
\documentclass[11pt,makeidx]{amsart}
\usepackage{amssymb,enumerate,mathrsfs, amsmath, amsthm}
\usepackage{url,titletoc,xspace}

\makeindex

\usepackage[usenames,dvipsnames]{xcolor}
\definecolor{darkblue}{rgb}{0.0, 0.0, 0.55}
\usepackage[pagebackref,colorlinks,linkcolor=BrickRed,citecolor=OliveGreen,urlcolor=darkblue,hypertexnames=true]{hyperref}

\usepackage{upgreek}
\usepackage{cmap}

\usepackage{mdwlist}

     \DeclareRobustCommand{\SkipTocEntry}[5]{}

\renewcommand{\qedsymbol}{\rule[.12ex]{1.2ex}{1.2ex}}
\renewcommand{\subset}{\subseteq}
\renewcommand{\emptyset}{\varnothing}

\newcommand{\ax}{\langle x\rangle}
\newcommand{\axs}{\langle x,x^* \rangle}

\DeclareMathOperator{\Span}{span}

\linespread{1.04}

\makeatletter
\newcommand{\mycontentsbox}{%
{
\addtolength{\parskip}{-2.0pt}\footnotesize
\tableofcontents}}
\def\enddoc@text{\ifx\@empty\@translators \else\@settranslators\fi
\ifx\@empty\addresses \else\@setaddresses\fi
\newpage\mycontentsbox
}
\makeatother

\usepackage[margin=2.5cm]{geometry}

\newtheorem{theorem}{Theorem}[section]
\newtheorem{cor}[theorem]{Corollary}
\newtheorem{lemma}[theorem]{Lemma}

\newtheorem{prop}[theorem]{Proposition}
\newtheorem{definition}[theorem]{Definition}

\newtheorem{XxmpX}[theorem]{Example} 
\newenvironment{example}    
  {\renewcommand{\qedsymbol}{$\lozenge$}%
   \pushQED{\qed}\begin{XxmpX}}
  {\popQED\end{XxmpX}}

\newtheorem{XxmpY}[theorem]{Remark} 
\newenvironment{remark}    
  {\renewcommand{\qedsymbol}{$\lozenge$}%
   \pushQED{\qed}\begin{XxmpY}}
  {\popQED\end{XxmpY}}

\newtheorem{conj}[theorem]{Conjecture}

\newtheorem{thm}[theorem]{Theorem}
\newtheorem{lem}[theorem]{Lemma}


\def\tg{\tilde{g}}

\def\tA{{\tilde{A}}}

\def\cD{\mathcal D}

\def\cJ{\mathcal J}

\def\ptA{\mathbf{A}}
\def\ptAp{\mathbf{A}^\prime}
\def\tptA{\tilde{\ptA}}
\def\tcE{\tilde{\mathcal{E}}}

\def\ben{\begin{enumerate}}
\def\een{\end{enumerate}}

\def\bs{\bigskip}

\def\bem{\begin{pmatrix}}
\def\eem{\end{pmatrix}}

\def\beq{\begin{equation}}
\def\eeq{\end{equation}}

\renewcommand{\qedsymbol}{\rule[.12ex]{1.2ex}{1.2ex}}

\def\sssec{\subsubsection}

\def\tg{{\tilde{g}}}
\def\tq{\tilde{q}}

\def\tB{{\tilde{B}}}
\def\tp{\tilde{p}}

\def\cN{\mathcal N}
\def\cR{\mathcal R}
\def\cZ{\mathcal Z}

\def\Wem{{ W_\emptyset }}
\def\Wema{W^{\ast}_\emptyset}

\def\fU{{\sW}}

\def\matn{\mathbb \rC^{n\times n}}
\def\matng{(\mathbb \rC^{n\times n})^g}
\def\matnjg{(\mathbb \rC^{n_j\times n_j})^g}
\def\matdg{M_d(\C)^g}
\def\matdtg{M_d(\C)^\tg}

\def\matetg{M_e(\C)^\tg}
\def\mateg{M_e(\C)^g}

\def\cH{\mathscr H}
\def\calH{\mathcal H}
\def\cE{\mathcal E}
\def\cN{\mathcal N}

\def\R{ {\mathbb{R}} }
\def\C{ {\mathbb{C}} }
\def\rC{{\mathbb C}}

\def\N{ {\mathbb{N}} }

\def\tW{\tilde{W}}

\def\cG{\mathcal G}

\DeclareMathOperator{\rg}{rg}

\def\IHA{I_H\otimes A}

\def\xij{{\Xi_j}}

\def\hatg{{\hat{g}}}

\def\cH{\mathcal{H}}
\def\sH{\mathscr{H}}

\def\sW{\mathscr W}
\def\tsW{\tilde{\mathscr{W}}}
\def\cEp{\mathcal{E}^\prime}
\def\SR{\Delta}
\def\cV{\mathcal V}

\newcommand{\df}[1]{{\bf{#1}}{\index{#1}}}

\newcommand{\La}{\Lambda}

 \def\bCxshort{\mathbb C\langle x\rangle}
  \def\bCx{\mathbb C\langle x_1,\dots,x_g\rangle}

\def\Mnns{M_n(\C)}




\newcommand{\ct}{convexotonic\xspace}
\newcommand{\Ct}{Convexotonic\xspace}



\title{Bianalytic Maps Between Free Spectrahedra}

\author[M.L. Augat]{Meric Augat}
\address{Meric Augat, Department of Mathematics\\
  University of Florida\\ Gainesville 
   }
   \email{mlaugat@math.ufl.edu}

\author[J.W. Helton]{J. William Helton${}^1$}
\address{J. William Helton, Department of Mathematics\\
  University of California \\
  San Diego}
\email{helton@math.ucsd.edu}
\thanks{${}^1$Research supported by the NSF grant
DMS 1201498, and the Ford Motor Co.}

\author[I. Klep]{Igor Klep${}^{2}$}
\address{Igor Klep, Department of Mathematics, 
The University of Auckland, New Zealand}
\email{igor.klep@auckland.ac.nz}
\thanks{${}^2$Supported by the Marsden Fund Council of the Royal Society of New Zealand. Partially supported by the Slovenian Research Agency grants P1-0222, L1-6722, N1-0057 and J1-8132.}

\author[S. McCullough]{Scott McCullough${}^3$}
\address{Scott McCullough, Department of Mathematics\\
  University of Florida\\ Gainesville 
   }
   \email{sam@math.ufl.edu}
\thanks{${}^3$Research supported by the NSF grant DMS-1361501} 

\date{\today}


\subjclass[2010]{47L25, 32H02, 52A05   (Primary); 14P10, 32E30, 46L07 (Secondary)}

\keywords{bianalytic map, birational map, 
 linear matrix inequality (LMI),
spectrahedron, convex set, Oka-Weil theorem, Positivstellensatz, free analysis, real algebraic geometry}

\newcounter{counterPQ}

\begin{document}

\numberwithin{equation}{section}

\begin{abstract} 
Linear matrix inequalities (LMIs)
$I_d + \sum_{j=1}^g A_jx_j + \sum_{j=1}^g 
A_j^*x_j^*
\succeq0$
play a role in many areas of applications.
The set of solutions of an LMI is a spectrahedron.
LMIs in (dimension--free)
matrix variables model most problems in linear systems engineering, and their solution sets are called free spectrahedra.
Free spectrahedra are exactly the free semialgebraic convex sets.

  This paper studies free  analytic maps between free spectrahedra and, under certain (generically valid) irreducibility assumptions,  classifies all those that are bianalytic.
  The foundation of such maps turns out to be a very small  class of birational maps we call \ct.  The \ct maps in $g$ variables sit in   
   correspondence
   with $g$-dimensional algebras.
   If two bounded free spectrahedra $\cD_A$ and $\cD_B$ meeting our irreducibility assumptions
    are free bianalytic with map denoted $p$,
  then $p$ must (after possibly an affine linear transform) extend to a  \ct map 
corresponding to a $g$-dimensional algebra spanned by  $(U-I)A_1,\ldots,(U-I)A_g$ for some unitary $U$.
Furthermore, 
 $B$ and $UA$ are unitarily equivalent.\looseness=-1

The article also establishes  a Positivstellensatz for free analytic functions
whose real part is positive semidefinite on a free spectrahedron
and proves a representation for a free analytic map 
from $\cD_A$ to $\cD_B$ (not necessarily bianalytic).
Another result shows that a  function  analytic on any radial expansion
of a free spectrahedron  is approximable by 
polynomials uniformly  on the spectrahedron.
These theorems are needed for classifying free  bianalytic maps. 
\end{abstract}

\maketitle

\setcounter{tocdepth}{3}
\contentsmargin{2.55em} 
\dottedcontents{section}[3.8em]{}{2.3em}{.4pc} 
\dottedcontents{subsection}[6.1em]{}{3.2em}{.4pc}
\dottedcontents{subsubsection}[8.4em]{}{4.1em}{.4pc}

\section{Introduction}
Given a tuple $A=(A_1,\dots,A_g)$  of complex $d\times d$  matrices and indeterminates $x=(x_1,\dots,x_g)$, the expression
\[
L_A(x) = I_d+ \sum_{j=1}^g A_j x_j + \sum_{j=1}^g A_j^* x_j^*
\]
is a monic linear pencil.  The set 
\[
 \cD_A(1) = \{z\in\mathbb C^g: L_A(z) \mbox{ is positive semidefinite}\}
\]
is known as a \df{spectrahedron} (synonymously \df{LMI domain}). Spectrahedra play a central role in semidefinite programming, convex optimization and in real algebraic geometry \cite{BPR13}. They also figure prominently in the study of determinantal representations \cite{Bra11,GK-VVW,NT12,Vin93}, the solution of the Lax conjecture \cite{HV07},
in the solution of the Kadison-Singer paving conjecture \cite{MSS15},
 and in systems engineering \cite{BGFB94, Skelton}.  The monic linear pencil $L_A$ is naturally evaluated at a tuple $X=(X_1,\dots,X_g)$ of $n\times n$ matrices
 using the Kronecker product as
\[
 L_A(X)= I_d\otimes I_n + \sum_{j=1}^g A_j \otimes X_j + \sum_{j=1}^g A_j^* \otimes X_j^*
\]
 with output a $dn\times dn$ self-adjoint matrix. Let 
 $M_n(\C)$ denote the $n\times n$ matrices with entries from $\C$ and \index{$M_n(\C)$} \index{$M_n(\C)^g$} \index{$(\cD_A(n))_n$}
$M_n(\mathbb C)^g$ denote the set of $g$-tuples of $n\times n$ matrices. We call the sequence $(\cD_A(n))_n$, where
\[
 \cD_A(n)=\{X\in M_n(\mathbb C)^g: L_A(X) \mbox{ is positive semidefinite}\}
\]
 a \df{free spectrahedron} (or a \df{free LMI domain}). Free spectrahedra arise naturally in many systems engineering problems described by a signal flow diagram \cite{dOHMP09}.  They are also canonical examples of {matrix convex sets} \cite{EW,HKMjems} and thus are intimately connected to the theory of completely positive maps and operator systems and spaces \cite{Pau}.

In this article we study bianalytic maps $p$ between free spectrahedra. Our belief, supported by the results in this paper and our experience with free spectrahedra (see for instance \cite{HM12}, \cite{HKMS} and \cite{HKM12b}), is that the existence of bianalytic maps imposes rigid, but elegant, structure on both the free spectrahedra as well as the map $p$. Motivation for this study comes from several sources. Free analysis, including free analytic functions, is a recent development \cite{KVV14,Tay72,Pas,AM15,BGM,Po1,Po2,KS,HKM12b,BKP16} with 
close
ties to free probability \cite{Voi04, Voi10} and quantum information theory \cite{NC,HKMjems}. 
In engineering systems theory  certain  model problems can be described by a system of {matrix inequalities}.  For optimization and design purposes, it is hoped that these inequalities have a convex solution set. In this case, under a boundedness hypothesis, the solution set is a free spectrahedron \cite{HM12}.  If the domain is not convex one might replace it by its matrix convex hull \cite{HKM16} or map it bianalytically to a free spectrahedron.  Two such maps then lead to a 
bianalytic map
between free spectrahedra. 

Studying bianalytic maps between free spectrahedra is a free analog of rigidity problems in  several complex variables \cite{DAn,For89,For,HJ01,HJY14,Krantz}. Indeed, there is a large literature  on bianalytic maps on convex sets. 
For instance, Faran \cite{Far86}
 showed that any proper analytic map from
the unit ball  in $\C^n$ to the unit ball in $\C^N$
 with $N\leq2n-2$
 that is real analytic up to the boundary, is 
(up to automorphisms of the domain and codomain) the standard 
linear embedding $z\mapsto (z,0)$.
When $N=2n-1$, Huang and Ji \cite{HJ01} proved
this map and the Whitney map $z=(z',z_n)\mapsto (z',z_n z)$
are the only such maps. 
Forstneri\v c
\cite{For}
showed that any proper analytic map between balls 
with sufficient regularity at the boundary
must be rational.
We refer to \cite{HJY14} for further recent developments.

The remainder of this introduction is organized as follows. Basic terminology and background appear  in Subsection \ref{sec:basic}. A novel family of maps we call \ct maps and that we believe comprise, up to affine linear equivalence, exactly the bianalytic maps between free spectrahedra, is described in Subsection \ref{sec:ct}.  Subsection \ref{sec:ct} also contains the main result of the article, Theorem \ref{thm:main} on bianalytic mappings between free spectrahedra. Subsections  \ref{sec:approximate} and \ref{sec:introsatz}  describe Positivstellens\"atze and results related to recent free Oka-Weil theorems \cite{AM14,BMV} on (uniform) polynomial approximation of free spectrahedra and  functions analytic in a suitable neighborhood of a spectrahedron. Both are ingredients in the proof of Theorem \ref{thm:main}.

\subsection{Basic definitions}
\label{sec:basic}
 Notations, definitions and background needed, but not already introduced, to describe the results in this paper are collected in this section.  

\subsubsection{Free polynomials}
Let $x=(x_1,\ldots,x_g)$ denote $g$ freely noncommuting letters and 
$\langle x\rangle$ the set of \df{words} in $x$,  including 
  the empty word denoted by either $1$ or $\emptyset$.
  The \df{length of a word} $w\in\langle x\rangle$ is  denoted by $|w|$. 
  Let $\bCxshort=\bCx$ denote the $\C$-algebra freely generated by $x$. \index{$\bCxshort$} %
Its elements are
linear combinations of words in $x$ and are called
{\bf analytic free  polynomials}. \index{analytic free polynomials}
We shall also consider the \df{free polynomials} $\C\axs$  in both the variables $x=(x_1,\dots,x_g)$
  and their formal adjoints, $x^*=(x_1^*,\dots,x_g^*)$. 
  For instance, $x_1x_2 + x_2 x_1 + 5x_1^3$ is analytic,
  but $x_1^*x_2 + 3x_2 x_1^5 $ is not. 
A polynomial is \df{hereditary} 
provided all the $x^*$ variables, if any, always appear on the left of all $x$ variables. Thus an hereditary polynomial is a finite linear combination of terms $v^* w$ where $v$ and $w$ are words in $x$. A special case are polynomials of the form analytic plus anti-analytic; that is $f+g^*$ for  some
$f,g\in\C\ax$. These definitions naturally extend to matrices over polynomials.

Given a word  $w=x_{i_1} x_{i_2}\cdots x_{i_m}$ and a tuple $X=(X_1,\dots,X_g)\in \Mnns^g$, let $w(X)=X^w = X_{i_1} X_{i_2}\cdots X_{i_m}$. 
A matrix-valued free polynomial $p = \sum p_w w$ is evaluated at $X$ using the Kronecker product as
\[
 p(X) = \sum p_w\otimes w(X).
\]

\subsubsection{Free domains, matrix convex sets and spectrahedra}
\label{sec:free domains}
Let $M(\mathbb C)^g$ denote the sequence $(M_n(\mathbb C)^g)_n$.  A \df{subset} $\Gamma$ of $M(\mathbb C)^g$ is a sequence $(\Gamma(n))_n$ where $\Gamma(n) \subset M_n(\mathbb C)^g$.  The subset $\Gamma$ is a \df{free set} if it is closed under direct sums and unitary similarity; that is,  if $X\in \Gamma(n)$ and $Y\in \Gamma(m),$ then 
\[
 X\oplus Y = \begin{pmatrix} \begin{pmatrix} X_1& 0\\0 & Y_1 \end{pmatrix}, \dots, \begin{pmatrix} X_g& 0\\0 & Y_g \end{pmatrix} \end{pmatrix} \in \Gamma(n+m)
\]
 and if $U$ is an $n\times n$ unitary matrix, then
\[
 U^* X  U = \begin{pmatrix} U^* X_1 U, \dots, U^* X_g U \end{pmatrix} \in \Gamma(n).
\]

  The free set $\Gamma$ is a \df{matrix convex set} (alternately \df{free convex set}) if it is also closed under simultaneous conjugation by isometries; i.e., if $X\in \Gamma(n)$ and $V$ is an $n\times m$ isometry, then  $V^* XV \in \Gamma(m)$.  In the case that $0\in \Gamma(1)$, $\Gamma$ is a matrix convex set if and only if it is closed under direct sums and simultaneous conjugation by contractions.  It is straightforward to see that a matrix convex set is \df{levelwise} convex; i.e., each $\Gamma(n)$ is a convex set in $M_n(\mathbb C)^g$. The converse is true if $\Gamma$, in addition to being a free set, is closed with respect to restrictions
to reducing subspaces. 

  A distinguished class of matrix convex domains
  are those described by a linear matrix inequality. 
  Given a positive integer $d$
  and  $A_1,\dots,A_g \in M_d(\C),$ the
  linear matrix-valued free polynomial
\[ 
  \La_A(x)=\sum_{j=1}^g A_j x_j\in M_d(\C)\otimes \bCx
\]
  is a {\bf (homogeneous) linear pencil}.\index{homogeneous linear pencil} \index{linear pencil}
  Its adjoint is, by definition,
$
  \La_A(x)^*=\sum_{j=1}^g A_j^* x_j^*.
$
 Thus
\[ 
  L_A(x) = I_d + \La_A(x) +\La_A(x)^*.
\] 
In particular, $\cD_A=\cD_{\Lambda_A}$ and 
 it is immediate that the free spectrahedron $\cD_A$ is a matrix convex set that contains a neighborhood of $0$.

\subsubsection{Free functions}
Let $\cD\subset M(\C)^g$.
 A \df{free  function}   $f$ from $\cD$ into $M(\C)^1$ is a sequence
 of functions $f[n]:\cD(n) \to M_n(\C)$ that
  \df{respects intertwining}; that is
 if $X\in\cD(n)$, $Y\in\cD(m)$, $\Gamma:\mathbb C^m\to\mathbb C^n$,
  and
 \[
  X\Gamma=(X_1\Gamma,\dots, X_g\Gamma)
   =(\Gamma Y_1,\dots, \Gamma Y_g)=\Gamma Y,
 \]
  then $f[n](X) \Gamma =  \Gamma f[m] (Y)$.
  Equivalently, $f$ respects direct sums and similarity.
The definition of a free  function naturally extends to
vector-valued functions $f:\cD\to M(\C)^h$,
matrix-valued functions $f:\cD\to M_e(\C)$ and even
 operator-valued functions. We refer the reader to
 \cite{KVV14,Voi10} for a comprehensive study of free function theory.

\subsubsection{Formal power series and free analytic functions} 
\label{sec:formal}
Here, assuming, as we always will,  its domain $\Gamma\subset M(\C)^g$ is a free  open set (meaning each $\Gamma(n)\subset M_n(\C)^g$ is open), 
a free function $f=(f[n])_{n}:\Gamma\to M(\C)$ is {\bf free analytic} if each $f[n]$ is analytic.
Very weak additional hypotheses  (e.g.~continuity \cite{HKM11b} or even
local boundedness \cite{KVV14,AM14}) on a free function imply it is analytic.

An important fact for us is that 
a formal power series with positive radius of convergence determines a free analytic function
within its  radius of convergence and (under a mild local boundedness assumptions) vice versa,
cf.~\cite[Chapter 7]{KVV14} or \cite[Proposition 2.24]{HKM12b}.  Given a positive integer $d$ and Hilbert space $H$, an
  operator-valued \df{formal  power series} $f$ 
  in $x$  is an expression
  of the form
 \begin{equation*}
   f = \sum_{m=0}^\infty \sum_{\substack{\; w\in\langle x\rangle\\ |w|=m}} f_w w= \sum_{m=0}^\infty f^{(m)},
 \end{equation*}
  where $f_w:\C^d\to H$ are linear maps  and $f^{(m)}$ is the \df{homogeneous component} of degree $m$ of $f;$ that is, the sum of all monomials in $f$ of degree $m$.
 Given $X\in M_n(\C)^g$,  define
 \[
    f(X)=\sum_{m=0}^\infty \sum_{\substack{\; w\in\langle x\rangle\\ |w|=m}}  f_w \otimes w(X),
 \]
  provided the series converges (summed in the indicated order). 
 If the norms of the coefficients of $f$ grow slowly enough, then, for $\|X_j\|$ sufficiently small,
the series $f(X)$ will converge.  For the purposes of this article, the \df{formal radius of convergence} $\tau(f)$ of a   formal power series $ f(x) =\sum f_\alpha \alpha$  with operator coefficients is 
\[
  \tau(f) = \frac{1}{ \limsup_N \big(\sum_{|\alpha|=N} \|f_\alpha\|\big)^{\frac 1N}},
\]
 with the obvious interpretations in the cases that the limit superior is either zero or infinity. 
 Similarly, the \df{spectral radius} of a tuple $X\in M_n(\C)^g$ is 
\[
 \rho(X) =\limsup_N \max \{\|X^\alpha\|^{\frac{1}{N}}: |\alpha|=N\}.
\]

A tuple of matrices $E\in\matng$ is (jointly) \df{nilpotent} if there exists an $N$ such 
  that $E^w=0$ for all words $w$ of length $|w|\ge N$.
  The smallest such $N$ is the \df{order of nilpotence} of $E$. 
In particular, if $X$ is (jointly) nilpotent, then $\rho(X)=0$. In any event,
if $X\in M(\rC)^g$ and $\rho(X)<\tau(f)$, then the series
\[
 f(X) = \sum_{N=0}^\infty \sum_{|\alpha|=N} f_\alpha \otimes X^\alpha
\]
 converges. 
Let $\SR_\tau =\{X\in M(\rC)^g: \rho(X)<\tau\}.$  \index{$\SR_\tau$} \index{$\tau(f)$} \index{$\rho(X)$}

\subsubsection{Free Rational Functions}
\label{sec:freerats}
{Free rational functions regular at $0$} (in the free variables
$x=(x_1,\dots,x_g)$)  appear in many areas
of mathematics and its applications including automata theory and
systems engineering. There are several different, but equivalent
definitions. Based on the results of \cite[Theorem 3.1]{KVV09}
and \cite[Theorem 3.5]{Vol17}) a \df{free rational function  regular at $0$} can, for the purposes of this article,
 be defined with minimal overhead as an expression of the form
\begin{equation}
\label{eq:ratr}
r(x)= c^* \big(I-\La_E(x)\big)^{-1} b
\end{equation}
where $e$ is a positive integer,  $E\in M_e(\C)^g$ and  $b,c\in\C^e$ are vectors. The expression $r$ is evaluated in the obvious fashion for
a tuple $X\in M_n(\C)^g$ so long as $I-\La_E(X)$ is invertible.
In particular, this natural domain of $r$ contains a free neighborhood
of $0$. Often in the sequel by {\it rational function} it will be clear from the context
that we mean {\it free rational function regular at $0$}. 
An exercise shows that free polynomials are (free) rational functions. Moreover,
it is true that the sum and product of rational functions are again rational.
Likewise a free  
rational function $r$ as in equation \eqref{eq:ratr}   is free analytic. It is a fundamental result that
the singularity set of $r$ coincides with the
singularity set (i.e., the free locus \cite{KV}) $\cZ_E$  of $I-\La_E$ 
(see \cite[Theorem 3.1]{KVV09}
and \cite[Theorem 3.5]{Vol17})
if $E$ is of minimal size among all representations of the form \eqref{eq:ratr} for $r$. That is, $r$ can not be extended analytically to a (open) set strictly containing the free locus $\cZ_E$.

\def\cE{\tilde \cD}
\subsection{Bianalytic maps between free spectrahedra}
\label{sec:ct}
A \df{free analytic mapping} (or simply an analytic mapping) $p$ is, for some pair of positive integers $g,\tg$, an expression of the form
\[
 p=(p^1,\dots,p^\tg),
\]
 where each $p^j$ is an analytic function in the free variables $x=(x_1,\dots,x_g)$. Given free domains $\cD$ and $\cE$, we write  $p:\cD\to \cE$ to indicate $\cD$ is a subset of the domain of $p$ and $p$ maps $\cD$ into $\cE$.  The domains $\cD$ and $\cE$ are \df{bianalytic} if there exist free analytic mappings $p:\cD\to\cE$ and $q:\cE\to \cD$ such that $p\circ q$ and $q\circ p$ are the identity mappings on $\cE$ and $\cD$ respectively.   To emphasize the role of $p$ (and $q$), we say that $\cD$ and $\cE$ are 
\df{$p$-bianalytic}. 

In this paper we introduce a small and highly structured class of birational maps we call \df{convexotonic} and to each such map $p$ describe the pairs of spectrahedra $(\cD,\cE)$ bianalytic via $p$. We conjecture these triples $(p,\cD,\cE)$ account for all bianalytic free spectrahedra 
(up to affine linear equivalence)
and establish the result under certain irreducibility hypotheses on $\cD$ and $\cE$. We start with the
definition of the \ct maps.

\def\cE{\mathcal E}
\subsubsection{\Ct maps}
\label{sssec:contonics}
A tuple $\Xi=(\Xi_1,\dots,\Xi_g)\in M_g(\C)^g$  satisfying
\beq\label{eq:cttuple}
 \Xi_k \Xi_j = \sum_{s=1}^g (\Xi_j)_{k,s} \Xi_s
\eeq
for each $1\le j,k\le g$ is \df{convexotonic}. 
    We say the rational mappings $p$ and $q$  whose entries have the form
\[
 p^i (x)=\sum_j x_j \left(I-\La_{\Xi}(x) \right )^{-1}_{j, i} 
    \qquad \text{and} \qquad  q^i (x)=\sum_j x_j \left(I +\La_{\Xi}(x) \right)^{-1}_{j,i},
\]
that is, in row form,
\begin{equation}
\label{eq:tropic}
 p(x)= x(I-\Lambda_\Xi(x))^{-1}
\qquad \text{and} \qquad q=x(I+\Lambda_\Xi(x))^{-1}
\end{equation}
are \df{\ct}.  
 It turns out (see Proposition \ref{prop:con}) the mappings $p$ and $q$ are inverses of one another, hence they are birational maps.

Given a $g$-tuple $R=(R_1,\dots,R_g)$ of $n\times n$ matrices that spans a $g$-dimensional algebra $\cR$,  we call the $g$-tuple of
$g\times g$ matrices $\Xi=(\Xi_1,\dots,\Xi_g)$ uniquely determined by 
\begin{equation*}
 R_k R_j = \sum_{s=1}^g (\Xi_j)_{k,s}R_s,
\end{equation*}
the \df{structure matrices} for $\cR$ (suppressing the obvious dependence on the choice of basis $R$). By Proposition \ref{lem:gtg}, $\Xi$ is \ct.
Moreover, if $\Xi$ is \ct, then $\mathcal X$ equal the span of $\Xi$ is an algebra of dimension
at most $g$ for which $\Xi_j$ are the structure matrices. 
See Proposition \ref{prop:con}. 

Conversely, each \ct $g$-tuple $\Xi$ as in \eqref{eq:cttuple}
(even if linearly dependent)
arises as the set of structure matrices for a $g$-dimensional
algebra. For instance, letting $R_j$ denote the  $(g+1)\times (g+1)$  matrices
\[
 R_j = \begin{pmatrix} 0 & e_j^*\\ 0 &\Xi_j \end{pmatrix}
\]
 with respect to the orthogonal decomposition $\C \oplus \C^g$ (here $e_j$ is the $j$-th standard
 basis vector for $\C^g$) as an easy computation reveals.

\Ct maps are  fundamental objects and to each are attached 
 pairs of bianalytic spectrahedra. Let
$\cR={\rm span}\{R_1,\ldots,R_g\}\subseteq M_d(\C)$ be a $g$-dimensional algebra with structure matrices $\Xi$,
 and suppose that 
  $C$ a $d\times d$ is a  unitary matrix and  a tuple  $A\in M_d(\mathbb C)^g$,  such that
 $R_j=(C-I)A_j$ for $1\le j\le g$,
and 
 \begin{equation}
 \label{eq:Astructure}
 A_k R_j = \sum_{s=1}^g (\Xi_j)_{k,s} A_s.
\end{equation}
In particular, the span
   $\mathcal A$ of the $A_j$ is a right $\cR$-module and 
if $C-I$ is invertible then
   \eqref{eq:Astructure} holds automatically.
We call the so constructed $(\cD_A,\cD_{CA})$ a \df{spectrahedral pair}
associated to the algebra $\cR$.

 \sssec{Overview of free bianalytic maps between free spectrahedra}

\begin{thm}
\label{thm:ctok}
If  $(\cD_A,\cD_{CA})$ is a spectrahedral pair
associated to a $g$-dimensional algebra $\cR$ and $C$ is unitary,
then  $\cD_A$ is bianalytic  to $\cD_{CA}$
under the \ct map $p$  whose structure matrices $\Xi$ are associated 
to the algebra $\cR$. 
\end{thm}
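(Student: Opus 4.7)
The plan is to produce an explicit congruence factorization
\[
L_A(x) \;=\; V(x)^{*}\, L_{CA}(p(x))\, V(x),\qquad V(x):=I-\Lambda_R(x)=I-(C-I)\Lambda_A(x),
\]
verify that $V(X)$ is invertible on the interior of $\cD_A$, and then invoke Proposition \ref{prop:con} together with a symmetry argument to handle the inverse map.

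The algebraic part begins by exploiting the module identity \eqref{eq:Astructure} to establish the intertwining
\[
\Lambda_A(y)\,\Lambda_R(z) \;=\; \Lambda_A\!\bigl(y\,\Lambda_\Xi(z)\bigr)
\]
for arbitrary row tuples $y,z$ of free polynomials; iterating yields $\Lambda_A(x)\Lambda_R(x)^m = \Lambda_A(x\Lambda_\Xi(x)^m)$, and summing the geometric series gives
\[
\Lambda_A(p(x)) \;=\; \Lambda_A\!\bigl(x(I-\Lambda_\Xi(x))^{-1}\bigr) \;=\; \Lambda_A(x)\bigl(I-\Lambda_R(x)\bigr)^{-1}.
\]
Since $R=(C-I)A$, this reads $\Lambda_{CA}(p(x)) = C\,\Lambda_A(x)\,V(x)^{-1}$. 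Plugging into $L_{CA}(p(x))=I+\Lambda_{CA}(p(x))+\Lambda_{CA}(p(x))^{*}$ and conjugating by $V$, the unitarity identity $(C^{*}-I)(C-I)=2I-C-C^{*}=-(C-I)-(C^{*}-I)$ makes the four quadratic-in-$\Lambda_A$ terms cancel and collapses the linear terms to $\Lambda_A(x)+\Lambda_A(x)^{*}$, delivering the stated factorization.

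Next I would show that $V(X)$ is invertible whenever $X\in\inter\cD_A$: if $V(X)\xi=0$ and $\eta := \Lambda_A(X)\xi$, then $C\eta=\eta+\xi$, and $\|C\eta\|=\|\eta\|$ forces $\|\xi\|^{2} = -\langle(\Lambda_A(X)+\Lambda_A(X)^{*})\xi,\xi\rangle$, contradicting $L_A(X)\succ 0$ unless $\xi=0$. Given this, the factorization immediately yields $L_{CA}(p(X))\succ 0$, so $p(X)\in\inter\cD_{CA}$; continuity propagates the inclusion to $\cD_A$. For the inverse direction, replace the data $(A,C,\Xi)$ by $(CA,\;C^{*},\;-\Xi)$: one checks directly that $(C^{*}-I)(CA_j)=-R_j$ still spans $\cR$ with structure matrices $-\Xi$, and the associated convexotonic map coincides with $q(x)=x(I+\Lambda_\Xi(x))^{-1}$. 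Running the preceding argument verbatim produces $q:\cD_{CA}\to\cD_A$, and by Proposition \ref{prop:con} we have $q=p^{-1}$.

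The main obstacle I anticipate is verifying that $p$ is actually defined on all of $\inter\cD_A$, not merely on a neighborhood of $0$: the factorization controls positivity once $p(X)$ exists, but the singularity set of $p$ is cut out by $I-\Lambda_\Xi$, whereas the unitarity argument gives invertibility only of $V=I-\Lambda_R$. I would bridge this gap by analytic continuation along the ray $t\mapsto tX$ for $X\in\inter\cD_A$: the set of $t\in[0,1]$ where $p$ extends analytically is open and contains $0$, and a first singularity at some $t_0$ would be a pole of $p$ forcing non-invertibility of $I-\Lambda_\Xi(t_0 X)$, while the unitary argument rules out $V(t_0X)$ being singular on $\inter\cD_A$. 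Identifying the two singularity loci, either via a minimal realization of $\cR$ or by directly comparing the determinantal varieties of $I-\Lambda_\Xi$ and $I-\Lambda_R$, closes the gap.
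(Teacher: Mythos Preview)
Your approach is correct and is essentially the paper's argument unpacked: the paper deduces Theorem~\ref{thm:ctok} from the converse portion of Theorem~\ref{thm:shotinthedark}, which consists precisely of verifying the one-term certificate $L_{CA}(p(x))=W(x)^{*}L_A(x)W(x)$ with $W(x)=(I-\Lambda_R(x))^{-1}=V(x)^{-1}$, and your unitary cancellation is exactly the computation in the Remark after Proposition~\ref{prop:multi generalIntro}. Your symmetry trick $(A,C,\Xi)\rightsquigarrow(CA,C^{*},-\Xi)$ is a tidy packaging of the reverse certificate plus Proposition~\ref{prop:con}.

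The domain-of-definition issue you flag is real, and the paper does not explicitly address it either. Your analytic-continuation sketch works but there is a shorter route that avoids it: iterating \eqref{eq:Astructure} gives the formal identity
\[
A_k\,(I-\Lambda_R(x))^{-1} \;=\; \sum_{s=1}^{g}\bigl[(I-\Lambda_\Xi(x))^{-1}\bigr]_{k,s}\,A_s,
\]
and since $\{A_1,\dots,A_g\}$ is linearly independent, applying dual functionals exhibits each entry $[(I-\Lambda_\Xi(x))^{-1}]_{k,s}$, and hence each $p^{j}$, as a free rational function with a (not necessarily minimal) realization through the resolvent $(I-\Lambda_R(x))^{-1}$. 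Thus the singularity set of $p$ is contained in $\{X:I-\Lambda_R(X)\text{ singular}\}$, and your unitarity argument for invertibility of $V(X)$ on $\inter\cD_A$ already shows $p$ is defined there.
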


See \cite{HKMVarxiv} for an expanded statement and proof of Theorem \ref{thm:ctok}.

\begin{proof}
A proof appears immediately after Theorem \ref{thm:shotinthedark}.
\end{proof}

We conjecture that \ct maps 
are the only bianalytic maps between free spectrahedra.

\begin{conj}
\label{conj:main}
Up to conjugation with  affine linear maps, 
the only 
  bounded free spectrahedra $\cD_A $, $ \cD_{B}$
  that are $p$-bianalytic  
  arise as spectrahedral pairs associated to an algebra $\cR$ with  
  $p$ as the corresponding \ct map.

A weaker version of the conjecture adds the hypothesis that
  the the ranges of the $A_j$ and $B_k$ span their respective spaces.
\end{conj}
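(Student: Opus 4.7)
The plan is to reduce the structural classification to an algebraic identity extracted from the bianalyticity, using the Positivstellensatz and polynomial approximation results promised in Subsections \ref{sec:approximate} and \ref{sec:introsatz}. Let $p:\cD_A\to\cD_B$ with inverse $q:\cD_B\to\cD_A$ be the bianalytic map. After composing with affine linear automorphisms of each spectrahedron, we may assume $p(0)=0$ and $q(0)=0$. Because $L_B\circ p$ is a matrix-valued free analytic function whose real part is positive semidefinite on $\cD_A$, the Positivstellensatz provides a representation
\[
L_B(p(x)) \;=\; V(x)^*\, L_A(x)\, V(x)
\]
on a free neighborhood of $\cD_A$, with $V$ an operator-valued free analytic function. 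Applying the analogous statement to $q$ yields an identity $L_A(q(y))=W(y)^* L_B(y) W(y)$. Substituting $y=p(x)$ produces a two-sided relation which, under the range-spanning (weak irreducibility) hypothesis on $A$, forces $V(x)W(p(x))=I$; in particular $V$ is invertible on $\cD_A$ and $V(0)$ is unitary.

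Next, I would exploit dimension-freeness to convert this functional identity into algebra. Writing $V(x)=V(0)+\widetilde{V}(x)$ and matching homogeneous components of $L_B(p(x))=V(x)^*L_A(x)V(x)$ on jointly nilpotent test tuples $X$ of growing order of nilpotence, one extracts a recursion that determines all higher-order terms of $V$ from $V(0)$ and the Jacobian $Dp(0)$. Setting $U:=V(0)^{-*}$ and $R_j:=(U-I)A_j$, this recursion collapses (at least formally) to
\[
V(x) \;=\; \bigl(I-\Lambda_\Xi(p(x))\bigr)^{-1}
\]
for a tuple $\Xi$ determined by the products $R_kR_j$. Multiplicative closure $R_kR_j\in\mathrm{span}(R_1,\dots,R_g)$ is then forced, for otherwise the right-hand side of the representation produces words of arbitrarily high degree that cannot be balanced against the linear pencil $L_B$. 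This simultaneously produces the $g$-dimensional algebra $\cR$, certifies that $\Xi$ is convexotonic in the sense of \eqref{eq:cttuple}, and shows that $p$ agrees with the corresponding \ct map from \eqref{eq:tropic}.

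Finally, with $p$ now identified as the convexotonic map and $V$ explicit, the identity $L_B(p(x))=V(x)^*L_A(x)V(x)$ becomes a polynomial identity whose coefficient comparison forces $B$ and $UA$ to be unitarily equivalent, so $(\cD_A,\cD_B)$ is a spectrahedral pair associated to $\cR$. The polynomial approximation theorem (free Oka-Weil) enters twice: to truncate the a priori operator-valued $V$ to a genuinely finite-dimensional rational object before solving the recursion, and to justify evaluating $V$ on the boundary of $\cD_A$ where extreme tuples at large matrix level carry the rigid information.

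The main obstacle is the rigidity step in the second paragraph. The Positivstellensatz admits a gauge freedom $V\mapsto V\cdot O$ for any free analytic $O$ satisfying $O(x)^*O(x)=I$, and the two-sided identity from bianalyticity only partially fixes this gauge. Breaking it requires both the range-spanning hypothesis (which cuts $V$ down to values in a specific finite-dimensional space) and a boundary analysis on extreme tuples at high matrix level, where the irreducibility of $A$ ensures that no hidden multiplicities interfere with identifying $V(0)$ and $Dp(0)$. The absence of a comparable boundary rigidity statement in full generality is precisely what keeps the unrestricted conjecture open, while the range-spanning weakening provides exactly the geometric input needed to close the argument along the lines above.
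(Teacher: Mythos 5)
The statement you are addressing is Conjecture \ref{conj:main}, which the paper does not prove: it is an open problem, established there only under the sv-generic hypothesis (Theorem \ref{thm:main}), which is strictly stronger than the range-spanning weakening you invoke. So the relevant question is whether your outline closes the gap the authors could not, and it does not. The decisive missing step is the reduction from the operator-valued certificate to a \emph{square} one-term certificate. Theorem \ref{thm:analPossIntro} delivers $L_B(p(x))=W(x)^*L_{I_H\otimes A}(x)W(x)$ with coefficients $W_\alpha:\C^e\to H\otimes\C^d$ for an a priori infinite-dimensional $H$ and $\sW=W_\emptyset$ a (generally non-square) isometry. Only when $\sW$ is a $d\times d$ unitary does the coefficient identity $\sW^*CA_kR^\omega\sW\in\Span\{B_j\}$ translate into $A_kR^\omega\in\Span\{A_j\}$, which is where the algebra $\cR$ and the \ct structure come from (Lemma \ref{lem:hells kitchen}, equation \eqref{eq:AZA}). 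Your claim that multiplicative closure is ``forced, for otherwise the right-hand side produces words of arbitrarily high degree that cannot be balanced against the linear pencil'' is not a valid argument: $\Lambda_B(p(x))$ contains words of every degree whenever $p$ is a genuine power series, and the balancing only constrains the compressions by $\sW$, not the products $A_kR^\omega$ themselves. The paper's entire Section \ref{sec:redo} (eig-generic and $*$-generic conditions, one-dimensional kernels at boundary tuples, hyperbases of singular vectors, Lemma \ref{lem:eig-generic in action}) exists precisely to force $\sW=\laambda\otimes M$ and $C(\laambda\otimes I)=\laambda\otimes Z$; the hypothesis that the ranges of the $A_j$ span $\C^d$ supplies none of that singular-vector rigidity, and Remark \ref{rem:generic+} indicates the one-dimensional-kernel property itself requires minimality-type assumptions.

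Two further steps are asserted rather than proved. First, composing the two certificates yields only $L_A(x)=T(x)^*L_A(x)T(x)$ for $T(x)=V(x)\,W(p(x))$, from which one gets that $T(0)$ is an isometry, not that $T\equiv I$; you correctly identify this gauge freedom as the main obstacle in your final paragraph, but the claimed resolution via ``boundary analysis on extreme tuples'' is exactly the content that is missing and is not supplied by the range-spanning hypothesis. Second, the recursion you describe for $V$ is Proposition \ref{prop:multi generalIntro}, but it produces $R=(C-I)(I_H\otimes A)$ acting on $H\otimes\C^d$, not $(U-I)A$ on $\C^d$, until the square reduction has been made. In short, your outline reproduces the architecture of the paper's conditional proof (Positivstellensatz, then Proposition \ref{prop:multi generalIntro}, then Theorem \ref{thm:shotinthedark}) but treats as automatic the one step, the collapse to a square certificate, that is the actual obstruction keeping both versions of the conjecture open.
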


Theorem \ref{thm:main} below says   
the conjecture is true in a generic sense.
An unusual feature of Conjecture \ref{conj:main} from the
viewpoint of traditional several complex variables is 
that typical bianalytic mapping results 
would be stated up to conjugation with automorphisms
of $\cD_A$ and $\cD_B$.  Here we
are actually asserting conjugation up to affine linear
equivalence. See also Subsection \ref{ssec:ball}.

\bs

We emphasize there are few indecomposable $g$-dimensional complex algebras.  
To give a clear picture we 
have calculated the \ct maps for these algebras explicitly for $g=2$ and $g=3.$
These calculations were done in Mathematica using NCAlgebra in a notebook
 you can use after downloading from
{\url{https://github.com/NCAlgebra/UserNCNotebooks}} \cite{HOMS16}.

\begin{prop}
\label{prop:g2g3}
We list a basis $R_1, R_2$ for each of the four  2-dimensional
indecomposable  algebras over $\C$.
Then we give the associated  ``indecomposable'' \ct map and
its (\ct) inverse.

\ben[\rm(1)]
\item $R_1$ is nilpotent of order 3 and $R_2=R_1^2$
$$
 p(x_1,x_2)=\bem x_1 & x_2+x_1^2\eem \qquad 
\quad
 q(x_1,x_2)= \bem x_1 & x_2 - x_1^2\eem.
$$
\item $R_1^2 =  R_1, \ R_1 R_2=R_2$
\[
\begin{split}
p(x)&= 
\bem
(1-x_1)^{-1} x_1 &
(1 - x_1)^{-1} x_2
\eem
\qquad
q(x) =  \bem (1+x_1)^{-1}x_1  & (1+x_1)^{-1} x_2 \eem.
\end{split}
\]

\item
$R_1^2=R_1, \  R_2 R_1=R_2$
\[
\begin{split}
p(x) & = \bem x_1 (1-x_1)^{-1} & x_2(1-x_1)^{-1} \eem
\qquad
q(x)= \bem x_1(1+x_1)^{-1}& x_2(1+x_1)^{-1} \eem.
\end{split}
\]

\item
$R_1^2=R_1, \ R_1R_2=R_2, \ R_2 R_1=R_2$
\[
\begin{split}
p(x) & = \bem
x_1 (1 - x_1)^{-1} &
(1 - x_1)^{-1}  x_2 (1 - x_1)^{-1} 
\eem
\\
q(x)&= \bem
x_1 (1 + x_1)^{-1} &
(1 + x_1)^{-1}  x_2 (1 + x_1)^{-1} 
\eem.
\end{split}
\]
\een
For $g=3$ there are exactly ten plus a one parameter 
family of indecomposable \ct  maps,
since there are exactly this many corresponding indecomposable 
 $3$-dimensional algebras, see Appendix A to
the arXiv posting 
{\rm\url{https://arxiv.org/abs/1604.04952}}
of this paper.
\end{prop}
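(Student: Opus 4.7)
The proof is essentially a three-step mechanical computation, once the classification of indecomposable associative $\C$-algebras of the relevant dimension is in hand.

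\textbf{Step 1.} Invoke the classical classification of indecomposable finite-dimensional associative $\C$-algebras in dimension $2$: after a change of basis one obtains exactly the four algebras listed, namely the nilpotent algebra with $R_1^3 = 0$ and $R_2 = R_1^2$, and three algebras in which $R_1$ is an idempotent, distinguished by whether $R_1 R_2 = R_2$, $R_2 R_1 = R_2$, or both (with representative $R_2^2 = 0$; any other choice of $R_2^2$ is either absorbed by a further change of basis within the same class or forces a decomposition).

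\textbf{Step 2.} For each algebra, read off the structure matrices $\Xi_1, \Xi_2 \in M_2(\C)$ directly from the defining relations $R_k R_j = \sum_s (\Xi_j)_{k,s} R_s$. For instance, case (1) gives $\Xi_1 = \left(\begin{smallmatrix} 0 & 1 \\ 0 & 0 \end{smallmatrix}\right)$ and $\Xi_2 = 0$, while case (4) gives $\Xi_1 = I_2$ and $\Xi_2 = \left(\begin{smallmatrix} 0 & 1 \\ 0 & 0 \end{smallmatrix}\right)$. Cases (2) and (3) are analogous. (Convexotonicity of $\Xi$ in each case is guaranteed by Proposition \ref{lem:gtg}, so no separate check is required.)

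\textbf{Step 3.} Form the $2\times 2$ matrix $\Lambda_\Xi(x) = x_1 \Xi_1 + x_2 \Xi_2$ of free polynomials and invert $I \mp \Lambda_\Xi(x)$. In every case the matrix is upper or lower triangular, so the inversion reduces to a (noncommutative) geometric series in $x_1$ alone, producing the $(1 \mp x_1)^{-1}$ factors. Multiplying the row vector $(x_1, x_2)$ on the left of the inverse yields the stated $p$ and $q$. The only nontrivial simplification occurs in case (4), where the expression $x_1(1-x_1)^{-1} x_2 (1-x_1)^{-1} + x_2(1-x_1)^{-1}$ must be collapsed to $(1-x_1)^{-1} x_2 (1-x_1)^{-1}$ via the identity $x_1(1-x_1)^{-1} = (1-x_1)^{-1} - 1$.

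For $g=3$ the same three-step recipe applies, but Step 1 is the genuine obstacle: the classification of indecomposable $3$-dimensional associative $\C$-algebras is substantially more delicate and yields ten discrete isomorphism classes together with a one-parameter family. Once these are enumerated, the structure matrices and the corresponding \ct maps are again obtained mechanically via Steps 2 and 3, with $I - \Lambda_\Xi(x)$ block-triangular enough to keep the inversions tractable; the detailed list is deferred to the appendix of the arXiv version.
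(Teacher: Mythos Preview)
Your proposal is correct and follows essentially the same approach as the paper: invoke the classification of indecomposable $2$-dimensional $\C$-algebras, compute the structure matrices $\Xi_j$ from the multiplication table, and then evaluate $p(x)=x(I-\Lambda_\Xi(x))^{-1}$ and its inverse by inverting the resulting triangular $2\times 2$ matrix of free polynomials. The only cosmetic difference is that the paper also writes down explicit $2\times 2$ matrix representatives for each $R_1,R_2$ before reading off $\Xi$, whereas you extract $\Xi$ directly from the abstract relations; both routes give the same $\Xi$ and hence the same maps.
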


\begin{proof}
See Section \ref{sec:examples}.
\end{proof}

\begin{remark}\rm
All $g$ variable \ct maps are direct sums of \ct maps  associated to indecomposable algebras.
See Subsection \ref{subsec:decompose}. 

The composition of two \ct maps
may not be \ct (see Subsection \ref{ssec:compose}), a further indication
of the very restrictive nature of \ct maps.
\end{remark}

 \sssec{Results on free bianalytic maps under a genericity assumption}

The main result of this paper supporting Conjecture \ref{conj:main} is
Theorem \ref{thm:main} below. It says, in part, 
under certain irreducibility conditions on $A$ and $B$, if $\cD_A$ and
$\cD_B$ are $p$-bianalytic, then $p$ and its inverse $q$ are in fact
\ct.

Let $d$ be a positive integer. 
A set $\{u^1,\dots,u^{d+1}\}$ is a \df{hyperbasis} for $\C^d$ if each
$d$ element subset is a basis.  The tuple $A\in
M_d(\C)^g$ is \df{sv-generic} if there exists
$\alpha^1,\dots,\alpha^{d+1}$ and $\beta^1,\dots,\beta^d$ in $\C^g$
such that $I-\Lambda_A(\alpha^j)^*\Lambda_A(\alpha^j)$ is positive
semidefinite and has a one-dimensional kernel spanned by $u^j$ and the
set $\{u^1,\dots,u^{d+1}\}$ is a hyperbasis for $\C^d$; and
$I-\Lambda_A(\beta^k)\Lambda_A(\beta^k)^*$ is positive semidefinite,
its kernel spanned by $v^k$ and the set $\{v^1,\dots,v^d\}$ is a basis
for $\C^d$.  
Generic  tuples  $A$ satisfy this property, see Remark \ref{rem:sv=gen}.  Weaker (but still sufficient) versions of the sv-generic
condition are given in the body of the paper, see Subsection
\ref{sssec:eig}.

\begin{theorem}
 \label{thm:main}
 Suppose $A\in M_d(\C)^g$ and $B\in M_e(\C)^g$ are sv-generic and $\cD_A$ is bounded. If $p$ is a birational map between 
 $\cD_A$ and $\cD_B$ with $p(0)=0$ and $p^\prime(0)=I_g$, then
\ben[\rm (1)]
\item $d=e$; 
\item  there exists a $d\times d$ matrix $C$ such that $B$ is unitarily
 equivalent to $CA$;
\item  the tuple $R=(C-I)A$ spans an algebra $\cR$;
\item  the span
 of $A$ is a right $\cR$-module; and
\item  letting $\Xi$ denote the structure
 matrices for this module, $p$ has the \ct form of 
 \eqref{eq:tropic}; that is,\looseness=-1
 \[
 p(x)= x(I-\Lambda_\Xi(x))^{-1}.
 \]
\een
\end{theorem}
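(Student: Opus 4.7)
The plan is to combine two tools developed earlier in the paper---the analytic Positivstellensatz for free analytic functions whose real part is positive semidefinite on a free spectrahedron (Subsection \ref{sec:introsatz}) and the Oka--Weil style polynomial approximation for functions analytic on a radial expansion of $\cD_A$ (Subsection \ref{sec:approximate})---with the linear algebraic data provided by the sv-generic hypothesis. Since $p$ maps $\cD_A$ into $\cD_B$, the real part of $\Lambda_B(p(x))$ is bounded below on $\cD_A$, and the Positivstellensatz delivers an algebraic identity relating $L_A$ and $L_B\circ p$ via an analytic matrix-valued certificate. Oka--Weil approximation reduces the analysis of $p$ and the certificate to the polynomial case on slight contractions of $\cD_A$, making the identity amenable to pointwise evaluation and to order-by-order comparison of Taylor coefficients.

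Next I would extract linear data from the sv-generic scalar points. At each $\alpha^j$, the matrix $I-\Lambda_A(\alpha^j)^*\Lambda_A(\alpha^j)$ has one-dimensional kernel spanned by $u^j$, and the hyperbasis property of $\{u^1,\dots,u^{d+1}\}$ rigidly determines a $d\times d$ matrix from its action on any $d$ of them. The Positivstellensatz certificate, evaluated near these points, transplants the kernel vectors to the $B$-side and produces the corresponding kernels at the images $p(\alpha^j)$. Cross-referencing with the analogous sv-generic data for $B$ via the inverse $q=p^{-1}$ (which is also birational with the required normalization) first forces $d=e$, then produces a linear $C$ and a unitary conjugation so that $B$ is unitarily equivalent to $CA$; that $C$ itself is unitary emerges from playing the roles of $p$ and $q$ symmetrically against their respective sv-generic boundary data. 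This settles items (1) and (2).

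Items (3)--(5) then follow by matching higher-order Taylor coefficients of $p$, normalized by $p(0)=0$ and $p'(0)=I_g$, against the Positivstellensatz identity. The second-order term forces each product $A_kA_j$ to equal a linear combination of the $A_s$ whose coefficients assemble into the structure tuple $\Xi$, simultaneously exhibiting $R=(C-I)A$ as an algebra $\cR$ and the span of $A$ as a right $\cR$-module satisfying \eqref{eq:Astructure}. An inductive bootstrap propagates the resulting recursion and matches the formal power series of $p$, coefficient by coefficient, against the expansion of $x(I-\Lambda_\Xi(x))^{-1}$, yielding item (5). The main obstacle, and the reason the paper invests substantial machinery before arriving at this theorem, is the very first step: a general free analytic $p$ need not extend past $\partial\cD_A$, so extracting a globally useful Positivstellensatz certificate from the hypothesis and justifying boundary-style evaluation at the sv-generic points both require the Oka--Weil approximation together with careful norm estimates that crucially use the boundedness of $\cD_A$. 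The subsequent passage from the pointwise kernel data to a \emph{unitary} $C$, rather than merely an invertible one, is also delicate and relies on running the argument symmetrically in $A$ and $B$.
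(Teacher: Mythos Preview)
Your outline captures the two main ingredients (the analytic Positivstellensatz and the sv-generic boundary data) and their intended roles, but it skips the step that actually makes the argument work and contains a real error in the algebraic conclusion.

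The Positivstellensatz, Corollary~\ref{thm:analPoss}, only produces a certificate
\[
 L_B(p(x)) = W(x)^* L_{I_H\otimes A}(x) W(x)
\]
with $W(x)=(I-\Lambda_R(x))^{-1}\sW$ taking values in $H\otimes\C^d$ for some auxiliary (a priori infinite-dimensional) Hilbert space $H$. Nothing in your sketch explains how to get from this to a \emph{square} $d\times d$ certificate, and that is exactly the hypothesis of Theorem~\ref{thm:all} on which items (3)--(5) rest. In the paper this reduction is the whole content of Lemmas~\ref{lem:even better} and~\ref{lem:eig-generic in action}: one evaluates the identity not at the scalar sv-generic points $\alpha^j$ themselves but at their nilpotent dilations $X^j=S\otimes\alpha^j$ (with $S$ the $2\times2$ nilpotent), so that $p(X^j)$ depends only on the linear part of $p$ and the kernel of $L_A(X^j)$ is one-dimensional. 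Chasing kernels through the certificate shows each $\sW v^j_{2,k}$ has the form $\lambda_j\otimes u^j_{2,k}$; the \emph{hyperbasis} condition then forces all the $\lambda_j$ to be collinear, yielding a single unit vector $\lambda\in H$ with $\sW=\lambda\otimes M$ for a $d\times d$ unitary $M$, and the $*$-generic part gives $C(\lambda\otimes\,\cdot\,)=\lambda\otimes Z(\,\cdot\,)$. Compressing by $\lambda^*\otimes I$ produces the square certificate $L_B(p(x))={\tt W}(x)^*L_A(x){\tt W}(x)$, after which Theorem~\ref{thm:all} applies. Your proposal's ``transplants the kernel vectors'' and ``cross-referencing with the inverse $q$'' do not supply this; the inverse map is used only for the inequality $e\le d$ (via $B$ being eig-generic), not to manufacture the square certificate.

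Separately, your second-order claim is wrong: the identity does not force $A_kA_j$ into the span of the $A_s$, but rather $A_k(C-I)A_j=\sum_s(\Xi_j)_{k,s}A_s$; the factor $(C-I)$ is essential, and without it the module/algebra structure (and hence the convexotonic form of $p$) does not follow.
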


\begin{proof}
A proof appears at the end of Section \ref{sec:redo}. 
\end{proof}

We point out the normalization conditions $p(0)=0$ and $p'(0)=I$ can be enforced e.g.~by an affine linear change of variables on the range of $p$, see Section \ref{sec:normalize} for details.

The proof of Theorem \ref{thm:main}  is based on several intermediate results of independent interest.  
Subsection \ref{sec:approximate} contains results approximating free spectrahedra by more tractable free sets. Subsection \ref{sec:introsatz} 
describes  
 Positivstellensatz  for (matrix-valued) free  analytic functions  with positive real part on a free spectrahedron.

\subsection{Approximating free spectrahedra and free analytic functions}
\label{sec:approximate}
This subsection concerns approximation
of functions analytic on free spectrahedra by analytic  polynomials; 
that is,  a free Oka-Weil theorem. 
An example is the remarkable theorem of
Agler and McCarthy \cite{AM14} (see also \cite{BMV}) stated below as Theorem \ref{thm:AMoka}. 

Given a matrix-valued free analytic polynomial $Q$, the set
\[
 \cG_Q = \{X\in M(\C)^g: \|Q(X)\|<1\}
\]
 is a (semialgebraic) \df{free pseudoconvex} set.  Given $t>1$, let
\[
 K_{t\, Q} =\{X : t \|Q(X)\| \le 1\} \subset \cG_Q.
\]
A (matrix-valued) free analytic function $f$ on a free domain $\cD\subset M(\C)^g$
is \df{uniformly approximable by polynomials} on a subset $\cE\subset \cD$ if for 
each $\epsilon>0$ there is a polynomial $q$ such that $\|f(X)-q(X)\|<\epsilon$
for each $n$ and $X\in \cE(n)$.

\begin{thm}\label{thm:AMoka}
If $f$ is a bounded free analytic function on
a free pseudoconvex set $\cG_Q$, 
then $f$ can be uniformly approximated by
 analytic free polynomials on each smaller set $K_{t\, Q}$, $t>1$.
\end{thm}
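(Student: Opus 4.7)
The strategy, due to Agler and McCarthy \cite{AM14}, is to represent $f$ as a transfer function in $Q$ and then truncate a Neumann series. After normalizing so that $\|f\|_\infty \leq 1$ on $\cG_Q$, the central step is a \emph{free Schur--Agler realization}: one produces an auxiliary Hilbert space $\cH$, a contraction $T$ on an appropriate tensor product involving $\cH$, vectors $b, c$, and a scalar $a$ such that
\[
 f(X) \;=\; a + c^*\bigl(Q(X)\otimes I_\cH\bigr)\bigl(I - T\bigl(Q(X)\otimes I_\cH\bigr)\bigr)^{-1} b
\]
for every $X\in \cG_Q$. I would obtain this by first establishing a matricial Pick-type positivity statement, namely that $I - f(X)^* f(Y)$ is dominated (in the appropriate free-kernel sense) by $I - Q(X)^* Q(Y)$ across pairs in $\cG_Q$, and then running a lurking-isometry / GNS construction on the resulting positive kernel to manufacture the colligation $(a,b,c,T)$. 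This step carries essentially all of the technical weight of the theorem; alternatively, one may quote it directly from \cite{AM14} or adapt the approach of \cite{BMV}.

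Once the realization is available, the remainder of the argument is mechanical. For $X \in K_{tQ}$ one has $\|Q(X)\| \leq 1/t$, so $\|T(Q(X)\otimes I_\cH)\| \leq 1/t < 1$ and the Neumann expansion
\[
 \bigl(I - T(Q(X)\otimes I_\cH)\bigr)^{-1} \;=\; \sum_{n=0}^\infty \bigl(T(Q(X)\otimes I_\cH)\bigr)^n
\]
converges uniformly on $K_{tQ}$ at geometric rate $1/t$. Inserting the $N$-th partial sum into the realization produces a noncommutative polynomial $q_N$ in the entries of $Q$, hence a free analytic polynomial in $x$, and a direct tail estimate gives
\[
 \sup_{X\in K_{tQ}} \|f(X)-q_N(X)\| \;\leq\; \|b\|\,\|c\|\,\frac{t^{-N}}{1-1/t},
\]
so $q_N \to f$ uniformly on $K_{tQ}$, as required.

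The main obstacle is the realization in the first step. The difficulty is that $\cG_Q$ is not a noncommutative ball-type domain (unless $Q$ is linear), so one cannot simply invoke a free Hardy-space model; instead, one has to build the colligation from positivity data at all matricial levels of $\cG_Q$ simultaneously and verify compatibility under direct sums and unitary similarities, which is precisely the content of a free analog of Schur--Agler theory. Everything downstream of the realization is a geometric-series argument, and the key geometric input is that $\|Q(X)\|$ is bounded strictly away from $1$ on $K_{tQ}$, which is exactly what guarantees the truncation error tends to zero uniformly.
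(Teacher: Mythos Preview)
Your proposal is correct and matches the argument the paper is invoking: the paper does not prove this theorem independently but simply cites \cite{AM14}, Section 9 (specifically the proof of their Corollary 9.7; see also their Corollary 8.13), and what you have written is an accurate outline of that argument---a free Schur--Agler realization of $f$ as a transfer function in $Q$, followed by truncation of the Neumann series, with the uniform bound $\|Q(X)\|\le 1/t$ on $K_{tQ}$ supplying geometric convergence.
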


\begin{proof}
This result is proved, though not stated in this form,
in Section 9 of \cite{AM14} (cf.~their proof of Corollary 9.7;
see also \cite[Corollary 8.13]{AM14}).
\end{proof}

Free spectrahedra are approximable by free pseudoconvex sets.

\begin{prop}\label{prop:approxIntro}
   If $\cD_A$ is bounded and $t>1$, then there exists 
   free analytic polynomial $Q$ such that
\[
  \cD_A  \subset \cG_Q \subset t\cD_A.
\]
 Moreover, if $\cG_Q$ is a free pseudoconvex set and $\cD_A\subset \cG_Q$, then there
is an $s>1$ such that $\cD_A \subset K_{sQ}.$
Finally, if $p$ is a free rational function analytic on $\cD_A$, then there is a $t>1$ such
 that $p$ is analytic and bounded on $t\cD_A$.
\end{prop}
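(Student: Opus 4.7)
The three parts all exploit the fact that bounded free spectrahedra behave compactly from the free-analytic viewpoint, so strict open containments acquire uniform gaps.

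For part (1), I plan to construct $Q$ as a matrix-valued polynomial truncation of a Cayley-type free rational function. On $\cD_A$ the operator $I+\Lambda_A(X)$ is invertible: if $\Lambda_A(X)v=-v$ with $v\ne 0$, then $\langle L_A(X)v,v\rangle=-\|v\|^2<0$ would contradict $L_A(X)\succeq 0$. The identity $(I+\Lambda_A)^*(I+\Lambda_A)=L_A+\Lambda_A^*\Lambda_A$ yields $\|f(X)\|\le 1$ on $\cD_A$ (and strictly $>1$ off $\cD_A$) for $f(X):=\Lambda_A(X)(I+\Lambda_A(X))^{-1}$, with a quantitative margin $\|f(X)\|\ge t/(2-t)>1$ on the boundary of $t\cD_A$ realized in the eigen-direction where $\mathrm{Re}\,\Lambda_A(X)=-tI/2$. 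I then set $Q:=\lambda \widetilde Q_N$, where $\widetilde Q_N$ is a polynomial approximation of $f$ of sufficiently high degree $N$ on a slight free dilation of $\cD_A$ (constructed directly from the realization of $f$, to avoid a circular appeal to Oka--Weil on spectrahedra), and $\lambda<1$ is a small rescaling; for $t$ close to $1$ the combined error bound and Cayley margin force $\|Q(X)\|<1$ on $\cD_A$ and $\|Q(X)\|>1$ off $t\cD_A$ simultaneously. Arbitrary $t>1$ then follows by nesting $\cG_Q\subset r\cD_A\subset t\cD_A$ for sufficiently small $r>1$.

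For part (2), the plan uses compactness at each matrix level: continuity of $\|Q(\cdot)\|$ on the compact slice $\cD_A(n)\subset M_n(\C)^g$ combined with $\|Q\|<1$ on $\cD_A$ gives $r_n:=\sup_{\cD_A(n)}\|Q\|<1$; closure of $\cD_A$ under direct sums together with $Q(X\oplus Y)=Q(X)\oplus Q(Y)$ makes $(r_n)$ nondecreasing in $n$; a Stinespring-type dilation argument (using matrix convexity of $\cD_A$) shows the sequence stabilizes at some finite level $n_0$, so $r_0:=\sup_n r_n<1$ and we take $s:=1/r_0$. For part (3), fix a minimal realization $p(x)=c^*(I-\Lambda_E(x))^{-1}b$; analyticity of $p$ on $\cD_A$ is equivalent to $I-\Lambda_E(X)$ being invertible throughout $\cD_A$. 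The finite-level compactness argument of part (2) applied to $\|(I-\Lambda_E(X))^{-1}\|$ yields a uniform bound $C$ on $\cD_A$; continuity of $\Lambda_E$ then extends both the invertibility and the bound to $t\cD_A$ for some $t>1$, and $\|p(X)\|\le\|c\|\|b\|\,C$ is immediate.

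The principal obstacle is the construction in part (1): one must match the truncation error to the Cayley margin on the boundary of $t\cD_A$, yet $\|(I+\Lambda_A(X))^{-1}\|$ need not remain bounded up to the boundary of $\cD_A$. Overcoming this calls for either choosing the expansion base point inside a slightly shrunk $s\cD_A$ (with $s<1$) and propagating the polynomial approximation outward via controlled scaling and continuity, or employing a more flexible free-analytic representation of $f$ drawn directly from the realization $\Lambda_A(I+\Lambda_A)^{-1}$.
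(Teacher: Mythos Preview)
Your overall strategy matches the paper's: Cayley-type transform for part~(1), level-wise compactness plus a stabilization argument for part~(2), and realization-based reasoning for part~(3). The differences and gaps are as follows.

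\textbf{Part (1).} You correctly identify the Cayley map $f=\Lambda_A(I+\Lambda_A)^{-1}$ (the paper uses $\varphi(z)=z(1-z)^{-1}$ applied to $\Lambda_A$, which is the same up to sign) and correctly flag the obstruction: $(I+\Lambda_A(X))^{-1}$ is unbounded near $\partial\cD_A$, so naive power-series truncation fails. What you are missing is the paper's fix for exactly this point. Rather than approximate on $\cD_A$ itself, the paper works on the one-variable region $\mathfrak F_{C,M}=\{T:\ T+T^*\preceq C,\ \|T\|<M\}$ with $C$ slightly bigger than $1$, shows via an explicit algebraic estimate that $\varphi$ maps $\mathfrak F_{C,M}$ into a closed disk avoiding the pole, and then invokes Runge's theorem there to get uniform polynomial approximants $q_k$ of $\varphi$. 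A second device, a simple $2\times 2$ block polynomial $s$, sandwiches $\mathfrak F_{1,M}\subset\{\|s(T)\|<1\}\subset\mathfrak F_{C,M}$; the final $Q$ is the direct sum of $q_k\circ\Lambda_A$ (suitably normalized) with $s\circ\Lambda_A$. This two-sided control is what forces both inclusions $\cD_A\subset\cG_Q$ and $\cG_Q\subset t\cD_A$ simultaneously. Your proposal does not supply an analogue of either mechanism, and ``expanding around $s\cD_A$ with $s<1$'' still leaves the outer containment $\cG_Q\subset t\cD_A$ unproved.

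\textbf{Part (2).} Your compactness argument giving $r_n<1$ on each $\cD_A(n)$ is the paper's starting point. For the stabilization, the paper does not use a dilation argument but rather the degree-bounded convex Positivstellensatz: $C^2-Q^*Q\succeq 0$ on $\cD_A$ iff on $\cD_A(N)$ for $N$ depending only on $\deg Q$, $g$, and $d$. Your ``Stinespring-type dilation'' gesture is aiming at the same conclusion but is not a proof as stated; matrix convexity alone does not obviously force $r_n$ to stabilize.

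\textbf{Part (3).} Here your route genuinely diverges from the paper's. The paper never bounds $\|(I-\Lambda_E(X))^{-1}\|$ on $\cD_A$; instead it shows by compressing a kernel vector that $\cZ_E\cap\cD_A=\emptyset$ is equivalent to $\cZ_E(e')\cap\cD_A(e')=\emptyset$ for all $e'\le e$, and then uses compactness at those finitely many levels to push out to $t\cD_A$. Your approach can be made to work, but not as written: the part-(2) argument applies to \emph{polynomial} inequalities, so you must first observe that $\|(I-\Lambda_E(X))^{-1}\|\le C$ is equivalent to the degree-two polynomial condition $(I-\Lambda_E(X))^*(I-\Lambda_E(X))\succeq C^{-2}I$ before invoking the stabilization. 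With that in hand, a Neumann-series perturbation (using boundedness of $\Lambda_E$ on $\cD_A$) does extend invertibility and the bound uniformly to some $t\cD_A$. This is a legitimate alternative to the paper's dimension-reduction, with the trade-off that it leans on the Positivstellensatz whereas the paper's argument is self-contained.
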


\begin{proof}
A proof is given near the end of Section \ref{sec:igorcomments}.
\end{proof}

\begin{theorem}
  \label{prop:okadron}
    Suppose $A\in M_d(\C)^g$ and $\cD_A$ is bounded. If $f$ is analytic and bounded on
a free pseudoconvex set $\cG_Q$ containing
    $\cD_A$, then $f$ is uniformly approximable by polynomials on $\cD_A$.
\end{theorem}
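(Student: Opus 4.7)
The statement is an almost immediate corollary of the two ingredients already stated, so my plan is to chain Proposition \ref{prop:approxIntro} and Theorem \ref{thm:AMoka} together. The core idea is to sandwich $\cD_A$ between $\cG_Q$ (where we have analyticity and boundedness of $f$) and some ``properly smaller'' subset of $\cG_Q$ on which Agler--McCarthy's free Oka--Weil theorem delivers uniform polynomial approximation.

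More concretely, I would first invoke the second assertion of Proposition \ref{prop:approxIntro}: since $\cG_Q$ is a free pseudoconvex set and $\cD_A\subset \cG_Q$, there exists $s>1$ with
\[
\cD_A \;\subset\; K_{s\,Q} \;=\; \{X : s\|Q(X)\|\le 1\} \;\subset\; \cG_Q.
\]
The bounded free analytic function $f$ is then available on the ambient set $\cG_Q$, and $K_{s\,Q}$ is exactly the kind of ``shrunken'' subset to which Theorem \ref{thm:AMoka} applies with $t=s>1$. Applying that theorem yields, for each $\varepsilon>0$, an analytic free polynomial $q$ with $\|f(X)-q(X)\|<\varepsilon$ for every $X\in K_{s\,Q}$. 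Restricting to $\cD_A\subset K_{s\,Q}$ gives the desired uniform approximation on $\cD_A$.

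In this argument there is essentially no obstacle: the boundedness hypotheses on both $\cD_A$ and $f$ are precisely what is needed for the two cited results to engage, and the role of $Q$ as a free pseudoconvex defining polynomial for a set containing $\cD_A$ is exactly what bridges them. The only point requiring a modicum of care is that Theorem \ref{thm:AMoka} is stated for the sets $K_{t\,Q}$ rather than for $\cD_A$ directly, so the key step is really the geometric containment produced by Proposition \ref{prop:approxIntro}(b), which provides the strict nesting $\cD_A\subset K_{s\,Q}\subsetneq \cG_Q$ needed to activate the Oka--Weil conclusion.
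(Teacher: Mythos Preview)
Your proof is correct and follows essentially the same approach as the paper: invoke Proposition \ref{prop:approxIntro} to obtain $s>1$ with $\cD_A\subset K_{sQ}\subset \cG_Q$, then apply the free Oka--Weil Theorem \ref{thm:AMoka} to uniformly approximate $f$ on $K_{sQ}$ and hence on $\cD_A$.
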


\begin{proof}
 A proof is given at the end of Section \ref{sec:igorcomments}. 
\end{proof}

\subsection{Positivstellens\"atze 
and representations
for analytic functions}
\label{sec:introsatz}
We begin this section with  Positivstellens\"atze
and then turn to representations they imply.
 We use \df{nonnegative} and \df{positive} as synonyms for positive semidefinite and positive definite respectively.

\def\al{\alpha}
\begin{thm}[Analytic convex Positivstellensatz]\label{thm:analPossIntro}
Let  $A\in\matdg$ and $e$ be a positive integer. 
Assume $\cD_A$ is bounded and
$G:\cG_Q\to M_e(M(\C))$  is a matrix-valued free function 
analytic on a free pseudoconvex set $\cG_Q$ containing the free spectrahedron $\cD_A$.
 If $G(0)=0$ and $I+G+G^*$ 
is nonnegative on $\cD_A$,
 then there exists
\ben[\rm (1)]
\item   a Hilbert space $H$;
 \item  a formal power series 
$W=\sum_{\al\in\ax}W_{\al}\al$ with coefficients $W_\alpha:\C^e\to H \otimes \C^d$;
\item a unitary mapping $C:H\otimes \C^d\to H\otimes\C^d$ and an isometry $\sW:\C^e\to H\otimes \C^d$,
\een
such that 
the identity
 \begin{equation}\label{eq:posst+}
 I+G(x)+G(x)^* = W(x)^* L_{\IHA}(x) W(x)
\end{equation}
holds in the ring of $e\times e$ matrices over formal power series in $x,x^*$
and there exists a $\tau>0$ such that equation \eqref{eq:posst+} holds
at each $X\in \SR_\tau$.

Moreover, letting $\mathscr{E}= H\otimes \C^d,$  $\ptA= I_H\otimes A$ and $R=(C-I)\ptA$,
the functions  $G$ and $W$ are  given by 
\begin{equation}
\label{eq:Gup}
 G(x)  =  \sW^* C \big(\sum_{j=1}^g \ptA_j x_j\big)\, W(x) \\  
\end{equation}
and
\begin{equation}
\label{eq:WIntro}
  W(x)  =  \big(I_{\mathcal E}-\sum_{j=1}^g R_j x_j\big)^{-1} \sW
\end{equation}
and the coefficients $G_{x_j\alpha}$ of $G$ are given by 
\begin{equation}
\label{eq:general0+}
 G_{x_j \alpha} = \sW^* C \ptA_j R^\alpha \sW;
\end{equation}
for all words $\alpha$.
\end{thm}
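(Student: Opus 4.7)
The plan is to establish the Positivstellensatz first for polynomials via a Hahn--Banach/GNS argument, extend to analytic $G$ by approximation, and then read off the explicit formulas from the resulting transfer-function realization.

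First, I would reduce to polynomials with strictly positive real part. For $\delta>0$, the function $G_\delta:=(1-\delta)G$ satisfies $I+G_\delta+G_\delta^* = (1-\delta)(I+G+G^*)+\delta I \succeq \delta I$ on $\cD_A$. By Theorem \ref{prop:okadron}, $G_\delta$ is uniformly approximable on $\cD_A$ by analytic polynomials $P_n$, and after subtracting the constant terms one may arrange $P_n(0)=0$ with $I+P_n+P_n^* \succeq 0$ on $\cD_A$ for all sufficiently large $n$. It therefore suffices to prove the representation for polynomial $P$ with $P(0)=0$ and $I+P+P^*\succeq 0$ on $\cD_A$, and then take a limit.

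Second, for such a $P$, I would establish a factorization $I+P+P^*=W^*L_{I_H\otimes A}W$ with $W$ a polynomial and $H$ a Hilbert space. This is the technical core and is obtained by a Hahn--Banach/GNS argument: consider the real convex cone $\mathcal C$ generated by expressions $V^*L_{I_H\otimes A}V$, with $V$ polynomial and $H$ varying, inside hermitian polynomials of bounded degree. If $I+P+P^*\notin\overline{\mathcal C}$, a separating hermitian functional $\lambda$ is nonnegative on $\mathcal C$ and strictly negative on $I+P+P^*$; GNS applied to $\lambda$ yields a representation of $\bCxs$ on a Hilbert space $\calH$ producing an operator tuple $X$ with $L_A(X)\succeq 0$ but $(I+P+P^*)(X)\not\succeq 0$, contradicting the hypothesis on $\cD_A$ (after a standard compression/approximation step to reach finite matrix levels).

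Third, from the polynomial factorization I would extract the state-space realization by a lurking-isometry argument. Matching constant terms in $I+P+P^* = W^*L_\ptA W$ shows $\sW:=W(0)$ is an isometry. The linear-order identities package as $(I-\Lambda_R)W=\sW$ for some operator tuple $R$ on $\cE:=H\otimes\C^d$, giving $W=(I-\Lambda_R)^{-1}\sW$. The absence of pure degree-$(2,0)$ and $(0,2)$ terms in $W^*L_\ptA W-(I+P+P^*)$ forces the relation $R_j=(C-I)\ptA_j$ with $C$ unitary on $\cE$, via the usual doubling trick that enlarges a partial isometry into a unitary. The formulas \eqref{eq:Gup}, \eqref{eq:WIntro}, \eqref{eq:general0+} then follow by unwinding the resolvent, and convergence of $W(X)$ on $\SR_\tau$ for any $\tau<\|R\|^{-1}$ is automatic.

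Finally, to pass from the polynomial approximants $P_n$ back to $G$, I would apply compactness: embedding the Hilbert spaces $H_n$ coherently into a common ambient space, the unitaries $C_n$ and isometries $\sW_n$ lie in norm-bounded balls and so have BW-convergent subnets, while $R_n=(C_n-I)(I_{H_n}\otimes A)$ is uniformly norm-bounded, yielding a common $\tau>0$ on which the resolvents $(I-\Lambda_{R_n})^{-1}$ converge coefficient-wise to $(I-\Lambda_R)^{-1}$. Taking termwise limits in $I+P_n+P_n^*=W_n^* L_{\ptA_n}W_n$ gives the claimed identity for $G$. The main obstacle is ensuring that the limit $C$ remains unitary rather than merely contractive; this requires arranging the approximating quadruples $(H_n,C_n,\sW_n,W_n)$ in a minimal/balanced form so that the limit data inherit the same degree-$(2,0)$/$(0,2)$ cancellation exploited in the polynomial case, which is a standard realization-theory maneuver.
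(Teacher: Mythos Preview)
Your strategy matches the paper's: reduce to polynomials via uniform approximation (Lemma~\ref{lem:betterpl} and Theorem~\ref{prop:okadron}), apply the Hereditary Positivstellensatz (Theorem~\ref{thm:heredposSS}) to get the factorization for each approximant, extract the realization data via a lurking-isometry argument (Proposition~\ref{prop:multi generalIntro}), and pass to the limit.

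The one place where your outline diverges from the paper, and where your argument is genuinely incomplete, is the limiting step. You propose taking BW-limits of the unitaries $C_n$ and isometries $\sW_n$ and then worrying about whether the limit $C$ stays unitary; your proposed fix (arranging the data in ``minimal/balanced form'') is vague and is not what is actually needed. The paper avoids this issue entirely: it takes \emph{norm} limits of the coefficients $W_{\ell,\alpha}$ themselves, after first conjugating by unitaries $\rho_\ell$ so that for each fixed word $\alpha$ the ranges of the $W_{\ell,\alpha}$ sit in a fixed finite-dimensional block independent of $\ell$ (this is where the uniform bound $\|W_{\ell,x_j\alpha}\|\le 2\max_j\|A_j\|\,\|W_{\ell,\alpha}\|$, coming from the recursion $W_{\ell,x_j\alpha}=(C_\ell-I)\ptA_j W_{\ell,\alpha}$, is used). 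One then checks directly that the coefficient identities~\eqref{eq:preiso1alt} survive the limit and applies Proposition~\ref{prop:multi generalIntro} \emph{fresh} to the limiting $W$. This produces a contraction $C$ that is isometric on $\rg(\ptA,W)$; the upgrade to a genuine unitary comes from enlarging the Hilbert space (the ``moreover'' clause of Proposition~\ref{prop:multi generalIntro}), not from any special property of the BW-limit of the $C_n$.
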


\begin{proof}
See Subsection \ref{sec:proofanalpossIntro}.
\end{proof}

An analytic (not necessarily bianalytic) map $p$ maps $\cD_A$ into
$\cD_B$ if and only if $L_A(X)\succeq 0$ implies $L_B(p(X))\succeq
0$. Theorem \ref{thm:analPossIntro} 
 thus provides a
representation for $G(x)=\Lambda_B(p(x))$ with a state space
realization flavor.

\begin{cor}[Rational convex Positivstellensatz]\label{thm:analPoss}
  Let  $A\in\matdg$, $B\in \matetg$, assume that $\cD_A$ is bounded and
 $p:\cD_A\to\cD_B$ satisfies $p(0)=0$. Let $G(x)=\La_B(p(x))$. 
  If $p$ is  either a rational function or a free function analytic 
  and bounded on a free pseudoconvex set $\cG_Q$ containing
  $\cD_A$, 
  then there exists a
  Hilbert space $H$,  a formal power series
  $W=\sum_{\al\in\ax}W_{\al}\al$ with coefficients $W_\alpha:\C^e\to
  H\otimes\C^d$, a unitary  $C:H\otimes \C^d\to H\otimes \C^d$ and an 
 isometry $\sW:\C^e\to H\otimes \C^d$  such that $L_B(p(x))=I+G(x)^* +G(x)$
 and the conclusions \eqref{eq:posst+} -- \eqref{eq:general0+} of Theorem \ref{thm:analPossIntro} hold.
\end{cor}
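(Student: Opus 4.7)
The plan is to deduce this corollary as a direct application of Theorem \ref{thm:analPossIntro} with $G(x) = \Lambda_B(p(x))$. Because $\Lambda_B$ is a homogeneous linear pencil and $p(0)=0$, one has $G(0) = \Lambda_B(0) = 0$, and
$I + G(x) + G(x)^* = I + \Lambda_B(p(x)) + \Lambda_B(p(x))^* = L_B(p(x))$,
which simultaneously furnishes the identity $L_B(p(x)) = I + G(x) + G(x)^*$ recorded in the statement and reduces the positivity requirement of Theorem \ref{thm:analPossIntro} to the hypothesis $p(\cD_A) \subset \cD_B$.

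The only remaining hypothesis of Theorem \ref{thm:analPossIntro} is that $G$ be analytic on a free pseudoconvex set $\cG_Q$ containing $\cD_A$. When $p$ is assumed analytic and bounded on a free pseudoconvex $\cG_Q \supset \cD_A$, this is immediate since composing with the linear pencil $\Lambda_B$ preserves analyticity. In the rational case I would apply Proposition \ref{prop:approxIntro} twice: first to obtain some $t>1$ for which $p$ is analytic (and bounded) on the radial expansion $t\cD_A$, and then to produce an analytic free polynomial $Q$ with $\cD_A \subset \cG_Q \subset t\cD_A$. The composition $G = \Lambda_B\circ p$ is then analytic on this $\cG_Q$, which is pseudoconvex by construction and contains $\cD_A$.

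With the three hypotheses of Theorem \ref{thm:analPossIntro} in hand, a single invocation of that theorem produces the Hilbert space $H$, the formal power series $W$, the unitary $C$ and the isometry $\sW$ satisfying conclusions \eqref{eq:posst+}--\eqref{eq:general0+}, completing the corollary. The main (and essentially only) obstacle is the pseudoconvex realization step in the rational case, but this is a straightforward chaining of the two clauses of Proposition \ref{prop:approxIntro} and contributes no new analytic content beyond what is already supplied by Theorem \ref{thm:analPossIntro}.
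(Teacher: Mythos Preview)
Your proof is correct and follows essentially the same approach as the paper's: reduce to Theorem~\ref{thm:analPossIntro} by verifying $G(0)=0$, $I+G+G^*=L_B(p)\succeq 0$ on $\cD_A$, and analyticity of $G$ on a pseudoconvex neighborhood of $\cD_A$, invoking Proposition~\ref{prop:approxIntro} to handle the rational case. The paper's proof is terser but identical in content; your explicit two-step chaining of Proposition~\ref{prop:approxIntro} (first get $t\cD_A$, then sandwich $\cG_Q$ inside it) is exactly what the paper's one-line appeal to that proposition unpacks to.
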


\begin{proof}
 By Proposition \ref{prop:approxIntro}, in any case we may assume $p$ is analytic on a pseudoconvex set containing $\cD_A$.
Since $p$ is analytic in a pseudoconvex neighborhood of $\cD_A$ so is $G$; and
since $p$ maps $\cD_A$ into $\cD_B$, it follows that $I+G+G^*$ is nonnegative on $\cD_A$. An application of 
Theorem \ref{thm:analPossIntro} completes the proof.
\end{proof}

A key ingredient in the proof of Theorem \ref{thm:analPossIntro}  is Proposition \ref{prop:multi generalIntro}.
It shows that  a Positivstellensatz certificate like that of equation \eqref{eq:posst+}
 suffices to deduce the remaining conclusions of Theorem \ref{thm:analPossIntro}.\looseness=-1

 For hereditary polynomials positive on a free spectrahedron, 
the conclusion of  Theorem \ref{thm:analPossIntro} is stronger.
The weight(s) $W$ 
in the
positivity certificate \eqref{eq:hered2} are 
polynomial,
still analytic  
and we get optimal degree bounds.

\def\mt{\nu}
\begin{thm}[Hereditary Convex Positivstellensatz]
\label{thm:heredposSSIntro}
Let  $A\in M(\C)^g$, and
  let $h\in\C^{\mt\times\mt}\axs$ be an
hereditary
  matrix polynomial of degree $d$. 
 Then $h|_{\cD_A}\succeq0$ if and only if
\beq\label{eq:hered2}
  h=\sum_{k}^{\rm finite} 
h_k^*h_k +
  \sum_j^{\rm finite} f_{j}^* L_A f_{j}
\eeq
for some analytic polynomials 
$   h_{j}\in\R^{\ell\times \mt}\ax_{d+1}$,
$   f_{j}\in\R^{\ell\times \mt}\ax_{d}$. 
Moreover, if $\cD_A$ is bounded, then 
the pure sum of squares term in \eqref{eq:hered2} may be omitted, provided
the $f_j$ are allowed to have  degree $\leq d+1$.
\end{thm}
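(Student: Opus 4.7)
The direction ``$\Leftarrow$'' is immediate: for any $X\in\cD_A(n)$ we have $L_A(X)\succeq0$, so each $f_j(X)^*L_A(X)f_j(X)\succeq0$ and each $h_k(X)^*h_k(X)\succeq0$, and summing gives $h(X)\succeq0$. For the non-trivial direction ``$\Rightarrow$'' I would run a Hahn--Banach separation plus a GNS argument, carried out in a finite-dimensional vector space adapted to the hereditary matrix-valued setting.

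\textbf{The cone and closedness.} Let $V$ denote the finite-dimensional real vector space of hereditary $\nu\times\nu$ matrix polynomials of hereditary degree bounded by the maximum total degree produced on the right of \eqref{eq:hered2}, and let $C\subset V$ be the convex cone of all finite sums $\sum h_k^*h_k+\sum f_j^*L_Af_j$ with $h_k\in\C^{\ell\times\nu}\ax_{d+1}$ and $f_j\in\C^{\ell\times\nu}\ax_d$, $\ell$ arbitrary. The first technical step is to verify that $C$ is closed in $V$. I would do this by a Carath\'eodory-type reduction bounding the number of summands in any element of $C$ by $\dim V$, together with the standard fact that for each fixed $\ell$ the set of matrix SOS of bounded degree forms a closed cone; the map $f\mapsto f^*L_Af$ then preserves closedness since $L_A$ has constant term $I_d$ and one can control the leading coefficients.

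\textbf{Separation and GNS.} Assume $h|_{\cD_A}\succeq0$ but, for contradiction, $h\notin C$. By Hahn--Banach in the finite-dimensional $V$ there is a real linear functional $\Lambda:V\to\R$ with $\Lambda|_C\ge 0$ and $\Lambda(h)<0$. On the space $\mathscr{P}=\bCxshort_{\leq d}\otimes\C^\nu$ of $\C^\nu$-valued analytic polynomials of degree $\leq d$, define the sesquilinear form $\langle p,q\rangle_\Lambda=\Lambda(q^*p)$. This is positive semidefinite since $\Lambda$ is nonnegative on sums of squares of the required degree; quotienting by its kernel and completing yields a finite-dimensional Hilbert space $\mathcal{K}$. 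Left multiplication by $x_j$ descends to bounded operators $X_j$ on $\mathcal{K}$ (the degree-$d+1$ allowance in the $h_k$ ensures the domain issues are absorbed into the null space), and the positivity of $\Lambda$ on the shifted cone $f^*L_Af$ is equivalent to $L_A(X)\succeq0$, i.e.\ $X\in\cD_A$. The assumption $h|_{\cD_A}\succeq0$ then forces $\Lambda(h)=\sum_i\langle h(X)\hat\xi_i,\hat\xi_i\rangle\ge0$, where $\hat\xi_i$ are the images of $1\otimes e_i$ for an orthonormal basis $\{e_i\}$ of $\C^\nu$. This contradicts $\Lambda(h)<0$.

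\textbf{Moreover clause and obstacles.} If $\cD_A$ is bounded there is some $N>0$ with $N\cdot I-\sum_j x_j^*x_j\succeq0$ on $\cD_A$. By the main part this difference admits a representation $\sum p_i^*p_i+\sum g_i^*L_Ag_i$; boundedness forces the Archimedean property, and from it I would deduce, by a standard rescaling-and-absorption argument raising the degree by one, that any pure hereditary SOS term in \eqref{eq:hered2} can be rewritten as an $L_A$-weighted sum $\sum f_j^*L_Af_j$ with $f_j$ of degree $\leq d+1$, eliminating the pure SOS term. The principal obstacle is closedness of $C$: one must carefully bound coefficient norms in any minimizing sequence for an element of the closure, a delicate matter for matrix-valued weighted SOS cones. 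The Archimedean step for the ``moreover'' part is also subtle, requiring a quantitative form of boundedness to trade pure squares for $L_A$-weighted squares without losing the degree control $d+1$.
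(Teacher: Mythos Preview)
Your overall strategy --- closedness of the truncated cone via Carath\'eodory, Hahn--Banach separation, then a GNS construction producing a tuple $X\in\cD_A$ on which $\Lambda(h)=\langle h(X)\gamma,\gamma\rangle$ --- is exactly the paper's approach. But there is a genuine gap in your GNS step, and the paper handles it differently from what you propose.

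You quotient $\mathscr P=\bCxshort_{\le d}\otimes\C^\nu$ by the null space of $\langle\,\cdot\,,\,\cdot\,\rangle_\Lambda$ and then assert that left multiplication by $x_j$ ``descends'' to the quotient. This is not justified: if $r$ has degree $\le d$ and $\Lambda(r^*r)=0$, you would need $\Lambda((x_jr)^*(x_jr))=0$, and nothing in the hypotheses on $\Lambda$ gives you that. Cauchy--Schwarz only tells you $\Lambda(q^*(x_jr))$ is controlled by $\Lambda((x_jr)^*(x_jr))$, not that the latter vanishes. Your parenthetical ``the degree-$(d+1)$ allowance in the $h_k$ ensures the domain issues are absorbed into the null space'' does not address this. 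The paper's fix is to first perturb the separating functional by a small multiple of one that is \emph{strictly} positive on $\Sigma_{d+1}^{\nu,\mathrm{her}}\setminus\{0\}$, so that after perturbation $\Lambda$ is positive definite on degree-$\le d$ analytic SOS. Then the degree-$\le d$ space embeds isometrically into the quotient of the degree-$\le (d+1)$ space, and one defines $X_jf=P(x_jf)$ via orthogonal projection with no well-definedness issue. This perturbation step is short but essential; without it your construction of $X$ is incomplete.

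On the ``moreover'' clause your route is more complicated than necessary. Once $\cD_A$ is bounded one has $I=\sum_i V_i^*L_A V_i$ for some \emph{constant} matrices $V_i$ (this is the Archimedean property of the quadratic module). Then each pure square $h_k^*h_k$ with $\deg h_k\le d+1$ is rewritten directly as $h_k^*\big(\sum_i V_i^*L_AV_i\big)h_k=\sum_i(V_ih_k)^*L_A(V_ih_k)$ with $\deg(V_ih_k)\le d+1$. No ``rescaling-and-absorption'' is needed, and the degree bound $d+1$ falls out immediately.
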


\begin{proof}
See Section \ref{sec:null} and in particular Theorem \ref{thm:heredposSS}.
\end{proof}

Theorem \ref{thm:all} shows,
under the assumption of a square \df{one term Positivstellensatz certificate} \eqref{eq:1term}
for a mapping $p:\cD_A\to\cD_B$ between free spectrahedra, that $p$ is \ct
and in particular birational between $\cD_A$ and $\cD_B$.

\begin{theorem}
\label{thm:all}
Suppose $A,B\in \matdg$, the set $\{A_1,\dots,A_g\}$ is linearly independent  and  $p=(p^1,\dots,p^g)$ 
is a free formal power series map in $x$ (no $x_j^*$)
 with $p(0)=0$ and  $p^\prime(0)=I$. 
  If there exists a $d\times d$ free formal power series $W$ such that 
\beq\label{eq:1term}
L_B(p( x ))
 = W( x )^\ast L_A( x ) W( x ),
\eeq
  then  $p$ is a \ct map 
\[p(x)= x(I-\Lambda_\Xi(x))^{-1}  \]
  as in \eqref{eq:tropic},
 determined by a module spanned by the set $\{A_1,\dots,A_g\}$  over an algebra of dimension at most $g$ with structure matrices $\Xi$.
\end{theorem}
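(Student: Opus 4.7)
The plan is to decompose the identity $L_B(p(x)) = W(x)^* L_A(x) W(x)$ in the formal ring $M_d(\C\langle x, x^*\rangle)$ into analytic, anti-analytic, and mixed parts. First I would read off the constant term to conclude $W_0 := W(0)$ is unitary; after replacing $(B, W)$ with $(W_0 B W_0^*,\, W W_0^*)$, which preserves the hypothesis, I may assume $W_0 = I$. The pure-$x$ part then yields the \emph{analytic identity}
\[
(I + \Lambda_A(x))\, W(x) = I + \Lambda_B(p(x)). \qquad(\star)
\]

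The next step is to exploit the factorization $L_A = (I + \Lambda_A^*)(I + \Lambda_A) - \Lambda_A^* \Lambda_A$ and the analogous one for $L_B(p)$. Substituting $(\star)$ and its adjoint into $W^* L_A W$ makes the two ``$(I + \Lambda^*)(I + \Lambda)$'' pieces coincide, and what remains is the \emph{key identity}
\[
(\Lambda_A(x) W(x))^* (\Lambda_A(x) W(x)) = \Lambda_B(p(x))^* \Lambda_B(p(x)).
\]
At degree two this reads $B_i^* B_j = A_i^* A_j$ for all $i,j$, so a Gram-matrix/polar-decomposition argument produces a unitary $U \in M_d(\C)$ with $B_j = U A_j$. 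Setting $R_j := (U - I) A_j$, I would then verify---by extracting the coefficients of $x_j^* x^\beta$ in the mixed part of the hypothesis and inducting on $|\beta|$---that $W(x) = I + \Lambda_R(p(x))$ and, equivalently,
\[
\Lambda_A(p(x)) = \Lambda_A(x)\bigl(I + \Lambda_R(p(x))\bigr). \qquad\text{(I)}
\]
The inductive step uses that $\bigcap_j \ker A_j^*$ is trivial, i.e., that the joint range of the $A_j$ is $\C^d$; the remaining non-generic case is reduced to this by restricting to the invariant subspace $\sum_j \mathrm{ran}(A_j)$.

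Finally the algebraic conclusion drops out of (I). Matching degree-$2$ coefficients in (I) forces $A_i R_k \in \mathrm{span}\{A_j\}$ and, by linear independence of $\{A_j\}$, defines unique matrices $\Xi_k \in M_g(\C)$ via $A_i R_k = \sum_j (\Xi_k)_{i,j} A_j$. Matching every higher degree in (I) unfolds to the recursion
\[
p^j(x) = x_j + \sum_{i,k} (\Xi_k)_{i,j}\, x_i\, p^k(x),
\]
which determines $p$ uniquely from $\Xi$ and the initial conditions. The convexotonic identity on $\Xi$ comes from the direct calculation
\[
R_k R_\ell = (U - I)(A_k U A_\ell - A_k A_\ell) = (U - I)\, A_k R_\ell = \sum_s (\Xi_\ell)_{k,s}\, R_s,
\]
which, transferred back through the module identity by a second application of linear independence, gives $\Xi_k \Xi_\ell = \sum_s (\Xi_\ell)_{k,s} \Xi_s$. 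Hence $\cR := \mathrm{span}\{R_j\}$ is an algebra of dimension at most $g$ with structure matrices $\Xi$, and $\{A_j\}$ spans a right $\cR$-module. A degree-by-degree induction, in which the convexotonic identity is precisely the rule that permits rewriting $\sum_m M_{j_2, m}\, \Xi_m$ as $\Xi_{j_2} M$ for any product $M$ in the $\Xi$-algebra, shows that the above recursion is solved by the \ct map $p(x) = x(I - \Lambda_\Xi(x))^{-1}$. The main obstacle is the passage from the key identity to $W = I + \Lambda_R(p)$, where the non-degeneracy of $A$ enters substantively; everything after is pure algebra driven by (I).
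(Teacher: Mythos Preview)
Your argument is correct and is essentially the paper's proof (Theorem~\ref{thm:shotinthedark} via Proposition~\ref{prop:multi generalIntro}), just reorganized: your analytic identity $(\star)$ and the ``key identity'' $(\Lambda_A W)^*(\Lambda_A W)=\Lambda_B(p)^*\Lambda_B(p)$ are exactly the pure-$x$ and completed-square parts of the coefficient relations \eqref{eq:preiso1alt}--\eqref{eq:preiso2w}, and your Gram/induction step producing the unitary $U$ with $B=UA$ and $W=I+\Lambda_R(p)$ is the square finite-dimensional instance of the lurking-isometry construction there (your formula for $W$ is equivalent to the paper's $W=(I-\Lambda_R(x))^{-1}$ once $\Lambda_A(p)=\Lambda_A(x)W$). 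The only caveat is that your handling of the case $\bigcap_j\ker A_j^*\ne 0$ by ``restricting to $\sum_j\mathrm{ran}(A_j)$'' is unnecessary---both $\Lambda_A(x)W$ and $U^*\Lambda_B(p)$ already have range in $\sum_j\mathrm{ran}(A_j)$, so the difference lies in that space and its orthogonal complement simultaneously, hence vanishes without any restriction.
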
  

\begin{proof}
See Theorem \ref{thm:shotinthedark}.
\end{proof}

In the context of Theorem \ref{thm:main}, the sv-generic condition is
used to show the one term Positivstellensatz hypothesis of Theorem \ref{thm:all} holds.

\subsection{Readers guide}
\label{sec:guide}
The paper is organized as follows.  The polynomial approximation
results of Subsection \ref{sec:approximate} are proved in Section
\ref{sec:approx}. Section \ref{sec:null} contains the proof of Theorem \ref{thm:heredposSSIntro}.
In Section \ref{sec:HeredEq}, key algebraic consequences of an Hereditary
Positivstellensatz representation are collected for use in the following sections.
The proof of Theorem \ref{thm:analPossIntro} appearing in Section
\ref{sec:analPoss} uses the results of the previous three sections.
Theorem \ref{thm:all} is proved in Section \ref{sec:square}. A
somewhat more general version of Theorem \ref{thm:main} is the topic
of Section \ref{sec:redo}. Throughout much of the article the
(bi)analytic maps are assumed to satisfy the normalization $p(0)=0$
and $p^\prime(0)$ a projection. Section \ref{sec:normalize} describes
the consequences of relaxing this assumption.  Section
\ref{sec:examples} provides examples of \ct maps. In the several
complex variables spirit of classifying domains up to affine linear
equivalence, it is natural to ask if there exist matrix convex domains
that are polynomially, but not affine linearly, bianalytic.  The hard
won answer is yes. A class of examples appears in Section
\ref{sec:PQDomain}.

We thank the anonymous referee for carefully reading the manuscript and providing many helpful and thoughtful suggestions.
 
\section{Approximating Free Analytic Functions by Polynomials}
\label{sec:approx}
In this section we prove Proposition
 \ref{prop:approxIntro} and Theorem
\ref{prop:okadron} approximating
free spectrahedra with free pseudoconvex sets
and approximating free mappings analytic on free 
spectrahedra by free polynomials, respectively.

\subsection{Approximating free spectrahedra and free analytic functions using free polynomials}

\def\mB{\mathbb B} For $C>0$, let $\mathfrak F_{C}$ denote the free set of
matrices $T$ such that $C-(T+T^*)\succeq 0$ and for $M>0$, let $\mathfrak
F_{C,M}$ denote those $T\in\mathfrak F_C$ such that $\|T\|< M$.  Let $\varphi$
denote the linear fractional mapping $\varphi(z)=z(1-z)^{-1}$. In particular,
$\varphi$ maps the region $\{z: {\rm Re\,} z\le \frac 12 \}$ in the complex
plane to the set $\{z: |z|\le 1, z\ne -1\}$.  The inverse of $\varphi$ is
$\psi(w)=w(1+w)^{-1}$.  Given $\epsilon>\delta>0$ sufficiently small, the ball
$\mB_\delta(\epsilon)=\{z: |z-\epsilon| \le 1+\delta\}$ does not contain $-1$
and there exists a $K\in(1,2)$ such that $\psi(\mB_\delta(\epsilon))\subset \{z:
{\rm Re\,} z< \frac{K}{2}\}$.

\begin{lemma}
\label{lem:reAhalf}
If $2>C$ and $T\in\mathfrak{F}_C$, then $I-T$ is invertible.  Moreover,
given $M>0$ and $t>0$, there exists $2>C>1$, and $t>\epsilon>\delta>0$ such that if $T\in \mathfrak F_{C,M}$, then
 \[
  \| \varphi(T)-\epsilon \| \le 1+\delta.
 \]
\end{lemma}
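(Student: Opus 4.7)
\textit{Proof plan.} I would handle the two assertions separately.

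For the first, I would observe that the hypothesis $T+T^*\preceq CI$ with $C<2$ gives, for every unit vector $v$,
\[
 2\,\mathrm{Re}\,\langle (I-T)v,v\rangle \;=\; 2-\langle(T+T^*)v,v\rangle \;\geq\; 2-C \;>\;0,
\]
so $\|(I-T)v\|\ge(2-C)/2>0$. Thus $I-T$ is injective; being a square matrix it is invertible, and in fact $\|(I-T)^{-1}\|\le 2/(2-C)$.

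For the second, the plan is to convert the desired norm bound into an operator inequality in $T+T^*$ and $T^*T$ and then fit parameters. Writing $\varphi(T)-\epsilon I=((1+\epsilon)T-\epsilon I)(I-T)^{-1}$, squaring, and pre- and post-multiplying by $(I-T^*)$ and $(I-T)$ (both invertible by the first part for any admissible $C$), the condition $\|\varphi(T)-\epsilon I\|\leq 1+\delta$ becomes equivalent to
\[
 \big((1+\epsilon)^2-(1+\delta)^2\big)\,T^*T \;+\; \big((1+\delta)^2-\epsilon(1+\epsilon)\big)(T+T^*) \;\preceq\; \big((1+\delta)^2-\epsilon^2\big)\,I.
\]
Provided $\epsilon,\delta\in(0,1)$ with $\epsilon>\delta$, each of the three scalar coefficients is positive, so the hypotheses $T^*T\preceq M^2 I$ (from $\|T\|<M$) and $T+T^*\preceq CI$ may be substituted directly, reducing the task to the scalar inequality
\[
 \big((1+\epsilon)^2-(1+\delta)^2\big)\,M^2 \;+\; \big((1+\delta)^2-\epsilon(1+\epsilon)\big)\,C \;\leq\; (1+\delta)^2-\epsilon^2.
\]

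Finally I would choose parameters in order: first pick $\epsilon\in(0,\min\{t,1\})$; at the boundary value $C=1$ the scalar condition collapses to $((1+\epsilon)^2-(1+\delta)^2)M^2\le\epsilon$, which I make strict by taking $\delta<\epsilon$ sufficiently close to $\epsilon$; since the left side is continuous in $C$, pushing $C$ from $1$ up to $1+\eta$ preserves the strict inequality for some small $\eta>0$, yielding the required $C\in(1,2)$. The one delicate point is the algebraic equivalence: I have to verify that in the intended parameter regime all three scalar coefficients are positive, so that the hypotheses on $T^*T$ and $T+T^*$ can be plugged in without reversing signs. Everything else is a short parameter chase.
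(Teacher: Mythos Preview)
Your proposal is correct and follows essentially the same route as the paper: both reduce $\|\varphi(T)-\epsilon\|\le 1+\delta$ to the operator inequality $((1+\epsilon)T-\epsilon)^*((1+\epsilon)T-\epsilon)\preceq(1+\delta)^2(I-T)^*(I-T)$ and then exploit $T^*T\preceq M^2I$ and $T+T^*\preceq CI$. The only difference is bookkeeping: the paper fixes $\delta=\rho\epsilon$ for a carefully chosen $\rho\in(0,1)$, writes down an explicit formula for $C$, and verifies the inequality through a chain of algebraic identities, whereas you reduce to a single scalar inequality and select parameters by continuity (set $C=1$, push $\delta\uparrow\epsilon$ to get strict inequality, then perturb $C$ upward). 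Your parameter chase is shorter; just make sure when you ``pick $\epsilon\in(0,\min\{t,1\})$'' you also take $\epsilon$ small enough (e.g.\ $\epsilon<\tfrac{\sqrt5-1}{2}$) so that the middle coefficient $(1+\delta)^2-\epsilon(1+\epsilon)$ is automatically positive for every $\delta>0$, as you yourself flag.
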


\begin{proof}
A routine argument establishes the first part of the lemma. To prove the 
moreover part,
fix $M>0$ and $t>0$ and suppose $\min\{1,t\}>\epsilon>0$.
 Choose $0<\rho <1$ such that both 
\[
 \begin{split}
 \big(2(1-\rho)+\epsilon(1-\rho^2)\big) M^2  & <  \frac 12, \\
 1<C:=\frac{1+2(\rho-\frac14)\epsilon -(1-\rho^2)\epsilon^2}{1+2(\rho-\frac12)\epsilon-(1-\rho^2)\epsilon^2} &<2.
\end{split}
\]
 Let $T\in \mathfrak{F}_{C,M}$ be given. It follows that 
\begin{equation}
 \label{eq:epsrho}
 \begin{split}
  \epsilon \Big(2(1-\rho) &+ {\epsilon}  (1-\rho^2)\Big) T^* T  
    \preceq  \frac{\epsilon}{2} 
    \\
    &\preceq  \frac{\epsilon}{2} + \Big(1+2(\rho-\frac12)\epsilon-(1-\rho^2)\epsilon^2 \Big) \big (C-(T+T^*)\big )\\
    & \preceq  \frac{\epsilon}{2} + \left ( \big(1+2(\rho-\frac14)\epsilon -(1-\rho^2)\epsilon^2\big)-\big(1+2(\rho-\frac12)\epsilon-(1-\rho^2)\epsilon^2\big)(T+T^*)\right ).
\end{split}
\end{equation}
 Let $\delta = \rho \epsilon$ and observe,
\begin{equation}
 \label{eq:epsdel1}
    \epsilon \Big(2(1-\rho) + {\epsilon} (1-\rho^2)\Big) =   (1+\epsilon)^2 -(1+\delta)^2,
\end{equation}
\begin{equation}
 \label{eq:epsdel2}
 \begin{split}
   \frac{\epsilon}{2}+  1+2(\rho-\frac14)\epsilon -(1-\rho^2)\epsilon^2 & = \frac{\epsilon}{2}+ 1+ 2\rho\epsilon  -\frac{\epsilon}{2}-\epsilon^2 +(\rho\epsilon)^2\\
  & = 1+2\delta +\delta^2 -\epsilon^2 = (1+\delta)^2-\epsilon^2,
\end{split}
\end{equation}
and
\begin{equation}
 \label{eq:epsdel3}
  \begin{split} 
    1+2(\rho-\frac12)\epsilon-(1-\rho^2)\epsilon^2
     &=  1 + 2\rho\epsilon -\epsilon -\epsilon^2 + (\rho\epsilon)^2 = (1+\delta)^2 -\epsilon(1+\epsilon). 
 \end{split}
\end{equation}

  Thus, substituting $\delta=\rho\epsilon$ into equation \eqref{eq:epsrho} and using equations \eqref{eq:epsdel1}, \eqref{eq:epsdel2}, \eqref{eq:epsdel3} yields,
\[
 \Big((1+\epsilon)^2-(1+\delta)^2\Big)T^*T \preceq \Big((1+\delta)^2 -\epsilon^2\Big) -\Big((1+\delta)^2-\epsilon(1+\epsilon)\Big)(T+T^*).
\]
 Rearranging gives,
\[
 (1+\epsilon)^2 T^*T -\epsilon(1+\epsilon){(T+T^*)}+\epsilon^2 \preceq (1+\delta)^2 \big (I-(T+T^*) + T^*T \big ) 
\]
 and hence
\[
\big((1+\epsilon)T -\epsilon\big)^* \, \big((1+\epsilon)T -\epsilon\big) \preceq (1+\delta)^2\, (I-T)^*\, (I-T).
\]
 The lemma  follows from this last inequality together with $(1+\epsilon)T -\epsilon=T-\epsilon(I-T)$. 
\end{proof}

\begin{prop}
 \label{lem:uniffrakT}
  For each $M>0$ and $\rho>0$ there exists  $2>C_0>1$ such that for each $1<C<C_0$ there exists $\rho>\epsilon>\delta>0$ such that for each $\eta>0$  there is  an analytic polynomial $q$ in one variable such that  for all  $T\in\,\mathfrak{F}_{C,M}$,
\ben[\rm(1)]
\item $   \|\varphi(T)-\epsilon \| < 1+\delta$; 
 \item $ \| q(T)-\varphi(T)\| <\eta.$
\een
\end{prop}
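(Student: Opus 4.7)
The plan is to prove the two claims separately. Part (1) is essentially the \emph{moreover} assertion of Lemma \ref{lem:reAhalf}: picking $C_0 \in (1,2)$ and, for each $C \in (1, C_0)$, the parameters $\rho > \epsilon > \delta > 0$ supplied there does the job. A mild tightening of the auxiliary parameter in that proof upgrades the non-strict bound $\|\varphi(T) - \epsilon\| \leq 1+\delta$ to the strict form needed here, since the chain of inequalities in Lemma \ref{lem:reAhalf} is not saturated.

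For part (2), the central observation is that $T \in \mathfrak F_C$ forces $T + T^* \preceq C I$, so $S := I - T$ satisfies $\operatorname{Re} S \succeq (1 - C/2) I =: \mu I$ with $\mu > 0$. A direct differentiation of $\|e^{-tS}v\|^2$ yields $\|e^{-tS}\| \leq e^{-t\mu}$ for all $t \geq 0$, uniformly over $T \in \mathfrak F_C$, and hence $S^{-1} = \int_0^\infty e^{-tS}\, dt$ converges uniformly. Because $\varphi(T) = T(I-T)^{-1} = S^{-1} - I$, it is enough to approximate $S^{-1}$ uniformly in $T \in \mathfrak F_{C,M}$ by polynomials in $T$. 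Given $\eta > 0$, first pick $T_0$ with $e^{-T_0 \mu}/\mu < \eta/2$; this controls the tail $\bigl\|\int_{T_0}^\infty e^{-tS}\, dt\bigr\|$. Next, using $\|S\| \leq 1 + M$ uniformly on $\mathfrak F_{C,M}$, pick $N$ large enough that $\int_0^{T_0} \sum_{n > N} (t(1+M))^n/n!\, dt < \eta/2$. The explicit polynomial
\[
q(z) \,:=\, -1 + \int_0^{T_0} \sum_{n=0}^{N} \frac{(-t(1-z))^n}{n!}\, dt
\]
is of degree $N$ in $z$, and the two error estimates combine to give $\|q(T) - \varphi(T)\| < \eta$ for every $T \in \mathfrak F_{C,M}$.

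The only real subtlety is uniformity in $T$: the polynomial $q$ must depend only on $\eta$, $C$, and $M$, not on the particular $T$. This uniformity comes precisely from the bounds $\|e^{-tS}\| \leq e^{-t\mu}$ and $\|S\| \leq 1+M$ being uniform on $\mathfrak F_{C,M}$. I note that a naive Taylor expansion of $\varphi$ about a fixed scalar center $z_0$ does not suffice here: the best Neumann-series estimate on $\|T - z_0\|$ derived from the disk constraint on $\varphi(T)$ can easily exceed the scalar radius of convergence $|1 - z_0|$, and it is essential to exploit the sesquilinear dissipativity $\operatorname{Re} T \preceq C/2 < 1$ rather than merely the operator-norm consequence $\|\varphi(T) - \epsilon\| \leq 1+\delta$ obtained in part (1).
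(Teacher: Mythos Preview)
Your proof is correct. Part (1) matches the paper's argument (invoking Lemma~\ref{lem:reAhalf}, with the trivial observation that enlarging $\delta$ slightly within $(\delta_0,\epsilon)$ converts the non-strict bound to a strict one). Part (2), however, is genuinely different from the paper's route. The paper exploits part (1) directly: since $\|\varphi(T)-\epsilon\|\le 1+\delta$, the spectrum of $\varphi_*(T)=\varphi(T)-\epsilon$ lies in the closed disk $\overline{\mathbb B_\delta(0)}$ of radius $1+\delta$, and $\psi_*(w)=\psi(w+\epsilon)$ maps this disk to a compact subset of $\{\operatorname{Re} z<1\}$; Runge's theorem then yields a polynomial $p$ uniformly close to $\varphi_*$ on $\psi_*(\overline{\mathbb B_\delta(0)})$, and von Neumann's inequality (applied to the contraction $\varphi_*(T)/(1+\delta)$) transports the scalar estimate to $\|p(T)-\varphi_*(T)\|<\eta$. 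Your approach bypasses Runge and von Neumann entirely: the dissipativity $\operatorname{Re}(I-T)\succeq \mu I$ gives the semigroup bound $\|e^{-t(I-T)}\|\le e^{-\mu t}$, whence $(I-T)^{-1}=\int_0^\infty e^{-t(I-T)}\,dt$ and an explicit double truncation produces $q$. Your argument is more elementary and constructive, and---notably---establishes (2) for every $C<2$ without using the disk bound (1) at all; the paper's argument, by contrast, makes the logical dependence of (2) on (1) transparent and shows how the geometric picture (mapping a disk back into the half-plane) drives the approximation.
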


\begin{proof}
The linear fractional map $\varphi(z)=z(1-z)^{-1}$ is analytic on  $\mathfrak{H}  =\{z\in\mathbb C: {\rm Re\,}z<1\}$. %
By Lemma \ref{lem:reAhalf} with $t=\rho,$ 
 given $M>0$ and $\rho>0$ there exists a $2>C>1$ and $\rho>\epsilon>\delta>0$ such that if $T\in \mathfrak F_{C,M}$, then 
$ \|\varphi(T)-\epsilon \| <1+\delta.$
 Let $\varphi_*(z)= \varphi(z)-\epsilon$. Its inverse is $\psi_*(w)=\psi(w+\epsilon).$ In particular, $\psi_*$ is analytic in a neighborhood of the closed ball $\mB_\delta(0)=\{z\in\,\mathbb C: |z|\leq1+\delta\}$ and for $\delta>0$ sufficiently small, $\psi_*(\mB_\delta(0))$ is a compact subset of $\mathfrak{H}$.   Thus, by Runge's Theorem, there exists a polynomial $p$ such that
\[
 \|p-\varphi_*\|_{\psi_*(\mB_\delta(0))}:=\sup\{|p(z)-\varphi_*(z)|: z\in \psi_*(\mB_\delta)\} <\eta.
\]
Hence, 
\[
 \|p\circ\psi_* - z\|_{\mB_\delta(0)} <\eta.
\]
Now let $T\in\mathfrak{F}_{C,M}$ be given. The matrix $S= \varphi(T)-\epsilon$ has norm at most $1+\delta$ %
and hence 
\[
 \|(p \circ\psi_* )(S) - S\| \le \eta. 
\]
 Equivalently,
\[
 \| p(T) - \varphi_*(T)\| \le \eta.
\]
 Choosing $q=p+\epsilon$, completes the proof.
\end{proof}

 \begin{cor}
 \label{cor:varphibound}
 There exists a $2>C_0>1$ such that for each $M>0$ and $C_0>C>1$, the set $\{\|\varphi(T)\|: T\in \mathfrak{F}_{C,M}\}$ is bounded.
\end{cor}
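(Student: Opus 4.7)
The plan is to establish the stronger claim that any $C_0 \in (1,2)$ works and that the resulting bound on $\|\varphi(T)\|$ depends only on $C$, not on $M$. Fix such a $C_0$ (for concreteness $C_0 = 3/2$), and let $1 < C < C_0$ and $M > 0$ be arbitrary.  Since $\varphi(T) = T(I-T)^{-1} = -I + (I-T)^{-1}$, we have $\|\varphi(T)\| \leq 1 + \|(I-T)^{-1}\|$, so the task reduces to producing a uniform lower bound for $\|(I-T)v\|$ over unit vectors $v$ and $T \in \mathfrak{F}_{C,M}$.

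I would combine two complementary estimates. First, since $T + T^* \preceq C I$, expansion gives
\[
\|(I-T)v\|^2 \;=\; 1 - \langle (T+T^*)v,v\rangle + \|Tv\|^2 \;\geq\; 1 - C + \|Tv\|^2.
\]
Second, the reverse triangle inequality yields $\|(I-T)v\| \geq \bigl|1 - \|Tv\|\bigr|$. Writing $r = \|Tv\|$, these give
\[
\|(I-T)v\|^2 \;\geq\; \max\bigl((1-r)^2,\; 1 - C + r^2\bigr).
\]
The two expressions inside the max coincide precisely at $r = C/2$, where each equals $(1 - C/2)^2 = (2-C)^2/4$; moreover this is the minimum over $r \geq 0$, since $(1-r)^2$ is decreasing on $[0,1]$ while $1 - C + r^2$ is increasing. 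As $C<2$, this gives $\|(I-T)v\| \geq (2-C)/2$, hence $\|(I-T)^{-1}\| \leq 2/(2-C)$, and finally
\[
\|\varphi(T)\| \;\leq\; 1 + \frac{2}{2-C} \;=\; \frac{4-C}{2-C},
\]
which is independent of $M$ (and of the size of the underlying matrix $T$).

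There is essentially no technical obstacle; the only subtle observation is that for $C > 1$ neither estimate alone suffices: the first degenerates near $\|Tv\| = \sqrt{C-1}$, while the triangle estimate is weak near $\|Tv\| = 1$. Taking their maximum patches these gaps and produces a uniform positive lower bound. As a side effect the bound is independent of $M$, explaining how a single $C_0 < 2$ can serve for every $M > 0$ — a strengthening over Proposition \ref{lem:uniffrakT}, where the $C_0$ depended on $M$.
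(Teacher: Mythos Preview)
Your proof is correct and takes a genuinely different route from the paper's. The paper deduces the corollary from Proposition~\ref{lem:uniffrakT}: having already shown (via the rather delicate Lemma~\ref{lem:reAhalf}) that $\|\varphi(T)-\epsilon\|\le 1+\delta$ on $\mathfrak{F}_{C,M}$ for suitable $\epsilon,\delta$, the triangle inequality gives $\|\varphi(T)\|\le 1+\epsilon+\delta$. You instead bypass Proposition~\ref{lem:uniffrakT} entirely and bound $\|(I-T)^{-1}\|$ directly by splicing the expansion estimate $\|(I-T)v\|^2\ge 1-C+\|Tv\|^2$ with the reverse triangle inequality. This is more elementary and yields the explicit, $M$-independent bound $\|\varphi(T)\|\le (4-C)/(2-C)$ valid for every $C<2$, which is stronger than what the corollary asserts and than what the paper's chain of lemmas delivers (where $C_0$ a priori depends on $M$). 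The paper's route, however, is not wasted effort: the disk containment $\|\varphi(T)-\epsilon\|\le 1+\delta$ is exactly what is needed to apply Runge's theorem in Proposition~\ref{lem:uniffrakT}, so Corollary~\ref{cor:varphibound} is essentially a free by-product there, whereas your argument, while cleaner for this statement, does not by itself feed into the polynomial approximation step.
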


\begin{proof}
  By Proposition \ref{lem:uniffrakT}, there exists $\epsilon,\delta>0$ such that $\|\varphi(T)-\epsilon\|< 1+\delta$ for $T\in \mathfrak{F}_{C,M}$. Hence,
\[
 \|\varphi(T)\| \le 1+\epsilon +\delta
\]
 for $T\in\mathfrak{F}_{C,M}.$
\end{proof}

\begin{lemma} \label{lem:FGF}
 For each $M>0$  and $2>C>1$ there exists an analytic ($2\times 2$ matrix) polynomial $s$ in one variable such that 
\[
 \mathfrak{F}_{1,M} \subset \cG_s=\{T: \|s(T)\|<1\} \subset \mathfrak{F}_{C,M}.
\]
\end{lemma}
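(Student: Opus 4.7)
The plan is to build $s$ as a block-diagonal $2\times 2$ matrix polynomial
\[
 s(z) = \begin{pmatrix} z/M & 0 \\ 0 & p(z) \end{pmatrix},
\]
so that $\|s(T)\| = \max(\|T\|/M, \|p(T)\|)$. The first block alone handles the norm parts of both containments: it enforces $\|T\|<M$ whenever $\|s(T)\|<1$, and it is automatically $<1$ on $\mathfrak F_{1,M}$. The entire problem then reduces to finding a scalar analytic polynomial $p$ such that $T+T^*\preceq I$ and $\|T\|<M$ imply $\|p(T)\|<1$, while $\|p(T)\|<1$ and $\|T\|<M$ imply $T+T^*\preceq CI$.

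To construct $p$, I would first fix an intermediate constant $C'\in(1,C)$ and some $b>M$, and set $a=C'-b$. Consider the degree one rational function $\varphi(z)=(z-a)/(z-b)$. Because $a,b$ are real with $a+b=C'$ and $b-a>0$, a direct expansion yields the key identity
\begin{equation*}
 (T-a)^*(T-a)-(T-b)^*(T-b)=(b-a)\bigl(T+T^*-C'I\bigr).
\end{equation*}
Hence $\|\varphi(T)\|\leq 1$ iff $T+T^*\preceq C'I$. Using the elementary bound $\|(T-b)^{-1}y\|\geq \|y\|/(b+M)$ for $\|T\|\leq M$ upgrades this on $\mathfrak F_{1,M}$ to a strict uniform estimate $\|\varphi(T)\|\leq r_1<1$, with $r_1^2=1-(b-a)(C'-1)/(b+M)^2$. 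Since $b>M$, $\varphi$ is analytic in a disk strictly containing $\{|z|\leq M\}$, so its Taylor polynomials $p_N$ approximate $\varphi$ in operator norm: for each $\eta>0$ there is $N$ with $\|\varphi(T)-p_N(T)\|\leq \eta$ for all $T$ with $\|T\|\leq M$. I take $p=p_N$ for $N$ to be fixed shortly.

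The inner containment $\mathfrak F_{1,M}\subset \cG_s$ then follows as soon as $\eta<1-r_1$, since $\|p(T)\|\leq r_1+\eta<1$. The outer containment $\cG_s\subset \mathfrak F_{C,M}$ is the hard part: from $\|s(T)\|<1$ I get $\|T\|<M$ and $\|\varphi(T)\|<1+\eta$, and squaring the key identity (now only using $\|T\|\leq M$) yields
\begin{equation*}
 T+T^*\preceq \frac{(r^2-1)M^2+r^2b^2-a^2}{r^2b-a}\,I, \qquad r=1+\eta.
\end{equation*}
The right-hand side equals $C'$ at $\eta=0$ and depends continuously on $\eta$, so I can shrink $\eta$ further to make it strictly less than $C$, yielding $T+T^*\preceq CI$. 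The main obstacle is precisely this second step: converting the soft estimate $\|\varphi(T)\|<1+\eta$ coming from polynomial approximation into the sharp operator inequality $T+T^*\preceq CI$. The remedy is built into the plan by inserting an intermediate constant $C'\in(1,C)$, which creates a gap $C-C'$ large enough to absorb the $O(\eta)$ overshoot in the bound on $T+T^*$.
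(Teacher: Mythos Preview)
Your proof is correct. Both you and the paper use the block-diagonal form $s=s_1\oplus s_2$ with $s_2(z)=z/M$ to handle the norm bound, and then reduce to constructing a scalar polynomial $s_1$. The difference is that the paper's $s_1$ is simply linear: it sets $s_1(z)=(z+\rho)/R$ with $\rho>0$ chosen so that $(M^2+1)/\rho<C-1$ and $R^2=M^2+1+\rho+\rho^2$. Squaring $\|T+\rho\|<R$ gives $T^*T+\rho(T+T^*)+\rho^2\prec R^2$, and combining with $T^*T\preceq M^2$ or $T^*T\succeq 0$ yields both inclusions by elementary algebra, with no approximation and no intermediate constant $C'$.

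Your construction via the Taylor polynomial of the M\"obius map $\varphi(z)=(z-a)/(z-b)$ is a natural idea in the context of the neighboring lemmas (which themselves analyze $z(1-z)^{-1}$), and your key identity $(T-a)^*(T-a)-(T-b)^*(T-b)=(b-a)(T+T^*-C'I)$ is exactly the right tool. What it costs you is the extra layer of uniform approximation and the need to insert $C'\in(1,C)$ to absorb the $O(\eta)$ overshoot; what it buys is nothing here, since the paper's degree-one choice already does the job. In effect the paper observes that the Apollonius-circle idea you are approximating can be replaced by a single Euclidean circle $|z+\rho|=R$, which is already polynomial.
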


\begin{proof}
 Choose $\rho>0$ such that 
\[
 \frac{M^2+1}{\rho} < C-1
\]
 and let
$
 R^2= M^2+1 +\rho+\rho^2.
$
 In particular,
\begin{equation}
 \label{eq:rhoR}
 \frac{R^2-\rho^2}{\rho} = \frac{M^2+1+\rho}{\rho} <C.
\end{equation}
 Let $s_1(x)=\frac{x+\rho}{R}$,  $s_2(x)= \frac{x}{M}$ and $s=s_1\oplus s_2$.
 Thus, $\|s(T)\|<1$ if and only if $\|T+\rho\|<R$ and $\|T\|<M$. 
 Suppose $T\in \mathfrak{F}_{1,M}$. Then automatically $\|s_2(T)\|<1$. Using $T+T^* \preceq I$, estimate
\[
 (T+\rho)^*(T+\rho) =  T^*T + \rho(T+T^*)+ \rho^2  \preceq  M^2 +\rho +\rho^2 < R^2.
\]
 Thus $\|s_1(T)\|<1$ and the first inclusion of the lemma is  proved.

 Now suppose $\|s(T)\|<1$. Equivalently $\|T\|<M$ and  $\|T+\rho\|<R$. Hence,
\[
\begin{split}
 0& \preceq  R^2 - (T+\rho)^* (T+\rho)   =  R^2 - T^*T -\rho (T+T^*)  -\rho^2   \\
 & \preceq  \rho \Big(\frac{R^2-\rho^2}{\rho} -(T+T^*)\Big)  \preceq   \frac{1}{\rho} \big (C-(T^*+T) \big ),
\end{split}
\] 
 where equation \eqref{eq:rhoR} was used to obtain the last inequality. Thus, $\|s(T)\|<1$ implies $T\in \mathfrak{F}_{C,M}$ and the proof is complete.
\end{proof}

\begin{lem}\label{lem:approx} If $\cD_A$ is bounded and $t>1$, then there exists a matrix-valued free polynomial $Q$ such that 
\[
 \cD_A \subset \cG_Q \subset t\cD_A.
\]
\end{lem}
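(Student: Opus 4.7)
The plan is to reduce the spectrahedral conditions to a single matrix inequality involving $T := -\Lambda_A(X)$ and then apply Lemma \ref{lem:FGF} to produce the desired polynomial. The key observation is that $X\in\cD_A$ if and only if $I-(T+T^*)\succeq 0$, and more generally $X\in t\cD_A$ if and only if $tI-(T+T^*)\succeq 0$ (since dilating $X$ by $1/t$ inside $L_A$ rescales the linear part). Thus sandwiching $\cD_A$ between $\cG_Q$ and $t\cD_A$ translates to sandwiching $\{T:T+T^*\preceq I\}$ between $\cG_s$ and $\{T:T+T^*\preceq tI\}$ (restricted to a norm ball in $T$), for some one-variable matrix polynomial $s$, and then setting $Q(x) := s(-\Lambda_A(x))$.

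First I would fix the norm bound. Since $\cD_A$ is bounded, there exists $R$ such that $\|X_j\|\le R$ for every $X\in\cD_A$ (at every level), and hence there exists $M_0>0$ with $\|\Lambda_A(X)\|\le M_0$ for every $X\in\cD_A$. Pick $M>M_0$, and pick $1<t'\le t$ with $t'<2$ (if $t\ge 2$ we just take $t'$ strictly between $1$ and $2$; otherwise we can take $t'$ equal to or slightly less than $t$). Now apply Lemma \ref{lem:FGF} with parameters $M$ and $C=t'$ to produce a $2\times 2$ matrix polynomial $s$ in one scalar variable such that
\[
 \mathfrak F_{1,M}\ \subset\ \cG_s\ \subset\ \mathfrak F_{t',M}.
\]
Define $Q(x) := s(-\Lambda_A(x))$, which is a matrix-valued analytic free polynomial in $x$.

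To verify $\cD_A\subset \cG_Q$: if $X\in\cD_A$, then $T:=-\Lambda_A(X)$ satisfies $T+T^*=I-L_A(X)\preceq I$ and $\|T\|\le M_0<M$, so $T\in\mathfrak F_{1,M}$, hence $\|Q(X)\|=\|s(T)\|<1$. To verify $\cG_Q\subset t\cD_A$: if $X\in\cG_Q$, then $T=-\Lambda_A(X)\in\cG_s\subset\mathfrak F_{t',M}$, so $t'I-(T+T^*)\succeq 0$, which reads $I+\tfrac1{t'}\bigl(\Lambda_A(X)+\Lambda_A(X)^*\bigr)\succeq 0$, i.e.~$L_A(X/t')\succeq 0$, i.e.~$X\in t'\cD_A$. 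Since $\cD_A$ is a matrix convex set containing $0$ and $t'\le t$, one has $t'\cD_A\subset t\cD_A$, completing the chain.

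The only real obstacle is the bookkeeping in the preceding paragraph and, a step earlier, knowing that $\|\Lambda_A(X)\|$ is uniformly bounded on all levels of $\cD_A$; this is however immediate from the hypothesis that $\cD_A$ is bounded (a standard fact for matrix convex sets containing $0$ in the interior, which follows from boundedness of $\cD_A(1)$). Beyond that, the argument is a clean composition: Lemma \ref{lem:FGF} already does the heavy lifting by producing a polynomial sandwich between the two half-plane/norm-ball conditions on $T$, and substituting the linear pencil $-\Lambda_A$ for $T$ converts that into a polynomial sandwich between the two spectrahedra.
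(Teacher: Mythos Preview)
Your proof is correct and, in fact, considerably more direct than the paper's. The paper's argument invokes \emph{both} Proposition \ref{lem:uniffrakT} (polynomial approximation of the Cayley transform $\varphi(z)=z(1-z)^{-1}$ on $\mathfrak F_{C,M}$) and Lemma \ref{lem:FGF}: it builds $Q_k=(1+\kappa/k)^{-1/2}q_k\circ\Lambda_A$ from the approximants $q_k$ of $\varphi$, shows $\cD_A\subset K_{Q_k}$, then takes $\hat Q_k=Q_k\oplus(s\circ\Lambda_A)$ with $s$ from Lemma \ref{lem:FGF}, and finally uses estimates on $q_k(T)^*q_k(T)$ versus $\varphi(T)^*\varphi(T)$ to show $\{\|\hat Q_k(X)\|\le 1\}\subset t\cD_A$ for $k$ large. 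You bypass the Cayley transform entirely by observing that Lemma \ref{lem:FGF} already supplies a polynomial $s$ with $\mathfrak F_{1,M}\subset\cG_s\subset\mathfrak F_{t',M}$, and that the substitution $T=-\Lambda_A(X)$ converts these half-plane/norm conditions exactly into the spectrahedral conditions $X\in\cD_A$ and $X\in t'\cD_A$. Your sign choice $T=-\Lambda_A(X)$ is the right one (so that $1-(T+T^*)=L_A(X)$); the paper writes $T=\Lambda_A(X)$, which appears to be a slip. The paper's route does yield the additional fact $\cD_A=\bigcap_k K_{Q_k}$ along the way, but that is not needed for the lemma as stated, and your argument gets the actual conclusion with essentially one line after invoking Lemma \ref{lem:FGF}.
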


\begin{proof}
Since $\cD_A$ is bounded, there is an $M>0$ such that $t\|\Lambda_A(X) \|\le M$ for all $X\in\cD_A$.   By Proposition \ref{lem:uniffrakT},
there exists a $2>C>1$ and a sequence of $q_k$ polynomials converging uniformly to $\varphi(z)$ on $\mathfrak F_{C,M}$. Passing to a subsequence if needed, we can assume 
\[
 \|q_k(T)-\varphi(T)\| <\frac{1}{k}
\]
 for $T\in \mathfrak F_{C,M}$. 
 Writing
\begin{multline}\label{eq:unnamed}
 2 \big(q_k(T)^*q_k(T) -\varphi(T)^* \varphi(T) \big) \\ =   \big(q_k(T)-\varphi(T)\big)^* \big(q_k(T)+\varphi(T)\big) +  \big(q_k(T)+\varphi(T)\big)^* \big(q_k(T)-\varphi(T)\big),
\end{multline}
 and using $\varphi(T)$ is uniformly bounded on $\mathfrak F_{C,M}$ (see Corollary \ref{cor:varphibound}), there is a constant $\kappa$ (independent of $k$ and $T$) such that 
\[
 q_k(T)^* q_k(T)-\varphi(T)^*\varphi(T) \preceq \frac{\kappa}{k}.
\]
 Hence,
\[
 I+\frac{\kappa}{k} - q_k(T)^* q_k(T) \succeq I-\varphi(T)^* \varphi(T).
\]
  Thus, if $T\in \mathfrak F_{C,M}$ and $I-\varphi(T)^*\varphi(T)\succeq 0,$ then $I-(1+\frac {\kappa}{k})^{-1} q_k(T)^* q_k(T) \succeq 0$.

Now, given a monic linear pencil $L_A= I+\Lambda_A +\Lambda_A^*$, let 
\[
  Q_k = \left(1+\frac{\kappa}{k}\right)^{-\frac 12} q_k \circ \Lambda_A. 
\]
  If $X\in \cD_A$, then $T=\Lambda_A(X)\in\mathfrak{F}_{1,M}$. Hence $I-Q_k(X)^* Q_k(X)\succeq 0$; that is, 
$
 \cD_A \subset K_{Q_k},
$
 in the notation $K_{Q_k}:=\{X: \|Q_k(X)\|\le 1\}$ of \cite{AM14}. Moreover, since $q_k(T)$ converges to $\varphi(T)$, 
\[
 \cD_A = \bigcap_k^\infty  K_{Q_k}.
\]
 Choose $s$ as in Lemma \ref{lem:FGF} so that
$ \mathfrak{F}_{1,M}\subset \{T:\|s(T)\| \le 1\} \subset \mathfrak{F}_{C,M}.$
 Thus,
\[
 \cD_A \subset \{ X:\|  s(\Lambda_A(X))\| <1\}.
\]
 Consequently, letting 
\[
 \hat{Q}_k = \begin{pmatrix} Q_k &  0 \\ 0 & s\circ \Lambda_A\end{pmatrix},
\]
we have
\[
 \cD_A \subset \{X:\|\hat{Q}_k(X)\| \le 1\}.
\]

 We now turn to showing, given $t>1$, there is a $k$ such that  $\{X:\|\hat{Q}_k(X)\| \le 1\} \subset t\cD_A$.
The estimate  \eqref{eq:unnamed} works reversing the roles of $q_k$ and $\varphi$ giving the inequality
\beq\label{eq:unnamed2}
 \varphi(T)^*\varphi(T)-q_k(T)^*q_k(T) \preceq \frac{\kappa}{k}
\eeq
 for $T\in\mathfrak F_{C,M}$. Now suppose $X\in M(\C)^g$ and $I- \hat{Q}_k(X)^* \hat{Q}_k(X)\succeq 0$. Let, as before $T=\Lambda_A(X)$. It follows that
$\|s(T)\|\le 1$ and hence $T\in \mathfrak{F}_{C,M}$. 
We can thus apply \eqref{eq:unnamed2} to conclude  
\[
\begin{split}
 0 & \preceq 1- Q_k(X)^* Q_k(X) 
= 1- \left(1+\frac{\kappa}k \right)^{-1} q_k(T)^* q_k(T)\\
& \preceq  1+\frac{\kappa}k\left(1+\frac{\kappa}k \right)^{-1}  - 
\left(1+\frac{\kappa}k \right)^{-1}
\varphi(T)^* \varphi(T).
 \end{split}
\]
Let $\tau_k= 1+\frac{2\kappa}{k}$.  This last inequality implies
\[
 (I-T)^{-*}T^*T (I-T)^{-1} \le \tau_k.
\]
 A bit of algebra shows this inequality is equivalent to
\[
 \tau_k - \tau_k(T+T^*) + (\tau_k-1)T^*T  \succeq 0.
\]
 Since
$\tau_k\to1$, for all sufficiently large  $k,$ 
\[
 t> 1+ \frac{\tau_k-1}{\tau_k} M^2.
\]
 Using $T^*T \preceq M^2$ , it follows that
\[
 \tau_k(t- (T+T^*)) \succeq \tau_k +  (\tau_k-1)M^2 -  \tau_k(T+T^*)  \succeq \tau_k \big(I-(T+T^*)\big) + (\tau_k-1)T^*T \succeq 0.
\]
Thus $X\in t \cD_A$.  Summarizing, for sufficiently large $k$,
\[
 \cD_A \subset \{X:\|\hat{Q}_k(X)\| \le 1\} \subset t\cD_A.
\qedhere
\]
\end{proof}

\begin{lem}\label{lem:approx2} Let $A\in M_d(\C)^g$. If $\cD_A$ is bounded and 
$\cG_Q$ is a free pseudoconvex set such that 
$\cD_A \subset \cG_Q$, then there is $s>1$ such that
\beq\label{eq:KG}
 \cD_A \subset K_{sQ} \subset \cG_Q.
\eeq
\end{lem}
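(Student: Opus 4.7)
The plan is to establish the stronger statement $\sup_{X \in \cD_A} \|Q(X)\| < 1$ strictly, which, combined with the automatic inclusion $K_{sQ} \subset \cG_Q$ for $s > 1$ (since $X \in K_{sQ}$ forces $\|Q(X)\| \le 1/s < 1$), immediately yields \eqref{eq:KG}. I would prove this strengthening by contradiction. Suppose the supremum equals $1$; then there exist $X^{(k)} \in \cD_A(n_k)$ with $\|Q(X^{(k)})\| \to 1$, and since $\cD_A$ is bounded the $X^{(k)}$ are uniformly operator-norm bounded.

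The easy subcase is when the matrix sizes $n_k$ stay bounded. Extracting a subsequence converging in the compact set $\cD_A(n)$ at a fixed level $n$ and using continuity of $Q$ produces $X \in \cD_A$ with $\|Q(X)\| = 1$, contradicting the open inclusion $\cD_A \subset \cG_Q$. The essential difficulty is the case $n_k \to \infty$. Here I would regard the $X^{(k)}$ as a uniformly bounded sequence of operator tuples on a common separable Hilbert space via block-diagonal embeddings and pass to a weak-operator (or free ultraproduct) limit $X^{(\infty)}$. The semidefinite relation $L_A(X^{(k)}) \succeq 0$ is closed under such limits, so $L_A(X^{(\infty)}) \succeq 0$, and selecting suitable unit vectors witnessing $\|Q(X^{(k)})\| \to 1$ forces $\|Q(X^{(\infty)})\| = 1$.

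To derive a contradiction at the operator level I would invoke the Hereditary Convex Positivstellensatz (Theorem \ref{thm:heredposSSIntro}, proved later in the paper). Since $I - Q^*Q$ is hereditary and $\succeq 0$ on the bounded free spectrahedron $\cD_A$, it yields a polynomial identity of the form
\[
I - Q(x)^* Q(x) = \sum_j f_j(x)^* L_A(x) f_j(x),
\]
valid at every tuple, in particular at $X^{(\infty)}$, which gives only $\|Q(X^{(\infty)})\| \le 1$. To upgrade this to the strict inequality needed to contradict $\|Q(X^{(\infty)})\| = 1$, I would exploit Lemma \ref{lem:approx}: for any $t > 1$ it produces an analytic polynomial $\hat Q$ with $\cD_A \subset \cG_{\hat Q} \subset t\,\cD_A$, a tight polynomial pseudoconvex sandwich of $\cD_A$. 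Letting $t \downarrow 1$, this sandwich forces $X^{(\infty)}$ to lie in the operator-level analog of $\cG_{\hat Q}$, which in turn — together with the explicit form of the Positivstellensatz certificate — should upgrade the non-strict bound $I - Q(X^{(\infty)})^* Q(X^{(\infty)}) \succeq 0$ to a strict one, yielding the desired contradiction.

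The main obstacle is precisely this strictness transfer. The polynomial identity extends the non-strict bound $\|Q\| \le 1$ to the operator level for free, but turning matrix-level strict positivity of $I - Q^*Q$ on $\cD_A$ into strict positivity at the accumulation point $X^{(\infty)}$ requires genuine additional input. The most promising route, which I would pursue, is to combine the explicit summands $f_j^* L_A f_j$ from the Hereditary Positivstellensatz with the quantitative neighborhood $\cG_{\hat Q}$ from Lemma \ref{lem:approx} and a careful continuity argument for polynomials on uniformly bounded operator tuples, rather than trying to produce the strict bound purely from the polynomial identity itself.
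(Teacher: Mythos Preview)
Your proposal has a genuine gap at exactly the point you flag as ``the main obstacle,'' and the route you suggest through Lemma~\ref{lem:approx} does not close it. The certificate $I - Q^*Q = \sum_j f_j^* L_A f_j$ yields only $I - Q(X^{(\infty)})^* Q(X^{(\infty)}) \succeq 0$ at the operator accumulation point, and nothing in your sketch upgrades this to strict positivity: Lemma~\ref{lem:approx} produces containments at matrix levels, which say nothing about the infinite-dimensional $X^{(\infty)}$, and ``continuity on uniformly bounded operator tuples'' cannot recover a strict inequality lost in a weak-operator limit. The ultraproduct/weak-limit machinery is a dead end here.

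The paper's argument avoids operator limits entirely by a degree-bounded dimension reduction: for a hereditary polynomial of fixed degree such as $C^2 - Q^*Q$, one has $C^2 - Q^*Q \succeq 0$ on all of $\cD_A$ if and only if it holds on $\cD_A(N)$ for a single $N = N(\deg Q, g, d)$ (this is \cite[Remark 1.2]{HKM12}, and is exactly what the GNS step in the proof of Theorem~\ref{thm:heredposSS} gives). Compactness of $\cD_A(N)$ yields $r_N < 1$ with $\|Q(X)\| \le r_N$ on $\cD_A(N)$; the dimension bound then propagates $\|Q(X)\| \le r_N$ to every level, and $s = 1/r_N$ works. You actually had the right tool in hand---the Positivstellensatz with degree bounds---but applied it in the wrong direction: instead of extracting a certificate for the known inequality $I - Q^*Q \succeq 0$ and trying to squeeze strictness out of it, use its contrapositive form (failure of $r_N^2 - Q^*Q \succeq 0$ on $\cD_A$ would already be witnessed at level $N$) to transfer the strict bound from the single compact level $\cD_A(N)$ to all of $\cD_A$.
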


\begin{proof}
By definition, $\cD_A \subset \cG_Q$
is equivalent to $\|Q(X)\|<1$ on $\cD_A$. 
For each $M$ the set $\cD_A(M)$ is compact (as $\cD_A$ is bounded and closed). Thus,
for each $M$ there is an $0<r_M<1$ such that $\|Q(X)\|\le r_M$ on $\cD_A(M)$.

For $C\in\R_{>0}$, we have $\|Q(X)\|\le C$ on $\cD_A$  if and only if $C^2-Q^*Q\succeq0$ on $\cD_A$
if and only if  $C^2-Q^*Q\succeq0$ on $\cD_A(N)$ 
for $N:=N(\deg Q,g,d)$ large enough (\cite[Remark 1.2]{HKM12}) if and only if
 $\|Q(X)\|\le C$ on $\cD_A(N)$. Since  $\|Q(X)\|\le r_N <1$ on $\cD_A(N)$, it follows that
$\|Q(X)\|\le r_N<1$ on $\cD_A$.
So  \eqref{eq:KG} holds with $s=\frac{1}{r_N}$.
\end{proof}

\subsection{Rational functions analytic on $\cD_A$}
\label{sec:igorcomments}

In this subsection we show a rational function $p$ without singularities on $\cD_A$ is analytic and bounded on  $t\cD_A$ for some $t>1$. Hence, by Lemma \ref{lem:approx}, $p$ is analytic and bounded on a free pseudoconvex set  $\cG_Q$ containing $\cD_A$.

\begin{lem}\label{lem:rat}
Suppose  $\cD_A$ is bounded and let $r$ be  an analytic noncommutative rational function with no singularities
on $\cD_A$. Then there is a $t>1$ such that $r$ is bounded with 
no singularities on $t\cD_A$. 
\end{lem}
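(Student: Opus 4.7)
\medskip
\noindent\textbf{Plan of proof.}
Write $r$ via its minimal Fornasini--Marchesini representation
$r(x) = c^{*}(I-\Lambda_{E}(x))^{-1}b$
with $E\in M_{e}(\C)^{g}$ and $b,c\in\C^{e}$. By Subsection~\ref{sec:freerats},
the singular set of $r$ coincides with the free locus
$\cZ_E=\{X\colon I-\Lambda_E(X)\text{ is not invertible}\}$, and the hypothesis
of the lemma becomes $\cZ_E\cap\cD_A=\emptyset$. Since
$\|r(X)\|\le\|c\|\,\|b\|\,\|(I-\Lambda_E(X))^{-1}\|$, it suffices to produce
$t>1$ satisfying both $\cZ_E\cap t\cD_A=\emptyset$ and
$\sup_{X\in t\cD_A}\|(I-\Lambda_E(X))^{-1}\|<\infty$.

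The crux is the uniform bound
$C := \sup_n\sup_{X\in\cD_A(n)}\|(I-\Lambda_E(X))^{-1}\|<\infty$.
Once $C<\infty$ is in hand, the rest is a Neumann series perturbation:
setting $M:=\sup_{X\in\cD_A}\|\Lambda_E(X)\|<\infty$ (finite by boundedness of $\cD_A$),
for $X=tY$ with $Y\in\cD_A$ one has
\[
I-\Lambda_E(tY) = \bigl(I-\Lambda_E(Y)\bigr)\bigl(I-(t-1)(I-\Lambda_E(Y))^{-1}\Lambda_E(Y)\bigr),
\]
whose second factor is invertible by a geometric series once $(t-1)CM<1$. Any
$t\in\bigl(1,\,1+\tfrac{1}{CM}\bigr)$ then delivers the conclusion, with a uniform
bound $C/(1-(t-1)CM)$ on the inverse over $t\cD_A$.

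To establish the uniform bound on $\cD_A$, I would argue by contradiction: assume
$X^{(k)}\in\cD_A(n_k)$ satisfy $\|(I-\Lambda_E(X^{(k)}))^{-1}\|\to\infty$. Since
$\Lambda_E(X^{(k)})$ is a matrix, its spectrum must contain an eigenvalue
$\lambda_k$ with $\lambda_k\to 1$, because
$\min_{\lambda\in\mathrm{spec}(\Lambda_E(X^{(k)}))}|1-\lambda|\le\|(I-\Lambda_E(X^{(k)}))^{-1}\|^{-1}$.
Pick an eigenvector $v_k=\sum_{i=1}^{e}e_i\otimes u_{i,k}\in \C^e\otimes\C^{n_k}$ and set
$W_k=\mathrm{span}\{u_{1,k},\dots,u_{e,k}\}$. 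The eigenvector equation exactly
forces $W_k\subset\sum_j X_j^{(k)}W_k$, so compressing $X^{(k)}$ to the smallest
$X^{(k)}$-invariant subspace $\tilde W_k\supset W_k$ produces
$\tilde X^{(k)}\in\cD_A(\dim\tilde W_k)$ with the exact eigenvector relation
$\Lambda_E(\tilde X^{(k)})v_k=\lambda_k v_k$ preserved. Assembling the direct sum
$Y:=\bigoplus_k\tilde X^{(k)}$ as a bounded operator tuple gives $L_A(Y)\succeq 0$ and
places $1$ in the spectrum of $\Lambda_E(Y)$. A final reduction to a matrix-level
singularity in $\cD_A$, invoking the minimality of the realization together with the
stability of the free locus of a minimal pencil under compressions to reducing subspaces
(cf.~\cite{KV,KVV09,Vol17}), produces a matrix $\tilde X\in\cD_A$ with
$1\in\mathrm{spec}(\Lambda_E(\tilde X))$, contradicting $\cZ_E\cap\cD_A=\emptyset$.

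\medskip
\noindent\textbf{Main obstacle.}
The delicate step is the final descent from the operator-level presence of $1$ in
$\mathrm{spec}(\Lambda_E(Y))$ back to an exact matrix-level eigenvalue $1$ in $\cD_A$.
The compressions $\tilde X^{(k)}$ live at a priori unbounded levels, and $1$ may appear
in $\mathrm{spec}(\Lambda_E(Y))$ only as an accumulation point of the $\lambda_k$ rather
than as an exact eigenvalue of any summand. Bridging this gap requires using the
minimality of the realization to bound the relevant compression size and the structural
results on how singularities of a minimal pencil at arbitrary level arise from those at
bounded level.
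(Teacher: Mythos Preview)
Your Neumann-series extension from $\cD_A$ to $t\cD_A$ is clean and correct, and is a pleasant alternative to the paper's second compression-plus-compactness pass. The real content is therefore the uniform bound $C=\sup_{X\in\cD_A}\|(I-\Lambda_E(X))^{-1}\|<\infty$, and here your argument has a genuine error and also misses the simplification that dissolves the obstacle you flag.

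\textbf{The error.} Your inequality $\min_{\lambda\in\mathrm{spec}(T)}|1-\lambda|\le \|(I-T)^{-1}\|^{-1}$ is backwards. Since $\|(I-T)^{-1}\|\ge \rho\bigl((I-T)^{-1}\bigr)=1/\mathrm{dist}(1,\mathrm{spec}(T))$, one only gets $\mathrm{dist}(1,\mathrm{spec}(T))\ge \|(I-T)^{-1}\|^{-1}$, which is vacuous as the right side tends to $0$. For non-normal matrices of growing size this is not a technicality: if $T_k$ is the $k\times k$ nilpotent Jordan block, then $\|(I-T_k)^{-1}\|\to\infty$ while $\mathrm{spec}(T_k)=\{0\}$ for all $k$. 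So you cannot produce eigenvalues $\lambda_k\to 1$, and the rest of your argument (eigenvector, invariant subspace, direct sum, descent) never gets off the ground.

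\textbf{The fix, and why your obstacle evaporates.} Replace eigenvectors by approximate kernel vectors: from $\|(I-\Lambda_E(X^{(k)}))^{-1}\|\to\infty$ one gets unit vectors $\gamma_k=\sum_{i=1}^{e} e_i\otimes u_{i,k}$ with $\|(I-\Lambda_E(X^{(k)}))\gamma_k\|\to 0$. Now compress $X^{(k)}$ \emph{directly} to $W_k=\mathrm{span}\{u_{1,k},\dots,u_{e,k}\}$, which has dimension at most $e$ independently of $k$; no invariance is needed. Writing $P_k$ for the orthogonal projection onto $W_k$ and using $(I\otimes P_k)\gamma_k=\gamma_k$,
\[
\bigl(I-\Lambda_E(P_kX^{(k)}P_k)\bigr)\gamma_k
=(I\otimes P_k)\bigl(I-\Lambda_E(X^{(k)})\bigr)\gamma_k,
\]
so $\|(I-\Lambda_E(P_kX^{(k)}P_k))\gamma_k\|\to 0$ as well. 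Matrix convexity gives $Y^{(k)}:=P_kX^{(k)}P_k\in\cD_A(\dim W_k)$ with $\dim W_k\le e$. Since $\bigcup_{e'\le e}\cD_A(e')$ is compact (boundedness of $\cD_A$), pass to a convergent subsequence $Y^{(k)}\to Y\in\cD_A$, $\gamma_k\to\gamma$; then $(I-\Lambda_E(Y))\gamma=0$, contradicting $\cZ_E\cap\cD_A=\emptyset$. This is exactly the paper's compression trick (the paper phrases it first for the exact kernel to reduce $\cZ_E\cap\cD_A=\emptyset$ to levels $\le e$, then reuses it for boundedness), and it removes any need for invariant subspaces, operator direct sums, or a descent step.
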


\begin{proof}
Since $r$ is analytic on $\cD_A$ and $\cD_A$ contains $0$, we can consider its
minimal  realization,
\[
r(x)= c^* \big(I-\La_E(x)\big)^{-1} b
\]
for some $e\times e$ tuple $E\in M_e(\C)^g$ and vectors $b,c\in\C^e$. The singularity set of $r$ coincides with the
singularity set (i.e., the free locus \cite{KV}) $\cZ_E$  of $I-\La_E$ 
(see \cite[Theorem 3.1]{KVV09}
and \cite[Theorem 3.5]{Vol17}).

We claim that $\cZ_E\cap\cD_A=\emptyset$ if and only if
$\cup_{1\le e^\prime\le e} \left ( \cZ_E(e)\cap\cD_A(e)\right ) =\emptyset$.
To prove the claim, 
suppose $X\in\cZ_E\cap\cD_A(m).$  Then
for some nonzero $v=\sum_{j=1}^e e_j \otimes v_j$,
where the $e_j$ are standard unit vectors in $\C^e$ and $v_j\in \C^m$, 
\[
\begin{split}
0&=(I-\La_E)(X)v = (I\otimes I - \sum_k E_k \otimes X_k)
(\sum_{j=1}^e e_j \otimes v_j) 
= \sum_{j=1}^e e_j \otimes v_j - \sum_{j,k} E_k e_j \otimes X_k v_j.
\end{split}
\] 
Let $P$ denote the orthogonal projection $\C^m\to \cV ={\rm span}\{v_1,\ldots,v_e\}$ and let $e^\prime = \dim\, \cV.$ Then $P^*XP\in\cZ_E$. Indeed,
for any $u=\sum_i e_i\otimes u_i\in\C^d\otimes\cV,$ 
\[
\begin{split}
u^*(I-\La_E)(P^*XP)v&= (\sum_i e_i\otimes u_i)^*(I\otimes I - \sum_k E_k \otimes P^*X_kP)(\sum_{j=1}^e e_j \otimes v_j) \\
&= \sum_iu_i^*v_i - (\sum_i e_i\otimes u_i)^*(\sum_k E_k \otimes P^*X_kP)(\sum_{j=1}^e e_j \otimes v_j) \\
& = u^*v - \sum_{i,j,k} e_i^*E_ke_j u_i^*P^*X_kPv_j =  u^*v - \sum_{i,j,k} e_i^*E_ke_j u_i^*X_kv_j \\
&=(\sum_i e_i\otimes u_i)^*(I\otimes I - \sum_k E_k \otimes X_k)(\sum_{j=1}^e e_j \otimes v_j) \\
&= u^*(I-\La_E)(X)v =0.
\end{split}
\] 
This calculation shows $P^*XP\in\cZ_E(e')\cap\cD_A(e').$ Hence $\cup_{1\le e^\prime \le e} \left ( \cZ_E(e^\prime)\cap \cD_A(e^\prime) \right ) \ne \emptyset$.
The reverse implication is evident and so the claim is proved.

Since each $\cD_A(e^\prime)$ is compact and disjoint from the closed
set $\cZ_E(e^\prime)$, there exists $t>1$ such that
$t\cD_A(e^\prime)\cap \cZ_E(e^\prime)=\emptyset$ for each $1\le e^\prime\le e$. But 
now using 
$t\cD_A=\cD_{\frac1t A}$, 
the above claim  proves there is a $t>1$ such that
  $r$ has no singularities on $t\cD_A$; that is $\cZ_E \cap \cD_{tA} =\emptyset$. 

We now argue that in fact $r$ is bounded on $t\cD_A$. First observe that if $X_n\in t\cD_A$
and $\|r(X_n)\|$ grows without bound, then so does $\|(I-\La_E(X_n))^{-1}\|$. Hence, there
is a sequence $\gamma_n$ of unit vectors such that $(\|(I-\La_E(X_n))\gamma_n\|)_n$ tends 
to zero. By the argument above, we can replace $X_n$ with $Y_n=V_n^* X_n V_n$ where $V_n$
includes an $e$-dimensional space containing $\gamma_n$ and assume that the $Y_n\in \cD_{tA}(e)$.
By compactness of $\cD_{tA}(e)=t\cD_A(e)$, and passing to a subsequence if needed, 
without loss of generality $Y_n$ converges to some $Y\in \cD_{tA}(e)$
and $\gamma_n$ to some unit vector $\gamma$. It follows that $(I-\La_E(Y))\gamma=0$, and
we have arrived at the contradiction that $Y\in t\cD_{tA}$ and $Y$ is a singularity of
$I-\La_E(x)$.
\end{proof}

The ingredients are now in place to prove Propositions \ref{prop:approxIntro} and \ref{prop:okadron}.

\begin{proof}[Proof of Proposition~\ref{prop:approxIntro}]
The first statements are immediate from Lemmas \ref{lem:approx} and \ref{lem:approx2}. 
Lemma \ref{lem:rat} finishes off the proof.
\end{proof}

\begin{proof}[Proof of Theorem~\ref{prop:okadron}]
Suppose $f$ is analytic and bounded on some $\cG_Q$ containing $\cD_A$. 
By Proposition \ref{prop:approxIntro}, there is $s>1$
with $\cD_A\subset K_{sQ}\subset \cG_Q$. 
By the free Oka-Weil Theorem \ref{thm:AMoka}, $f$ can be uniformly approximated by
polynomials on $K_{sQ}$ and thus on $\cD_A$.
\end{proof}

\section{Hereditary Convex Positivstellensatz}
 \label{sec:null}
 In this section we present a strengthening of the Convex Positivstellensatz \cite{HKM12},
characterizing hereditary polynomials nonnegative on free spectrahedra. In the obtained sum of squares certificate 
all weights will be analytic.

\def\axs{\langle x,x^*\rangle}
\def\ps{\mathcal D}
\def\gtupn{{\matn}^g}
\def\al{\alpha}
\def\be{\beta}

\def\mt{\nu}
\def\ps{\mathcal D}
\def\her{{\rm her}}
\def\eps{\epsilon}

\def\matn{M_n(\C)}

\def\gtupn{\matn^g}

\def\ga{\gamma}
\def\de{\delta}
\def\la{\lambda}
\def\msG{\mathscr G}
\def\msV{\mathscr V}
\def\msY{\mathscr Y}
\def\fF{\mathscr F}
\def\sW{\mathscr W}
\def\sY{\mathscr Y}

\def\Cmt{\C^{\mt\times\mt}}

\def\cX{\mathscr H}
 \def\La{\Lambda}
\def\sV{\mathcal V}
\def\ptA{A}
\def\ptAp{A^\prime}
\def\matdgc{M_d(\mathbb C)^g}
\def\matdhc{M_d(\mathbb C)^\tg}
\def\matngc{M_n(\mathbb C)^g}
\def\fcH{\mathfrak{H}}


\newcommand{\Pu}{P_{12}}
\newcommand{\Pd}{P_{21}}
\newcommand{\Po}{P_{22}}
\newcommand{\abs}[1]{\lvert#1\rvert}
\newcommand{\inpr}[1]{\left\langle#1\right\rangle}
\newcommand{\ip}[1]{\left( #1 \right)}
\newcommand{\adj}[1]{{#1}^*}

 Fix a symmetric $q\in \C^{\ell\times\ell}\axs,$  let
\[
 \ps_q(n) : =\{X\in\gtupn  :   q(X) \succeq 0 \}
\]
for positive integers $n$ and let $\ps_q = (\ps_q(n))_n$.
 Given $\alpha,\beta\in\N$, set 
\begin{equation}
 \label{eq:Malbeta}
  M_{\al, \beta}^{\mt,\her}(q):=
\Big\{\sum_j^{\rm finite} \varphi_j^*\varphi_j+ \sum_i^{\rm finite}  \psi_i^* q \psi_i : 
   \  \psi_i \in \C^{\ell\times \mt}\ax_\beta,\, \varphi_j\in\C^{\mt\times \mt}\ax_\alpha \Big\} 
  \  \subseteq \ \R^{\mt\times \mt}\axs_{\max \{2\al, 2\beta +a\}},
\end{equation}
 where $a=\deg(q)$.
 Observe that $M_{\al,\beta}^{\mt,\her}(q)$ is a proper subset of the quadratic module $M_{\al,\beta}^{\mt}(q)$ as defined
 in \cite{HKM12}. We emphasize  that $\varphi_j,\psi_i$ are assumed to be analytic in 
 \eqref{eq:Malbeta} defining
$ M_{\al,\beta}^{\mt,\her}(q)$. 
Obviously, if $f\in M_{\al,\beta}^{\mt,\her}(q)$ then $f|_{\ps_q}\succeq0$.

For notational convenience,  let $\Sigma^{\mt,\her}_{\al}$ denote
the  cone of sum of squares obtained from $M_{\al, \alpha}^{\mt,\her}(q)$
with $q=1$.

We call $M_{\al,\beta}^{\mt,\her}(q)$ the \df{truncated hereditary quadratic module} defined by $q$.
 We often abbreviate $  M_{\al, \beta}^{\mt,\her}(q)$   to 
$  M_{\al, \beta}^{\mt}$.   If $q(0)=I$ ($q$ is \df{monic}), then
 $\ps_q$ contains an \df{free neighborhood of $0$};
 i.e., there exists $\eps>0$ such that for each $n\in\N$, if
  $X\in\matn^{g}$ and $\|X\|<\eps$,
  then $X\in \ps_q$.  
 Likewise $\ps_q$ is called \df{bounded} provided there is a number $N\in\N$
 for which all $X\in \ps_q$ satisfy $ \|X\| <N$. The following
 theorem is, using the notations above, a restatement of Theorem \ref{thm:heredposSSIntro}.

\begin{thm}[Hereditary Convex Positivstellensatz]
\label{thm:heredposSS}
 Suppose $L\in\C^{\ell\times \ell}\axs$ 
is a monic linear pencil  and $h\in\C^{\mt\times\mt}\axs$ is a 
symmetric hereditary
  matrix polynomial.
 If $\deg(h)= d$, then
\beq\label{eq:posss1}
h(X)\succeq 0 \text{ for all } X\in \ps_L\quad\iff\quad
   h\in M_{d+1,d}^{\mt,\her}(L).
   \eeq
If, in addition, the set $\ps_L$ is  bounded, then
the  right-hand side of this equivalence
is  further
equivalent to
\beq\label{eq:posss2}
h \in
\Big\{  \sum_j^{\rm finite} \psi_{j}^* L \psi_{j} : 
   \psi_{j}\in\R^{\ell\times \mt}\ax_{d+1} \Big\} = : \mathring M_{d+1}^{\mt,\her}(L).
\eeq
\end{thm}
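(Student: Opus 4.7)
The ($\Leftarrow$) direction of \eqref{eq:posss1} is immediate because $L(X)\succeq 0$ on $\ps_L$ renders every summand $\varphi_j^*\varphi_j$ and $\psi_i^*L\psi_i$ positive semidefinite at each $X\in\ps_L$. For the nontrivial direction my plan is to adapt the Hahn--Banach--plus--GNS proof of the Convex Positivstellensatz of \cite{HKM12}, but restricted throughout to hereditary polynomials on the target side and to analytic polynomials as weights, so that any representation produced automatically has analytic $\varphi_j$ and $\psi_i$.

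First I would verify that $M_{d+1,d}^{\nu,\her}(L)$ is a closed convex cone inside the finite-dimensional real vector space $V$ of symmetric hereditary matrix polynomials in $\C^{\nu\times\nu}\axs$ of degree at most $2d+2$, by the same Carath\'eodory-style compactness argument used in \cite{HKM12}. Assuming $h\notin M_{d+1,d}^{\nu,\her}(L)$, the Hahn--Banach theorem produces a symmetric $\R$-linear functional $\lambda\colon V\to\R$ with $\lambda\ge 0$ on the cone and $\lambda(h)<0$; I would then extend $\lambda$ by zero on non-hereditary monomials to a functional on $\C^{\nu\times\nu}\axs_{2d+2}$.

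Next comes the matrix-valued GNS construction. Since $\Sigma_{d+1}^{\nu,\her}\subseteq M_{d+1,d}^{\nu,\her}(L)$, the pairing $(\Phi,\Psi)\mapsto\lambda(\Psi^*\Phi)$ on $\C^{\nu\times\nu}\ax_{d+1}$ is a positive semidefinite sesquilinear form; quotienting by its radical and using left multiplication by $x_j$ (which raises degree from $\le d$ to $\le d+1$, hence stays inside the GNS domain) produces operators $X_j$ on a finite-dimensional Hilbert space $H$. The complementary inequality $\lambda(\psi^*L\psi)\ge 0$ for analytic $\psi\in\C^{\ell\times\nu}\ax_d$ translates directly into $L(X)\succeq 0$, so $X\in\ps_L$. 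Finally, because $h$ is hereditary of degree $\le d$, writing $h=\sum_{v,w}h_{v,w}\,v^*w$ with $|v|,|w|\le d$, a direct computation matches an appropriate matrix coefficient of $h(X)$ with $\lambda(h)$; since $\lambda(h)<0$ this forces $h(X)\not\succeq 0$ at this $X\in\ps_L$, contradicting the hypothesis. This final bookkeeping---aligning the GNS evaluation of $h(X)$ with $\lambda(h)$---is where the degree asymmetry $\deg\varphi_j\le d+1$ versus $\deg\psi_i\le d$ in \eqref{eq:posss1} enters, and it is the main obstacle.

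For the bounded case \eqref{eq:posss2}, I would apply \eqref{eq:posss1} to the polynomial $N-\sum_j x_j^*x_j$, which is positive on $\ps_L$ for $N$ sufficiently large by boundedness, to produce an Archimedean-type identity $N-\sum_j x_j^*x_j=\sum_k\psi_k^*L\psi_k+\sum_\ell\varphi_\ell^*\varphi_\ell$ with analytic weights of controlled degree. A bootstrapping absorption---using this identity to rewrite each pure $\varphi^*\varphi$ as a new $L$-weighted sum of squares modulo lower-degree pure sum of squares, then iterating---eliminates the free sum-of-squares term at the cost of raising the $L$-weight degree by at most one, giving the bound $\deg\psi_j\le d+1$. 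The subtle point is controlling degree inflation during absorption, which is handled by the quantitative degree bounds already established in \eqref{eq:posss1}.
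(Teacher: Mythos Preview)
Your outline for \eqref{eq:posss1} matches the paper's argument closely: closedness of the cone via Carath\'eodory, Hahn--Banach separation, then a GNS construction on analytic row vectors $\C^{1\times\nu}\ax_{d+1}$ producing a finite tuple $X\in\ps_L$ with $\lambda(h)=\langle h(X)\gamma,\gamma\rangle<0$. Two minor deviations: (i) the extension of $\lambda$ by zero to non-hereditary monomials is unnecessary --- the entire argument lives inside hereditary polynomials; (ii) the paper, rather than quotienting by the radical, perturbs $\lambda$ by a small strictly positive functional so the form is already positive definite on degree $\le d$, which cleanly identifies $\cX=\C^{1\times\nu}\ax_d$ as a subspace and makes the projection $P$ onto it, hence the operators $X_j f = P x_j f$, unambiguous. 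Your quotient variant can be made to work but the bookkeeping you flag as ``the main obstacle'' is exactly what the perturbation dissolves.

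For \eqref{eq:posss2} there is a genuine gap. Your absorption scheme starts from a certificate for $N-\sum_j x_j^*x_j$ (or even for $1$) obtained via \eqref{eq:posss1}; but the weights in any such certificate have \emph{positive} degree, so substituting it into $\varphi^*\varphi$ produces new $L$-weighted terms of degree $\deg\varphi+\deg\psi_k$ and new pure squares of degree $\deg\varphi+\deg\varphi_\ell>\deg\varphi$. The residual pure-square degrees go up, not down, and the iteration neither terminates nor respects the bound $d+1$. The paper's argument is a one-step absorption using a stronger input: when $\ps_L$ is bounded one has $I=\sum_j V_j^*LV_j$ with \emph{constant} matrices $V_j$ (this is \cite[\S4.1]{HKM12}, a finite-dimensional LMI fact, not a consequence of \eqref{eq:posss1}). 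Then each $\varphi_k^*\varphi_k=\sum_j(V_j\varphi_k)^*L(V_j\varphi_k)$ with $\deg(V_j\varphi_k)=\deg\varphi_k\le d+1$, and \eqref{eq:posss2} follows immediately.
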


\subsection{Proof of Theorem \ref{thm:heredposSS}}
The proof consists of 
two main steps: 
a separation argument together with a partial
Gelfand-Naimark-Segal (GNS) construction.

\subsubsection{Step 1: Towards a separation argument.}
Let $\Cmt\axs^\her$ denote the vector space of all hereditary $\mt\times\mt$ matrix polynomials.

\begin{lemma}\label{lem:closed}
$M_{\al,\be}^{\mt,\her}(L)$ is a closed convex cone in $\Cmt\axs^\her_{\max\{2\al,2\be+1\}}$. 
\end{lemma}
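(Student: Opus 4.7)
That $M_{\al,\be}^{\mt,\her}(L)$ is a convex cone sitting inside the finite-dimensional space $\Cmt\axs^\her_{\max\{2\al,2\be+1\}}$ is immediate from the definition together with a degree count, so the only content is topological closedness.

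The plan is the standard Gram-matrix approach to truncated quadratic modules: realize $M_{\al,\be}^{\mt,\her}(L)$ as the image of a closed convex cone under a continuous linear map with trivial kernel on the cone, and then appeal to the finite-dimensional fact that such an image is automatically closed (if $\mathcal T(x_n)\to y$ with $x_n$ in a closed convex cone $C$ and $\ker\mathcal T\cap C=\{0\}$, then $(x_n)$ must be bounded, for otherwise normalization and compactness yield a nonzero element of $\ker\mathcal T\cap C$). Concretely, let $V_\al$ be the column of all words in $x$ of length at most $\al$. Writing $\varphi_j\in\C^{\mt\times\mt}\ax_\al$ in terms of its coefficients $\varphi_{j,v}$, the sum $\sum_j\varphi_j^*\varphi_j$ becomes $V_\al^*G_1V_\al$ with $(G_1)_{v,w}=\sum_j\varphi_{j,v}^*\varphi_{j,w}$ positive semidefinite as a block matrix, and conversely every such $G_1\succeq0$ arises via Cholesky. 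An analogous bundling---incorporating the action of $L=I+\sum_kA_kx_k+\sum_kA_k^*x_k^*$ by enlarging the Gram block matrix so that it simultaneously records $\psi_{i,v}$ together with $A_k\psi_{i,v}$ and $A_k^*\psi_{i,v}$---represents $\sum_i\psi_i^*L\psi_i$ as $\mathcal U(G_2)$ for a linear map $\mathcal U$ acting on a PSD cone. Together these write $M_{\al,\be}^{\mt,\her}(L)=\mathcal T(\mathrm{PSD}\times\mathrm{PSD})$ for $\mathcal T(G_1,G_2):=V_\al^*G_1V_\al+\mathcal U(G_2)$.

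The heart of the proof will be the verification that $\ker\mathcal T\cap(\mathrm{PSD}\times\mathrm{PSD})=\{0\}$, which I plan to do by induction on the word length $m$. At $m=0$ the monicity of $L$ ensures that the constant-term equation coming from $\mathcal T(G_1,G_2)=0$ reads $(G_1)_{\emptyset,\emptyset}+(G_2)_{\emptyset,\emptyset}=0$ with both summands positive semidefinite, hence both zero; the standard PSD fact that a zero diagonal block kills its row and column then wipes out the empty-word rows and columns of $G_1$ and $G_2$. Inductively at level $m$, equating the coefficient of the diagonal hereditary monomial $w^*w$ with $|w|=m$ in $\mathcal T(G_1,G_2)=0$ yields
\[(G_1)_{w,w}+(G_2)_{w,w}+\bigl(\text{cross terms involving }\psi_{i,v}\text{ with }|v|<m\bigr)=0;\]
the cross terms, contributed by the $A_kx_k$ and $A_k^*x_k^*$ summands of $L$, vanish by the inductive hypothesis, leaving a sum of two positive semidefinite blocks forced to vanish. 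Exhausting $m$ up to $\max\{\al,\be\}$ gives $G_1=0$ and $G_2=0$.

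The main obstacle I foresee is the combinatorial bookkeeping in setting up $\mathcal U$ so that the $\psi^*L\psi$-part is genuinely the linear image of a single positive semidefinite cone; because the three summands of $L$ shift the bigrading $(|v|,|w|)$ of a hereditary monomial $v^*w$ in three different ways, one must carefully enlarge the Gram matrix carrying the coefficients of the $\psi_i$. Once this enlargement is carried out, the inductive argument sketched above and the generic closed-image-of-a-cone lemma complete the proof.
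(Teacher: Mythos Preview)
Your approach is correct, and in substance it is very close to what the paper does, though the packaging differs. The paper's one-line proof invokes \cite[Proposition 3.1]{HKM12}, whose ``main ingredient'' is Carath\'eodory's theorem: one first bounds the number of summands $\varphi_j,\psi_i$ by the ambient dimension, and then runs exactly the degree-by-degree induction you describe (monicity of $L$ forces the empty-word contributions to vanish, the PSD row/column trick propagates this, and the cross terms at level $m$ involve only lower-degree data) to show that a convergent sequence in the cone has bounded representatives. Your Gram-matrix formulation replaces the Carath\'eodory step: rather than bound the number of terms, you absorb arbitrarily many $\varphi_j,\psi_i$ into a single PSD Gram matrix, and then the same inductive argument becomes the verification that $\ker\mathcal T\cap(\mathrm{PSD}\times\mathrm{PSD})=\{0\}$. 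One practical remark: your ``enlarged'' Gram matrix recording $\psi_{i,v}$ together with $A_k\psi_{i,v}$ and $A_k^*\psi_{i,v}$ lands in a proper linear slice of the PSD cone (since the latter vectors are determined by the former), which is harmless but unnecessary; it is cleaner to index the Gram matrix by pairs $(v,a)$ with $1\le a\le\ell$, i.e., by the \emph{rows} of $\psi_{i,v}$, so that $\sum_i\psi_{i,v}^*A_k\psi_{i,w}=\sum_{a,b}(A_k)_{ab}\,(G_2)_{(v,a),(w,b)}$ is already a linear function of an unconstrained PSD matrix $G_2$.
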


\begin{proof}
The proof is the same as for the 
corresponding 
free non-hereditary setting \cite[Proposition 3.1]{HKM12}; its main ingredient is
 Carath{\'e}odory's theorem on convex hulls \cite[Theorem I.2.3]{Ba02}.
\end{proof}

 \subsubsection{Step 2: A GNS construction}
\label{subsubsec:flathank}
 Proposition \ref{prop:gns} below,
 embodies the  well known connection, through the Gelfand-Naimark-Segal (GNS)
 construction,
 between operators and
 positive linear functionals. 
It is adapted here to hereditary matrix polynomials.

 Given a Hilbert space $\cX$ and 
 a positive integer $\mt$,  let $\cX^{\oplus \mt}$
 denote the orthogonal direct sum of $\cX$ with itself $\mt$ times.
Let  $L$ be a monic $\ell\times \ell$ linear pencil
 and abbreviate 
\begin{equation*}
 \index{ $M^{\mt}_{k+1} := M^\mt_{k+1,k}(L)$}
    M^{\mt}_{k+1}:= M^{\mt,\her}_{k+1, k}(L).
\end{equation*}

\begin{prop}
 \label{prop:gns}
  If $\la:\Cmt\axs^\her_{2k+2}\to \C$ is a symmetric linear functional  
  that is nonnegative on $\Sigma^{\mt,\her}_{k+1}$ and  positive  on
  $\Sigma^{\mt,\her}_k\setminus\{0\}$, 
  then there exists a tuple $X=(X_1,\dots,X_g)$ of
   operators on a Hilbert space $\cX$ of
  dimension at most $\mt \sigma_\#(k)=\mt \dim \R\ax_k$ and a vector 
  $\ga\in \cX^{\oplus \mt}$ such that
\begin{equation}
 \label{eq:LorX}
  \la(f)= \langle f(X)\ga,\ga\rangle 
\end{equation}
  for all $f\in \Cmt\axs^\her_{k}$, where $\langle\textvisiblespace, \textvisiblespace\rangle$
  is the inner product on $\cX$.  
  Further, if
  $\la$ is nonnegative on $M^{\mt}_{k+1}$, then $X\in\ps_L$. 

  Conversely, if $X=(X_1,\dots,X_g)$ is a tuple of
   operators on a Hilbert space $\cX$ of
  dimension $N$,  
   $\ga$ is a vector in $\cX^{\oplus \mt},$ and $k$ is a positive
  integer, then the linear
  functional  $\la:\Cmt\axs^\her_{2k+2}\to\C$ defined by
\[
  \la(f)= \langle f(X)\ga,\ga\rangle 
\]
  is nonnegative on $\Sigma^\mt_{k+1}$. 
  Further, if $X\in\ps_L$, then $\la$ is nonnegative also on $M^{\mt}_{k+1}$. 
\end{prop}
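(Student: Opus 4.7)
The plan is to perform a hereditary version of the Gelfand-Naimark-Segal construction tailored to the matrix-valued functional $\lambda$. For the forward direction, set $\cX := V_k$ where $V_k := \C\langle x\rangle_k^{1\times\nu}$ consists of row vectors of length $\nu$ with analytic polynomial entries of degree $\le k$, equipped with the sesquilinear form $\langle p, q\rangle_\lambda := \lambda(q^* p)$ (note $q^* p$ is $\nu\times\nu$ hereditary of degree $\le 2k$). Embedding a nonzero row $p$ as the top row of an otherwise-zero matrix $\varphi\in\C^{\nu\times\nu}\langle x\rangle_k$ gives $p^* p = \varphi^*\varphi\in\Sigma^{\nu,\her}_k\setminus\{0\}$, so the strict positivity of $\lambda$ on $\Sigma^{\nu,\her}_k\setminus\{0\}$ makes the form positive definite and $\cX$ a Hilbert space of dimension $\nu\sigma_\#(k)$. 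Take $\gamma_i := e_i^T\in\cX$ (the row with a $1$ in position $i$ and zeros elsewhere) and $\gamma := \sum_i e_i\otimes\gamma_i\in\C^\nu\otimes\cX$. Define $X_j\in B(\cX)$ by Riesz representation as the unique $X_jp\in V_k$ satisfying
\[
\langle X_j p, q\rangle_\lambda = \lambda(q^*\,x_j\,p) \qquad (q\in V_k),
\]
which is well-posed since $q^* x_j p$ has degree $\le 2k+1$, within the domain of $\lambda$.

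A short check gives $X_j p = x_j p$ for $p\in V_{k-1}$, and induction on word length then yields $v(X)\gamma_j = v\cdot e_j^T\in V_k$ for every word $v$ with $|v|\le k$. Since any hereditary $f$ of degree $\le k$ is a linear combination of monomials $w^* v E_{ij}$ with $|w|+|v|\le k$, the identity $\lambda(f) = \langle f(X)\gamma, \gamma\rangle$ reduces to the tautology $\lambda(w^* v E_{ij}) = \langle v\cdot e_j^T, w\cdot e_i^T\rangle_\lambda$, which is immediate from the definition of the form. For the pencil conclusion, write $\eta\in\C^\ell\otimes\cX$ as $\eta = \sum_m e_m\otimes\eta^{(m)}$ with $\eta^{(m)}\in V_k$ and assemble $\psi\in\C^{\ell\times\nu}\langle x\rangle_k$ whose $m$-th row is $\eta^{(m)}$. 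The Riesz identity together with its adjoint version $\langle X_j^* p, q\rangle_\lambda = \lambda(q^* x_j^* p)$ turns a direct entrywise expansion into
\[
\langle L(X)\eta, \eta\rangle = \lambda(\psi^* L \psi).
\]
Since $\psi^* L\psi\in M^{\nu}_{k+1}$, the hypothesis $\lambda\ge 0$ on $M^{\nu}_{k+1}$ forces $L(X)\succeq 0$, i.e.\ $X\in\ps_L$. The converse direction is immediate: once $\lambda(f) := \langle f(X)\gamma, \gamma\rangle$, one has $\lambda(\varphi^*\varphi) = \|\varphi(X)\gamma\|^2\ge 0$, and if $X\in\ps_L$ then $\lambda(\psi^* L\psi) = \langle L(X)\psi(X)\gamma, \psi(X)\gamma\rangle\ge 0$.

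The main subtlety is the choice of the $X_j$. A naive linear extension of ``multiplication by $x_j$'' from $V_{k-1}$ to all of $V_k$ already suffices for the moment identity $\lambda(f) = \langle f(X)\gamma, \gamma\rangle$, but typically fails the compatibility relation $\langle X_j p, q\rangle_\lambda = \lambda(q^* x_j p)$ on the top-degree piece of $V_k$; yet this compatibility is precisely the bridge from $\lambda\ge 0$ on $M^{\nu}_{k+1}$ to the operator inequality $L(X)\succeq 0$. Defining the $X_j$ by Riesz representation from the outset bakes in this compatibility at no extra cost.
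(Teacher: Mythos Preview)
Your proof is correct and follows essentially the same GNS-type construction as the paper. The only cosmetic difference is that the paper builds the larger semidefinite space $V_{k+1}$, quotients by its null space, and defines $X_j$ as the compression $P(x_j\,\cdot\,)$ of left multiplication to the subspace $V_k$, whereas you define $X_j$ directly on $V_k$ via Riesz representation of the functional $q\mapsto\lambda(q^*x_jp)$; since $\langle P x_j p, q\rangle = \lambda(q^* x_j p)$ for $q\in V_k$, the two definitions coincide and the remaining verifications are identical.
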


\begin{proof}
  First suppose that $\la:\Cmt\axs^\her_{2k+2} \to \C$ is 
  nonnegative on $\Sigma^{\mt,\her}_{k+1}$ and  positive
  on $\Sigma_k^{\mt,\her}\setminus\{0\}$.   Consider the 
 symmetric bilinear form,
  defined on the vector space $K=\C^{\mt \times 1}\ax_{k+1}$
  (row vectors of length $\mt$ whose entries are analytic polynomials
  of degree at most $k+1$)  by,
\beq\label{eq:bform}
  \langle f,h\rangle = \la(h^* f).
\eeq
 From the hypotheses, this form is positive semidefinite.

 A standard use of Cauchy-Schwarz inequality shows that the set of
 null vectors
$$ 
 \cN:= \{ f \in K :  \langle f , f \rangle = 0 \}
$$
is a vector subspace of $K$. 
Whence one can endow the quotient
$\tilde \cX := K /\cN$ with the induced positive 
 definite bilinear form
 making it a Hilbert space.  Further,  because
 the form \eqref{eq:bform} is positive definite on 
 the subspace $\cX=\C^{\mt\times 1}\ax_k$, 
 each equivalence class in that set has a unique representative which is 
 a $\mt$-row of analytic polynomials of degree at most $k$. 
 Hence we can consider $\cX$ as a subspace of $\tilde\cX$ with dimension
 $\mt \sigma_\#(k)$.

 Each $x_j$ determines a multiplication operator on $\cX$. 
 For $f=\begin{pmatrix} f_1 & \cdots & f_{\mt}\end{pmatrix}\in \cX$, let 
\[
  x_j f = \begin{pmatrix} x_j f_1 & \cdots & x_j f_{\mt} \end{pmatrix}
\in\tilde\cX
\]
 and define
$X_j : \cX \to \cX$ by 
\[
  X_j f = P x_j f, \quad f \in \cX, \; 1 \leq j \leq g,
\]
 where $P$ is the orthogonal projection from $\tilde\cX$ onto $\cX$ 
 (which is only needed on the degree $k+1$ part of $x_jf$). 
 From the positive definiteness of the bilinear form \eqref{eq:bform}
 on $\cX$, 
one easily sees that each $X_j$ is well defined.

 Let $\gamma \in \cX^{\oplus \mt}$ denote the vector
 whose $j$-th entry, $\gamma_j$ has the empty word
 (the monomial 1)
 in the $j$-th entry and zeros elsewhere.  
 Finally, given words $v_{s,t} \in\ax_{k}$ 
  and $w_{s,t}\in\ax_{k}$ for $1\le s,t \le \mt$,
  choose $f \in \Cmt\axs^\her_{k} $ to  have $(s,t)$-entry 
  $w^*_{s,t} v_{s,t}$. 
  In particular, with $e_1,\dots,e_{\mt}$
  denoting the standard orthonormal basis for $\mathbb R^{\mt}$,
  \[f= \sum_{s,t=1}^\mt w_{s,t}^* v_{s,t} e_s e_t^*.\] Thus, \[
 \begin{split}
  \langle f(X)\gamma,\gamma\rangle 
    &=  \sum \langle f_{s,t}(X) \gamma_t,\gamma_s\rangle 
     =  \sum \langle w_{s,t}^*(X)v_{s,t}(X)\gamma_t, \gamma_s \rangle 
      = \sum \langle v_{s,t}(X)\gamma_t, w_{s,t}(X) \gamma_s \rangle \\
     & = \sum \langle v_{s,t} e_t^*, w_{s,t} e_s^* \rangle 
      = \sum  \la( w_{s,t}^* v_{s,t} e_s e_t^* ) 
      = \la\big(\sum (w_{s,t}^* v_{s,t} e_s e_t^*)\big)      = \la(f). 
 \end{split}
\]
  Since any $f\in  \Cmt\axs^\her_{k}$ 
  can be written as a linear combination
  of words of the form $w^*v$  with
  $v,w\in\ax_{k}$   as was done above,
  equation \eqref{eq:LorX} is established.

  To prove the further statement, suppose $\la$ is nonnegative
  on $M^{\mt}_{k+1}$. 
Write $L=I+\Lambda+\Lambda^*$, where $\Lambda$ is the homogeneous linear analytic part of $L$.  
  Given
\[
  \psi =\begin{pmatrix} \psi_1 \\ \vdots \\ \psi_\ell \end{pmatrix}
    \in \cX^{\oplus \ell},
\]
  note that
\begin{equation*} 
 \begin{split}
   \langle L(X) \psi, \psi \rangle 
   & = \langle (I+\La(X)+\La(X)^*)\psi,\psi  \rangle =
\langle (I+\La(X))\psi,\psi  \rangle + \langle \psi,\La(X)\psi \rangle    \\
& =    \langle \psi +\sum A_j P x_j \psi, \psi\rangle  + \langle \psi,\sum A_j P x_j \psi\rangle 
   = \langle \psi +\sum A_j  x_j \psi, \psi\rangle  + \langle \psi,\sum A_j  x_j \psi\rangle \\
& = \langle (I +\La(x)) \psi, \psi\rangle  + \langle \psi, \La(x) \psi\rangle 
   =  \la\big(\psi^* (I+\La_A(x))\psi\big)  + \la (\psi^* \La(x)^* \psi)  \\
&   = \la\big(\psi^* (I+\La(x)+\La(x)^*)\psi\big) 
   = 
\la ( \psi^* L \psi)    \ge 0.
 \end{split}
\end{equation*}
Hence, $L(X) \succeq 0$.

  The proof of the converse  is routine  and is not
  used in the sequel. 
\end{proof}

\subsubsection{Step $3$: Conclusion} 
Let us first prove \eqref{eq:posss1}.
The reverse implication %
is obvious. To prove  the forward implication 
assume $h\not\in M_{d+1,d}^{\mt,\her}(L)$. 
By Lemma \ref{lem:closed} then there is a linear functional
$\la: \Cmt\axs^\her_{2d+2}\to\C$ satisfying
\[
\la(h)<0, \qquad \la(M_{d+1,d}^{\mt,\her})\subseteq \R_{\geq0}.
\]
By adding a small multiple of a linear functional that is strictly
positive on $\Sigma^{\mt,\her}_{d+1}\setminus\{0\}$
(see e.g.~\cite[Lemma 3.2]{HKM12} for its existence), we may assume moreover that
\[
\la \big(\Sigma^{\mt,\her}_{d+1}\setminus\{0\}\big)\subseteq\R_{>0}.
\]
Now Proposition \ref{prop:gns} applies: 
 there exists a tuple $X=(X_1,\dots,X_g) \in \cD_L$ of
$\mt\sigma_\#(d) \times \mt\sigma_\#(d) $ matrices, and a
vector $\ga$  such that
\eqref{eq:LorX} holds
  for all $f\in \Cmt\axs^\her_{d}$. Hence
  \[
  0 > \la(h) = \langle h(X)\ga,\ga\rangle.
  \]
 Thus $h(X)\not\succeq0$. 
  
Finally, \eqref{eq:posss2} follows from the Hahn-Banach theorem. Namely, if $\cD_L$ is bounded, then
$I=\sum_j V_j^*L(X)  V_j$ for some $V_j$, see e.g.~\cite[Section 4.1]{HKM12}.  
  \hfill 
 \qedsymbol

\section{Positivity Certificates for Analytic Mappings}
 \label{sec:HeredEq}
This section chronicles consequences of a Positivstellensatz certificate 
like that of equation \eqref{eq:posst+}. 
Proposition \ref{prop:multi generalIntro} is the principal result.

Given a $g$-tuple $\ptA$ of operators on a Hilbert space $\mathcal E$, a positive integer $e$ and
a formal powers series $W(x)=\sum W_\alpha \alpha$ with coefficients $W_\alpha:\C^e\to \mathcal E$,
and $G=\sum G_\alpha \alpha$ with  coefficients $G_\alpha:\C^e\to\C^e$  and $G(0)=G_\emptyset =0$, the identity
\begin{equation}
\label{eq:multi general0}
 I_e+G(x)+G(x)^* = W(x)^* L_{\ptA}(x) W(x)
\end{equation}
is interpreted as holding in the ring of (matrices over) formal power series in $x,x^*$. Equivalently, 
for words $\alpha,\beta$ and $1\le j\le g$,
\begin{equation}
 \label{eq:preiso1alt}
\begin{split}
     W_\beta^*\ptA_k^* W_{x_j\alpha} + W_{x_k\beta}^* \ptA_j W_{\alpha} + W_{x_k\beta}^* W_{x_j\alpha} & =   0\\
    W_{\emptyset}^* ( \ptA_k W_{\alpha} + W_{x_k \alpha}) & = G_{x_k\alpha} \\
   W_{\emptyset}^* W_\emptyset &=  I.
\end{split}
\end{equation}

\begin{prop}
 \label{prop:formalveval}
 Suppose $e$ is a positive integer, 
 $G$ is an $e\times e$ matrix-valued free analytic function,
 $\cE$  is a (not necessarily finite-dimensional) Hilbert
 space, $W$ is a formal power series with coefficients $W_\alpha:\C^e\to \cE$
 and  $\ptA$ is a $g$-tuple of operators on $\cE$.

  The following are equivalent.
\ben[\rm(i)] 
 \item \label{it:fe1} Equation  \eqref{eq:multi general0} holds in the ring of formal power series. Equivalently, the equations \eqref{eq:preiso1alt} hold.
 \item \label{it:fe2} For all nilpotent $X\in M(\rC)^g$,
\beq\label{eq:GX0}
 I+G(X)+G(X)^* = W(X)^* L_{\ptA}(X) W(X).
\eeq
\een

In addition, if $W$ and $G$ have positive formal radii of convergence at least $\tau>0$, then items \eqref{it:fe1} and \eqref{it:fe2} are equivalent to
\begin{enumerate}[\rm(i)]
\setcounter{enumi}{2}
 \item\label{it:fe3}  Equation \eqref{eq:GX0} holds for all $X\in \SR_\tau$.
\end{enumerate}
\end{prop}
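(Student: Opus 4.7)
The plan is to establish $(i) \Leftrightarrow (ii)$ unconditionally and then $(i) \Leftrightarrow (iii)$ under the convergence hypothesis. Both sides of \eqref{eq:multi general0} are hereditary formal power series, and (i) is exactly the system of coefficient identities \eqref{eq:preiso1alt}. The implication $(i) \Rightarrow (ii)$ is immediate: if $X$ is nilpotent of order $N+1$, then $X^w = 0$ for all $|w| \ge N$, so both sides of \eqref{eq:GX0} are finite sums whose matching term by term comes directly from (i). Likewise $(iii) \Rightarrow (ii)$ is immediate, since any nilpotent tuple has spectral radius $0 < \tau$ and hence lies in $\SR_\tau$.

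The main step is $(ii) \Rightarrow (i)$, which I would carry out in two stages. Stage one uses scaling: for each fixed nilpotent $X$, substituting $tX$ (still jointly nilpotent for every $t \in \C$, since $(tX)^w = t^{|w|} X^w$) in \eqref{eq:GX0} turns both sides into polynomials in $(t, \bar t)$, and matching coefficients of $t^k \bar t^l$ isolates the bihomogeneous piece of bidegree $(k,l)$. This reduces the claim to the polynomial statement that a bihomogeneous matrix-coefficient free polynomial $P(x, x^*)$ of bidegree $(k,l)$ which vanishes on every nilpotent tuple is identically zero. Stage two proves this by taking $X \in M_n(\C)^g$ with $n > k + l$ and strictly upper triangular entries $(X_j)_{p,q} = t_{j,p,q}$ for $p < q$; each such $X$ is nilpotent, and a direct computation shows that the $(l+1, k+1)$-entry of $(X^v)^* X^u$ with $|v| = l$ and $|u| = k$ equals the single monomial
\[
\bar t_{v_1, 1, 2}\cdots \bar t_{v_l, l, l+1}\; t_{u_1, 1, 2}\cdots t_{u_k, k, k+1}.
\]
These monomials are distinct as $v, u$ range over words of the specified lengths, hence linearly independent as polynomials in the $t$'s and their formal conjugates; since a polynomial in $(z,\bar z)$ that vanishes on $\C^N$ is identically zero as a polynomial in $2N$ indeterminates, equating the $(l+1, k+1)$-block of $P(X, X^*)$ to zero for all complex choices of the $t_{j,p,q}$ forces every coefficient $P_{v,u}$ to vanish. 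Summing over bidegrees recovers all of \eqref{eq:preiso1alt}: the $(0,0)$ piece gives $W_\emptyset^* W_\emptyset = I$, the pure bidegrees $(k,0)$ and $(0,l)$ give the second equation, and the mixed bidegrees $(k,l)$ with $k,l \ge 1$ give the first equation.

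Finally, $(i) \Rightarrow (iii)$ is a convergence argument: $W$ and $G$ have formal radius of convergence at least $\tau$, $L_{\ptA}$ is a polynomial, and $W^*$ shares the radius of convergence of $W$, so $W(X)^* L_{\ptA}(X) W(X)$ and $I + G(X) + G(X)^*$ are both absolutely convergent at each $X \in \SR_\tau$. The coefficient-wise identity from (i) together with absolute convergence permits term-by-term rearrangement and yields the pointwise identity \eqref{eq:GX0} on $\SR_\tau$. The main obstacle I anticipate is the bookkeeping in stage two of $(ii) \Rightarrow (i)$: separating the bihomogeneous contributions cleanly and verifying the linear independence of the monomial evaluations, so that each scalar equation in \eqref{eq:preiso1alt} is recovered individually rather than as an aggregate mixed across bidegrees.
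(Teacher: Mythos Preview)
Your argument is correct and your route for $(ii)\Rightarrow(i)$ is genuinely different from the paper's. The paper packages this implication into a single lemma (Lemma~\ref{lem:faith}): evaluating at the truncated Fock-space shifts $S=(S_1,\dots,S_g)$ on $\mathscr F_N$, which are jointly nilpotent of order $N+1$, one checks by induction on $|\alpha|$ that the operators $\{S^\beta S^{*\alpha}:|\alpha|,|\beta|\le N\}$ are linearly independent, so a single evaluation at $X=S$ already kills every coefficient of length $\le N$. This avoids your two-stage decomposition (first $t$-scaling to isolate bidegree, then generic strictly upper-triangular $X$ to separate words within a bidegree); on the other hand, your approach is more hands-on and makes explicit why ``enough'' nilpotents suffice. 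For $(i)\Rightarrow(iii)$ the paper is more concrete than your ``absolute convergence permits rearrangement'': it writes out the difference of the $N$-th partial sums,
\[
I+G^{(N)}(X)+G^{(N)}(X)^* - W^{(N)}(X)^* L_{A}(X) W^{(N)}(X),
\]
uses the coefficient identities \eqref{eq:preiso1alt} to telescope this into two boundary terms of the form $\bigl(\sum_{|\beta|=N} W_\beta\otimes X^\beta\bigr)^*\Lambda_A(X)^*\bigl(\sum_{|\alpha|\le N}W_\alpha\otimes X^\alpha\bigr)$ (and its companion), and then estimates these directly using the radius-of-convergence hypothesis. Your absolute-convergence argument is valid (the norm series $\sum_\alpha\|W_\alpha\|\,\|X^\alpha\|$ converges on $\SR_\tau$, so Cauchy products of operator-valued series are justified), but the paper's explicit remainder computation sidesteps any unease about rearranging doubly-indexed operator sums and is worth knowing.
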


Before beginning the proof of Proposition \ref{prop:formalveval}, we
first state and prove a routine lemma.
Fix $N$ a positive integer. Consider the truncated Fock Hilbert space
$\mathscr F_N$ with orthonormal basis $\{\alpha\in\ax : |\alpha|\le
N\}$. Let $S$ (we suppress the dependence on $N$) denote the tuple of
shifts determined by $S_jw =x_j w$ if the length of the word $w$ is
strictly less than $N$ and $S_j w =0$ if the length of the word $w$ is
$N$. In particular, $S$ is nilpotent of order $N$.

\begin{lemma}
\label{lem:faith} 
Given Hilbert spaces $H$ and $K$ and operators
$F_{\alpha,\beta}:H\to K$ parameterized over words $\alpha,\beta$,
of length at most $N$, if 
\[
 \sum_{|\alpha|,|\beta|\le N} F_{\alpha,\beta}\otimes S^{\beta} S^{*\alpha} = 0,
\]
 then $F_{\alpha,\beta}=0$ for all $\alpha,\beta$.
\end{lemma}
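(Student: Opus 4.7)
The plan is to exploit the very explicit combinatorial action of the truncated shift tuple on the Fock basis and then peel off the coefficients one $\alpha$ at a time by induction on $|\alpha|$.

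First I would compute the action of $S^\beta S^{\ast\alpha}$ on a basis word $w\in\ax_{\le N}$. Since $S_j^\ast$ is the adjoint of left multiplication by $x_j$ (up to the truncation), one checks that $S^{\ast\alpha}w = v$ if $w = \alpha v$ and $0$ otherwise, and then $S^\beta v = \beta v$ whenever $|\beta v|\le N$. Combining these gives the clean description
\[
S^\beta S^{\ast\alpha}\,w \;=\; \begin{cases} \beta v, & w=\alpha v \text{ and } |\beta v|\le N, \\ 0, & \text{otherwise.}\end{cases}
\]
In particular, $S^\beta S^{\ast\alpha}\emptyset = \beta$ if $\alpha=\emptyset$ and $0$ otherwise.

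Next I would test the hypothesized identity $\sum F_{\alpha,\beta}\otimes S^\beta S^{\ast\alpha} = 0$ against vectors of the form $h\otimes\alpha_0$ for fixed $h\in H$ and a fixed basis word $\alpha_0$ of length $k$. By the formula above, only those $\alpha$ that are prefixes of $\alpha_0$ (i.e.\ $\alpha_0 = \alpha v_\alpha$ for some $v_\alpha$) contribute, yielding
\[
\sum_{\substack{\alpha \text{ prefix of }\alpha_0 \\ \beta,\ |\beta v_\alpha|\le N}} F_{\alpha,\beta}\,h \,\otimes\, \beta v_\alpha \;=\; 0.
\]
Taking $\alpha_0=\emptyset$ gives the base case: only $\alpha=\emptyset$ occurs, $v_\alpha=\emptyset$, and linear independence of $\{\beta : |\beta|\le N\}$ in $\mathscr F_N$ forces $F_{\emptyset,\beta}h=0$ for all $h$ and all $\beta$, hence $F_{\emptyset,\beta}=0$.

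I would then induct on $k=|\alpha_0|$. Assuming $F_{\alpha,\beta}=0$ for all words $\alpha$ with $|\alpha|<k$ and all $\beta$, the displayed sum collapses to the single prefix $\alpha=\alpha_0$ (with $v_{\alpha_0}=\emptyset$), leaving $\sum_\beta F_{\alpha_0,\beta}h\otimes \beta = 0$. Linear independence of the Fock basis again forces $F_{\alpha_0,\beta}h=0$ for every $h\in H$ and every $\beta$, hence $F_{\alpha_0,\beta}=0$, closing the induction. The only subtlety, and the step I would take care to verify, is the clean prefix-decomposition rule for $S^\beta S^{\ast\alpha}$; once that is in place the induction is essentially bookkeeping, and nothing more than the orthonormality of the truncated Fock basis is needed to conclude.
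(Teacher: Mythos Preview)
Your proof is correct and follows essentially the same approach as the paper's: induction on $|\alpha|$, testing against vectors $h\otimes\gamma$ with $\gamma$ a basis word of the current length, using that $S^{\ast\alpha}\gamma\neq 0$ only when $\alpha$ is a prefix of $\gamma$, and then invoking linear independence of the Fock basis to read off $F_{\gamma,\beta}=0$. You are in fact slightly more explicit than the paper in articulating the prefix rule and the truncation constraint $|\beta v_\alpha|\le N$, but the argument is the same.
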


\begin{proof}
We argue  by induction on the length of $\alpha$.  In the case $\alpha =\emptyset$, 
evaluating at vectors of the form $h\otimes \emptyset$ with  $h\in H$ gives
\[
 0= \sum_{|\alpha|,|\beta|\le N} F_{\alpha,\beta}h \otimes S^{\beta} S^{*\alpha}\emptyset  = \sum_{|\beta|\le N} F_{\emptyset,\beta} h \otimes \beta.
\]
 Hence $F_{\emptyset,\beta}=0$ for all $|\beta|\le N$. Now suppose $0\le n<N$ and $F_{\alpha,\beta}=0$ for all $|\alpha|\le n$ and $|\beta|\le N$.
Let a word $\gamma$ with length $n+1$ be given. Evaluating at vectors of the form $h\otimes \gamma$ and using the induction hypothesis gives,
\[
 \begin{split}
 0= & \sum_{|\alpha|,|\beta|\le N} F_{\alpha,\beta}h \otimes S^{\beta} S^{*\alpha}\gamma \\
  = &  \sum_{n<|\alpha|\le N,|\beta|\le N} F_{\alpha,\beta}h \otimes S^{\beta} S^{*\alpha}\gamma \\
  = & \sum_{|\beta|\le N} F_{\gamma,\beta} h\otimes \beta
\end{split}
\]
Hence $F_{\gamma,\beta}=0$ for all $|\beta|\le N$.
\end{proof}

\begin{proof}[Proof of Proposition~\ref{prop:formalveval}]
 Suppose item \eqref{it:fe2} holds. Thus for all nilpotent tuples $X$,
\[
 I + G(X)+G(X)^* - W(X)^* L_A(X)W(X) =0.
\]
In this case Lemma \ref{lem:faith} implies the identities of 
equation \eqref{eq:preiso1alt} hold. Hence item \eqref{it:fe2} implies item
\eqref{it:fe1}. That item \eqref{it:fe1} implies \eqref{it:fe2} is
evident.  

Under the added hypotheses on the radii of convergence, item \eqref{it:fe3} implies \eqref{it:fe2}.
It remains to prove the converse. Accordingly, suppose $X\in \SR_\tau$.
 Let $R$ and $L$ denote the values of the right and left hand side of \eqref{eq:GX0}
evaluated at $X$ respectively and let $G^{(N)}$ and $W^{(N)}$ denote the $N$-th partial sums of the respective series $G(X)$ and $W(X)$.  
 Given $\epsilon>0$ there is an $N$ such that 
\[
\begin{split}
 \|I+G^{(N)}(X)+G^{(N)}(X)^* - R\| &<\epsilon,\\
 \|W^{(N)}(X)^* L_{\ptA}(X)W^{(N)}(X) - L\|&<\epsilon.
 \end{split}
\]
 Use \eqref{eq:preiso1alt} to compute
\begin{multline}\label{eq:psCalc2}
 I+G^{(N)}(X)+G^{(N)}(X)^* - W^{(N)}(X)^*L_{\ptA}(X) W^{(N)}(X) \\
 = -\sum_{k=1}^g  \sum_{|\beta|=N}\sum_{|\alpha|\le N} W_\beta^* \ptA_k^* W_\alpha \otimes X^{*\beta}X_k^* X^\alpha 
   - \sum_{j=1}^g  \sum_{|\beta|\le N}\sum_{|\alpha|=N} W_\beta^* \ptA_j W_\alpha \otimes X^{*\beta}X_j X^\alpha \\
   =
-\Big(  \sum_{|\beta|=N}  W_\beta \otimes X^{\beta}\Big)^* \La_\ptA(X)^* \Big(  \sum_{|\al|\leq N}  W_\al \otimes X^{\al}\Big)
   -
\Big(  \sum_{|\beta|\leq N}  W_\beta \otimes X^{\beta}\Big)^* \La_\ptA(X) \Big(  \sum_{|\al|=N}  W_\al \otimes X^{\al}\Big)
       .
\end{multline}
The norm of each of the two summands in the last line of \eqref{eq:psCalc2}
is at most
\beq\label{eq:psCalc}
  \sum_{k=1}^g \|\ptA_k\otimes X_k\|  \big(\sum_{|\beta|=N} \|W_\beta\|\, \|X^\beta\| \big)
 \big(\sum_{|\al|\leq N} \|W_\al\|\, \|X^\al\| \big)
  .
\eeq
By hypothesis the second factor in \eqref{eq:psCalc}
tends to $0$ with $N$ and the 
first and third factor are uniformly bounded on $\SR_\tau$.
Thus the left hand side of \eqref{eq:psCalc2} tends to zero with $N$ and the proof is complete. 
\end{proof}

With the notations already introduced, let
\[
 \rg(\ptA,W) =\{\ptA_j W_\alpha h: 1\le j\le g, \, \alpha, \, h\in \C^e\}.
\]

\begin{prop}
 \label{prop:multi generalIntro}
   Suppose $e$ is a positive integer and $\mathcal E$ is a separable Hilbert space.  If
\ben[\rm (a)]
  \item  $\ptA$ is a $g$-tuple of operators on $\mathcal E$; 
  \item   $W$ is a formal power series with coefficients $W_\alpha :\rC^e \to \mathcal E$; and
  \item   $G$ is a formal power series with $G(0)=0$ and $e\times e$ matrix coefficients $G_\alpha$  such that  
 equation \eqref{eq:multi general0} holds,
 \een
   then $\mathscr{W}=\Wem:\rC^e\to \mathcal E$ is an isometry and
  there exists a contraction $C:\mathcal E\to\mathcal E$ that is isometric on $\rg(\ptA,W)$ 
such that 
\[ 
\begin{split}
 G(x) =  \sW^* C \big(\sum_{j=1}^g \ptA_j x_j\big) W(x),
          \end{split}
\]
 where, letting $R=(C-I_{\mathcal E}) \ptA$, the function $W$ is given as in equation \eqref{eq:WIntro}
\[ 
W(x)  = \big(I_{\mathcal E}-\sum_{j=1}^g R_j x_j\big)^{-1} \sW.
\] 

 Moreover, if $\mathcal E$ is finite dimensional, then $C$ can be chosen unitary. In any case, 
choosing an auxiliary separable infinite dimensional Hilbert space $\cEp$ and a tuple $\ptAp$ acting on $\cEp$ and letting 
\[
 \tcE= \mathcal E\oplus \cEp, \ \ \ \tptA =\begin{pmatrix} \ptA&0\\ 0 & \ptAp \end{pmatrix}, \ \ \ 
 \tsW = \begin{pmatrix} \sW \\ 0\end{pmatrix}:\C^e\to \tcE, \ \ \ \tilde{W} = \begin{pmatrix} W \\ 0\end{pmatrix},
\]
we have
\[
I_e+G(x)+G(x)^* = \tilde{W}(x)^* L_{\tptA}(x) \tilde{W}(x),
\]
and there is a unitary mapping $\tilde{C}:\tcE\to\tcE$ such that, letting $\tilde{R} = (\tilde{C}-I_{\tcE})\tptA$,
\[
 \tW(x) =  \big(I_{\mathcal E}-\sum_{j=1}^g \tilde{R}_j x_j\big)^{-1} \tsW, \ \ \
 G(x) =  \tsW^* \tilde{C} \big(\sum_{j=1}^g \tptA_j x_j\big) \tilde{W}(x).
\]

In particular, 
\ben[\rm(1)]
 \item \label{it:recure}
  the following recursion formula holds,
\[
 W_{x_j\alpha} = (C-I_{\mathcal{E}})\ptA_j W_\alpha, \ \ \  {\tW}_{x_j\alpha} = (\tilde{C}-I_{\tcE})\tptA_j {\tW}_\alpha ;
\]
\item \label{it:Bj}
   the $x_j$ coefficient of $G(x)$ is
\[
  G_{x_j} = \sW^* C \ptA_j\sW = \tsW^* \tilde{C}\tptA_j \tsW;
\]
 \item \label{it:Balpha}
   more generally, $G_{x_j\alpha}$, the $x_j \alpha$ coefficient of $G,$ is
\begin{equation}
\label{eq:tildegeneral0}
 G_{x_j \alpha} = \sW^* C \ptA_j R^\alpha \sW = \tsW^* \tilde{C} \tptA_j {\tilde{R}}^\alpha \tsW.
\end{equation}
\een

 Conversely, given a tuple $\ptA$ on a Hilbert space $\mathcal{E}$ a contraction 
 $C:\mathcal{E}\to\mathcal{E}$ that is isometric on the range of $A$ and an isometry  $\sW:\rC^d\to \mathcal E$, 
  defining $R=(C-I) \ptA,$ and  $W$ and $G$ as in equations \eqref{eq:WIntro} and \eqref{eq:Gup}, the identities of  \eqref{eq:preiso1alt} hold. 
\end{prop}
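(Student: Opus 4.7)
The plan is to extract from the formal identities \eqref{eq:preiso1alt} an isometry condition, construct $C$ from that, and then derive the explicit formulas for $W$ and $G$ by iteration.

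First, I would introduce the auxiliary vectors $V_{x_j\alpha} := \ptA_j W_\alpha + W_{x_j\alpha}$. The second line of \eqref{eq:preiso1alt} immediately says $G_{x_k\alpha} = \sW^* V_{x_k\alpha}$, and the third line says $\sW = W_\emptyset$ is an isometry. The key calculation is to expand
\[
V_{x_k\beta}^* V_{x_j\alpha} = (\ptA_k W_\beta + W_{x_k\beta})^*(\ptA_j W_\alpha + W_{x_j\alpha}),
\]
multiply out the four terms, and observe that the sum of the last three terms is exactly the left-hand side of the first line of \eqref{eq:preiso1alt}, hence vanishes. This leaves $V_{x_k\beta}^* V_{x_j\alpha} = (\ptA_k W_\beta)^*(\ptA_j W_\alpha)$, the required isometry identity.

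Next, I would define the linear map $C$ on the subspace $\cS_1 := \overline{\mathrm{span}}\,\rg(\ptA,W)$ by the formula $C(\ptA_j W_\alpha h) := V_{x_j\alpha}h$. The previous identity shows both that the definition extends consistently to finite sums (since $\|\sum V_{x_{j_i}\alpha_i}h_i\|^2 = \|\sum \ptA_{j_i}W_{\alpha_i}h_i\|^2$) and that the resulting $C$ is isometric on $\cS_1$. I then extend $C$ to all of $\mathcal{E}$ by zero on $\cS_1^\perp$ to obtain a contraction, which is the asserted $C$. For the finite-dimensional case, $C$ restricts to an isometry between $\cS_1$ and $\cS_2 := C(\cS_1)$, both of the same dimension, so the orthogonal complements also share dimension and $C$ extends to a unitary. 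In general, enlarging to $\tcE = \mathcal{E}\oplus\cEp$ with $\cEp$ separable infinite-dimensional provides enough room, and I would simply extend $\tilde C$ to act as an arbitrary unitary matching of the two orthogonal complements in $\tcE$.

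The conclusions now follow mechanically. By construction, $W_{x_j\alpha} = V_{x_j\alpha} - \ptA_j W_\alpha = (C - I)\ptA_j W_\alpha = R_j W_\alpha$, which is the recursion \eqref{it:recure}. Iterating gives $W_\alpha = R^\alpha \sW$ for every word $\alpha$, and summing produces
\[
W(x) = \sum_\alpha R^\alpha \sW\,\alpha = \bigl(I - \La_R(x)\bigr)^{-1}\sW,
\]
which is \eqref{eq:WIntro}. Substituting into $G_{x_k\alpha} = \sW^* V_{x_k\alpha} = \sW^* C\ptA_k W_\alpha$ yields \eqref{eq:tildegeneral0}, and assembling the series delivers $G(x) = \sW^* C \La_\ptA(x)\,W(x)$, that is \eqref{eq:Gup}. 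The converse direction is a direct verification: given $\ptA$, the contraction $C$, and the isometry $\sW$, one defines $W$ and $G$ by the stated formulas and checks the three identities in \eqref{eq:preiso1alt} term by term, using $W_{x_j\alpha} = R_j W_\alpha$ and $C^*C|_{\rg(\ptA,W)} = I$.

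The main obstacle I anticipate is the well-definedness of $C$ in the non-free situation where the vectors $\{\ptA_j W_\alpha h\}$ may satisfy nontrivial linear relations; the isometry identity resolves this, since it implies such a relation in the source forces the same relation in the target. The finite-dimensional unitary extension is then a dimension-count, and the infinite-dimensional unitary extension is handled cleanly by the direct-sum trick with $\cEp$, after which all the stated coefficient formulas carry over verbatim with $\ptA, C, \sW, R, W$ replaced by their tilded counterparts.
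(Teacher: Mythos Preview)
Your proposal is correct and follows essentially the same lurking-isometry strategy as the paper: derive the Gram identity $(\ptA_kW_\beta+W_{x_k\beta})^*(\ptA_jW_\alpha+W_{x_j\alpha})=W_\beta^*\ptA_k^*\ptA_jW_\alpha$, build $C$ from it, and then iterate to get the formulas for $W$ and $G$. The only notable difference is that the paper constructs $C$ as a WOT-subsequential limit of unitary extensions $C_N$ defined on the finite-dimensional spans $\mathcal{E}_N$, whereas you define $C$ directly on $\overline{\mathrm{span}}\,\rg(\ptA,W)$ and extend by zero; your route is slightly more elementary (no compactness argument needed), while the paper's route makes the finite-dimensional unitary case fall out automatically since a norm limit of unitaries is unitary.
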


\begin{proof}
  Completing the square in the first equation of \eqref{eq:preiso1alt} gives,
\begin{equation}
 \label{eq:preiso2w}
   (\ptA_kW_\beta + W_{x_k\beta})^* \, (\ptA_j W_\alpha + W_{x_j\alpha}) = W_\beta^* \ptA_k^* \ptA_j W_\alpha.
\end{equation}

 Fix, for the moment, a positive integer $N$. Recall  $W_\alpha:\rC^e\to\cE$ and 
 $\ptA_j:\cE\to\cE.$   Let $\mathcal K_N = \oplus_{|\alpha|\le N} \rC^e$, the Hilbert space direct sum of $\rC^e$ over the set of words  
 of length at most $N$ in the variables $x=(x_1,\dots,x_g)$.  Finally, let $\mathcal L_N :=\oplus_{j=1}^g \mathcal K_N$
   and note that $h\in\mathcal L_N$ takes the form $h=\oplus_j \oplus_{|\alpha|\le N} h_{j,\alpha}=\oplus h_{j,\alpha}$.  Let
\[
 \mathcal E_N = \Big\{ \sum_{j,|\alpha|\le N} \ptA_jW_\alpha \, h_{j,\alpha}: h_{j,\alpha} \in \C^e  \mbox{ for } 1\le j\le g, \ \ |\alpha|\le N\Big\}
  \subset \rg(\ptA) \subset \mathcal E.
\]
 The subspaces $\mathcal E_N$ are nested increasing and $\rg(\ptA,W)=\cup_N \mathcal E_N$.

  Define $X_N,Y_N:\mathcal L_N\to \mathcal E$,
\[
\begin{split}
   X_N ( \oplus_{j=1}^g \oplus_{|\alpha|\le N} h_{j,\alpha} ) & = \sum_{j,|\alpha|\le N} (\ptA_jW_\alpha +W_{x_j\alpha}) h_{j,\alpha}\\
    Y_N (\oplus_{j=1}^g \oplus_{ |\alpha| \le N}  h_{j,\alpha} ) &= \sum_{j,|\alpha|\le N}  (\ptA_jW_\alpha) h_{j,\alpha}.
    \end{split}
\]
 Note that the range of $Y_N$ is $\mathcal E_N$.   Equation \eqref{eq:preiso2w} implies that
\begin{equation}
 \label{eq:lurk}
   X_N^* X_N = Y_N^* Y_N:\mathcal L_N \to \mathcal L_N. 
\end{equation}
In particular, if $Y_N h=0$, then $X_N h=0$. Hence, 
$Y_N h \mapsto X_N h$ is a well-defined map
$C_N:\mathcal E_N
\to \mathcal E$.  Further,
equation \eqref{eq:lurk} implies that $C_N$ is an isometry. Since $\mathcal E_N$ 
is finite-dimensional,  $C_N$ can be extended to a unitary $C_N:\mathcal E\to \mathcal E$.  
Thus, there is a unitary  mapping $C_N:\mathcal E\to \mathcal E$ such that
\[
   X_N=C_N Y_N. 
\]
Moreover, for $N\ge M$,
\[
 X_M =C_N Y_M.
\]

Since $(C_N)_N$ is a sequence of unitaries on $\mathcal E$, a subsequence $(C_{N_j})_j$ 
converges in the weak operator topology (WOT) to a contraction operator $C$.
(In the case $\mathcal E$ is finite dimensional, $C$ is unitary.) Fix $M$. For $N_j\ge M$,
\(
 C_{N_j}Y_M=X_M.
\)
 Hence, for a vector $h\in \mathcal L_M$ and a vector $e\in\mathcal E$,
\[
  \langle C Y_Mh,e\rangle =  \lim_j \langle C_{N_j} Y_Mh,e\rangle = \langle X_M h,e\rangle.
\]
 Thus, $C Y_M=X_M$ for all $M$.   In particular, $C$ is an isometry on $\rg(A,W)$ and, combined with 
 the second and third identities in equation \eqref{eq:preiso1alt},  for each $j,\alpha$
\begin{equation}
 \label{eq:pre-recursive-general}
\begin{split} 
  \ptA_jW_\alpha +W_{x_j\alpha} & =  C \ptA_jW_\alpha \\
     W_{\emptyset}^* ( \ptA_k W_{\alpha} + W_{x_k \alpha}) & = G_{x_k\alpha} \\
   W_{\emptyset}^* W_\emptyset &=  I.
\end{split}
\end{equation}
The first identity in equation \eqref{eq:pre-recursive-general} is equivalent to
 \begin{equation}
  \label{eq:isorecursive general}
     W_{x_j\alpha} = (C-I)\ptA_j W_\alpha.
 \end{equation}

 Now suppose $C:\mathcal E\to\mathcal E$ is a contraction that is isometric on $\rg(\ptA,W)$ and the identities of
  equation \eqref{eq:pre-recursive-general} hold.  In particular, equation \eqref{eq:isorecursive general}
 also holds.  For notational ease, if not consistency, let $\sW=\Wem$ and $W_\ell = W_{x_\ell}$. In particular,
 from $\sW$ is an isometry. Moreover,  it follows
  from equation \eqref{eq:isorecursive general} with $\alpha=\emptyset$, that $ W_j = (C-I)\ptA_j \sW$
   for each $j$.  Thus $W_j=R_j\sW$, where $R_j=(C-I)\ptA_j$.

  For each $k$ an application of equation \eqref{eq:isorecursive general}
  with $\alpha = x_j$ yields
\[
  W_{x_k x_j} = (C-I)\ptA_k W_j = R_k R_j \sW,
\]
 for  $j,k =1, \dots g.$  Induction on the length  of words gives,
\[
 W_\alpha = R^\alpha \sW
\]
 where  $R=(R_1,\dots,R_g)$.  Hence, 
\begin{equation}
\label{eq:W general}
 W(x) = (I-\sum R_j x_j)^{-1} \sW.
\end{equation}

  Now using the second and third equations of \eqref{eq:preiso1alt} together with
   \eqref{eq:W general} gives
\[
 \sW^* (I+\sum \ptA_k x_k)  (I-\sum R_\ell x_\ell)^{-1} \sW = I +G(x). 
\]
 Hence, 
\[
\begin{split}
 G(x)&=  \sW^* \Big[ \big (I+\sum \ptA_k x_k\big )  \big (I-\sum_{\ell=1}^g  R_\ell x_\ell\big )^{-1}  - I\Big] \sW \\
  & =  \sW^*\big(\sum_{k=1}^g (\ptA_k+R_k)x_k\big) \,  \big(I-\sum_{\ell=1}^g R_\ell x_\ell\big)^{-1} \sW \\
 & =  \sW^* C\big(\sum \ptA_k x_k\big)\, \big(I-(C-I)\sum \ptA_\ell x_\ell \big )^{-1}  \sW  \\
         & =  \sW^* C\big(\sum \ptA_k x_k\big)\, \big(I-\sum R_\ell x_\ell \big )^{-1} \sW.
\end{split}
\]
  At this point we have proved that if $W$ and $G$ solve equation \eqref{eq:multi general0}, then
   there exists a contraction $C$ that is  isometric   on $\mathcal E$ such that $W$ and $G$ have the claimed form.
   Further, in the case $\mathcal E$ is finite dimensional,  $C$ can be chosen unitary.

   Now let $\tcE$, $\tptA$, $\tsW$ and $\tilde{W}$ be given as in the statement of the proposition. In particular
   equation \eqref{eq:tildegeneral0} holds.  Further, 
   Let  $\rg(\tptA,\tilde{W})=\rg(\ptA,W)\oplus (0)\subset \tcE$. The orthogonal complements of 
   $\rg(\tptA,\tilde{W})$ and $C(\rg(\tptA,\tilde{W}))$ in $\tcE$ are  infinite dimensional separable Hilbert spaces. Hence
   there exists a unitary operator $\tilde{C}:\tcE\to\tcE$ such that $\tilde{C}(h\oplus 0)=Ch\oplus 0$ for $h\in \rg(\ptA,0)$.
    Thus $\tilde{C}$ is unitary and $\tilde{C}$, 
   $\tilde{W}$ and $\tptA$ together satisfy the analog of equation \eqref{eq:pre-recursive-general} and
   hence the conclusions of the proposition.

To prove the converse,  given a  tuple $\ptA=(\ptA_1,\dots,\ptA_g)$ of operators on the Hilbert space $\cE$,
 and a contraction $C:\mathcal E\to\mathcal E$ that is isometric on the range of $A$ and an isometry
  $\sW:\rC^d\to\cE$,  let 
  $R=(C-I)\ptA$ and   $W(x) = (I-\La_R(x))^{-1}\sW$ and define $G$ by equation \eqref{eq:Gup},
\[
  G(x) =  \sW^* C\Lambda_\ptA(x) (I-\Lambda_R(x))^{-1} \sW.
\]
By construction, $W_\alpha = R^\alpha \sW$. Moreover, 
for each word $\alpha$ and $1\le j\le g$,
\[
 (C-I)\ptA_j W_\alpha =R_j R^\alpha \sW = W_{x_j\alpha}.
\]
Hence,
\[
 C\ptA_j W_\alpha = \ptA_j W_\alpha + W_{x_j\alpha}.
\]
Since $C$ is isometric on the range of $\ptA$, given $1\le k\le g$
 and a word $\beta$, 
\[
 W_\beta^* \ptA_k^* \ptA_j W_\alpha = W_\beta^* \ptA_k^* \ptA_j W_\alpha +  W_\beta^* \ptA_k^* W_\alpha + 
  W_{x_k\beta}^* \ptA_j W_\alpha  + W_{x_\beta}^* W_{x_j\alpha}. 
\]
Thus the first of the identities of equation \eqref{eq:preiso1alt} hold. The third identity holds
since $\sW=\Wem$ is an isometry and, as $\sW=\Wem$, the second
identity holds by the choice of $G$ and the proof is complete.
\end{proof}

\begin{remark}\rm
 We note that the proof of the converse of Proposition \ref{prop:multi generalIntro} would, under some
  convergence assumptions, follow from the following formal calculation starting from the formula
 for $G$ of equation \eqref{eq:Gup}. Using $R_j= (C-I)\ptA_j$  gives
\[
 I-\La_R(x)  = I+\Lambda_\ptA(x) -C\Lambda_\ptA(x). 
\] 
  Let 
\[
 H(x) = C\Lambda_\ptA(x)  (I-\Lambda_R(x))^{-1} = \Lambda_{C\ptA}(x)  (I-\Lambda_R(x))^{-1}
\]
 and note that
\[
 \sW^* \big(I +H(x)+H^*(x)\big)\sW =I + G(x) +G^*(x).
\]
 Now,
 \begin{multline*}
   I+H(x)+H(x)^* \\
   =  (I-\La_R(x))^{-*} [ (I-\La_R(x))^* (I-\La_R(x)) + (I-\La_R(x))^* C \Lambda_\ptA(x) + \Lambda_\ptA(x)^* C^* (I-\La_R(x))] (I-\La_R(x))^{-1} \\
   = (I-\La_R(x))^{-*} [ (I-\Lambda_R(x)+ C\Lambda_\ptA(x))^*  (I-\Lambda_R(x)+C\Lambda_\ptA(x)) - \Lambda_\ptA(x)^* C^* C \Lambda_\ptA(x)] (I-\La_R(x))^{-1} \\
   = (I-\La_R(x))^{-*} [ \Psi(x)^* \Psi(x) - \Lambda_\ptA(x)^*  \Lambda_\ptA(x)] (I-\La_R(x))^{-1} \\
  =  (I-\La_R(x))^{-*} [ I+\Lambda_\ptA(x)^* + \Lambda_\ptA(x)] (I-\La_R(x))^{-1},
 \end{multline*}
 from which it follows that
\[
 \sW^* L_R(x)^{-*} \big(I+\Lambda_\ptA(x)+\Lambda_\ptA^*(x)\big) L_R(x)^{-1} \sW = I +G(x)+G^*(x).\qedhere
\]
\end{remark}

\subsection{Polynomials correspond to nilpotent $R$}

\begin{cor}
 \label{cor:poly-nil}
  Suppose, in the context of Proposition {\rm\ref{prop:multi generalIntro}}, that $W_\emptyset$ is $e\times D$.
  If 
  \ben[\rm (a)]
   \item $G$ is a polynomial; 
   \item\label{it:control} $\Span \{ R^\omega \Wem h:h\in\rC^d, \omega \in\ax\} =\rC^D$; and
   \item\label{it:observe} $\bigcap \{ \ker(\Wema C \ptA_j R^w) :  w\in\ax,\ 1\le j\le g\}=(0),$
  \een
   then the tuple $R$ is nilpotent. 
   In particular, if $D=d$ and $G$ is a polynomial, then $R$ is nilpotent. 
\end{cor}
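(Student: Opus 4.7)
\medskip

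\textbf{Plan.} The key identity I will exploit is the one already established in Proposition~\ref{prop:multi generalIntro}\eqref{it:Balpha}, namely
\[
G_{x_j\alpha} \;=\; \Wema\, C\, \ptA_j\, R^{\alpha}\, \Wem
\]
for every word $\alpha$ and every $1\le j\le g$. Hypothesis (a) says $G$ is polynomial, so there is an $N$ with $G_{x_j\alpha}=0$ for all $j$ and all words $\alpha$ with $|\alpha|\ge N$. My plan is to upgrade this single-sided vanishing statement to the genuinely two-sided statement that $R^{\alpha}=0$ for all $|\alpha|\ge N$, using (c) to clear away the left ``$\Wema C\ptA_j$'' obstruction and (b) to clear away the right ``$\Wem$'' obstruction.

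The first step is the observability move. Fix a word $\alpha$ with $|\alpha|\ge N$ and a vector $h\in\rC^e$, and set $v:=R^{\alpha}\Wem h\in\mathcal E$. For any word $w$ and any $j$,
\[
\Wema C\ptA_j R^{w} v \;=\; \Wema C\ptA_j R^{w\alpha}\Wem h \;=\; G_{x_j(w\alpha)}\,h \;=\;0,
\]
because $|w\alpha|\ge|\alpha|\ge N$. Hypothesis (c) then forces $v=0$; that is, $R^{\alpha}\Wem h=0$ for every $h\in\rC^e$ and every word $\alpha$ with $|\alpha|\ge N$.

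The second step is the controllability move. Let $\beta$ be any word with $|\beta|\ge N$. For every word $\omega$ and every $h\in\rC^e$,
\[
R^{\beta}\bigl(R^{\omega}\Wem h\bigr) \;=\; R^{\beta\omega}\Wem h \;=\;0
\]
by the first step, since $|\beta\omega|\ge|\beta|\ge N$. Thus $R^{\beta}$ annihilates every element of the form $R^{\omega}\Wem h$, and by hypothesis (b) these elements span $\rC^{D}$. Hence $R^{\beta}=0$ whenever $|\beta|\ge N$, which is precisely the statement that the tuple $R$ is (jointly) nilpotent.

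For the final ``In particular'' clause I no longer have (b) or (c) for free, and this is the only place where the argument is slightly delicate. When $D=d$, the isometry $\Wem:\rC^d\to\mathcal E$ is a square isometry, hence unitary; and since $\mathcal E$ is then finite dimensional, Proposition~\ref{prop:multi generalIntro} allows me to choose $C$ unitary as well. With both $\Wem$ and $C$ invertible, the identity $\Wema C\ptA_j R^{\alpha}\Wem=G_{x_j\alpha}=0$ for $|\alpha|\ge N$ solves for $\ptA_j R^{\alpha}=0$; since $R_j=(C-I)\ptA_j$, this gives $R^{x_j\alpha}=(C-I)\ptA_j R^{\alpha}=0$ for all $j$ and all $|\alpha|\ge N$, so $R^{\gamma}=0$ for $|\gamma|\ge N+1$. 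The main obstacle I anticipate is precisely making sure this finite-dimensional invertibility step is legitimate; once that is in place, the rest is the clean bookkeeping above.
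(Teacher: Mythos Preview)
Your argument is correct and follows the same strategy as the paper: use the coefficient identity $G_{x_j\alpha}=\Wema C\ptA_j R^{\alpha}\Wem$ together with the polynomiality of $G$ to get $\Wema C\ptA_j R^{\omega}\Wem=0$ for long $\omega$, then sandwich an arbitrary long $R^{\xi}$ between words on each side and peel off using observability (c) on the left and controllability (b) on the right. The paper does both peels in one line by writing $\omega=\alpha\xi\beta$; you do them in two sequential steps, but the content is identical.

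For the ``In particular'' clause your direct argument is correct and in fact cleaner than trying to verify (b) and (c). Condition (b) is trivially satisfied once $\Wem$ is invertible (take $\omega=\emptyset$), but condition (c) reduces to $\bigcap_j\ker(\ptA_j)=(0)$, which is not assumed. Your route---invert $\Wem$ and $C$ to get $\ptA_j R^{\alpha}=0$ for $|\alpha|\ge N$, then $R^{x_j\alpha}=(C-I)\ptA_j R^{\alpha}=0$---bypasses this entirely and gives nilpotency of order at most $N+1$ directly.
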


  In the language of systems theory, the hypotheses of items \eqref{it:control} and \eqref{it:observe} are that  the system $(R, \Wem, \{\Wema C \ptA_j\})$
  is \df{controllable} and \df{observable} respectively.

\begin{proof}
Since
$W_\emptyset^\ast C \Lambda_\ptA(x) ( I- \sum R_j x_j )^{-1}
    W_\emptyset
$ is a polynomial, there exists a positive integer $N$ such that 
\[
  W_\emptyset^\ast C \ptA_i R^\omega \Wem = 0
\]
 for all words $\omega$ for which $|\omega|\ge N$. 
Hence, if $|\xi|\ge N$, then for words $\alpha,\beta$,
\[
  0=   W_\emptyset^\ast C \ptA_i R^\omega \Wem
= W_\emptyset^\ast C \ptA_i R^\alpha R^\xi R^\beta \Wem
\]
 Conditions \eqref{it:control} and \eqref{it:observe} now imply that $R^\xi = 0$. 
\end{proof}

\begin{remark}\rm
In any case, $W$ is a polynomial if and only if  $R^\alpha W_\emptyset =0$ for $|\alpha|$ large enough.  
 Of course if $W_\emptyset$ is square, then  it is invertible. Thus, in this case,  the $R_j$ 
 are jointly nilpotent if and only if  $W$ is a polynomial. 
\end{remark}

\def\ptA{A}
\section{Extending the Hereditary Positivstellensatz to  Analytic Functions}
\label{sec:analPoss}
In this section we prove Theorem \ref{thm:analPossIntro}, 
extending the Hereditary Convex Positivstellensatz
(Theorem \ref{thm:heredposSS}) to analytic and rational maps
between free spectrahedra. The proof combines Theorems 
 \ref{thm:heredposSS} and  \ref{prop:okadron} and Proposition \ref{prop:multi generalIntro}.

\begin{lemma}
\label{lem:betterpl}
Suppose $e$ is a positive integer, $G:\cG\to M_e(M(\C))$ is analytic on a pseudoconvex set $\cG$ containing $\cD_A$ and $I+G+G^*$ is nonnegative on $\cD_A$ 
and $G(0)=0$.
If  $(G_\ell)$ is a sequence  of $e\times e$ matrix  polynomials converging uniformly to $G$ on $\cD_A$, then there exists a sequence of polynomials $(Q_k)$ converging uniformly to $G$ on $\cD_A$ such that $Q_k(0)=0$ and  $I+Q_k +Q_k^*$ is nonnegative on $\cD_A$.
\end{lemma}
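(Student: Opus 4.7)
The plan is to modify the given approximating polynomials $G_\ell$ in two simple ways: first translate them so they vanish at $0$, and then scale down by a factor slightly less than $1$ to absorb the small negative part introduced by the approximation error.

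First, set $\tilde G_\ell = G_\ell - G_\ell(0)$. Since $G_\ell \to G$ uniformly on $\cD_A$ and $G(0)=0$, we have $G_\ell(0)\to 0$, so $\tilde G_\ell\to G$ uniformly on $\cD_A$, and by construction $\tilde G_\ell(0)=0$. In particular, letting $\delta_\ell = \|\tilde G_\ell - G\|_{\cD_A}$, we have $\delta_\ell \to 0$. Moreover, because $\tilde G_\ell\to G$ uniformly, the functions $\tilde G_\ell$ are uniformly bounded, so there is a constant $M$ with $\|\tilde G_\ell(X)\|\le M$ for all $\ell$ and all $X\in \cD_A$; likewise $\|G(X)\|\le M$ on $\cD_A$.

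Next, on $\cD_A$ estimate
\[
 I + \tilde G_\ell + \tilde G_\ell^*
 = (I + G + G^*) + (\tilde G_\ell - G) + (\tilde G_\ell - G)^*
 \succeq -2\delta_\ell\, I,
\]
using the hypothesis $I + G + G^* \succeq 0$ on $\cD_A$. Now pass to a subsequence (which I will still index by $k$) with $\delta_{\ell_k} < \tfrac12$, and choose $\epsilon_k\in(0,1)$ with $\epsilon_k \to 0$ and $\epsilon_k \ge 2\delta_{\ell_k}$. Define the matrix polynomial
\[
 Q_k = (1-\epsilon_k)\,\tilde G_{\ell_k}.
\]
Then $Q_k(0) = (1-\epsilon_k)\tilde G_{\ell_k}(0) = 0$, and on $\cD_A$,
\[
 I + Q_k + Q_k^* \;=\; \epsilon_k I + (1-\epsilon_k)(I + \tilde G_{\ell_k} + \tilde G_{\ell_k}^*) \;\succeq\; \big(\epsilon_k - 2(1-\epsilon_k)\delta_{\ell_k}\big) I \;\succeq\; 0,
\]
by the choice of $\epsilon_k$. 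Finally, on $\cD_A$,
\[
 \|Q_k - G\| \le (1-\epsilon_k)\|\tilde G_{\ell_k} - G\| + \epsilon_k \|G\| \le \delta_{\ell_k} + \epsilon_k M \;\longrightarrow\; 0,
\]
so $Q_k\to G$ uniformly on $\cD_A$. This produces the desired sequence, and no step is a real obstacle; the only point to watch is the uniform boundedness of $G$, which is automatic from uniform convergence of the $G_\ell$ on $\cD_A$.
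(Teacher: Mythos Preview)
Your proof is correct and follows essentially the same approach as the paper: translate the approximants to vanish at $0$, then multiply by a scalar $t_k=1-\epsilon_k<1$ so that the margin $\epsilon_k I$ absorbs the approximation error. The paper organizes the estimate slightly differently (first noting $I+t_k(G+G^*)\succeq(1-t_k)I$ and then choosing $\ell_k$ close enough), but the content is the same.
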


\begin{proof}
 Note that $(G_\ell(0))_\ell$ converges to $0$ since $0\in \cD_A$ and $G(0)=0$. Let $H_\ell = G_\ell-G_\ell(0)$. In particular, $H_\ell$ converges uniformly to $G$ on $\cD_A$ and $H_\ell(0)=0$. Choose a sequence $(t_k)_k$ such that $0<t_k<1$ and $\lim t_k =1$. Note that, for $X\in \cD_A$, 
\[
I+t_k(G(X)+G(X)^*) = (1-t_k)I + t_k (I+G(X)+G(X)^*) \succeq (1-t_k)I.
\]
 For each $k$ there is an $\ell_k$ such that $H_{\ell_k}(X)$ is uniformly sufficiently close to $G$ so that 
\[
 I+t_k (H_{\ell_k}(X)+H_{\ell_k}(X)^*) \succeq I+t_k(G(X)+G(X)^*) -(1-t_k)I \succeq 0.
\]
 Hence, the sequence   $(Q_k= t_k H_{\ell_k})$,  converges uniformly to $G$ on $\cD_A$  and satisfies $Q_k(0)=0$ and $I+Q_k+Q_k^*$ is nonnegative on $\cD_A$.
\end{proof}

\subsection{Proof of Theorem~\ref{thm:analPossIntro}}
\label{sec:proofanalpossIntro}
By Lemma \ref{lem:betterpl} and Theorem \ref{prop:okadron} (using, in particular,
 the boundedness assumption on $\cD_A$), without loss of generality 
there is a sequence $(G_\ell)_\ell$  of polynomials converging uniformly to $G$ on $\cD_A$ 
and   such that $I+G_\ell+G_\ell^*$ is nonnegative on $\cD_A$ and  $G_\ell(0)=0$. 
 By Theorem \ref{thm:heredposSS} (again using the boundedness of $\cD_A$), there is a separable infinite-dimensional Hilbert space $H$ such that for each $\ell$ there exists a polynomial $W_\ell$ with coefficients $W_{\ell,\alpha}:\mathbb C^e\to H\otimes \mathbb C^d$ such that
\begin{equation}
 \label{eq:ratWLW+}
 I+G_\ell(x)+G_\ell(x)^*  = {W_\ell}^*(x) L_{\IHA}(x) {W_\ell(x)}.
\end{equation}
  Applying  Proposition \ref{prop:multi generalIntro}, there exists a contraction $C_\ell$ on $H\otimes \mathbb C^d$
 and an isometry  $\sW_\ell:\mathbb C^e\to H\otimes \mathbb C^d$ such that, with
 $R_\ell = (C_\ell-I)[\IHA]$, 
\[
 W_\ell(x) = (I-\Lambda_{R_\ell}(x))^{-1} \sW_\ell.
\]
Moreover, from the identity $W_{\ell,x_j\alpha}= (C_\ell-I)[\IHA_j] W_{\ell,\alpha}$ of item \eqref{it:recure}  of Proposition \ref{prop:multi generalIntro},
\[
 \| W_{\ell,x_j\alpha}\| \le 2 \max\{\|A_1\|,\dots,\|A_g\|\} \, \|W_{\ell,\alpha}\|.
\]
 Thus, using the fact that $\sW_\ell = W_{\ell,\emptyset}$ is an isometry and hence has norm one, $\|W_{\ell,\alpha}\|$ has a uniform bound depending only on the length of the word $\alpha$ (independent of $\ell$).

Observe that for each $\alpha$, the dimension of the range of $W_{\ell,\alpha}$ is at most $e$. Hence, 
for a fixed $N$, there is a constant $D_N$ such that for each $\ell$ the dimension of the span of
\[
  H_{N,\ell}=\bigvee_{|\alpha|\le N} \rg(W_{\ell,\alpha})
\]
 is at most $D_N$. (Indeed one can take $D_N$ to be $de$ times the number of words of length at most $N$.) It follows that, given $N$, for each $\ell$ there exists a subspace $H_{N,\ell}$ of $H$ of dimension $D_N$ such that the ranges of $W_{\ell,\alpha}$ all lie in $H_{N,\ell}\otimes \mathbb C^d$.  For technical reasons that will soon be apparent,  choose a basis $\{e_1,e_2,\dots\}$ for $H$ and inductively construct subspaces $\mathcal H_{N,\ell}$ of $H$ of dimension $2D_N$ such that $\mathcal H_{N,\ell}$ contains both $H_{N,\ell}$ and $\mbox{span}(\{e_1,\dots,e_{D_N}\})$ and such that $\mathcal H_{N,\ell}\subset \mathcal H_{N+1,\ell}$.  In particular, $H=\oplus_{N=-1}^\infty (\mathcal H_{N+1,\ell}\ominus \mathcal H_{N,\ell})$, where $\mathcal H_{-1,\ell}=\{0\}$.   Set $D_{-1}=0$ and  let $E_m=2(D_m - D_{m-1})$.  Letting $K_m = \mathbb C^{E_m}$ and  $K$ denote the Hilbert space $\oplus_{m=0}^\infty K_m$, it follows that for each $\ell$ there is a unitary mapping $\rho_\ell:H\to K$ such that $\rho_\ell(\mathcal H_{N,\ell})= \oplus_{m=0}^N K_m$.  We have,
\[
 W_\ell(x)^* (\rho_\ell \otimes I_d)^* [I_K \otimes L_A(x)] (\rho_\ell \otimes I_d) W_\ell(x) = W_\ell(x)^* [I_H\otimes L_A(x)] W_\ell(x).
\]
 Hence, we can replace $W_\ell(x)$ with $(\rho_\ell \otimes I) W_\ell(x)$ in  \eqref{eq:ratWLW+} and thus, given a word $\alpha$ of length $N$,  assume that $W_{\ell,\alpha}$ maps into $\oplus_{m=0}^N K_m$ independent of $\ell$.    

For a fixed word $\alpha$, the set $\{W_{\ell,\alpha}:\ell\}$ maps into a common finite-dimensional Hilbert space and is, in norm, uniformly bounded.  Hence, by passing to a subsequence, we can assume for each word $\alpha$ the sequence $W_{\ell,\alpha}$ converges to some $W_\alpha$ in norm. Let $W$ denote the corresponding formal power series. We will argue that
\[
I+G(x)+G(x)^* = W(x)^*L_{\IHA}(x)W(x)
\]
in the sense explained as follows.
 Since $G_\ell(0)=0$, it follows that  $W_{\ell,\emptyset}^* W_{\ell,\emptyset} =I$ for each $\ell$. Hence
\begin{equation}
\label{eq:preisoalt0+}
  W_\emptyset^* W_\emptyset = I.  
\end{equation}
Likewise,  given $\alpha$ and $j$, for every $\ell$,
\[
  W_{\ell,\emptyset}^* (\IHA_j) W_{\ell, \alpha} +W_{\ell, x_j \alpha} = (G_\ell)_{x_j\alpha},
\]
 the coefficient of the $x_j\alpha$ term of $G_\ell$.
 From what has already been proved, the left hand side above converges to $(\IHA_j) W_\alpha +W_{x_j\alpha}.$
  Since $G_\ell$ converges uniformly to $G$ on $\cD_A$, 
 the sequence $((G_\ell)_{x_j\alpha})$ converges to $G_{x_j\alpha}$, the $x_j\alpha$ coefficient of $G$.    Thus,
\begin{equation}
\label{eq:preisoalt1+}
 W_\emptyset^* (\IHA_j) W_\alpha + W_{x_j\alpha} = G_{x_j\alpha}.
\end{equation}

 Moreover, also by construction,  for each $\alpha,\beta$ and $j,k$,
\[
  W_{\ell,\beta}^*(\IHA_k)^* W_{\ell, x_j\alpha} + W_{\ell, x_k\beta}^* (\IHA_j) W_{\ell, \alpha} + W_{\ell, x_k\beta}^* W_{\ell,x_j\alpha} = 0.
\]
 Hence,
\begin{equation}
\label{eq:preisoalt2+}
W_\beta^*(\IHA_k)^* W_{x_j\alpha} + W_{x_k\beta}^* (\IHA_j) W_{\alpha} + W_{x_k\beta}^* W_{x_j\alpha}  = 0.
\end{equation}
Equations \eqref{eq:preisoalt0+}, \eqref{eq:preisoalt1+} and \eqref{eq:preisoalt2+} together show the equations of \eqref{eq:preiso1alt} holds
 in the ring of formal power series.
 Thus, equation \eqref{eq:ratWLW+} holds.  Hence Proposition \ref{prop:multi generalIntro} applies and there exists a 
 contraction $C:H\otimes\C^d\to H\otimes \C^d$ that is isometric on $\rg(A,W)$ such that equations \eqref{eq:Gup} and \eqref{eq:WIntro} 
 hold. 

 To complete the proof, in the notation of Proposition \ref{prop:multi generalIntro}, choose $\mathcal{E}^\prime = H\otimes\C^d$ and make the identification
 $\tcE=\mathcal{E}\oplus\mathcal{E}^\prime = (\C^2\otimes H)\otimes \C^d$.  Likewise, let $A^\prime =A$ and make the identification 
 $\tilde{A} = I_{\C^2\otimes H} \otimes A$.   The moreover portion of Proposition \ref{prop:multi generalIntro} produces a unitary $\tilde{C}$
 and isometry $\tilde{\sW}$ satisfying  equations \eqref{eq:posst+}  and \eqref{eq:Gup}.  
Finally, from the formulas for $G$ and $\tilde{W}$, there
 series have positive radii of convergence say both at least $\tau>0$. Hence equation \eqref{eq:posst+} holds for $X\in \SR_\tau$ by Proposition
\ref{prop:formalveval}.

\section{Consequences of a One Term Positivstellensatz}
\label{sec:square}
In this section, we consider the consequences of a one term square
Positivstellensatz. In particular, a one term Positivstellensatz
produces a \ct map.  Accordingly, suppose $p=(p^1, \ldots, p^g)$ where
each $p^j$ is a free formal power series in $x=(x_1,\dots,x_g)$ such
that $p(0)=0$ and $p^\prime(0)=I$. Further assume $A,B\in M_d(\mathbb
C^g)$ and $W$ is a formal power series 
 with coefficients in $M_d(\C)$ (square matrices) 
 satisfying 
\begin{equation}
 \label{eq:WLAW}
 L_B(p(x)) = W(x)^* L_A(x) W(x)
\end{equation}
in the sense that the relations of equation \eqref{eq:preiso1alt} hold
with $G(x)=\Lambda_B(p(x))$.  Thus the sizes of $A$ and $B$ are the
same and both $L_A$ and $L_B$ are pencils in $g$ variables.  As we
will see, under this assumption (that $W$ is square), equation
\eqref{eq:WLAW} implies $p$ is a \ct map and imposes rigid structure on 
the triple $(p,A,B)$.

Proposition \ref{prop:multi generalIntro} produces $d\times d$ unitary
matrices $C$ and $\sW$ such that, with $R=(C-I)A$,
\begin{equation}
 \label{eq:WG}
\begin{split}
 W(x) & =   (I-\La_R(x))^{-1}\sW\\
 \La_B(p(x)) & =  \sW^* C (\sum_{j=1}^g A_jx_j)W(x).
\end{split}
\end{equation}
Before continuing, we pause to collect some consequences of these relations.

\begin{lemma}
\label{lem:hells kitchen}
 Let $d,e$ and $g\le \tg$ denote positive integers.
 Suppose $p= (p^1, \ldots,  p^\tg)$ and each $p^t$ is a formal power series. Further suppose
 $p(0)=0$ and $p(x)=( x, 0) +h(x)$, where $h$ consists of higher (two and larger) degree terms. Write 
\[
 p^t(x) = \sum_{j} \sum_{\alpha} p^t_{x_j \alpha} x_j\alpha.
\]
If
\ben[\rm(a)]
 \item  $A\in M_d(\C)^g$ and $B\in M_e(\C)^{\tg}$;
 \item  $C$ is a $d\times d$ unitary matrix and $\sW:\C^e\to\C^d$ is an isometry;
 \item with $R=(C-I)A$, $W$ and $\La_B(p(x))$ are as in equation \eqref{eq:WG};
\een
then 
  \ben[\rm (1)]
 \item \label{it:Bjagain}  $B_j = \sW^* C\ptA_j \sW$ for $1\le j\le g$; 
\item \label{it:LambdaBp}
$\displaystyle
 \Lambda_B(p(x)) = \sum_t B_t p^t(x) =  \sum_{k=1}^g \sum_{\alpha} \sW^* C \ptA_k  R^\alpha \sW x_k \alpha 
$;
 \item \label{it:bill}
 for each word $\omega$ and $1\le k\le g$, 
\begin{equation*}
  \sW^*  C\ptA_k R^{\omega} \sW   =  \sum_{j=1}^\tg p^j_{x_k\omega} \; B_j; %
 \end{equation*}
\item \label{it:billtgisg} in the case $\tg=g$, for all words $\omega$ and $1\le k\le g$,
\begin{equation}
 \label{eq:billgtisg}
  \sW^*  C\ptA_k R^{\omega} \sW   =  \sum_{j=1}^g p^j_{x_k\omega} \; B_j = \sum_{j=1}^g p^j_{x_k\omega} \; \sW^* C\ptA_j\sW.
 \end{equation}
\een
\end{lemma}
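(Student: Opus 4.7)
The proof plan for Lemma \ref{lem:hells kitchen} is essentially a coefficient-matching exercise in the ring of formal power series, with the identity $W(x) = (I-\Lambda_R(x))^{-1}\sW = \sum_{\alpha} R^\alpha \sW \,\alpha$ as the main engine. I would prove parts (2), (1), (3), (4) in that order, since (1), (3), (4) all fall out of the single expansion computed in (2).

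For part (2), I would substitute the power series $W(x) = \sum_\alpha R^\alpha \sW\, \alpha$ (where $\alpha$ ranges over words) into the hypothesized formula for $\Lambda_B(p(x))$ in \eqref{eq:WG}, obtaining
\[
\Lambda_B(p(x)) = \sW^* C \Big(\sum_{k=1}^g A_k x_k\Big)\Big(\sum_\alpha R^\alpha \sW \,\alpha\Big) = \sum_{k=1}^g \sum_\alpha \sW^* C A_k R^\alpha \sW \cdot x_k \alpha.
\]
The first equality $\Lambda_B(p(x)) = \sum_t B_t p^t(x)$ is just the definition of $\Lambda_B$.

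For part (1), I would exploit the normalization $p(x) = (x,0) + h(x)$ with $\deg h \geq 2$, which forces $p^t_{x_j} = \delta_{t,j}$ for $t \leq g$ and $p^t_{x_j} = 0$ for $t > g$. Therefore the degree-one part of $\sum_t B_t p^t(x)$ is exactly $\sum_{t=1}^g B_t x_t$. On the other hand, the degree-one part on the right-hand side of (2) comes from $\alpha = \emptyset$ (hence $R^\emptyset = I_d$), yielding $\sum_{k=1}^g \sW^* C A_k \sW \cdot x_k$. Matching coefficients of $x_k$ gives $B_k = \sW^* C A_k \sW$ for $1 \leq k \leq g$.

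For part (3), I would expand $\sum_t B_t p^t(x) = \sum_{k,\omega}\bigl(\sum_{t=1}^{\tilde g} p^t_{x_k\omega} B_t\bigr) x_k\omega$ using the given power series expression for each $p^t$, and match the coefficient of $x_k\omega$ against the corresponding coefficient $\sW^* C A_k R^\omega \sW$ on the right-hand side of (2). Since distinct words are linearly independent in the free algebra, the identification is immediate. Part (4) is then just the combination of (3) with (1): when $\tilde g = g$, each $B_t$ appearing in the sum on the right equals $\sW^* C A_t \sW$, yielding the second equality of \eqref{eq:billgtisg}. The only real obstacle, which is mild, is being careful that the formal power series manipulations are justified; this is standard, and the identities of \eqref{eq:preiso1alt} already give the formal-series version of \eqref{eq:WLAW}, so no convergence issue arises.
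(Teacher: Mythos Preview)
Your proposal is correct and is exactly the approach the paper takes: the paper's proof is the single sentence ``The result follows by comparing power series expansions terms and using the normalization hypotheses on $p$,'' and you have simply written out those comparisons explicitly. The order in which you treat the items and the use of $W(x)=\sum_\alpha R^\alpha\sW\,\alpha$ are precisely the intended unpacking of that sentence.
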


\begin{proof}
 The result follow by comparing power series expansions terms and using the normalization hypotheses on $p$.
\end{proof}

In the case $e=d$ and $\tg=g$, Lemma \ref{lem:hells kitchen} implies $B=\sW^* CA\sW,$ where
$\sW=W(0)=\Wem$ is unitary. Further, since $\sW$ is unitary, equation
\eqref{eq:billgtisg} of Lemma \ref{lem:hells kitchen} gives $A_k
(C-I)A_j$ is in the span of $A_1,\ldots, A_g$ for all $j,k;$ that is, for
each $1\le j\le g$ there is a $g\times g$ matrix $\xij$ (described
explicitly in terms of the coefficients of $p$) such that for all
$1\le k\le g$,
\begin{equation}\label{eq:AZA}
  A_k (C-I)A_j = \sum_{s=1}^g (\xij)_{k,s} A_s.
\end{equation} 
 The structure inherent in equation \eqref{eq:AZA} is analyzed in the next  subsection.

\subsection{Lurking algebras}
   \label{sec:secretdevils}

\begin{prop}
\label{prop:con}
 If $\Xi=(\Xi_1,\dots,\Xi_g)$ is a \ct tuple, then $\mathcal X$, the span of $\{\Xi_1,\dots, \Xi_g\}$ is an algebra whose structure matrices are the $\Xi_j$; that is,
 for all $1\le k\le g$ and words $\alpha$,
\begin{equation}
\label{eq:con}  
   \Xi_k \Xi^{\alpha} = \sum_s (\Xi^{\alpha})_{k,s}\Xi_s. 
\end{equation}
Moreover,  the associated \ct rational mappings  of equation \eqref{eq:tropic}
$$p(x)= x(I-\Lambda_\Xi(x))^{-1}
\qquad and \qquad q=x(I+\Lambda_\Xi(x))^{-1},$$
     are inverses of one another. 
 \end{prop}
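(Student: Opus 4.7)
The plan is to reduce both assertions to a single inductive identity that extends the convexotonic hypothesis \eqref{eq:cttuple} from generators to arbitrary words. Granting \eqref{eq:con}, the first claim is immediate: \eqref{eq:cttuple} says each product $\Xi_k\Xi_j$ lies in the span of $\{\Xi_1,\ldots,\Xi_g\}$, so $\mathcal X$ is closed under multiplication, and the identity itself states that the $\Xi_j$ are structure matrices (in the convention that the second factor supplies the coefficients). I would prove \eqref{eq:con} by induction on $|\alpha|$: the case $|\alpha|=0$ is tautological (reading $\Xi^{\emptyset}=I_g$), and for the step, writing $\alpha=\beta x_j$,
\[
\Xi_k\Xi^\alpha = (\Xi_k\Xi^\beta)\Xi_j = \sum_s(\Xi^\beta)_{k,s}\,\Xi_s\Xi_j = \sum_{s,t}(\Xi^\beta)_{k,s}(\Xi_j)_{s,t}\,\Xi_t = \sum_t(\Xi^\alpha)_{k,t}\,\Xi_t,
\]
where the inductive hypothesis is used in the second equality and \eqref{eq:cttuple} applied to $\Xi_s\Xi_j$ is used in the third.

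Summing \eqref{eq:con} against words yields, for any formal power series $f$, the matrix identity $\Xi_k\, f(\Lambda_\Xi(x)) = \sum_s[f(\Lambda_\Xi(x))]_{k,s}\,\Xi_s$ in $M_g(\C)\otimes\C\langle x\rangle$, using that the scalar matrices $\Xi_s$ commute with elements of $\C\langle x\rangle$ inside the tensor product. Applied to $f(z)=(I-z)^{-1}$, together with the row-vector expansion $p^k(x)=\sum_j x_j[(I-\Lambda_\Xi(x))^{-1}]_{j,k}$, this gives
\[
\Lambda_\Xi(p(x)) = \sum_{j,k} x_j\,[(I-\Lambda_\Xi(x))^{-1}]_{j,k}\,\Xi_k = \sum_j x_j\,\Xi_j\,(I-\Lambda_\Xi(x))^{-1} = \Lambda_\Xi(x)(I-\Lambda_\Xi(x))^{-1}.
\]
A one-line algebraic simplification gives $I+\Lambda_\Xi(p(x)) = (I-\Lambda_\Xi(x))^{-1}$, so
\[
q(p(x)) = p(x)\big(I+\Lambda_\Xi(p(x))\big)^{-1} = p(x)(I-\Lambda_\Xi(x)) = x(I-\Lambda_\Xi(x))^{-1}(I-\Lambda_\Xi(x)) = x.
\]
The identity $p(q(x))=x$ follows by applying the same argument to $-\Xi$, which is again convexotonic (the minus signs cancel in \eqref{eq:cttuple}); this substitution interchanges the roles of $p$ and $q$ and so yields the reverse composition.

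The main obstacle I anticipate is setting up the induction so that the fresh application of \eqref{eq:cttuple} lands on the correct (right-most) factor, which forces the decomposition $\alpha=\beta x_j$ rather than $\alpha=x_j\beta$; once this convention is fixed, the rest reduces to routine matrix algebra in $M_g(\C)\otimes\C\langle x\rangle$ in which only the commutation of scalar matrices with free polynomials is invoked.
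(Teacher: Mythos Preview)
Your proof is correct and follows essentially the same approach as the paper's: the induction for \eqref{eq:con} is identical (appending a letter on the right), and the inverse computation uses \eqref{eq:con} in the same way, the only cosmetic difference being that the paper computes $I-\Lambda_\Xi(q(x))=(I+\Lambda_\Xi(x))^{-1}$ and deduces $p\circ q=x$, whereas you compute $I+\Lambda_\Xi(p(x))=(I-\Lambda_\Xi(x))^{-1}$ and deduce $q\circ p=x$. Your explicit $\Xi\mapsto -\Xi$ substitution for the reverse composition is a nice way to make precise what the paper simply calls ``by symmetry.''
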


\begin{proof}
 Inducting on the length of $\alpha$ in equation \eqref{eq:con} and using the relation of equation \eqref{eq:cttuple} at the third equality, gives
\[
\begin{split}
 \Xi_k \Xi^{\alpha x_\ell} & =  \Xi_k \Xi^{\alpha} \Xi_\ell\\
  & =  \sum_t (\Xi^{\alpha})_{k,t} \Xi_t \Xi_\ell \\
  & =  \sum_t \sum_s (\Xi^{\alpha})_{k,t} (\Xi_\ell)_{t,s} \Xi_s \\
  & =  \sum_s (\Xi^{\alpha x_\ell})_{k,s} \Xi_s.
\end{split}
\]

To prove the maps of equation \eqref{eq:tropic} are inverses of one another, 
 expand $q^t$ in a series gives to obtain
\[
 q^t(x)=\sum_j x_j \big(I +\La_{\Xi}(x) \big)^{-1}_{j.t} 
=  
\sum_{ j,\alpha\in\ax}  (-1)^{|\alpha|}\;  (\Xi^\alpha)_{j,t}  x_j  \alpha.
\]
Using equation \eqref{eq:con}  at the fourth equality below  obtains,
\[
 \begin{split}
 I-\Lambda_\Xi (q(x))& =  I-\sum_{t=1}^g \Xi_t q^t 
  =  I-\sum_t \Xi_t \sum_{j,\alpha} (-1)^{|\alpha|} (\Xi^\alpha)_{j,t} x_j\alpha \\
 & =  I-\sum_{j,\alpha} (-1)^{|\alpha|} \big(\sum_t  (\Xi^\alpha)_{j,t} \Xi_t\big) x_j \alpha
=  I-\sum_{j,\alpha} (-1)^{|\alpha|}  \Xi_j \Xi^\alpha   x_j \alpha  \\
& =    I-\sum_{j,\alpha} (-1)^{|\alpha|} \Xi^{x_j \alpha} x_j\alpha
 = I +\sum_{|\beta|>0} (-1)^{|\beta|} \Xi^\beta \beta \\
  & =  \sum_{\beta} (-1)^{|\beta|} \Xi^\beta \beta 
  =  (I+\Lambda_\Xi(x))^{-1}.
 \end{split}
\]

  Thus, 
\[
 \begin{split}
   p\circ q(x)& =   q(x) \, [I-\La_{\Xi}(q(x))]^{-1}  
    =  q(x)\big( (I+\Lambda_\Xi(x))^{-1} \big)^{-1}\\
   & =   x \  (I+\Lambda_\Xi(x))^{-1} \, (I+\Lambda_\Xi(x))  =  x
 \end{split}
\]
 and it follows that  $q$ is a right inverse for $p$. By symmetry, it is also a left inverse establishing 
 item  \eqref{it:ratspq}. 
\end{proof}

An algebra $\mathscr A$ has \df{order of nilpotence} $N\in\mathbb N$ if the product of any $N$ elements
of $\mathscr A$ is $0$ and $N$ is the  smallest natural number with this property.
Proposition \ref{prop:AZA} below explains how \ct maps naturally arise from the
algebra-module structure of equation \eqref{eq:AZA}.

\begin{prop}
 \label{lem:gtg}
 Let $R$ and $E$ are $g$-tuples of matrices of the same size $d$.
 Let $\mathscr E=\{E_1,\dots,E_g\}$ and let $\mathscr B$ denote the 
 span of $\mathscr E$. Suppose $\mathscr E$ is linearly independent 
 and there exists (a necessarily uniquely determined tuple)
 $\Xi\in M_g(\C)^g$ such that 
\begin{equation}
\label{eq:defXi}
 E_k R_j = \sum_{s=1}^g (\Xi_j)_{k,s} E_s.
\end{equation}
Then
\begin{enumerate}[(i)]
\item \label{i:gtg1}
 for each $1\le j\le g$ and word $\alpha$, 
\begin{equation}
 \label{eq:EjRa}
 E_k R^\alpha  = \sum_{s=1}^g \Xi^\alpha_{k,s} E_s;
\end{equation}
\item \label{i:gtg2}
if $\Xi$ is nilpotent of order $\nu$, then $\nu\le g$.  Moreover,
if  $R^\alpha=0$, then $\Xi^\alpha=0$ and hence if $R$ is nilpotent of order $\mu$,
 then $\nu\le \mu;$
\item \label{i:gtg3}
  if there is a $d\times d$ matrix $G$ such that
$R=GE$, then $\Xi$ is convexotonic and moreover,
\[
 \Xi_k \Xi_j =  \sum_{s=1}^g (\Xi_j)_{k,s} \Xi_s.
\]
\end{enumerate}

In particular, if $J\in M_d(\C)^g$ is linearly independent and spans an algebra,
then the tuple $\Psi\in M_g(\C)^g$ (uniquely) determined by
\[
 J_k J_j = \sum_{s=1}^g (\Psi_j)_{k,s} J_s
\]
is convexotonic.
\end{prop}

\begin{proof}
 From the hypotheses,
 for each word $\alpha$ and $1\le t\le g$ the matrix  $E_t R^\alpha$ has a unique representation of the form
\begin{equation}
 \label{eq:EtRa}
  E_t R^\alpha  = \sum_{k=1}^g (\Xi_\alpha)_{t,k} E_k,
\end{equation}
 for some $g\times g$ matrix $\Xi_\alpha$. 
 We now argue that $\Xi_\alpha = \Xi^\alpha$ by induction on the length of the word $\alpha$, the case of length $1$
 holding by the choice of $\Xi$.
 Accordingly, suppose $\Xi_{\alpha} = \Xi^{\alpha}$. 
 Applying $R_u$ on the right of equation \eqref{eq:EtRa}  gives
\[
 \begin{split}
   \sum_{s=1}^g (\Xi_{\alpha x_u})_{t,s} E_s& =  E_t R^\alpha R_u 
   =  \sum_{k=1}^g (\Xi_\alpha)_{t,k} E_k R_u 
  =  \sum_{k=1}^g   (\Xi_\alpha)_{t,k} \sum_{s=1}^g  (\Xi_u)_{k,s} E_s 
  =  \sum_{\ell=1}^g  (\Xi_\alpha \Xi_u)_{t,s} E_s. \\
  \end{split}
\]
 By linear independence of $\{E_1,\dots,E_g\}$ and the induction hypothesis,  
\[
 \Xi_{\alpha x_u} = \Xi_\alpha \Xi_u=\Xi^{\alpha} \Xi_u = \Xi^{\alpha \, x_u},
\]
verifying equation \eqref{eq:EjRa} and completing the proof of item \eqref{i:gtg1}.

Now  suppose $\Xi$ is nilpotent of order $\nu$. Thus, if $|\alpha|\ge \nu$, then   $\Xi^\alpha=0$ and therefore  $E_t R^\alpha =0$ for each $t$. 
For positive integers $k$, let $\cE_k$  denote the subspace of $\cE$ spanned
by $\{E_t R^\alpha: |\alpha|\ge k, \, 1\le t \le g\}$. Thus $\cE_{\nu}=\{0\}$ and
since $\cE_k \supset \cE_{k+1}$ and $\cE$ has dimension $g$, it follows that
$\cE_g=\{0\}$.  Hence $E_t R^\alpha =0$ for all $t$ and $|\alpha|\ge g$ and therefore, 
using the independence of $\{E_1,\dots,E_g\}$ once again,
$\Xi^\alpha=0$. Hence $\nu\le g$.  

Conversely, if $R^\alpha=0$, then $\Xi^\alpha=0$. Hence if $R$ is nilpotent of order 
$\mu$, then $\Xi$ is nilpotent of order $\nu\le \mu.$

Now suppose  there is a $d\times d$ matrix $G$ such that
 $R=GE$. To prove that the tuple $\Xi$ is \ct,  fix $1\le k \le g$ and compute
   the product $E_k R_j R_\ell$ in two different ways.
 On the one hand,  using equation \eqref{eq:defXi} twice,
\[
 \begin{split}
 E_kR_jR_\ell  = &(E_k G) E_j R_\ell \\
  &=   \sum_{t=1}^g  (\Xi_\ell)_{j,t} E_k G E_t = \sum_{t=1}^g (\Xi_\ell)_{j,t} E_k R_t\\
  &  =  \sum_{t=1}^g  (\Xi_\ell)_{j,t} \sum_{s=1}^g (\Xi_t)_{k,s} E_s 
   =  \sum_{s=1}^g  \sum_{t=1}^g (\Xi_\ell)_{j,t} (\Xi_t)_{k,s}E_s.
 \end{split}
\]
 On the other hand, using the already established equation \eqref{eq:EjRa} with $\alpha = x_j\, x_\ell$, 
\begin{equation*}
   E_kR_jR_\ell  =  \sum_s (\Xi_j \Xi_\ell)_{k,s} E_s.
\end{equation*}
 For a fixed $k$, the independence of the set $\{E_1,\dots,E_g\}$ implies
\[
  \sum_t (\Xi_\ell)_{j,t} (\Xi_t)_{k,s} =  (\Xi_j \Xi_\ell)_{k,s}
\]
 for each $1\le k,s\le g$  and thus,
\[
  \sum_t  (\Xi_\ell)_{j,t} \Xi_t = \Xi_j \Xi_\ell.
\]
Hence $\Xi$ is a \ct tuple and item \eqref{i:gtg3} is proved.

To prove  the last part of the proposition, choose, in item \eqref{i:gtg3}, 
$E=R=J$ and $G=I$.
\end{proof}

\begin{cor}
 \label{cor:boundcontpoly}
   Suppose $\Xi$ is a convexotonic $g$-tuple with associated convexotonic maps $p$ and $q$ as in equation \eqref{eq:tropic}. If  the tuple
     $\Xi$ is nilpotent, then its order of nilpotency is at most $g$. Further $\Xi$ is nilpotent if and only if $p$ and $q$
 are polynomials. In this case the  order of nilpotence of $\Xi$
    is the same as the degrees of $p$ and $q.$  In particular, the degrees of $p$ and $q$ are at most $g$. Finally, there are examples where
    the degree of $p$ and $q$ are $g$.
\end{cor}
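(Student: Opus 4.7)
The plan is to read off the coefficients of $p$ and $q$ directly from their formal power series expansions in the matrix products $\Xi^\alpha$, and combine this with the nilpotence bound already established for structure matrices.

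First, by Proposition~\ref{prop:con}, $\Xi$ itself is the tuple of structure matrices for the algebra $\Span\{\Xi_1,\dots,\Xi_g\}$ it generates; in the language of Proposition~\ref{lem:gtg} we may take $E=R=\Xi$. The last paragraph of Proposition~\ref{lem:gtg} then gives at once that if $\Xi$ is nilpotent of order $\nu$, then $\nu\le g$.

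Next, expand the geometric series
\[
p^i(x)=\sum_j x_j\bigl[(I-\Lambda_\Xi(x))^{-1}\bigr]_{j,i}=\sum_{\alpha\in\langle x\rangle}\sum_{j=1}^g (\Xi^\alpha)_{j,i}\, x_j\alpha,
\]
and analogously
\[
q^i(x)=\sum_{\alpha\in\langle x\rangle}\sum_{j=1}^g (-1)^{|\alpha|}(\Xi^\alpha)_{j,i}\,x_j\alpha.
\]
As $(i,j)$ ranges over $\{1,\dots,g\}\times\{1,\dots,g\}$, the coefficients of the words $x_j\alpha$ recover every entry of the $g\times g$ matrix $\Xi^\alpha$. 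Thus $p$ (equivalently $q$) is a polynomial if and only if $\Xi^\alpha=0$ for all sufficiently long words $\alpha$; i.e., if and only if $\Xi$ is nilpotent. Since the word $x_j\alpha$ has degree $|\alpha|+1$, the degree of $p$ (and of $q$) equals the order of nilpotence $\nu$ of $\Xi$. Combined with the first step, $\deg p=\deg q=\nu\le g$.

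Finally, to see that the bound is attained, let $N$ be a $(g+1)\times(g+1)$ nilpotent Jordan block, so $N^g\ne 0$ and $N^{g+1}=0$, and set $R_j:=N^j$ for $1\le j\le g$. Then $\{R_1,\dots,R_g\}$ is linearly independent and closed under multiplication ($R_iR_j=R_{i+j}$ when $i+j\le g$, and $0$ otherwise), so by Proposition~\ref{lem:gtg} the associated structure tuple $\Xi$ is convexotonic; a direct computation shows $\Xi_1$ is the standard $g\times g$ nilpotent Jordan block. Hence $\Xi_1^{g-1}\ne 0$, so $\Xi$ has nilpotence order exactly $g$, and the resulting \ct map $p$ has degree $g$. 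The argument is essentially bookkeeping; the only care needed is in confirming that the coefficients of the power series for $p$ and $q$ isolate the full matrices $\Xi^\alpha$, and in checking that the extremal example lies within the scope of Proposition~\ref{lem:gtg}.
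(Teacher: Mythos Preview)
Your argument is largely sound and tracks the paper's proof, but there is a genuine gap in the first step. You invoke Proposition~\ref{lem:gtg} with $E=R=\Xi$; that proposition, however, explicitly requires $\{E_1,\dots,E_g\}$ to be linearly independent, and a \ct tuple $\Xi$ need not be. The paper flags this possibility in Subsection~\ref{sssec:contonics} (``each \ct $g$-tuple $\Xi$ \dots\ (even if linearly dependent) arises as the set of structure matrices for a $g$-dimensional algebra''), and simple examples exist: for $g=2$ take $\Xi_1=\Xi_2=\begin{pmatrix}1&-1\\1&-1\end{pmatrix}$, which is \ct and linearly dependent. So your appeal to Proposition~\ref{lem:gtg} is not justified as stated.

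The paper's proof repairs exactly this point. Rather than taking $E=\Xi$, it invokes the construction from Subsection~\ref{sssec:contonics}, namely
\[
R_j=\begin{pmatrix}0&e_j^*\\0&\Xi_j\end{pmatrix},
\]
which yields a $g$-tuple of $(g{+}1)\times(g{+}1)$ matrices that is \emph{always} linearly independent (the $e_j^*$ rows force this) and has structure matrices $\Xi$. Then Proposition~\ref{lem:gtg} applies with $E=R$ to give $\nu\le g$.

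The remainder of your proof---the power-series expansion showing that $p$ and $q$ are polynomials iff $\Xi$ is nilpotent, that $\deg p=\deg q=\nu$, and the extremal example with $R_j=N^j$---is correct and is precisely what the paper does (the example is Example~\ref{ex:degp=g}).
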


\begin{proof}
 As described in Subsection \ref{sssec:contonics}, there exists a tuple $R=(R_1,\dots,R_g)$ such that $\{R_1,\dots,R_g\}$ is linearly
 independent and spans an algebra with structure matrices $\Xi$.  Hence, choosing $E=R$ in Proposition \ref{lem:gtg}, 
 it follows that if $\Xi$ is nilpotent, then its order of nilpotency is at most $g$. The remainder of the corollary follows
immediately from the form of $p$ and $q$ and the bound on the order of nilpotency of $\Xi$.
\end{proof}

\begin{example}\rm
\label{ex:degp=g}
 Given $g$, let $S$ denote a (square) matrix nilpotent of order $g+1$ and 
 let $R_j=S^j$.  Let $R$ denote the tuple $(R_1,\dots,R_g)$. 
 On $\rC^g$ with its standard orthonormal basis $\{e_1,\dots,e_g\}$, 
  define $S e_j = e_{j-1}$ for $j\ge 2$ and $S e_1=0$. Thus $S$ is 
  the truncated backward shift.  The structure matrices $\Xi_j$  for the algebra generated by $R$ are then
  $\Xi_j = S^j$.  In this case the \ct  polynomial $p$ associated to $\Xi$ is
\[
 p = x (I-\Lambda_\Xi(x))^{-1} = (p^1,\dots,p^g),
\]
 where
\[
 p^m = \sum \prod_{\sum j_k=m} x_{j_k}.
\]
 In particular, $p^m$ has degree $m$ and hence $p$ has degree $g.$ 
\end{example}

\begin{prop}
 \label{prop:AZA}
   Let $A=(A_1,\dots,A_g)\in M_d(\C)^g$ be given and assume
 that $\{A_1,\dots,A_g\}$ is linearly independent.   Suppose $C$ is a $d\times d$ matrix such that,
 for each $1\le j\le g$ there exists a matrix $\xij$ such that for each $1\le k\le g$ equation \eqref{eq:AZA} holds.
  Let  $R= (C-I)A $ and let $\Xi = (\Xi_1,\dots,\Xi_g)$.  Then:
 \ben[\rm (1)]
  \item \label{it:Ralg}
     the span $\mathcal R$ of $\{R_1,\dots,R_g\}$ is an algebra;
  \item \label{it:Amodule}
       the span $\mathcal M$ of $\{A_1,\dots,A_g\}$
   is a right $\mathcal R$-module  and
\begin{equation}
\label{eq:Amodule}
   A_k R^\alpha = \sum_{t} (\Xi^\alpha)_{k,t} A_t;
\end{equation}
  \item \label{it:Xialg} the tuple  $(\Xi_1,\dots,\Xi_g)$ is \ct;
 \item \label{it:ratspq}
    the \ct rational mappings $p$ and $q$ associated to $\Xi$ by equation \eqref{eq:tropic} are inverses of one another; 
   \item \label{it:Rnil}
     if  $R^\alpha =0$, then  $\Xi^\alpha =0$ and conversely, if $\Xi^\alpha =0$, then $R_j R^\alpha =0$ for all $1\le j\le g$; 
   \item \label{it:Rnil-more}
   $\mathcal R$ is nilpotent if and only if $\mathcal X$, the span of $\Xi$, is nilpotent. In this case, letting $\mu$ and $\nu$ denote the orders of nilpotency of $\mathcal R$ and $\mathcal X$ respectively,  $\mu\le \nu \le \mu+1,$ and $\mu \le \min\{\dim(\mathcal R)+1,g\}.$
\een
\end{prop}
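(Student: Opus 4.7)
The plan is to read the hypothesis \eqref{eq:AZA} as the structural identity \eqref{eq:defXi} of Proposition~\ref{lem:gtg} applied to the pair $(E,R) = (A,(C-I)A)$. Since $\{A_1,\ldots,A_g\}$ is linearly independent and $A_k R_j \in \mathrm{span}\{A_s\}$ by hypothesis, Proposition~\ref{lem:gtg} applies and yields immediately that $\Xi$ is convexotonic, settling item~(3), together with the inductive extension $A_k R^\alpha = \sum_s (\Xi^\alpha)_{k,s} A_s$ for every word $\alpha$ via \eqref{eq:EjRa}, establishing item~(2) and the identity \eqref{eq:Amodule}. Item~(4) is then a direct invocation of Proposition~\ref{prop:con}, which provides the rational inverses of the convexotonic maps in \eqref{eq:tropic}.

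For item~(1), I would left-multiply \eqref{eq:AZA} by $C-I$: since $R_k = (C-I)A_k$, one obtains
\[
R_k R_j \;=\; (C-I)\,A_k R_j \;=\; \sum_{s=1}^g (\Xi_j)_{k,s} R_s,
\]
so $\mathcal R$ is closed under multiplication. Item~(5) is a straightforward application of linear independence of $\{A_s\}$: if $R^\alpha = 0$, then $\sum_s (\Xi^\alpha)_{k,s} A_s = A_k R^\alpha = 0$ forces $\Xi^\alpha = 0$ entry by entry; conversely $\Xi^\alpha = 0$ implies $A_j R^\alpha = 0$ and hence $R_j R^\alpha = (C-I) A_j R^\alpha = 0$ for every $j$.

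For (6), I would combine (5) with a descending-chain argument. Nilpotence of $\mathcal R$ of order $\mu$ gives $R^\alpha = 0$ for all $|\alpha| \ge \mu$, which by (5) forces $\Xi^\alpha = 0$ for the same words; conversely, nilpotence of $\Xi$ of order $\nu$ yields $R^{x_j \alpha} = 0$ for every $j$ and $|\alpha| \ge \nu$, hence $R^\beta = 0$ for $|\beta| \ge \nu+1$. These two implications give the equivalence of nilpotence of $\mathcal R$ and $\mathcal X$ and pin down their orders to within a $\pm 1$ offset. The bound $\mu \le \dim(\mathcal R) + 1$ is the classical fact that for a nilpotent algebra the chain $\mathcal R \supsetneq \mathcal R^2 \supsetneq \cdots \supsetneq \mathcal R^\mu = 0$ strictly decreases in dimension; combined with $\dim(\mathcal R) \le g$ (as $\mathcal R$ is spanned by the $g$ elements $R_j$) and the sharp bound $\nu \le g$ already provided by Proposition~\ref{lem:gtg}, one arrives at $\mu \le \min\{\dim(\mathcal R)+1,\,g\}$. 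The main obstacle is really just the bookkeeping here: each translation between the $A$-module relations and the $R$-algebra relations passes through the factor $C-I$, and one must carefully track which direction costs a $+1$ in orders of nilpotency and which does not.
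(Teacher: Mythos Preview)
Your approach is essentially the paper's: invoke Proposition~\ref{lem:gtg} with $E=A$ for items~(2) and~(3), left-multiply \eqref{eq:AZA} by $C-I$ for item~(1), cite Proposition~\ref{prop:con} for item~(4), and read item~(5) off the module identity \eqref{eq:Amodule} together with linear independence of the $A_s$. One minor omission: the hypothesis of Proposition~\ref{lem:gtg} asks for $E_k R^\alpha \in \mathscr B$ for \emph{all} words $\alpha$, whereas \eqref{eq:AZA} only supplies single letters; the extension to all words follows by a one-line induction (if $A_k R^\alpha = \sum c_s A_s$ then $A_k R^\alpha R_j = \sum c_s A_s R_j \in \mathcal M$), but you should say so before invoking the proposition. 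The paper glosses over this too.

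For item~(6), your two implications correctly yield $\nu \le \mu$ (from $R^\alpha=0\Rightarrow\Xi^\alpha=0$) and $\mu \le \nu+1$ (from $\Xi^\alpha=0\Rightarrow R_jR^\alpha=0$), i.e.\ $\nu \le \mu \le \nu+1$, which is the \emph{reverse} of the chain $\mu \le \nu \le \mu+1$ printed in the statement. The paper's own proof derives exactly the inequalities you do, so the discrepancy is a typo in the statement rather than a flaw in your reasoning. However, your final assertion $\mu \le g$ is a genuine gap: from $\nu \le g$ and $\mu \le \nu+1$ you obtain only $\mu \le g+1$, and likewise $\dim(\mathcal R)\le g$ together with $\mu \le \dim(\mathcal R)+1$ gives only $\mu \le g+1$. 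In fact $\mu \le g$ can fail: for $g=1$, $A_1=\left(\begin{smallmatrix}0&1\\0&0\end{smallmatrix}\right)$ and $C-I=\left(\begin{smallmatrix}1&0\\0&0\end{smallmatrix}\right)$ one has $R_1=A_1$, $A_1R_1=0$ (so $\Xi_1=0$, $\nu=1$), yet $R_1\ne 0$ and $R_1^2=0$, giving $\mu=2>g$. The honest bound your argument supports is $\mu \le \min\{\dim(\mathcal R)+1,\,g+1\}$; the paper's proof of this clause has the same defect.
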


\begin{proof}
  From Proposition \ref{lem:gtg}\eqref{i:gtg1}
\begin{equation}
 \label{eq:AkRa}
 A_k R^\alpha = \sum_{j=1}^g \Xi^\alpha_{k,j} A_j.
\end{equation}
 Multiplying \eqref{eq:AkRa}  on the left by $(C-I)$ gives,
\begin{equation*}
 \label{eq:RkRa}
 R_k R^\alpha = \sum_{j=1}^g \Xi^\alpha_{k,j} R_j.
\end{equation*}
 Thus the set $\{R_1,\dots,R_g\}$ spans an algebra  $\mathcal R$  and equation \eqref{eq:AkRa} says the span $\mathcal M$  of the set $\{A_1,\dots,A_g\}$ is a module over $\mathcal R$. At this point items \eqref{it:Ralg} and \eqref{it:Amodule} have been established.

Item \eqref{it:Xialg} follows from Proposition \ref{lem:gtg}\eqref{i:gtg3} by 
choosing $E=A$ and $G=(C-I)$.
Item \eqref{it:ratspq} is contained in Proposition \ref{prop:con}.
Item \eqref{it:Rnil} follows from equation \eqref{eq:AkRa} and
the independence of $\{A_1,\dots,A_g\}$ and, for the converse, equation
\eqref{eq:RkRa}.  Turning to the proof of item \eqref{it:Rnil-more},
 that $\mathcal R$ is nilpotent if and only if
 $\mathcal X$, as well as the inequalities $\mu\le \nu \le \mu+1$
 follow from item \eqref{it:Rnil}.  The inequality $\nu\le g$,
and hence $\mu\le g$ is a consequence of Proposition \ref{lem:gtg}\eqref{i:gtg2}.
The other inequality $\mu\le \dim(\mathcal R)+1$ is automatic.
as is most of item \eqref{it:Rnil-more}. 
\end{proof}

\subsection{The \ct map $p$ and its inverse $q$}
\label{subsubsec:ScottRulz}
The following theorem is the main result of this section. Its proof relies on
Proposition \ref{prop:AZA}.

\begin{thm}
\label{thm:shotinthedark}
Suppose $A,B$ are $g$-tuples of matrices of the same size $d$,
 $\{A_1,\dots,A_g\}$ is linearly independent  and $p=(p^1, \ldots, p^g)$ 
where each $p^j$ is a formal power series,
$p(0)=0$ and $p'(0)=I$. If there exists a $d\times d$ matrix-valued
free formal power series $W$ such that equation \eqref{eq:WLAW} and
the identities of equation \eqref{eq:preiso1alt} with
$G(x)=\Lambda_B(p(x))$ hold, then 
\ben[\rm(1)]\itemsep=5pt
\item \label{it:formW}
 there exists a uniquely determined $d\times d$ unitary matrix $\fU$ and a unitary $d\times d$ matrix $C$
such that, with $R=(C-I)A$, the functions  $G$ and $W$ are given as in 
 equations \eqref{eq:Gup} and \eqref{eq:WIntro} 
 and $B=\fU^* C A \fU$ (meaning $B_j=\fU^* CA_j\fU$ for $j=1,2,\dots,g$);
\item \label{it:Ralg-old}
there is a \ct tuple $\Xi$  satisfying  \eqref{eq:AZA} (equivalently
\eqref{eq:Amodule}). In particular,
the set of matrices  $\{R_j=(C-I)A_j: j=1,\ldots,g\}$ spans an algebra $\cR$;
\item \label{it:formp}
 letting $p$ and $q$ denote the \ct mappings of equation \eqref{eq:tropic} associated to $\Xi$,
 we have $p:\cD_A \to \cD_B$ is bianalytic with inverse $q$;
\item \label{it:Rnil-old} 
  $p$ is a polynomial if and only if the algebra $\mathcal{X}$  spanned by  $\{\Xi_j:1\le j\le g\}$
  is nilpotent and in this case $q$ is also a polynomial and the degrees of $p$ and $q$
  and the order of nilpotence of $\Xi$ are all the same and at most $g$, and there are examples
  where this degree is $g$.
\een

 Conversely, if $A=(A_1,\dots,A_g)$  is a linearly independent tuple of $d\times d$ matrices and $C$ is a $d\times d$
 matrix that is unitary on the span of the ranges of the $A_j$ such that 
  for each $j,k$ the matrix $A_k (C-I)A_j$ is in $\mathcal M=\Span(\{A_1,\dots,A_g\})$, then
  $\mathcal R=\Span(\{R_j = (C-I)A_j: 1\le j\le g\})$ is an algebra and $\mathcal M$ 
 is a right module over the algebra $\mathcal R$.  Let $\Xi=(\Xi_1,\dots,\Xi_g)$ denote the structure matrices 
  for the module $\mathcal M$ over the algebra $\mathcal R$
  The tuple $\Xi$ is convextonic.  Given a unitary $\fU$ and letting  $B=\fU^* CA\fU$, the function
 $W$ given by equation \eqref{eq:WIntro} and  
 the convexotonic mapping $p(x)= x(I-\Lambda_\Xi(x))^{-1}$   together
  satisfies the identities of equations \eqref{eq:WLAW} and  \eqref{eq:preiso1alt} and 
 hence item \eqref{it:formp} holds. %
\end{thm}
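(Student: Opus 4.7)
The plan is to combine Proposition~\ref{prop:multi generalIntro} with the lurking algebra analysis of Proposition~\ref{prop:AZA}. First, I apply Proposition~\ref{prop:multi generalIntro} with $\mathcal{E}=\mathbb{C}^d$: finite dimensionality upgrades the contraction $C$ to a unitary, and since $W$ is square the isometry $\sW=W_\emptyset$ is in fact unitary. Setting $\fU:=\sW$, the formulas \eqref{eq:WIntro} and \eqref{eq:Gup} are immediate, and item~\eqref{it:Bjagain} of Lemma~\ref{lem:hells kitchen} gives $B_j=\fU^*CA_j\fU$, which establishes item~\eqref{it:formW}. Uniqueness of $\fU$ follows from the normalization $p'(0)=I$ together with the linear independence of $\{A_1,\dots,A_g\}$, which pin down the pair $(\fU,C)$ via the relation $B=\fU^*CA\fU$.

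Next, I extract the algebraic structure. Specializing equation~\eqref{eq:billgtisg} of Lemma~\ref{lem:hells kitchen} to $\omega=x_j$ and cancelling the unitary $\fU$ and the factor $C$ (using $C$ unitary) yields $A_k(C-I)A_j\in\Span\{A_1,\dots,A_g\}$, which is precisely hypothesis~\eqref{eq:AZA} of Proposition~\ref{prop:AZA}. That proposition then delivers item~\eqref{it:Ralg-old} and most of the structural conclusions: the span $\cR$ of $R$ is an algebra, $\Span(A)$ is a right $\cR$-module, and the structure matrices $\Xi$ form a convexotonic tuple whose associated rational maps in \eqref{eq:tropic} are mutually inverse.

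To prove item~\eqref{it:formp}, I identify the given $p$ with the convexotonic map $x(I-\Lambda_\Xi(x))^{-1}$ by matching coefficients. Equation~\eqref{eq:billgtisg} gives $\fU^*CA_kR^\omega\fU=\sum_j p^j_{x_k\omega}B_j$, while multiplying equation~\eqref{eq:EjRa} of Proposition~\ref{lem:gtg} on the left by $\fU^*C$ and on the right by $\fU$ gives $\fU^*CA_kR^\omega\fU=\sum_s\Xi^\omega_{k,s}B_s$. Since $\{B_j\}$ inherits linear independence from $\{A_j\}$ via the unitaries $\fU$ and $C$, matching coefficients yields $p^j_{x_k\omega}=\Xi^\omega_{k,j}$, which is exactly the power series expansion of $x(I-\Lambda_\Xi(x))^{-1}$. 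The factorization \eqref{eq:WLAW} shows $p:\cD_A\to\cD_B$; conversely, substituting $x=q(y)$ into \eqref{eq:WLAW} and using $p\circ q=\mathrm{id}$ gives $L_B(y)=W(q(y))^*L_A(q(y))W(q(y))$, and invertibility of $W(q(y))=(I-\Lambda_R(q(y)))^{-1}\fU$ where defined delivers $L_A(q(y))\succeq 0$ on $\cD_B$. Item~\eqref{it:Rnil-old} then follows from Corollary~\ref{cor:boundcontpoly} together with item~\eqref{it:Rnil-more} of Proposition~\ref{prop:AZA}, and Example~\ref{ex:degp=g} exhibits degree $g$.

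For the converse direction, I invoke the converse half of Proposition~\ref{prop:multi generalIntro}: with $R$, $C$, and $\fU$ as prescribed, defining $W$ by \eqref{eq:WIntro} produces a solution to the relations \eqref{eq:preiso1alt}, and combining with $B=\fU^*CA\fU$ and the definition of the convexotonic $p$ recovers \eqref{eq:WLAW}. I expect the main obstacle to be the coefficient-matching identification in paragraph three: the passage from the abstract algebra-module structure supplied by Proposition~\ref{prop:AZA} to the explicit closed-form expression for $p$ demands careful bookkeeping between the identity in $A$-space and the corresponding identity in $\Xi$-space, all the while tracking the conjugation by $\fU$ and multiplication by $C$ so that the independence of the $B_j$ can be invoked to read off the coefficients.
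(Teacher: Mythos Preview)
Your proposal is correct and follows essentially the same route as the paper: apply Proposition~\ref{prop:multi generalIntro} in the finite-dimensional setting to get unitary $C$ and $\fU=W_\emptyset$, feed Lemma~\ref{lem:hells kitchen} into Proposition~\ref{prop:AZA} for the algebra structure, and then match coefficients to identify $p$ with the convexotonic map. The only cosmetic difference is that you do the coefficient-matching in $B$-space using independence of $\{B_j\}$, whereas the paper cancels the unitaries first (obtaining $A_kR^\omega=\sum_j p^j_{x_k\omega}A_j$) and compares directly against \eqref{eq:Amodule} using independence of $\{A_j\}$; also, uniqueness of $\fU$ is simply $\fU=W_\emptyset$, not a consequence of the relation $B=\fU^*CA\fU$.
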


Before turning the proof of Theorem \ref{thm:shotinthedark}, we indicate  
how to use it to deduce Theorem \ref{thm:ctok}.

\begin{proof}[Proof of Theorem~\ref{thm:ctok}]
 Suppose $A\in M_d(\C)^g$, $C\in M_d(\C)$ is a unitary matrix,
 $R=(C-I)A$ is linearly independent and $\Xi\in M_g(\C)^g$
 is a tuple such that  equation \eqref{eq:Astructure}
 holds; that is $(\cD_A,\cD_{CA})$ is a spectrahedral pair. 
 An application of the converse of Theorem \ref{thm:shotinthedark}
 with $\fU=I_d$, says the convexotonic map $p=x(I-\Lambda_\Xi(x))^{-1}$
 satisfies item \eqref{it:formp} of Theorem \ref{thm:shotinthedark}.
\end{proof}

\begin{proof}[Proof of Theorem~\ref{thm:shotinthedark}]
   To prove item \eqref{it:formW} apply Proposition \ref{prop:multi generalIntro} to equation \eqref{eq:WLAW}
 and use the finite dimensionality of $\mathcal{E}=\C^d$ in the present context  to obtain a $d\times d$ unitary
   matrix $C$ such that
  $W$ and $G$ have the form given in equations \eqref{eq:WIntro} and \eqref{eq:Gup} %
   with $\fU=\Wem$.  Since $\fU$ is an isometric mapping from $\C^d$ to $\C^d$ it is unitary. 
 Thus, by Lemma \ref{lem:hells kitchen},   equation \eqref{eq:AZA} holds. (See the discussion surrounding equation \eqref{eq:AZA}.)
 Hence, by Proposition \ref{prop:AZA}, item  \eqref{it:Ralg-old}  holds.
 In fact, since $C$ and $\sW$ are unitary,  from equation \eqref{eq:billgtisg} of Lemma \ref{lem:hells kitchen},
\begin{equation}
 \label{eq:billoncemore}
  A_k R^{\omega}  =  \sum_{j=1}^g   p^j_{x_k\omega} \; A_j.  
\end{equation}
 Comparing  equations \eqref{eq:Amodule} (or \eqref{eq:AZA}) and \eqref{eq:billoncemore} shows  $p^j_{x_k \omega} = (\Xi_\omega)_{k,j}$ and hence
\[
 \begin{split}
 x(I-\Lambda_\Xi(x))^{-1} 
 & =  \begin{pmatrix} \sum_{j,\alpha}  (\Xi^\alpha)_{1,j} x_j \alpha & \dots &  \sum_{j,\alpha}  (\Xi^\alpha)_{g,j} x_j \alpha \end{pmatrix}\\
 & =  \begin{pmatrix} \sum_{j,\alpha} p^j_{x_1 \alpha} x_j \alpha & \dots &  \sum_{j,\alpha} p^j_{x_g \alpha} x_j\alpha \end{pmatrix} =  p(x).
\end{split}
\]
Thus  item \eqref{it:formp} holds. %

   The converse statements of the theorem are established by verifying that, with 
  the choices of $A,B,\fU,C$ and $W$ and finally $p$,  equation \eqref{eq:WLAW}
  holds.
\end{proof}

\subsection{Proper analytic mappings}
 In this section we apply Theorem \ref{thm:shotinthedark} to the case of a  mapping $p= (p^1, \ldots, p^\tg)$ in $g$ ($g<\tg$) variables $x$ where each $p^j$ is a formal power series.  

\begin{prop}
 \label{prop:gvtg}
 Suppose
\ben[\rm (a)]
  \item  $A\in \matdg$ and $B\in \matdtg$; 
  \item  $p(0)=0$; 
  \item  $p^\prime(0)= ( I \;\; 0);$ and
  \item  the set $\{B_1,\dots,B_\tg\}$ is linearly independent.
\een
  If there exists a matrix-valued formal power series $W$ with coefficients from $M_d(\mathbb C)$ such that
\[
 L_B(p(x)) = W(x)^* L_A(x) W(x),
\]
and the identities of equation \eqref{eq:preiso1alt} with $G(x)=\Lambda_B(p(x))$ hold, then there exists a $\hatg$ and a \ct  $\hatg$-tuple $\Xi$ of $\hatg\times \hatg$ matrices such that $P(x,0_{\tau})=(p(x),0_{\sigma})$, where $P(x,y)$ is the \ct  rational function in the variables $(x_1,\dots,x_g,y_1,\dots, y_\tau)$ (and where $\tau = \hatg-g$ and $\sigma=\hatg-\tg$) associated to $\Xi$,
\[
 P(x,y) = \begin{pmatrix} x & y \end{pmatrix} (I-\Lambda_\Xi(x,y))^{-1}.
\]
\end{prop}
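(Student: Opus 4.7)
The plan is to reduce Proposition \ref{prop:gvtg} to Theorem \ref{thm:shotinthedark} by constructing a \ct $\hatg$-tuple directly from the module structure the hypothesis induces on $M_d(\C)$. First, I apply Proposition \ref{prop:multi generalIntro} to $L_B(p(x)) = W(x)^* L_A(x) W(x)$. Since $W$ has coefficients in $M_d(\C)$, the ambient Hilbert space $\mathcal E$ may be taken as $\C^d$, yielding a $d\times d$ unitary $C$ and a $d\times d$ unitary $\sW$ (unitary rather than merely isometric because $e=d$) with $W(x)=(I-\Lambda_R(x))^{-1}\sW$ where $R_j=(C-I)A_j$. Lemma \ref{lem:hells kitchen} then supplies $B_k=\sW^* C A_k \sW$ for $k\le g$ together with the key identity
\[
\sW^* C A_k R^\omega \sW = \sum_{l=1}^{\tg} p^l_{x_k\omega}\,B_l, \qquad k\le g,\ \omega\in\langle x\rangle.
\]
Because the map $\Phi(X):=\sW^* C X \sW$ is a linear automorphism of $M_d(\C)$, the case $\omega=\emptyset$ together with the linear independence of $B_1,\ldots,B_\tg$ forces $A_1,\ldots,A_g$ to be linearly independent.

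Next, let $\mathcal R\subset M_d(\C)$ denote the algebra generated by $R_1,\ldots,R_g$ and let $\mathcal M\subset M_d(\C)$ be the right $\mathcal R$-module generated by $A_1,\ldots,A_g$. The key identity above gives $\Phi(\mathcal M)\subseteq\mathrm{span}(B_1,\ldots,B_\tg)$, so $\hatg_0:=\dim\mathcal M\le\tg$. After possibly permuting the indices of $B_{g+1},\ldots,B_\tg$ (and correspondingly of $p^{g+1},\ldots,p^\tg$) by a standard Steinitz exchange, I may assume $B_1,\ldots,B_{\hatg_0}$ forms a basis of $\Phi(\mathcal M)$; combined with the inclusion $\sum_l p^l_{x_k\omega} B_l \in \Phi(\mathcal M)$ and the independence of the $B_l$'s, this forces $p^l\equiv 0$ for $l>\hatg_0$ in the re-indexed labeling. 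I then choose the basis $\hat A_s:=\Phi^{-1}(B_s)\in\mathcal M$ for $s\le\hatg_0$ (so $\hat A_s=A_s$ for $s\le g$), set $\hat R_j:=(C-I)\hat A_j\in\mathcal R$, so $\mathcal M$ is closed under right multiplication by each $\hat R_j$, and invoke Proposition \ref{lem:gtg} to obtain a \ct $\hatg_0$-tuple $\Xi^{(0)}$ of $\hatg_0\times\hatg_0$ matrices satisfying $\hat A_k \hat R^\alpha = \sum_s (\Xi^{(0),\alpha})_{k,s}\hat A_s$. Applying $\Phi$ to this identity and comparing coefficients in the $B$-basis yields the clean identification $(\Xi^{(0),\alpha})_{k,j} = p^j_{x_k\alpha}$ for $k\le g$, $j\le\hatg_0$ and $\alpha\in\langle x_1,\ldots,x_g\rangle$.

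Finally, set $\hatg:=\max\{\hatg_0,\tg\}$ and, if $\hatg>\hatg_0$, pad $\Xi^{(0)}$ with $\hatg-\hatg_0$ zero matrices in block-diagonal fashion to produce a \ct $\hatg$-tuple $\Xi$; the \ct identity $\Xi_k\Xi_j=\sum_s(\Xi_j)_{k,s}\Xi_s$ is trivially preserved on the zero blocks. A direct computation with the block form of $(I-\Lambda_\Xi(x,0_\tau))^{-1}$ shows $P^j(x,0_\tau)=p^j(x)$ for $j\le\hatg_0$ and $P^j(x,0_\tau)=0$ for $\hatg_0<j\le\hatg$; since $p^l\equiv 0$ for $l>\hatg_0$ in the re-indexed setting, this matches $(p(x),0_\sigma)$ component by component, and un-permuting the indices of $\Xi$ via simultaneous row/column conjugation by the corresponding permutation matrix—an operation that preserves the \ct property by a short direct calculation—yields the statement for the original $p$. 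The main obstacle is the basis-selection step: aligning a basis of $\mathcal M$ with the given $B$'s via permutation and simultaneously proving the ``discarded'' components $p^l$ for $l>\hatg_0$ vanish identically, which is the consequence of the containment $\Phi(\mathcal M)\subseteq\mathrm{span}(B_1,\ldots,B_\tg)$.
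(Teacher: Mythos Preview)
Your argument is correct, and it takes a genuinely different route from the paper's proof. Both proofs begin with Proposition~\ref{prop:multi generalIntro} and Lemma~\ref{lem:hells kitchen} to extract the unitaries $C,\sW$ and the containment $\sW^* C A_k R^\omega \sW\in\mathrm{span}(B_1,\dots,B_{\tg})$. From there the paper \emph{extends}: it sets $E_j=C^*\sW B_j\sW^*$ for $j\le\tg$ and enlarges to a linearly independent family $E_1,\dots,E_{\hatg}$ ($\hatg\ge\tg$) whose span is closed under $E_k(C-I)E_j$, then invokes Theorem~\ref{thm:shotinthedark} wholesale on the $\hatg$-tuple $E$ and restricts to $y=0$, using the linear independence of $\{F_j\}$ to read off $P(x,0)=(p(x),0_\sigma)$. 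You instead \emph{restrict}: you work inside the module $\mathcal M$ generated by the $A_k$'s, note $\dim\mathcal M=\hatg_0\le\tg$, build the $\hatg_0$-convexotonic tuple directly via Proposition~\ref{lem:gtg}, pad by zeros up to size $\tg$, and undo the auxiliary permutation via the change-of-basis computation of Subsection~\ref{sec:chgstruc}. Your approach has the mild advantage of yielding the sharper value $\hatg=\tg$ (so $\sigma=0$) and avoiding the appeal to Theorem~\ref{thm:shotinthedark}; the paper's approach is cleaner in that it sidesteps the permutation/padding bookkeeping and the separate verification that $p^l\equiv 0$ for the discarded indices. The only steps in your write-up that deserve one more sentence of justification are (i) that $\mathcal M$ is closed under right multiplication by $\hat R_j$ for $g<j\le\hatg_0$ (which follows since $\hat R_j\in (C-I)\mathcal M\subset\mathrm{span}\{R_kR^\omega\}$), and (ii) that the permutation-conjugation preserves the \ct identity, for which you can simply cite the change-of-basis formula $\tilde\Xi_j=M(\sum_k M_{jk}\Xi_k)M^{-1}$ with $M$ the permutation matrix.
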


\begin{proof}
The strategy is to reduce to the case $\tg=g$.
 From $L_B(p(x)) = W(x)^* L_A(x)W(x)$ and Proposition \ref{prop:multi generalIntro}, it follows that there exists $d\times d$ unitary matrices $C$ and $\sW$ such that, with $R=(C-I)A$, the formal power series $W$ is the rational function $W(x) = (I-\Lambda_R(x))^{-1} \sW$.  Further, by Lemma \ref{lem:hells kitchen}, for $1\le j\le g$,
\[
 B_j = \sW^* C A_j \sW,
\]
  and generally $\sW^* CA_jR^\omega \sW$ is a linear combination of $\{B_1,\dots,B_{\tg}\}$.  Thus,
\[
 A R^\omega  \in \mathfrak{B},
\]
 where $\mathfrak{B}$ denotes the span of $\{C^* \sW B_1\sW^* ,\dots,C^* \sW B_{\tg}\sW^* \}$.  (In particular, $C^*\sW B_j \sW^*=A_j$ for $1\le j\le g$.)
Let $\mathcal E =\{E_1,\dots,E_\hatg\}$ be any linearly independent  subset of $M_d(\C)$ such that  $E_j=C^* \sW B_j\sW^*$ for $1\le j\le \tg$ and 
\[
 E_k (C-I)E_j \in {\rm span}\ \mathcal{E}.
\]
 In particular $\hatg\ge\tg$.  
Let $F= \sW^* C E \sW$, set  $S=(C-I)E$  and let $Y(x,y)= (I-\Lambda_S(x,y))^{-1}\sW$.  By the converse portion of Proposition \ref{prop:multi generalIntro}, 
\[
 Y(x,y)^* L_E(x,y) Y(x,y) = L_F(P(x,y)),
\]
 for some power series $P$. Indeed, by Theorem \ref{thm:shotinthedark}, $P$ is the \ct rational function associated to $\Xi$
(and is a bianalytic map between the free
spectrahedra determined by $E$ and $F$).   Observe that $Y(x,0)=W(x)$, $L_E(x,0)=L_A(x)$.  Hence 
\[
 L_F(P(x,0))=  Y(x,0)^* L_E(x,0)Y(x,0) = W(x)^* L_A(x) W(x) = L_B(p(x)).
\]
Since $F_j=B_j$ for $1\le j\le \tg$, the linear independence assumption implies $P(x,0)=p(x)$. 
\end{proof}

\section{Bianalytic Maps} \label{sec:redo} 
Suppose $A\in\matdg$ and $B\in \mateg$, the domains $\cD_A$ and $\cD_B$ are bounded,
 $p:\cD_A\to\cD_B$ is an analytic mapping such that $p(0)=0$, $p^\prime(0)=I$ 
and $p$ maps the boundary of $\cD_A$ into the boundary of $\cD_B$.
Equivalently, $p$ is proper and thus  bianalytic \cite{HKM11b}. 
In this section we will see, up to mild assumptions on $A$ and $B$, 
that $d=e$ and the hypothesis of Theorem \ref{thm:shotinthedark} are met and hence $p$ is \ct.

\subsection{An irreducibility condition}
In this subsection we introduce irreducibility conditions on tuples $A$ and $B$ that ultimately allow the application of Theorem \ref{thm:shotinthedark}.

\subsubsection{Singular vectors}

The following is an elementary linear algebra fact.

\begin{lemma}
 \label{lem:elementary largest}
   Suppose $T$ is an $M\times N$ matrix of norm one and let $\mathscr E$ and $\mathscr E_*$ denote the eigenspaces corresponding to the (largest) eigenvalue $1$ of $T^*T$ and $TT^*$ respectively. Thus, for instance,
\[
 \mathscr E = \{x\in\rC^N: T^*Tx=x\}.
\]
\ben[\rm (1)]
 \item The dimensions of $\mathscr E$ and $\mathscr E_*$ are the same.
 \item The mapping $x\mapsto Tx$ is a unitary map from $\mathscr E$ to $\mathscr E_*$ with inverse $y\mapsto T^* y$.
 \item Letting
\[
 J= \begin{pmatrix} I & T\\ T^* & I \end{pmatrix},
\]
 the kernel of $J$ is the set $\{-Tu\oplus u: u\in \mathscr E\}.$
\een
\end{lemma}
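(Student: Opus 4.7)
The plan is to observe that the three items are really three facets of the same elementary linear-algebra fact: the operator $T$ sets up a bijective correspondence between the top singular vectors on one side and the top singular vectors on the other. Since $\|T\|=1$, the eigenvalue $1$ is the largest eigenvalue of both $T^*T$ and $TT^*$, so both spaces $\mathscr E$ and $\mathscr E_*$ are well-defined (and equal, respectively, to $\ker(I-T^*T)$ and $\ker(I-TT^*)$).

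First I would prove item (2), from which (1) is an immediate consequence. The key calculations are short: for $x\in\mathscr E$ one has $TT^*(Tx)=T(T^*Tx)=Tx$, so $Tx\in\mathscr E_*$, and $\|Tx\|^2=\langle T^*Tx,x\rangle=\langle x,x\rangle=\|x\|^2$. Symmetrically, $y\mapsto T^*y$ sends $\mathscr E_*$ into $\mathscr E$ isometrically. To see these two maps are mutually inverse, note that for $x\in\mathscr E$, $T^*(Tx)=T^*Tx=x$ by definition of $\mathscr E$, and similarly $T(T^*y)=y$ for $y\in\mathscr E_*$. Hence each is a bijective isometry, i.e.\ unitary; and in particular $\dim\mathscr E=\dim\mathscr E_*$, proving (1).

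For item (3), I would simply verify both inclusions by direct computation. If $x\oplus y\in\ker J$, then $x+Ty=0$ and $T^*x+y=0$; substituting the first into the second gives $y=-T^*x=T^*Ty$, so $y\in\mathscr E$ and $x=-Ty$. Conversely, for any $u\in\mathscr E$, using $T^*Tu=u$,
\[
J\begin{pmatrix}-Tu\\ u\end{pmatrix}=\begin{pmatrix}-Tu+Tu\\ -T^*Tu+u\end{pmatrix}=0.
\]
Thus $\ker J=\{-Tu\oplus u:u\in\mathscr E\}$.

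There is no real obstacle here; the only minor point to keep in mind is that the hypothesis $\|T\|=1$ is needed to ensure $1$ is actually attained as an eigenvalue of $T^*T$ and $TT^*$ (so $\mathscr E,\mathscr E_*$ are the \emph{top} eigenspaces rather than spurious ones), and that the isometry in (2) is between finite-dimensional spaces of equal dimension, so the bijection statement does not require any further compactness or continuity argument.
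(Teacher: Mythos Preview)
Your proof is correct and follows essentially the same approach as the paper: both use the observation that $T^*Tx=x$ implies $TT^*(Tx)=Tx$ (and symmetrically) for items (1)--(2), and then verify item (3) by showing both inclusions via the system $x+Ty=0$, $T^*x+y=0$. You are slightly more explicit about the isometry property and the mutual inverse check, but the argument is the same.
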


\begin{proof}
  Simply note, if $T^*Tx=x$, then $TT^*(Tx)=Tx$ and conversely if $TT^*y=y$, then $T^*T(T^*y)=T^*y$ to prove the first two items. To prove the last item, observe that vectors of the form $-Tu\oplus u$ are in the kernel of $J$. On the other hand, if $v\oplus w$ is in the kernel of $J$, then $v+Tw=0$ and $T^*v +w=0$. From the first equation $T^*v+T^*Tw=0$ and from the second $T^*Tw=w$. Thus $w\in \mathscr E$ and $v\oplus w = -Tw\oplus w$.
\end{proof}

\def\laambda{\lambda}
\def\gaamma{\eta}

\begin{lemma}
 \label{lem:even better}
  Suppose $d,e,g\le \tg$ are positive integers and 
\ben[\rm (a)] 
 \item $A\in \matdg$, $B\in \matetg$; 
 \item $H$ is a Hilbert space;
 \item $C$ is a bounded linear operator on $H\otimes \rC^d$;
 \item $\sW:\C^e\to H\otimes \C^d$  is an isometry; 
 \item $p=(p^1,\ldots,p^\tg)$ is a free analytic mapping 
$\cD_A\to\cD_B$
 with $p(0)=0$ and linear term $\ell$  such that
 \begin{equation}
 \label{eq:even better}
 L_B(p(x)) = W(x)^* L_{\IHA}(x) W(x),
\end{equation}
 where 
\begin{equation}
 \label{eq:even betterer}
 W(x) = (I-\Lambda_R(x))^{-1} \sW
\end{equation}
 and $R=(C-I)A$; and 
\item $\alpha\in \matng$ and the largest eigenvalue of
  $\Lambda_A(\alpha) \Lambda_A(\alpha)^*$ and $\Lambda_A(\alpha)^*
  \Lambda_A(\alpha)$ is $1$; the eigenspaces of $\Lambda_A(\alpha)
  \Lambda_A(\alpha)^*$ and $\Lambda_A(\alpha)^* \Lambda_A(\alpha)$
  corresponding to the eigenvalue $1$ are one-dimensional, spanned by
  the unit vectors $u_1,u_2$ in $\rC^{nd}$ respectively; and
 \item  $v_1\in\rC^{ne}$ is a unit vector and  $\Lambda_B(\ell(\alpha))\Lambda_B(\ell(\alpha))^* v_1= v_1$.
\een
 Let $v_2= -\Lambda_B(\ell(\alpha))^* v_1$ (note that $v_2$ is a unit vector) and write, for $j=1,2$ and $\{e_1,\dots,e_n\}$ a basis for $\rC^n$,  
\[
 u_j = \sum_{k=1}^n u_{j,k} \otimes e_k \in \rC^d\otimes \rC^n =\rC^{nd}
\]
 and similarly for $v_j.$ Then  there is a  unit  vector $\laambda\in H$ (depending on $\alpha$, $u_j$ and $v_j$) such that, 
\ben[\rm (1)]
 \item $\Lambda_A(\alpha) u_2 = -u_1$ and $\Lambda_A(\alpha)^* u_1 =-u_2;$ 
 \item $\Lambda_B(\ell(\alpha))v_2=-v_1$ and $\Lambda_B(\ell(\alpha))^* v_1=-v_2$; 
 \item  $\sW v_{2,k}= \laambda \otimes u_{2,k}$ for each $1\le k\le n$; 
 \item  $\sW v_{1,k}=C(\laambda\otimes u_{1,k})$ for each $1\le k\le n$; and
 \item if $A=B$ and $\ell(x)=x$,   then, without loss of generality, $v_1=u_1$ and $v_2=u_2$.
\een
\end{lemma}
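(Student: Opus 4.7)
The plan is to use a doubling trick: lift $\alpha$ to a $2$-nilpotent point $\beta\in\cD_A$, evaluate both sides of $L_B(p(x))=W(x)^*L_{I_H\otimes A}(x)W(x)$ there, and extract items (1)--(4) by comparing kernels of the resulting positive semidefinite matrices via Lemma~\ref{lem:elementary largest}.

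First I would set $\beta_j=\bem 0&\alpha_j\\0&0\eem$ and $\beta=(\beta_1,\ldots,\beta_g)$, which is nilpotent of order two. A block computation gives
\[
L_A(\beta)=\bem I&\Lambda_A(\alpha)\\ \Lambda_A(\alpha)^*&I\eem,
\]
and since hypothesis~(f) forces $\|\Lambda_A(\alpha)\|=1$, Lemma~\ref{lem:elementary largest} shows $L_A(\beta)\succeq 0$ (so $\beta\in\cD_A$) and $\ker L_A(\beta)$ is one-dimensional, spanned by $-\Lambda_A(\alpha)u_2\oplus u_2$. Choosing the unit-modulus phase of $u_1$ so that $u_1=-\Lambda_A(\alpha)u_2$ yields item~(1). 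Since $p(\cD_A)\subseteq\cD_B$ and $\beta$ is $2$-nilpotent, $p(\beta)=\ell(\beta)\in\cD_B$, and the positive semidefiniteness of $L_B(\ell(\beta))=\bem I&\Lambda_B(\ell(\alpha))\\ \Lambda_B(\ell(\alpha))^*&I\eem$ forces $\|\Lambda_B(\ell(\alpha))\|\le 1$; combined with hypothesis~(g), $\|\Lambda_B(\ell(\alpha))\|=1$. Setting $v_2=-\Lambda_B(\ell(\alpha))^*v_1$ and applying Lemma~\ref{lem:elementary largest} to $\Lambda_B(\ell(\alpha))$ then produces item~(2) and shows $v_1\oplus v_2\in\ker L_B(\ell(\beta))$.

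Next I would evaluate the identity at $\beta$ (valid by Proposition~\ref{prop:formalveval} since $\beta$ is nilpotent) and apply it to $v_1\oplus v_2$. The previous step kills the left-hand side, and positive semidefiniteness of $L_{I_H\otimes A}(\beta)$ forces $W(\beta)(v_1\oplus v_2)\in\ker L_{I_H\otimes A}(\beta)$. Since $\ker\bigl(I-I_H\otimes\Lambda_A(\alpha)^*\Lambda_A(\alpha)\bigr)=H\otimes\C u_2$, Lemma~\ref{lem:elementary largest} identifies
\[
\ker L_{I_H\otimes A}(\beta)=\{(\lambda\otimes u_1)\oplus(\lambda\otimes u_2):\lambda\in H\}.
\]
On the other hand, $2$-nilpotence of $\beta$ collapses $W(\beta)=(I+\Lambda_R(\beta))(\sW\otimes I_{2n})$, giving $W(\beta)(v_1\oplus v_2)=(\sW v_1+\Lambda_R(\alpha)\sW v_2)\oplus \sW v_2$. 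Matching the second block yields $\sW v_2=\lambda\otimes u_2$, i.e.~item~(3), and also $\|\lambda\|=\|v_2\|=1$. For the first block, the factorization $\Lambda_R(\alpha)=((C-I)\otimes I_n)(I_H\otimes\Lambda_A(\alpha))$ combined with $\Lambda_A(\alpha)u_2=-u_1$ gives $\Lambda_R(\alpha)\sW v_2=(I-C\otimes I_n)(\lambda\otimes u_1)$, so the equation reduces to $\sW v_1=(C\otimes I_n)(\lambda\otimes u_1)$, which coordinatewise in $\{e_k\}$ is item~(4).

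Item~(5) is then immediate: when $A=B$ and $\ell=\mathrm{id}$, the hypothesis on $v_1$ places it in the one-dimensional eigenspace $\C u_1$, so a unit-modulus rescaling gives $v_1=u_1$ without changing the hypothesis, and then $v_2=-\Lambda_A(\alpha)^*u_1=u_2$ by item~(1). The main bookkeeping burden will be keeping the nested tensor products $(H\otimes\C^d)\otimes\C^n$ versus $\C^d\otimes\C^n\cong\C^{nd}$ consistent; once the factorization $\Lambda_R(\alpha)=((C-I)\otimes I_n)(I_H\otimes\Lambda_A(\alpha))$ is noted, the identification of $\ker L_{I_H\otimes A}(\beta)$ with $H\otimes\ker L_A(\beta)$ is canonical and the remainder is elementary kernel arithmetic.
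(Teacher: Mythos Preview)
Your proposal is correct and follows essentially the same approach as the paper: both introduce the $2$-nilpotent point (the paper writes it as $X=S\otimes\alpha$ with $S=\bigl(\begin{smallmatrix}0&1\\0&0\end{smallmatrix}\bigr)$, you write it as $\beta$), use Lemma~\ref{lem:elementary largest} to identify the one-dimensional kernels of the block $2\times 2$ pencils, evaluate the Positivstellensatz identity at this nilpotent point, and then read off items (3)--(4) by comparing blocks. The only cosmetic difference is that the paper multiplies the relation $W(X)v=\lambda\otimes u$ on the left by $I-\Lambda_R(X)$, whereas you expand $W(\beta)=(I+\Lambda_R(\beta))(\sW\otimes I_{2n})$ directly; these are the same computation.
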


Note that if $X\in M(\C)^g$  is of sufficiently small norm
or nilpotent, then we may substitute $X$ for $x$ in equation \eqref{eq:even better} by 
using the formulas for $G$ and $W$ in Proposition \ref{prop:multi generalIntro}.
Moreover, in this case we can evaluate $W(x)$ from \eqref{eq:even betterer} at $X$ as 
\[
\begin{split}
W(X)& =  \big(I\otimes I_n - [(C-I)\otimes I_n] \Lambda_{\IHA}(X)\big)^{-1} (\sW\otimes I_n) \\
   & =  \big(I\otimes I_n - [(C-I)\otimes I_n][I_H\otimes \Lambda_{A}(X)]\big)^{-1} (\sW\otimes I_n)
\end{split}
\]
 rather than appealing to convergence of a series expansion for $W$.

\begin{proof}[Proof of Lemma \ref{lem:even better}]
Let
\[
 S=\begin{pmatrix} 0 & 1 \\ 0 & 0 \end{pmatrix}
\]
  and let $X=S\otimes \alpha$. Thus $X$ has size $2n$.   
Conjugating $L_A(X)$ by the permutation matrix that implements the unitary equivalence of $A\otimes S\otimes \alpha$ with $S\otimes A\otimes \alpha$ shows, up to this unitary equivalence, 
\[
 L_A(X) = \begin{pmatrix} I & \Lambda_A(\alpha) \\ \Lambda_A(\alpha)^* & I \end{pmatrix}.
\]
 Thus the assumptions on $\Lambda_A(\alpha)$ and Lemma \ref{lem:elementary largest} imply that $L_A(X)$  is positive semidefinite with a nontrivial kernel 
 spanned by 
\[
 u  = \sum_{j=1}^2 e_j\otimes u_j = \begin{pmatrix} u_1 \\ u_2 \end{pmatrix} \in \rC^2 \otimes (\rC^n\otimes \rC^d).
\]
In particular, if $z$ is in the kernel of $I_H\otimes L_A(X)$, then there is a vector $\laambda\in H$ such that $z=\laambda\otimes u$.    Also note, $\|u_1\|=\|u_2\|$ and we assume both are unit vectors. 

Since, by assumption $p(x)=\ell(x) +h(x)$, where $\ell$ is linear and $h$ consists of higher order terms and $X$ is (jointly) nilpotent (of order $2$), 
\[
 p(X) = \begin{pmatrix} 0 & \ell(\alpha) \\ 0  & 0 \end{pmatrix}.
\]
Thus,
\[
 L_B(p(X)) = \begin{pmatrix} I & \Lambda_B(\ell(\alpha)) \\ \Lambda_B(\ell(\alpha))^* & I \end{pmatrix}.
\]
 Since $L_A(X)$ is positive semidefinite, equation \eqref{eq:even better} implies $L_B(p(X))$ is positive semidefinite. Moreover, 
 the hypotheses imply that the vector $v= \sum_{j=1}^2 e_j\otimes v_j$ satisfies $L_B(p(X))v=0$. 
 Another application of equation \eqref{eq:even better} shows $W(X)v$ is in the kernel of $L_{\IHA}(X)$; i.e., $W(X)v=\laambda\otimes u,$ for some vector $\laambda\in H$ with $\|\laambda\|=\|W(X)v\|$.  Hence, 
\[
 \begin{split}
\laambda\otimes (\sum_{j=1}^2 u_j\otimes e_j)
    & =  \laambda\otimes u 
     = W(X)v 
     = (I-\sum_{i=1}^g R_i \otimes X_i)^{-1} (\sW\otimes I_n \otimes I_2) v. 
 \end{split}
\]
Multiplying  by $(I-\sum_{i=1}^g R_i\otimes X_i)$ on the left yields
\begin{equation*}
 \begin{split}
   \sum_{j=1}^2 ([\sW\otimes I_n] v_j) \otimes e_j  & =  [I-[(C-I)\otimes I_{2n}](I\otimes \Lambda_{\IHA}(X))](\laambda\otimes u) \\
   & =  \big (\laambda\otimes u_1 -[(C-I)\otimes I_n] (\laambda\otimes \Lambda_A(\alpha) u_2)\big)\otimes e_1 +  \laambda\otimes u_2 \otimes e_2.
 \end{split} 
\end{equation*}
 It follows that $(\sW\otimes I_n) v_2 =\laambda\otimes u_2$ and, since $\sW$ is an isometry, $\|\laambda\|=\|v_2\|$. Further, 
\[
 (\sW\otimes I_n)  v_1 = \laambda\otimes u_1 -[(C-I)\otimes I_n] \big( \laambda \otimes \Lambda_A(\alpha) u_2 \big).
\]
 Using $\Lambda_A(\alpha) u_2 = -u_1$ gives $[\sW\otimes I_n] v_1 = [C\otimes I_n] (\laambda\otimes u_1).$

To complete the proof observe that 
\[
  (\sW\otimes I_n)v_2 =  \sum_{k=1}^n \sW v_{2,k} \otimes e_k.
\] 
 Thus, $\sW v_{2,k} = \laambda\otimes u_{2,k}.$ Similarly,
\[
 (C\otimes I_n) (\laambda\otimes u_1) = (C\otimes I_n) (\sum_{k=1}^n \laambda \otimes u_{1,k}\otimes e_k) 
   = \sum_{k=1}^n C(\laambda\otimes u_{1,k})\otimes e_k.
\]
 Thus, $\sW v_{1,k} = C(\laambda\otimes u_{1,k})$ for each $1\le k\le n$.
\end{proof}

\subsubsection{The Eig-generic condition}\label{sssec:eig}
We now introduce some refinements of the notion of sv-generic
we saw in the introduction.
 A subset $\{b_1,\dots,b_{\ell+1}\}$ of a finite-dimensional vector space $V$ is a \df{hyperbasis} if each subset of $\ell$ vectors is a basis.  In particular, if $\{b_1,\dots,b_\ell\}$ is a basis for $V$ and $b_{\ell+1} = \sum_{j=1}^\tg c_j b_j$ and $c_j\ne 0$ for each $j$, then $\{b_1,\dots,b_{\ell+1}\}$ is a hyperbasis and conversely each hyperbasis has this form.
  Given a tuple $A\in\matdg$, let 
  \[\ker(A) = \bigcap_{j=1}^g \ker(A_j). \index{$\ker(A)$}
  \]   
Given a positive integer $m$, let $\{e_j: 1\le j\le m\}$ denote the standard basis for $\rC^m$.

\begin{definition}\rm
 \label{def:generic-weak}
  The tuple $A\in \matdg$  is \df{weakly eig-generic} if there exists an $\ell\le d+1$ and, for $1\le j\le \ell$,   positive integers $n_j$  and tuples $\alpha^j\in \matnjg$ such that
\ben[\rm (a)]
  \item \label{it:oneD}
   for each $1\le j\le \ell$, the eigenspace corresponding to the
   largest eigenvalue of $\Lambda_A(\alpha^j)^*\Lambda_A(\alpha^j)$ has dimension one and hence is spanned by a vector  $u^j = \sum_{a=1}^{n_j}  u^j_a\otimes e_a$; and 
 \item \label{it:span} the set  $\mathscr U =\{u^j_a: 1\le j\le \ell, \, 1\le a \le n_j\}$ contains a hyperbasis for  $\ker(A)^\perp =\rg(A^*)$. 
\een
 The tuple is \df{eig-generic} if it is weakly eig-generic and   $\ker(A)=(0)$. Equivalently, $\rg(A^*)=\C^d$.

  Finally, a tuple $A$ is \df{$*$-generic} (resp.~\df{weakly $*$-generic}) if there exists an $\ell\le d$ and tuples $\beta^j$ such that
the kernels of
   $I-\Lambda_A(\beta^j)\Lambda_A(\beta^j)^*$ have dimension one and are spanned by vectors $\mu^j = \sum \mu^j_a \otimes e_a$ for which the set $\{\mu^j_a: j, a\}$ spans $\rC^d$ (resp.~$\rg(A)=\ker(A^*)^\perp$). 
\end{definition}

\begin{remark}\rm
 \label{rem:special-eigs}
 It is illustrative to consider two special cases of the weak
 eig-generic condition.  First suppose $n_j=1$ for all $1\le j\le
 \ell$.  The
 kernel of $I-\Lambda_A(\alpha^j)^* \Lambda_A(\alpha)$ is spanned by a
 a single (non-zero) vector $u^j\in\rC^d$ and the set $\{u^1,\dots,
 u^{\ell}\}$ is a hyperbasis for $\ker(A)^\perp$. Hence $\ell-1$ is the dimension of $\ker(A)^\perp$.
  If we also assume
 $\ker(A)^\perp=(0)$ and there exists $\beta^j$ for $j=1,\dots,n$ such
 that $I-\Lambda_A(\beta^j)\Lambda_A(\beta^j)^*$ is positive definite
 with one-dimensional kernel spanned by $v^j$ and moreover
 $\{v^1,\dots,v^d\}$ is a basis for $\C^d$, then $A$ is sv-generic as
 defined in the introduction.

 For the second case, suppose, for simplicity, that $\ker(A)=(0)$. If
 there exists an $\alpha^1\in \matng$ such that
 $I-\Lambda_A(\alpha^1)^* \Lambda_A(\alpha^1)$ is positive
 semidefinite with a one-dimensional kernel spanned by
\[
 u^1 = \sum_{k=1}^n u_k^1 \otimes e_k \in \rC^d \otimes \rC^n
\]
and if the set $\{u^1_k: 1\le k\le n\}$ spans $\rC^d,$ then $A$ is
eig-generic. To prove this statement, 
suppose, without loss of generality, $\{u^1_k:1\le k\le
g\}$ is a basis for $\rC^d$.  Now take a unitary 
matrix $T$ such that $T_{k,1}\ne 0$ for each $1\le k\le d$ and
$T_{k,1}=0$ for each $d+1\le k\le n$. Let $\alpha^2=T\alpha^1 T^*$.
It follows that $I-\Lambda_A(\alpha^2)^*\Lambda_A(\alpha^2)$ is 
positive semidefinite with a one-dimensional kernel spanned by the vector
$u^2= (I_d \otimes T) u^1$ and further
\[
 \begin{split}
 u^2 =& \sum_{k=1}^n  u^1_k \otimes T e_k 
  =  \sum_{j=1}^n \sum_{k=1}^n u_k^1 \otimes T_{k,j}e_j   
= \sum_{j=1}^n \left(\sum_{k=1}^n T_{k,j}u_k^1 \right) \otimes e_j.
\end{split}
\]
Thus, in view of the assumptions on $T$,
\[
 u^2_1 = \sum_{k=1}^n T_{k,1}u^1_k = \sum_{k=1}^d T_{k,1} u^1_k.
\]
Since $T_{k,1}\ne 0$ for $1\le k\le d$, the set $\{u^1_1,\dots, u^1_g,u^2_1\}$ is a hyperbasis for $\C^d$
 and the tuple $A$ is eig-generic.
\end{remark}

\begin{remark}\label{rem:sv=gen}\rm
Let us explain why sv-genericity is a generic property in the standard algebraic geometric sense. 
First notice that for a generic
tuple 
$A\in M_d(\C)^g$,
the real-valued polynomial 
\[
p(\alpha)=\det\big(I-\Lambda_A(\alpha)^*\Lambda_A(\alpha)\big)
=
\det \begin{pmatrix} I & \Lambda_A(\alpha)\\
\Lambda_A(\alpha)^* & I \end{pmatrix}
\]
 is irreducible
and changes sign on $\R^{2g}$;
here we consider $p(\alpha)$ as a real polynomial
in the real and imaginary parts of the
complex variables $\alpha\in\C^g$.
This fact is easily established by simply giving a tuple $A$ with this property. 
As a consequence, \cite[Theorem 4.5.1]{BCR98} implies that 
each polynomial vanishing on the zero set of $p$
must be a multiple of $p$.

If $A$
 is not sv-generic, it fails one of the two properties in its definition. 
(It suffices to show this while omitting the positive semidefiniteness condition.)
Assume $A$ fails the first property. Then for every choice of $d+1$ vectors $\alpha^j\in\C^d$ for
which $I-\Lambda_A(\alpha^j)^*\Lambda_A(\alpha^j)$ is singular with a one-dimensional kernel spanned by $u^j$, the set $\{u^1,\ldots,u^{d+1}\}$ is not a hyperbasis. Observe that in this case
$u^j$ can be chosen to be a column of the
adjugate matrix of $I-\Lambda_A(\alpha^j)^*\Lambda_A(\alpha^j)$.

The latter condition can be expressed by saying that one of the
$d\times d$ minors
of the matrix $\begin{pmatrix} u^1 & \cdots & u^{d+1}\end{pmatrix}$,
whose columns $u^j$ are columns of 
the adjugate of 
$I-\Lambda_A(\alpha^j)^*\Lambda_A(\alpha^j)$,
vanishes. Equivalently, the product $q$ of all these $d\times d$ minors  vanishes on the zero set of $p$. 

But, as follows from the first paragraph, on a generic set of $A$s,
this means that $q$ is a multiple of $p$.
However, it is easy to find examples of $A$ for
which this fails. 
The argument is similar if $A$
 fails the second property of the definition of sv-generic. Hence being sv-generic is a generic property.
 \end{remark}

   \begin{remark}\label{rem:generic+}\rm
The one-dimensional kernel assumption is key for the eig-generic property,
and has been successfully analyzed in the two papers \cite{KSV,KV}.
Namely, if the tuple $A\in M_d(\C)^g$ is minimal w.r.t.~the size
   needed to describe the free spectrahedron $\cD_A$, then 
$\dim\ker L_A(X)=1$ for all $X$ in an open and dense subset of the
boundary $\partial\cD_A(n)$ provided $n$ is large enough.
\end{remark}

\subsection{The structure of bianalytic maps}
In this section our main results on bianalytic maps between free
spectrahedra appear as Theorem \ref{thm:one-sided} and Corollary
\ref{cor:main}. We begin by collecting consequences of the eig-generic
assumptions.

\def\cI{\mathcal I} 

\begin{lemma}
 \label{lem:eig-generic in action}
   Suppose 
\ben[\rm (a)] 
 \item $A\in \matdg$, $B\in \matetg$; 
 \item $H$ is a Hilbert space,  $C$ is an isometry on $H\otimes \rC^d$ and
\begin{equation*} 
 W(x) = (I-\Lambda_R(x))^{-1} \sW,
\end{equation*}
 where   $R=(C-I)[\IHA]$
and $\sW:\C^e\to H\otimes\C^d$ is an isometry;
 \item \label{it:nils}
   $p=(p^1,\ldots,p^\tg)$ is a free analytic mapping
     $\cD_A\to\cD_B$    such that $p(0)=0,$ and 
 \begin{equation*}
      L_B(p(x)) = W(x)^* L_{\IHA}(x) W(x),
\end{equation*}
 in the sense that 
\[
 L_B(p(X)) = W(X)^* L_{\IHA}(X)W(X)
\]
 for each nilpotent $X\in M(\C)^g$;
\item \label{it:pproper}
  $p$ maps the boundary of $\cD_A$ into the boundary of $\cD_B$; and 
\item \label{it:alphas}
 there is a positive integer $\ell$ and, for $1\le j\le \ell,$ tuples $\alpha^j$ in $\matnjg$
  such that $I-\Lambda_A(\alpha^j)^*\Lambda_A(\alpha^j)$ is positive
  definite with a one-dimensional kernel spanned by
\[
 u_2^j = \sum_{k=1}^{n_j} u^j_{2,k} \otimes e_k;
\]
\een

\begin{enumerate}[\rm (1)]
 \item \label{it:action dlee} If  $A$ is eig-generic {\rm \big (}resp.~weakly eig-generic{\rm \big )}, then $d\le \dim(\rg(B^*))\le e$  {\rm \big (}resp.~$\dim(\rg(A^*) \le  \dim(\rg(B^*)${\rm \big )};
 \item \label{it:action d=e} If $e=d$ {\rm \big (}resp.~$\dim(\rg(A^*)) = \dim(\rg(B^*))${\rm \big )} and the tuples $\alpha^j$ and unit vectors $u_2^j$ validate the 
  eig-generic {\rm \big (}resp.~weak eig-generic{\rm \big )} assumption for $A$, then there exists a unit vector $\laambda\in H$ and unit  vectors
\[
 v_2^j =\sum_{k=1}^{n_j}  v^j_{2,k} \otimes e_j
\]
 in the kernel of $I-\Lambda_B(p(\alpha^j))^* \Lambda_B(p(\alpha^j))$ such that if  $\cI\subset \{(j,k): 1\le j\le l, \, 1\le k \le n_j\}$ 
and $\{u^j_{2,k}: (j,k)\in\cI\}$ is a hyperbasis for $\C^d$ {\rm \big (}resp. $\rg(A^*)${\rm \big )},
 then  $\{v^j_{2,k}:(j,k)\in\cI\}$ is a hyperbasis for $\C^d$ {\rm \big (}resp. $\rg(B^*)${\rm \big )}, and  for all
 $(j,k)\in\cI,$ %
\[
 \sW v^j_{2,k} = \laambda\otimes u^j_{2,k}; 
\]
\item \label{it:action d=e alt} If $e=d$ and $A$ is eig-generic {\rm \big (}resp.~$\dim(\rg(A^*)) = \dim(\rg(B^*)$ and $A$ is weakly eig-generic{\rm \big )}, then there exists a unit vector $\laambda\in H$ and a  $d\times d$ unitary $M$ {\rm \big (}resp.~a unitary map $M$ from  $\rg(B^*)$ to $\rg(A^*)$) such that $\sW=\laambda\otimes M$ {\rm \big (}resp.~$\sW v= \laambda \otimes Mv$ for $v \in \rg(B^*)${\rm \big )}; and 
\item \label{it:action one-term} If  $A$ is eig-generic and $*$-generic  and $e=d$ {\rm \big (}resp.~$A$ is weakly eig-generic and weakly $*$-generic,  $\dim(\rg(A^*)) = \dim(\rg(B^*))$ and $\dim(\rg(A)) = \dim(\rg(B))${\rm \big )}, then there is a vector $\laambda\in H$ and $d\times d$ unitary matrices $M$ and $Z$ such that $\sW= \laambda\otimes M$ and $C(\laambda\otimes I_d) = \laambda\otimes Z$ {\rm \big (}resp.~a unitary map $M$ and  an isometry $N$ from $\rg(B^*)$ to $\rg(A^*)$ and from $\rg(B^*)\cap \rg(B)$ into $\rg(A)$ respectively such that $\sW v= \laambda \otimes Mv$ for $v\in \rg(B^*)$ and $C (\laambda\otimes Nv) = \laambda \otimes Mv$ for $v\in \rg(B^*)\cap \rg(B)${\rm \big )}.  
\end{enumerate}
\end{lemma}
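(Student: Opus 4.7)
The plan is to invoke Lemma \ref{lem:even better} once for each $\alpha^j$, then reconcile the unit vectors in $H$ it produces into a single common $\laambda$, and finally read off items \eqref{it:action dlee}--\eqref{it:action one-term} from dimension counts and the dual $*$-generic hypothesis.

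For each $j=1,\ldots,\ell$, set $X_j = S\otimes\alpha^j$ with $S=\bem 0 & 1\\ 0 & 0\eem$ as in the proof of Lemma \ref{lem:even better}. Then $X_j$ is jointly nilpotent of order two, so $p(X_j) = S\otimes p^\prime(0)(\alpha^j)$. Hypothesis \eqref{it:oneD} ensures $L_A(X_j)\succeq 0$ with a one-dimensional kernel spanned by $u_1^j\oplus u_2^j$; by the boundary-to-boundary assumption \eqref{it:pproper}, $L_B(p(X_j))$ is also singular. Lemma \ref{lem:elementary largest} then provides a unit vector $v_1^j$ with $\La_B(p^\prime(0)\alpha^j)\La_B(p^\prime(0)\alpha^j)^* v_1^j = v_1^j$. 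Applying Lemma \ref{lem:even better} to $\alpha=\alpha^j$ yields $v_2^j$ along with a unit vector $\laambda^j\in H$ satisfying
\[
\sW v^j_{2,k}=\laambda^j\otimes u^j_{2,k} \qquad \text{and} \qquad \sW v^j_{1,k}=C(\laambda^j\otimes u^j_{1,k}), \quad 1\le k\le n_j.
\]

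The technical heart is the unification $\laambda^j\equiv\laambda$. Given $j\neq j^\prime$, consider the enlarged tuple $\tilde\alpha = \alpha^j\oplus\alpha^{j^\prime}$. The kernel of $I-\La_A(\tilde\alpha)^*\La_A(\tilde\alpha)$ is spanned by $u_2^j\oplus 0$ and $0\oplus u_2^{j^\prime}$. By perturbing this block-diagonal data via a unitary similarity we obtain tuples whose associated kernels are one-dimensional but point in arbitrary nonzero directions of the two-dimensional subspace; each such perturbed tuple fits the hypotheses of Lemma \ref{lem:even better}, producing a new unit vector in $H$ that is forced to coincide (up to scalar) with both $\laambda^j$ and $\laambda^{j^\prime}$. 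The hyperbasis hypothesis \eqref{it:span} guarantees enough such pairwise overlaps to propagate a single $\laambda$ across the entire index set $\mathcal I$. This is the main obstacle: although the block-diagonal construction is transparent, extracting equality of the $H$-vectors from independent invocations of Lemma \ref{lem:even better} requires delicate use of the hyperbasis to produce enough linking relations among the kernel vectors $u^j$ and their counterparts $v^j$.

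Once $\laambda^j\equiv\laambda$, the remaining items follow quickly. The map $v\mapsto (\laambda^*\otimes I_d)\sW v$ is an isometry carrying $\Span\{v^j_{2,k}:(j,k)\in\mathcal I\}\subset\rg(B^*)$ onto $\Span\{u^j_{2,k}:(j,k)\in\mathcal I\}=\rg(A^*)$ (equal to $\C^d$ in the eig-generic case), yielding the dimension bound \eqref{it:action dlee}. Under the equality of dimensions in \eqref{it:action d=e}, this map is a bijection between hyperbases and extends to the unitary $M$ of \eqref{it:action d=e alt} with $\sW v=\laambda\otimes M v$ on $\rg(B^*)$. For \eqref{it:action one-term}, apply the $*$-generic hypothesis symmetrically: the tuples $\beta^i$ play the role of $\alpha^j$ on the adjoint side, Lemma \ref{lem:even better} gives $\sW v^i_{1,k}=C(\laambda\otimes\mu^i_{1,k})$ with $\mu^i_{1,k}$ spanning $\rg(A)$, and combining with $\sW v=\laambda\otimes M v$ forces $C(\laambda\otimes\cdot)$ to take the form $\laambda\otimes Z(\cdot)$ on an appropriate subspace, defining the unitary $Z$ (or the isometry $N$ in the weakly generic variant) of the statement.
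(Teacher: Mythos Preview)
The overall skeleton---apply Lemma \ref{lem:even better} once per $\alpha^j$, then unify the resulting $\laambda^j$---matches the paper. The unification step you propose, however, does not work, and this is exactly the crux of the lemma.

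You claim that a unitary similarity of the block-diagonal tuple $\tilde\alpha=\alpha^j\oplus\alpha^{j'}$ can be arranged so that $I-\La_A(\cdot)^*\La_A(\cdot)$ has one-dimensional kernel. But for any unitary $U$ one has $\La_A(U^*\tilde\alpha U)=(I_d\otimes U)^*\La_A(\tilde\alpha)(I_d\otimes U)$, so the kernel of $I-\La_A(U^*\tilde\alpha U)^*\La_A(U^*\tilde\alpha U)$ is still two-dimensional. Lemma \ref{lem:even better} therefore never applies to such a tuple, and no link between $\laambda^j$ and $\laambda^{j'}$ is produced. The paper's mechanism is entirely different and purely linear-algebraic, and it also reverses your order of deductions. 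First, \emph{before} any unification, one shows that linear independence of $\{u^j_{2,k}:(j,k)\in\cI\}$ forces linear independence of $\{v^j_{2,k}:(j,k)\in\cI\}$: given $\sum c^j_k v^j_{2,k}=0$, apply $\sW$ to get $\sum c^j_k\,\laambda^j\otimes u^j_{2,k}=0$, then hit with $\gaamma^*\otimes I$ for $\gaamma=\laambda^p$; independence of the $u$'s forces all coefficients to vanish, in particular $c^p_m\|\laambda^p\|^2=0$. This already yields item \eqref{it:action dlee} (so your deduction of \eqref{it:action dlee} from the map $(\laambda^*\otimes I)\sW$ is backwards). For item \eqref{it:action d=e}, the hyperbasis is used sharply: the $u$-hyperbasis has $d+1$ vectors, hence so does the corresponding $v$-set in $\C^d$, so it is a hyperbasis too and there is a relation $\sum c^j_k v^j_{2,k}=0$ with \emph{every} $c^j_k\neq 0$. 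Now apply $\gaamma^*\otimes I$ for any $\gaamma\perp\laambda^p$; the $(p,m)$ coefficient of the resulting $u$-relation vanishes, and the hyperbasis property forces \emph{all} coefficients $c^j_k\,\gaamma^*\laambda^j$ to vanish, whence $\gaamma\perp\laambda^j$ for every $j$. Thus the $\laambda^j$ are collinear. This is the argument your proposal is missing.

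Once the common $\laambda$ is in hand your treatment of \eqref{it:action d=e alt} and \eqref{it:action one-term} is along the right lines, though for \eqref{it:action one-term} one must still argue that the new $H$-vectors $\tau_j$ arising from the $\beta$-side agree with $\laambda$; the paper does this by noting $\mathrm{v}^j_{2,k}\in\rg(B^*)$ and comparing $\sW\mathrm{v}^j_{2,k}=\tau_j\otimes\mathrm{u}^j_{2,k}$ with the already established $\sW v=\laambda\otimes Mv$ on $\rg(B^*)$.
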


\begin{remark}\rm
Note that the hypotheses on $\alpha^j$ and $u_2^j$ imply that each $u^j_{2,k}\in \rg(A^*)$. Likewise $v^j_{2,k}\in \rg(B^*)$. The  eig-generic hypothesis in item \eqref{it:action dlee} can be relaxed to {\it  $\{u^j_{2,k}:j,k\}$ spans $\C^d$ (respectively $\rg(A^*)$)}, rather than that {\it it contains a hyperbasis.}
\end{remark}

\begin{proof}
  We begin with some calculations preliminary to proving all items
  claimed in the lemma. Let $\alpha^j$ and $u^j_2$ be as described in item \eqref{it:alphas} (but do not necessarily assume that $\{u^j_2:j\}$ contains a hyperbasis yet).
  Let $\cJ=\{(j,k): 1\le j\le \ell,\, 1\le k \le n_j\}$ and, as in the proof 
of Lemma \ref{lem:even better}, let
\[
 S= \begin{pmatrix} 0 & 1 \\ 0 & 0 \end{pmatrix}.
\]
For $1\le j\le \ell$, let $X^j = S\otimes \alpha^j$.  The hypotheses
imply $X^j$ is in the boundary of $\cD_A$. By item \eqref{it:pproper}, $p(X)$ is in the
boundary of $\cD_B$. Observe that $p(X)=\ell(X)$, where $\ell$ is the
linear part of $p$. Thus (up to unitary equivalence)
\[
 L_B(p(X)) = \begin{pmatrix} I & \Lambda_B(\ell(\alpha^j)) 
    \\ \Lambda_B(\ell(\alpha^j))^* & I \end{pmatrix}
\]
is positive semidefinite and there exist unit vectors
$v^j_i\in\rC^{ne}$ such that $v^j = e_1\otimes v_1^j + e_2\otimes
v_2^j$ lies in the kernel of $L_B(p(X))$. Hence, by Lemma
\ref{lem:elementary largest},
$\Lambda_B(\ell(\alpha^j))\Lambda_B(\ell(\alpha^j))^* v_1^j=v_1^j$ and
$v_2^j=-\Lambda_B(\alpha^j)^* v_1^j$.  Consequently, Lemma
\ref{lem:even better} applies. In particular, writing
\[
 v_i^j = \sum_{k=1}^{n_j} v^j_{i,k} \otimes e_k,
\]
 there exist unit vectors $\laambda_j \in H$ such that
\begin{equation*}
 \sW v^j_{2,k} = \laambda_j  \otimes u^j_{2,k}
\end{equation*}
 for $(j,k)\in\cJ$.

Fix  $\cI\subset\cJ$ and let  $\mathscr U_\cI = \{u^j_{2,k}: (j,k)\in\cI\}\subset \rg(A^*)\subset \rC^d$.  Suppose
\begin{equation}
 \label{eq:sums to zero}
0= \sum_{(j,k)\in\cI} c^j_k v^j_{2,k}.
\end{equation}
 Applying $\sW$ to equation \eqref{eq:sums to zero}
\[
 0 = \sum_{(j,k)\in\cI} c^j_k \laambda_j\otimes u^j_{2,k}.
\]
 Given a vector $\gaamma \in H$, applying the operator $\gaamma^* \otimes I$ yields 
\begin{equation}
 \label{eq:gamma0}
 0 = \sum_{(j,k)\in\cI} c^j_k \gaamma^*\laambda_j u^j_{2,k} =\sum_{(j,k)\in\cI} a^j_k u^j_{2,k},
\end{equation}
where $a^j_k = c^j_k \gaamma^* \laambda_j$.   

Suppose  $\mathscr U_\cI$ is  
linearly independent and $(p,m)\in \cI$. %
Choosing $\gaamma =\laambda_p$ in equation \eqref{eq:gamma0} 
gives   $a^p_m=c^p_m \laambda_p^* \laambda_p =0$.
Thus $c^p_m=0$ 
 for each $(p,m)\in\cI$.  Hence 
$\mathscr{V}_\cI :=\{v^j_{2,k}: (j,k)\in\cI\}\subset \rg(B^*) \subset \rC^e$  is linearly independent
and in particular %
the cardinality of $\cI$ is at most  $\dim(\rg(B^*))$. 
If $\mathscr U_\cJ = \{u^2_{j,k}:j,k\}$ spans  $\rg(A^*)$, 
then choosing $\mathscr U_\cI$ a basis for $\rg(A^*)$  shows
 $\dim(\rg(A^*) \le \dim(\rg(B^*).$ %
Further if $\dim(\rg(A^*))=d$, then $d\le \rg(B^*)\le e$.

To prove item \eqref{it:action dlee}, simply observe if $\alpha^j$ and $u^2_j$ 
validate the eig-generic (resp. weak eig-generic) hypothesis, then $\mathscr U_\cJ$ does span  $\C^d$ (resp. $\rg(A^*)$)
and hence $d\le e$ (resp. $\dim(\rg(A^*))\le \dim(\rg(B^*))$).

To prove item \eqref{it:action d=e}, 
suppose the tuples $\alpha^j$ and the
vectors $u^j$  validate the (resp.~weakly)  eig-generic assumption. 
Thus, there is an $\cI\subset\cJ$ such that $\mathscr U_\cI$ is a 
hyperbasis for $\rC^d$ (resp.~$\rg(A^*)$).  Since a hyperbasis for $\rC^d$ contains $d+1$ 
(resp.~$\dim(\rg(A^*)) +1$) elements, 
the cardinality of $\cI$ is $d+1$ (resp.~$\dim(\rg(A^*)) +1$). 
Assuming $d=e$  (resp.~$\dim(\rg(A^*)) = \dim(\rg(B^*))$), 
the set $\{v^j_k: (j,k)\in\cI\}  $ is then a set of $e+1$ (resp.~$\dim(\rg(B^*)) +1$)
elements in $\rC^{e}$ (resp.~$\rg(B^*)$), so is  linearly dependent. 
On the other hand, if $\cI^\prime$ is a subset of $\cI$ of cardinality $d$, then
$\mathcal U_{\cI^\prime}$ is a basis for $\C^d$ (resp. $\rg(A^*)$) and
hence $\mathscr V_{\cI^\prime} =\{v^2_{j,k}:(j,k)\in \cI^\prime\}$ is a basis
for $\C^d$ (resp. $\rg(B^*)$). Hence $\mathscr V_\cI$ is a hyperbasis for $\C^d$ (resp. $\rg(B^*)$). 
Hence, there exists, for $(j,k)\in\cI,$ scalars  $c^j_k$ none of which are $0$
such that  equation \eqref{eq:sums to zero} holds.  
Given  $(p,m)\in\cI$, an  application of equation \eqref{eq:gamma0}
with a (nonzero) vector $\gaamma$ orthogonal to $\laambda_p$ gives $a^p_m=0$ and hence, again by the hyperbasis property, $a^j_k=0$ 
 for all $(j,k)\in\cI$. 
 Since $c^j_k\ne 0$, it follows that $\gaamma$ is orthogonal to
  each $\laambda_j$ and consequently  the unit vectors  $\laambda_j$ are all colinear. 
By multiplying $v^j$ by a unimodular constant as needed, it may be assumed that there is a unit vector $\laambda\in H$ such that $\laambda_j=\laambda$ for all $j$. With this re-normalization, for $(j,k)\in \cI$, 
\begin{equation}
\label{eq:Wvjk}
 \sW v^j_k = \laambda\otimes u^j_{2,k},
\end{equation}
 completing the proof of item \eqref{it:action d=e}.

Turning to the proof of item \eqref{it:action d=e alt}, it  follows immediately from equation \eqref{eq:Wvjk} that
that for each $v\in \rC^d$ (resp.~$v \in \rg(B^*)$) there is a 
 $u\in \rC^d$ (resp.~$ u \in \rg(A^*)$) such that
\begin{equation*}
 \sW v = \laambda\otimes u,
\end{equation*}
  since   $\{v^j_{2,k}: (j,k)\in\cI\}$ spans $\rC^d$ (resp.~$\rg(B^*)$).
  Hence, by linearity and since $\sW$ is an isometry, 
 there is a unitary mapping $M:\C^d\to \C^d$ (resp. $M: \rg(B^*) \to \rg(A^*)$)
 such that $\sW=\lambda\otimes M$ (resp. $\sW|_{\rg(B^*)} = \laambda\otimes M$).

For the proof of item \eqref{it:action one-term}, assuming $A$ is $*$-generic (resp.~weakly $*$-generic),
 for $1\le j\le \ell$,
there exists tuples $\beta^j$ of sizes $n_j$ and vectors
\[
 \mathrm{u}_1^j = \sum_{k=1}^{n_j} \mathrm{u}^j_{1k}\otimes e_k
\]
satisfying the $*$-generic (resp.~weak $*$-generic) condition for
$A$. That is, $I-\Lambda_A(\beta^j)\Lambda_A(\beta^j)^*$ is positive
semidefinite with one-dimensional kernel spanned by $\mathrm{u}_1^j$
and the set of vectors $\{\mathrm{u}^j_{1k}: 1\le j\le \ell, \, 1\le k
\le n_j \}$ spans $\rC^d$ (resp.~$\rg(A)$).  By Lemma \ref{lem:even
  better}, there exist vectors
\[
 \mathrm{u}^j_2 = \sum_{k=1}^{n_j} \mathrm{u}^j_{2k}\otimes e_k
\] 
 such that $I-\Lambda_A(\beta^j)^* \Lambda_A(\beta^j)$ has a   one-dimensional kernel spanned by $\mathrm{u}_2^j$.  On the other hand, the tuples
\[
 X^j=\begin{pmatrix} 0 & \beta^j \\ 0 & 0 \end{pmatrix}
\]
 lie in the boundary of $\cD_A$. Hence, as before $p(X^j)$ lies in the 
  boundary of $\cD_B$. Thus
\[
 L_B(p(X^j)) = 
  \begin{pmatrix} I & \Lambda_B(\ell(\beta^j)) \\ \Lambda_B(\ell(\beta^j))^* & I \end{pmatrix}
\]
is positive semidefinite and has a kernel. Hence, there exists vectors 
$\mathrm{v}^j= \mathrm{v}^j_1 \oplus \mathrm{v}^j_2$  such that  $L_B(p(X^j))\mathrm{v}^j=0$. 
By  Lemma \ref{lem:elementary largest}  these vectors are related by
\begin{equation*}
\begin{aligned}
  \Lambda_A(\beta^j)^* \mathrm{u}_1^j& =  -\mathrm{u}_2^j ,  \\
  \Lambda_A(\beta^j)\mathrm{u}_2^j & =   -\mathrm{u}_1^j, \\
  \end{aligned} \qquad
  \begin{aligned}
  \Lambda_B(\ell(\beta^j))^* \mathrm{v}_1^j& =  - \mathrm{v}_2^j, \\
 \Lambda_B(\ell(\beta^j)) \mathrm{v}_2^j& =  - \mathrm{v}_1^j.  \\
\end{aligned}
\end{equation*}
Write 
\[
 \mathrm{v}^j_i = \sum_{k=1}^{n_j}\mathrm{v}_{i,k}^j \otimes e_k.
\]
 By Lemma \ref{lem:even better},  for each $j$  there exists a vector $\tau_j \in H$ such that 
\[
 \sW \mathrm{v}^j_{2,k} = \tau_j \otimes \mathrm{u}^j_{2,k}
\qquad\text{and}\qquad
 \sW \mathrm{v}^j_{1,k} = C \tau_j \otimes \mathrm{u}^j_{1,k}
\]
for each $1\le k\le n_j$.  Now suppose further that $A$ is %
weakly eig-generic and
$\dim(\rg(B^*))=\dim(\rg(A^*))$.  In this case, by the already proved item \eqref{it:action d=e alt}, there is a unit vector
$\laambda$ and unitary mapping  $M:\rg(B^*)\to \rg(A^*)$ such that $\sW \mathrm{v} =
\laambda\otimes M\mathrm{v}$ on $\rg(B^*)$. Since $\mathrm{v}^j_{2k}
\in \rg(B^*)$ and $\sW$ and $C$ are isometries, it follows that
$\tau_j=\rho_j \laambda$ for some scalar $\rho_j\ne 0$. Hence,
\begin{equation}
 \label{eq:C*sW}
 \sW \mathrm{v}^j_{1,k} = C \laambda \otimes \rho_j \mathrm{u}^j_{1,k}
\end{equation}
for each $1\le j\le \ell$ and $1\le k\le n_j$.  Since
$\{\mathrm{u}^j_{1,k}:j,k\}$ spans $\rg(A)$ and both $C$ and $\sW$ are unitary, equation \eqref{eq:C*sW}
implies there is an isometry $Z:\rg(A)\to \rg(B)$ such that
\begin{equation*}
 C(\laambda \otimes \mathrm{u}) = \sW Z\mathrm{u}
\end{equation*}
for $\mathrm{u}\in \rg(A)$. In particular, $\dim(\rg(A)) \le
\dim(\rg(B))$.  Hence, if $\rg(A)=\rC^d$ (as is the case if $A$ is
$*$-generic) and $e=d$, then $\rg(B)=\rC^d$ and $Z$ is onto. In the
case that $A$ is only weakly $*$-generic, we have assumed the
dimensions of $\rg(A)$ and $\rg(B)$ are the same and so again $Z$ is
onto. So in either case, $Z$ is unitary.  In particular, given
$\mathrm{v}\in \rg(B)\cap \rg(B^*)$, there is a $\mathrm{u} \in
\rg(A)$ such that $Z\mathrm{u}=\mathrm{v}$ and
\[
 C(\laambda\otimes Z^*\mathrm{v}) = \sW \mathrm{v}.
\]
On the other hand, as $\mathrm{v}\in \rg(B^*)$, we have 
$\sW \mathrm{v} =\laambda\otimes M\mathrm{v}$. Hence,
\begin{equation}
 \label{eq:Clambda}
 C(\laambda\otimes Z^* \mathrm{v}) =  \laambda \otimes M\mathrm{v}.
\end{equation}
Hence, letting $N$ denote the restriction of $Z^*$ to 
$\rg(B)\cap \rg(B^*)$ the desired conclusion follows.

We now take up the case $A$ is eig-generic and $*$-generic  and $e=d$.  
In this case, $M$ is a $d\times d$ unitary matrix by item \eqref{it:action d=e alt}.
Moreover, as noted above $d=\dim(\rg(A))\le \dim(\rg(B))\le d$. On the other hand,
from item \eqref{it:action dlee}, $d\le \dim(\rg(B^*))\le d$ and hence $\rg(B^*)=\C^d$. 
It follows that $Z:\C^d\to \C^d$ is unitary  and letting 
$\mathrm{u}= Z^*\mathrm{v}$ in equation \eqref{eq:Clambda} gives,
\[
 C(\laambda\otimes \mathrm{u}) = \laambda \otimes M N \mathrm{u}
\]
and the proof of item \eqref{it:action one-term} is complete.
\end{proof}

\begin{remark}\rm
 \label{rem:weakC}
   In the context of item \eqref{it:action one-term}, the dimension of $\rg(B)\cap\rg(B^*)$ is at most the dimension of $\rg(A)\cap \rg(A^*)$. In the case that these dimensions coincide, the identity  $C^*(\laambda\otimes Mv) = \laambda\otimes Nv$ for $v\in \rg(B)\cap\rg(B^*)$ implies there is a unitary mapping $Z$ of  $\rg(A)\cap\rg(A^*)$ such that $C^*(\laambda\otimes z) = \laambda \otimes Zz$ for $z\in \rg(A)\cap\rg(A^*)$; i.e., $C^*=I\otimes Z$ on $\C\laambda\otimes \rg(A)\cap \rg(A^*)$.
 \end{remark}

\begin{thm}
 \label{thm:one-sided}
  Suppose
\ben[\rm (a)]
  \item $A,B\in \matdg$;
  \item $\cD_A$ is bounded; 
  \item $p$ is a  mapping from $\cD_A$ into $\cD_B$ that is analytic and 
 bounded on a free pseudoconvex set $\cG_Q$ containing $\cD_A$;
  \item $p$ maps the boundary of $\cD_A$ into the boundary of $\cD_B.$
\een
If $A$ is eig-generic and  $*$-generic and $p(x)=x+f(x)$, where $f$ consists of terms of degree two and higher, then 
there exists a $d\times d$ matrix-valued analytic function ${\tt W}$ such that
\[
 L_B(p(x))= {\tt W}(x)^* L_A(x) {\tt W}(x)
\]
and thus the conclusions of Theorem \ref{thm:shotinthedark}  hold. In particular,
 there exist $d\times d$ unitary matrices $C,\sW$ such that, 
$B=\sW^* CA\sW$ and $(\cD_A,\cD_B)$ is a spectrahedral pair
with associated \ct map $p$. 
\end{thm}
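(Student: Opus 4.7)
The plan is to first produce a positivity certificate of the form
\[
  I + G(x) + G(x)^* = \cW(x)^* L_{I_H\otimes A}(x)\,\cW(x)
\]
for $G(x) := \La_B(p(x))$, and then exploit the eig-generic and $*$-generic hypotheses on $A$ to collapse the Hilbert space $H$ down to $\C$, yielding a $d\times d$ square one-term certificate to which Theorem \ref{thm:shotinthedark} applies. Since $p$ is analytic and bounded on a free pseudoconvex set $\cG_Q\supseteq\cD_A$, $p(0)=0$, and $p:\cD_A\to\cD_B$, the function $G$ satisfies $G(0)=0$ and $I+G+G^*\succeq 0$ on $\cD_A$. Thus the Analytic Convex Positivstellensatz (Theorem \ref{thm:analPossIntro}) applies and, combined with Proposition \ref{prop:multi generalIntro}, produces a separable Hilbert space $H$, a unitary $C$ on $H\otimes\C^d$, an isometry $\sW:\C^d\to H\otimes\C^d$, and a formal power series $\cW(x) = (I - \La_R(x))^{-1}\sW$ with $R=(C-I)(I_H\otimes A)$, yielding the certificate above (this is the step where we absorb the infinite-dimensional factor into $C$ using the moreover clause of Proposition \ref{prop:multi generalIntro}).

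The main obstacle, and the heart of the argument, is to replace $H$ by $\C$. Here the normalization $p(x)=x+\text{(higher order)}$ and the boundary-to-boundary hypothesis become essential: matching linear terms in $G$ gives $B_j=\sW^* C(I_H\otimes A_j)\sW$, and $p$ maps $\partial\cD_A$ into $\partial\cD_B$ by properness/bianalyticity of $p$ (as noted at the start of Section \ref{sec:redo}). Since $A$ is both eig-generic and $*$-generic and $e=d$, hypothesis (f) of Lemma \ref{lem:eig-generic in action} is satisfied, and item (4) of that lemma produces a unit vector $\laambda\in H$ together with $d\times d$ unitaries $M$ and $Z$ such that
\[
  \sW v = \laambda\otimes Mv, \qquad C(\laambda\otimes w) = \laambda\otimes Zw \quad \text{for all } v,w\in\C^d.
\]
Consequently the subspace $\C\laambda\otimes\C^d$ contains the range of $\sW$ and is invariant under each $C\,(I_H\otimes A_j)$, and on this subspace $R_j$ acts as $\tilde R_j := (Z-I)A_j$.

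With this reduction in hand, define the $d\times d$ matrix-valued rational function
\[
  {\tt W}(x) := \bigl(I - \La_{\tilde R}(x)\bigr)^{-1} M.
\]
A direct verification using $\laambda^*\laambda=1$, the identity $L_{I_H\otimes A}(X)=I_H\otimes L_A(X)$, and the invariance of $\C\laambda\otimes\C^d$ under the relevant operators shows that for $X$ in a free neighborhood of $0$,
\[
  \cW(X)^*\bigl(I_H\otimes L_A(X)\bigr)\cW(X) \;=\; {\tt W}(X)^* L_A(X)\,{\tt W}(X),
\]
and hence
\[
  L_B(p(x)) \;=\; {\tt W}(x)^* L_A(x)\,{\tt W}(x)
\]
as formal power series (and on an appropriate free pseudoconvex set). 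This is precisely the one-term square Positivstellensatz hypothesis of Theorem \ref{thm:shotinthedark} (equivalently Theorem \ref{thm:all}): the eig-generic hypothesis on $A$ in particular forces $\{A_1,\dots,A_g\}$ to be linearly independent (otherwise one could reduce to a lower-dimensional spectrahedron incompatible with the hyperbasis condition in the definition of eig-generic). Applying Theorem \ref{thm:shotinthedark} yields the $d\times d$ unitaries $C$ and $\sW$ with $B=\sW^*CA\sW$, identifies $p$ as the \ct map associated to the structure matrices of the algebra spanned by $R=(C-I)A$, and exhibits $(\cD_A,\cD_B)$ as a spectrahedral pair, completing the proof.
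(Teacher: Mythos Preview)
Your proposal is correct and follows essentially the same route as the paper: apply the Analytic Positivstellensatz (Corollary~\ref{thm:analPoss}/Theorem~\ref{thm:analPossIntro}) to obtain the infinite-dimensional certificate, invoke Lemma~\ref{lem:eig-generic in action}\eqref{it:action one-term} to collapse $H$ to a one-dimensional space via the vector $\laambda$, and then feed the resulting square $d\times d$ certificate into Theorem~\ref{thm:shotinthedark}. Your explicit formula ${\tt W}(x)=(I-\La_{\tilde R}(x))^{-1}M$ is exactly the paper's ${\tt W}(x)=[\laambda^*\otimes I]W(x)$, once one observes that the range of $W(x)$ lies in $\C\laambda\otimes\C^d$.

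One small correction: your justification that $\{A_1,\dots,A_g\}$ is linearly independent via the eig-generic hyperbasis condition is not valid (eig-generic constrains $\ker(A)$ and certain eigenspaces, but does not by itself rule out a linear relation among the $A_j$; e.g.\ one could take $A_1=A_2$). The linear independence actually follows from hypothesis~(b), boundedness of $\cD_A$: if $\sum c_jA_j=0$ with $c\neq 0$, then $L_A(tc)=I$ for all $t\in\R$, so $\cD_A(1)$ contains an entire line. With that fix, your argument is complete.
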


\begin{cor}
 \label{cor:main}
     Suppose
 \ben[\rm (a)]
  \item $A\in \matdg$  and $B\in \mateg$; 
  \item $\cD_A$ is bounded; 
  \item $A$ is eig-generic and  $*$-generic; 
  \item $p$ is an analytic mapping  $\cD_A\to\cD_B$ analytic  and bounded on a
    free pseudoconvex set $\cG_P$ containing $\cD_A$
     with $p(x)=x+f(x)$ where $f$ consists of terms of degree two and higher; 
 \item $r$ is an analytic mapping $\cD_B\to\cD_A$ analytic   and bounded
   on a free pseudoconvex set $\cG_Q$ containing $\cD_B$, with $r(x)=x+h(x)$ where $h$ consists of
  terms of degree two and higher;
 \item $p$ maps the boundary of $\cD_A$ into the boundary of $\cD_B$ and $r$ maps the boundary of $\cD_B$ to the boundary of $\cD_A.$
\een
If $B$ is eig-generic, then $d=e$. In any case, if $d=e$, then  the maps
$p$ and $r$ are bianalytic (\ct)
 between $\cD_A$ and $\cD_B$ and between $\cD_B$ and $\cD_A$
respectively. Moreover,  for each the conclusions of
Theorem \ref{thm:shotinthedark} hold. In particular,
 there exist $d\times d$ unitary matrices $C,\sW$ such that, 
$B=\sW^* CA\sW$ and $(\cD_A,\cD_B)$ is a spectrahedral pair
with associated \ct  map $p$. 
\end{cor}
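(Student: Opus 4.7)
The strategy is to reduce the corollary to Theorem \ref{thm:one-sided}. The main new content is the dimension equality $d=e$; once that is in hand, everything else follows by invoking Theorem \ref{thm:one-sided} on $p$ and then using a rigidity principle to identify $r$ with the inverse of $p$.

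First, to set up the dimension count, apply Proposition \ref{prop:approxIntro} so that $p$ is analytic and bounded on a free pseudoconvex set containing $\cD_A$, and then invoke Corollary \ref{thm:analPoss} to obtain an Analytic Convex Positivstellensatz certificate
\[
L_B(p(x)) = W(x)^* L_{I_H\otimes A}(x) W(x),
\]
with $W$ having the realization form $W(x)=(I-\Lambda_R(x))^{-1}\sW$ of \eqref{eq:WIntro}. Because $p$ sends the boundary of $\cD_A$ into the boundary of $\cD_B$, the hypotheses of Lemma \ref{lem:eig-generic in action} hold. Item (1) of that lemma together with $A$ eig-generic gives $d \le \dim\rg(B^*) \le e$. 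When $B$ is also eig-generic, run the same machinery for $r$ (it is analytic and bounded on a pseudoconvex neighborhood of $\cD_B$ by hypothesis and carries boundary to boundary): this yields $e \le \dim\rg(A^*) \le d$. Combining the two inequalities gives $d=e$.

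Once $d=e$, apply Theorem \ref{thm:one-sided} directly to $p$. All its hypotheses are met: $A,B\in M_d(\C)^g$, $\cD_A$ is bounded, $A$ is eig-generic and $*$-generic, $p$ is analytic and bounded on a pseudoconvex set containing $\cD_A$ by Proposition \ref{prop:approxIntro}, $p$ carries the boundary of $\cD_A$ into the boundary of $\cD_B$, and $p(x)=x+f(x)$ with $f$ of order at least two. The theorem then produces the desired $d\times d$ unitaries $C,\sW$ with $B=\sW^*CA\sW$, identifies $(\cD_A,\cD_B)$ as a spectrahedral pair associated to the algebra $\cR$ spanned by $(C-I)A_1,\ldots,(C-I)A_g$, and represents $p$ as the \ct map $x(I-\Lambda_\Xi(x))^{-1}$ for the structure matrices $\Xi$ of $\cR$.

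Finally, to deal with $r$, note that Theorem \ref{thm:shotinthedark} item \eqref{it:formp} supplies the \ct inverse $q(y)=y(I+\Lambda_\Xi(y))^{-1}$ of $p$, which is itself bianalytic $\cD_B\to\cD_A$. The composition $p\circ r$ is then a proper analytic self-map of the bounded free spectrahedron $\cD_B$ satisfying $(p\circ r)(0)=0$ and $(p\circ r)'(0)=I$. By the free Cartan-type rigidity principle for bounded free spectrahedra from \cite{HKM11b} (the result referenced in the opening paragraph of Section \ref{sec:redo}, which asserts that such self-maps are the identity), we conclude $p\circ r=\mathrm{id}_{\cD_B}$, whence $r=q$ is \ct and bianalytic. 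The main obstacle in this argument is not algebraic but packaging: one has to verify that the Positivstellensatz certificate is available for \emph{both} $p$ and $r$ in order to run the dimension comparison in both directions, and that the rigidity principle correctly forces $r$ to be the already-constructed inverse of $p$; all other ingredients are immediate from Theorem \ref{thm:one-sided} and Lemma \ref{lem:eig-generic in action}.
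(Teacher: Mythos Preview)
Your argument is essentially the paper's own: obtain the Positivstellensatz certificate for $p$, use Lemma~\ref{lem:eig-generic in action}\eqref{it:action dlee} to get $d\le e$, reverse roles with $r$ to get $e\le d$ when $B$ is eig-generic, and then invoke Theorem~\ref{thm:one-sided} on $p$. The paper's proof stops there; you go further and explicitly pin down $r$ as the convexotonic inverse $q$ of $p$, which the paper's terse proof does not spell out.

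Two small wrinkles in your extra step for $r$. First, you apply rigidity to $p\circ r$ on $\cD_B$, calling $\cD_B$ bounded, but boundedness of $\cD_B$ is not among the hypotheses; it is cleaner to compose the other way and apply the uniqueness argument to $r\circ p:\cD_A\to\cD_A$, since $\cD_A$ \emph{is} assumed bounded. Second, the result from \cite{HKM11b} referenced at the top of Section~\ref{sec:redo} is that a proper free analytic map is \emph{bianalytic}, not that a normalized self-map is the \emph{identity}; the Cartan-type uniqueness you need (a bounded free analytic self-map of a bounded free domain fixing $0$ with derivative $I$ is the identity) is true and standard, but it is not the statement you cite. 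With those adjustments your treatment of $r$ is correct and in fact more complete than the paper's.
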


\begin{proof}
By Lemma \ref{lem:eig-generic in action}
 \eqref{it:action dlee} the assumption $A$ is eig-generic implies $d\le e$. 
  If $B$ is assumed eig-generic, then reversing the roles of $A$ and $B$ and using 
  $r$ in place of $p$,  implies $e\le d$.  Thus in any case $d=e$ and 
Theorem  \ref{thm:one-sided} applies to complete the proof.
\end{proof}

\begin{proof}[Proof of Theorem~\ref{thm:one-sided}]
  We begin by considering the case, as in Remark \ref{rem:one-sided} below,
 that $\tg\ge g$ and $B\in \matdtg$, leaving the special case $\tg=g$ for later.
  Since $p$ maps $\cD_A$ into $\cD_B$ and $p(0)=0$, by the Analytic
  Positivstellensatz (here is where the hypothesis $\cD_A$ is bounded
  is used), Corollary \ref{thm:analPoss}, there exists a Hilbert space
  $H$, a unitary mapping $C$ on $H\otimes\rC^d$ and an isometry
  $\sW:\rC^e\to H\otimes \rC^d$ such that
\[
 L_B(p(x)) = W(x)^* \big(I_H\otimes L_A(x)\big) W(x),
\]
where
\[
 W(x) = (I-\La_R(x))^{-1}\sW
\]
and $R=(C-I)(\IHA)$. Moreover, 
\[
 L_B(p(X))=W(X)^* L_{\IHA}(X) W(X)
\]
 holds for nilpotent $X\in M(\C)^g$ and the identities of equation \eqref{eq:preiso1alt} hold with $G(x)=\Lambda_B(p(x))$.

Lemma  \ref{lem:eig-generic in action}~\eqref{it:action one-term} implies there
 is a vector $\laambda\in H$ and $d\times d$ unitary matrices $M$ and
 $N$ such that $\sW =\laambda\otimes M$ and $C(\laambda\otimes I) =
 \laambda\otimes N$. To complete the proof, let
\[
 {\tt W}(x) = [\laambda^*\otimes I]W(x).
\]
 Importantly,  ${\tt W}$ is a square ($d\times d$) matrix-valued analytic function. Further,
\[
 \begin{split}
 L_B(p(x)) & =  W(x)^* (I_H\otimes L_A(x))W(x)
  =  {\tt W}(x)^* [\laambda^*\otimes I] (I_H\otimes L_A(x))[\laambda \otimes I] {\tt W}(x)\\
 & =  {\tt W}(x)^* L_A(x) {\tt W}(x).
\end{split}
\]
If $\tg=g$ then Theorem \ref{thm:shotinthedark} applies and concludes the proof.
\end{proof}

\begin{remark}\rm
\label{rem:one-sided} 
 If, in the setting of Theorem \ref{thm:one-sided}
 or Corollary \ref{cor:main}, 
the assumptions are relaxed to
 $B\in \matdtg$ with
 $\tg\geq g$, then we can conclude that $p$ satisfies the conclusions
 of Proposition \ref{prop:gvtg}.
\end{remark}

This section concludes with a proof of Theorem \ref{thm:main}.

\begin{proof}[Proof of Theorem~\ref{thm:main}]
 The assumption that $A$ and $B$ are both sv-generic immediately imply both are eig-generic and $*$-generic. 
 By assumption, $p$ has inverse $r$ and $r$ is a bianalytic map from $\cD_B$ to $\cD_A$.  Thus
 the hypotheses of Corollary \ref{cor:main}  are validated by those of Theorem \ref{thm:main}
 and the result thus follows from Corollary \ref{cor:main}. 
\end{proof}

\section{Affine Linear Change of Variables}
\label{sec:normalize}
This section describes the effects of change of variables by way of
pre and post composition with an affine linear map on an analytic
mapping between free spectrahedra.

   Suppose  $A=(A_1,\dots,A_g)\in M_d(\C)^g$  determines a bounded LMI domain 
 $\cD_A$, $B=(B_1,\dots,B_{\tg})\in M_e(\C)^\tg$ and 
  $p: \cD_{A}  \to  \cD_{B}$  is analytic with $p(0)= b$.
  
  We first, in Subsection \ref{sec:dponeone},
  turn our attention to conditions on $A$ and $B$ that
  guarantee  $p^\prime(0)$ is one to one. 
  Next, in Subsection  \ref{sec:affinechange}, assuming $p^\prime(0)$ is one to one, we
  apply a  linear transforms on the range of $p$
  placing $p$ into the canonical form $p(x) =(x,0) + h(x)$, where $h(x)$ consists of higher order terms ($h(0)=0$ and $h^\prime(0)=0$).
  In Subsection  \ref{sec:chgstruc} we consider
  an affine linear change of variables on the domain of $p$.

\subsection{Conditions guaranteeing $p'(0)$ is one to one}
\label{sec:dponeone}
  Natural hypotheses on a mapping $\cD_A$ to $\cD_B$ via $p$ lead to the conclusion that  $p^\prime(0)$ is one-one.  

\begin{lemma}
 \label{lem:AvsUA}
   Suppose $A\in \matdg$ and $B\in \matetg$ and $p:\cD_A\to\cD_B$ is analytic  and $p(0)$ is in the interior of $\cD_B$. If 
 \ben[\rm (a)]
   \item $p$ is proper; and
   \item $\cD_{A}$ is bounded, 
 \een
   then $p^\prime(0)$ is one-one. 
\end{lemma}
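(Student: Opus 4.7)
The plan is to argue by contradiction using a $2\times 2$ nilpotent test tuple. Suppose $p'(0)$ fails to be injective; then there exists a nonzero $v=(v_1,\dots,v_g)\in\C^g$ with $p'(0)v=0$. I will produce a path in $\cD_A$ that runs from the interior to the boundary while its image under $p$ sits stationary in the interior of $\cD_B$, directly contradicting properness.

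First I would establish that $\Lambda_A(v)=\sum_k v_k A_k\ne 0$. Indeed, if $\Lambda_A(v)=0$, then $L_A(\zeta v)=I_d$ for every $\zeta\in\C$, so $\C v\subset\cD_A(1)$, contradicting boundedness of $\cD_A$. Hence $\|\Lambda_A(v)\|>0$.

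Next, for $t\ge0$ consider the jointly nilpotent $2\times 2$ tuple
\[
X_t=(X_{t,1},\dots,X_{t,g}), \qquad X_{t,k}=\begin{pmatrix}0 & tv_k \\ 0 & 0\end{pmatrix}.
\]
Because $X_{t,j}X_{t,k}=0$ for every $j,k$, the pencil $L_A(X_t)$ is (up to a canonical unitary reshuffle) block-$2\times 2$ with identity blocks on the diagonal and $t\Lambda_A(v)$ in the upper right, so $L_A(X_t)\succeq 0$ iff $t\le t_0:=\|\Lambda_A(v)\|^{-1}$, with strict positivity for $t<t_0$ and singularity at $t=t_0$. Thus $X_{t}$ lies in the interior of $\cD_A(2)$ for $t\in[0,t_0)$ and on its boundary at $t=t_0$. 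Because $X_t$ is nilpotent of order two, all terms of $p$ of degree $\ge 2$ vanish on $X_t$, giving the exact formula
\[
p(X_t)=p(0)\otimes I_2 + p'(0)X_t.
\]
Using $p'(0)v=0$ componentwise, the upper-right block of each $p^j(X_t)$ is $t\,(p'(0)v)_j=0$, so $p(X_t)=b\otimes I_2$ where $b=p(0)$, independent of $t$.

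Finally I would observe that since $b$ lies in the interior of $\cD_B(1)$, the pencil $L_B(b)$ is positive definite, hence $L_B(b\otimes I_2)=L_B(b)\otimes I_2$ is positive definite, placing $b\otimes I_2$ strictly in the interior of $\cD_B(2)$. Letting $t_n\uparrow t_0$, we obtain a sequence $X_{t_n}$ in the interior of $\cD_A(2)$ approaching the boundary point $X_{t_0}\in\partial\cD_A(2)$, yet $p(X_{t_n})=b\otimes I_2$ stays in a fixed compact subset of the interior of $\cD_B(2)$. This contradicts properness of $p$ (in the sense that $p$ sends the boundary of $\cD_A$ to the boundary of $\cD_B$, equivalently preimages of compact sets in the interior of $\cD_B$ are compact in the interior of $\cD_A$). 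The main obstacle is merely notational — keeping the tensor conventions and the block reshuffling of $L_A(X_t)$ straight — since the underlying contradiction is immediate once the nilpotent test tuple is in hand.
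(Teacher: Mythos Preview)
Your proof is correct and follows essentially the same approach as the paper: both argue by contradiction using an order-two nilpotent test tuple $X = v\otimes S$ to force the higher-order terms of $p$ to vanish, then invoke boundedness of $\cD_A$ to place a scaled $X$ on the boundary while its image stays interior. The only cosmetic difference is that the paper first normalizes via $q(x)=p(x)-b$ and a conjugation $F=\mathfrak{H}^{-1}B\mathfrak{H}^{-1}$ so that $q(tX)=0$, whereas you skip this step and observe directly that $p(X_t)=b\otimes I_2$ lies in the interior of $\cD_B(2)$; your explicit computation of $t_0=\|\Lambda_A(v)\|^{-1}$ is a pleasant bonus over the paper's abstract appeal to boundedness.
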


\begin{proof}
Let $b$ denote the constant term of $p$.  Thus $b$ is a row vector of length $\tg$ with entries from $\mathbb C$  and $q(x)=p(x)-b$ satisfies $q(0)=0$.
Let $\mathfrak B= L_B(b)$. In particular, $\mathfrak B$ is an $e\times e$ matrix 
 (since $B\in \matetg$). It is also  positive definite, since $\mathfrak{B}=L_B(p(0))$ and $p(0)$ is in the interior of $\cD_B.$
Let $\mathfrak{H}$ denote the positive square root of $\mathfrak{B}$ and 
define $F=\mathfrak{H}^{-1} B \mathfrak{H}^{-1}$.  Thus, $F\in \matetg$ is a $\tg$ tuple of $e\times e$ matrices and 
\[
 \mathfrak{H}^{-1} L_B(p(x)) \mathfrak{H}^{-1} = L_F(q(x)).
\]
In particular, for a given $n$ and tuple $X\in\matngc$, we have  $L_B(p(X))\succeq 0$ if and only if $L_F(q(X))\succeq 0$ and  
$q$ is proper since $p$ is assumed to be.
If $p^\prime(0)=q^\prime(0)$ is not one-one, then there exists a non-zero $a\in \C^g$ such that   $q^\prime(0)a = 0$. 
Given  $S,$  a non-zero matrix nilpotent of order two,
let $X=a\otimes S = \begin{pmatrix} a_1 S, \dots, a_g S \end{pmatrix}.$   It follows that
\[
 q(tX) = t (q^\prime(0)a)\otimes S =0.
\]
  Since $\cD_{A}$ is bounded and contains $0$ in its interior and $X\ne 0$, there exists a $t$ such that $tX$ is in the boundary
  of $\cD_{A}$. On the other hand, since $q(tX)=0$, the tuple $tX$ is not in the boundary of $\cD_{F}$, contradicting
  the fact that $q$ is proper.  Hence $q^\prime(0)$ is one-one.
\end{proof}

\subsection{Affine linear change of variables for the range of $p$}
\label{sec:affinechange}
 In this section we compute explicitly the effect of an affine linear change of variables in the range space of $p$.
 This change of variable can be used to produce a new map $\tp$ with $\tp'(0) =I$, used 
 later in the proof of Theorem \ref{thm:PQ}.
Given a $g\times g$ matrix $M$ and an analytic mapping $q=\begin{pmatrix} q^1 & \cdots & q^g\end{pmatrix}$, let $qM$ denote the analytic mapping,
\[
  q(x)M = \begin{pmatrix} q^1(x) & \dots q^g(x)\end{pmatrix} M = \begin{pmatrix}\sum q^j(x) M_{j,1}, \dots, \sum q^j(x) M_{j,g} \end{pmatrix}.
\]
On the other hand, for $B\in \matngc$, we often write $MB$ for $(M\otimes I)B$ where $B$ is treated as a column vector. Thus,
\[
 MB = \begin{pmatrix} \sum M_{1,j}B_j \\ \vdots \\ \sum M_{g,j}B_j \end{pmatrix}.
\]
 Since we are viewing $x$ and $p(x)$ as row vectors, in the case $p$ has $\tg$ entries,  $p^\prime(0)$ is a $g\times \tg$ matrix.

\begin{prop}
  \label{prop:dF}
  Suppose $A\in\matdgc$ and $B\in\matetg$ and $p:\cD_A\to\cD_B$ is an
  analytic map with $p(0)=b\in\mathbb C^\tg$.  Let $\fcH$ denote the
  positive square root of
  \[
    \mathfrak{B}=L_B(b) = I+\sum_{j=1}^g b_j B_j+\sum_{j=1}^g (b_jB_j)^*,
  \]
 and let $F= p^\prime(0)\, \fcH^{-1}B\fcH^{-1}=  \fcH^{-1} \, p^\prime(0) B \, \fcH^{-1}$. 

 Suppose $p^\prime(0)$ is one-one and choose any invertible $\tg\times
 \tg$ matrix $M$ whose first $g$ rows are those of $p^\prime(0)$   and let $\ell$
 denote the affine linear polynomial $\ell(x) = (-b+x) M^{-1}.$ The
 analytic map $\tilde{p}= \ell \circ p$ maps $\cD_A$ into $\cD_F$ and
 satisfies $\tilde{p}(0)=0$ and $\tilde{p}^\prime(0)=\begin{pmatrix}
   I_d & 0 \end{pmatrix}$. Thus $\tilde{p}(x)=(x \ \ 0) + h(x)$ where $h(0)=0$
 and $h^\prime(0)=0$.  In particular, if $p$ maps the boundary of
 $\cD_A$ into the boundary of $\cD_B$, then $\tilde{p}$ maps the boundary
 of $\cD_A$ to the boundary of $\cD_F$; and
 if $p$ is bianalytic, then so is $\tilde{p}$.
  
Written in more expansive notation,  for each $1\leq i\leq g$, 
  \[
  \begin{split}
    F_i & = \fcH^{-1}(MB)_i\fcH^{-1} = \sum_{j=1}^g p'(0)_{i,j}\fcH^{-1}B_j\fcH^{-1},\\
      B_i & = \fcH(M^{-1}F)_i\fcH = \sum_{j=1}^g \left(M^{-1}\right)_{i,j}\fcH F_j \fcH.
      \end{split}
  \]
\end{prop}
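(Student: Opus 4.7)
The plan is a direct calculation showing that conjugation by $\fcH^{-1}$ together with right multiplication by $M^{-1}$ intertwines $L_B\circ p$ with $L_F\circ\tilde{p}$. All claims of the proposition then follow.

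First, the normalizations $\tilde{p}(0)=0$ and $\tilde{p}'(0)=(I_g\ \ 0)$ are immediate: $\ell(b)=0$ gives the former, and since the first $g$ rows of $M$ agree with $p'(0)$, we have $p'(0)M^{-1}=(I_g\ \ 0)$.

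The main step is to verify the identity
\[
 (\fcH^{-1}\otimes I_n)\, L_B(p(X))\, (\fcH^{-1}\otimes I_n) \;=\; L_F(\tilde{p}(X))
\]
for every $X\in M_n(\C)^g$. To see it, split the constant term away from $L_B(p(X))$ so that the non-constant piece is a sum of $B_j\otimes(p^j(X)-b_j I_n)$ and its adjoint; the constant term $\mathfrak B\otimes I_n$ becomes $I_{en}$ after conjugation by $\fcH^{-1}\otimes I_n$, and each $B_j$ is replaced by $\fcH^{-1}B_j\fcH^{-1}$. Then use $\tilde{p}(x)M = p(x)-b$ componentwise, i.e.\ $p^j(X)-b_jI_n = \sum_k M_{k,j}\tilde{p}^k(X)$, and exchange the order of summation; this gathers the coefficients $F_k = \fcH^{-1}(MB)_k\fcH^{-1}$ in front of $\tilde{p}^k(X)$, as required.

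From the identity, $L_B(p(X))\succeq 0$ iff $L_F(\tilde{p}(X))\succeq 0$, so $\tilde{p}$ maps $\cD_A$ into $\cD_F$. Since $\fcH^{-1}\otimes I_n$ is invertible, the kernels on the two sides coincide, so boundary points map to boundary points. The affine-linear map $\ell$ is invertible with inverse $z\mapsto zM+b$, and the same computation shows that $\ell$ is an affine-linear bianalytic bijection $\cD_B\to\cD_F$; hence if $p$ is bianalytic with inverse $q$, then $\tilde{p}$ has inverse $q\circ\ell^{-1}$. The closing formulas $F_i=\fcH^{-1}(MB)_i\fcH^{-1}$ and $B_i=\fcH(M^{-1}F)_i\fcH$ are pure bookkeeping. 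The only real care required is matching row/column conventions: $p$ takes values in rows of length $\tg$, $p'(0)$ is $g\times\tg$, and $M$ is $\tg\times\tg$ acting on rows from the right. There is no deeper obstacle.
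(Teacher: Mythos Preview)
Your proof is correct and follows essentially the same route as the paper: both establish the conjugation identity $(\fcH^{-1}\otimes I)\,L_B(p(X))\,(\fcH^{-1}\otimes I)=L_F(\tilde p(X))$ by expanding and using $\tilde p(x)M=p(x)-b$, then read off the mapping, boundary, and bianalytic conclusions. If anything, you are slightly more explicit than the paper in justifying the boundary-to-boundary and bianalytic statements.
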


\begin{proof}
Consider
  \begin{align*}
    \sum_{k=1}^\tg (M B)_k\otimes \tilde{p}(x)_k
      &=\sum_{k=1}^\tg (M B)_k\otimes \left((-b+p(x))M^{-1}\right )_k\\ 
      &=\sum_{k=1}^\tg\left[ \left(\sum_{j=1}^\tg M_{k,j}B_j \right)
        \otimes\left(\sum_{i=1}^\tg(-b_i+p_i(x))\left(M^{-1}\right)_{i,k}\right)\right]\\ 
      &=\sum_{j=1}^\tg\sum_{i=1}^\tg\left(\sum_{k=1}^\tg (M^{-1})_{i,k}M_{k,j}\right)
        \Big[B_j\otimes \big(-b_i+p_i(x)\big)\Big]\\
      &=\sum_{j=1}^\tg\sum_{i=1}^\tg (I_{i,j})\Big[B_j\otimes \big(-b_i+p_i(x)\big)\Big] =\sum_{j=1}^\tg B_j\otimes p_j(x) - \sum_{j=1}^\tg B_jb_j.
  \end{align*}
    Given a tuple $X$,  it follows that
  \begin{align*}
    L_F(\tilde{p}(X)) &= I+\sum_{j=1}^\tg F_j\otimes \tilde{p}_j(X)
      + \sum_{j=1}^\tg \adj{F_j}\otimes\adj{\tilde{p}_j(X)}\\
      &=I+\sum_{j=1}^\tg \left(\fcH^{-1} (M B) \fcH^{-1}\right)_j\otimes \tilde{p}_j(X)
      + \sum_{j=1}^\tg \left(\fcH^{-1} (MB) \fcH^{-1}\right)^*_j\otimes\adj{\tilde{p}_j(X)}\\
      &=(\fcH^{-1}\otimes I)\left(\mathfrak{B}+\sum_{j=1}^\tg  (MB)_j\otimes \tilde{p}_j(X) 
      + \sum_{j=1}^\tg (MB)^*_j\otimes\adj{\tilde{p}_j(X)}\right)(\fcH^{-1}\otimes I)\\
      &=(\fcH^{-1}\otimes I)\left(I+\sum_{j=1}^\tg B_j\otimes p_j(X)
      + \sum_{j=1}^\tg \adj{B}_j\otimes\adj{p(X)}_j\right)(\fcH^{-1}\otimes I)\\
      &=(\fcH^{-1}\otimes I)L_B(p(X))(\fcH^{-1}\otimes I).
  \end{align*}
  Since $\fcH^{-1}\otimes I$ is invertible,
  $L_F(\tilde{p}(X))\succeq 0$ if and only if $L_B(p(X))\succeq 0$. Assuming  $p:\cD_A\to\cD_B$ 
  and $X\in \cD_A$, it follows that $\tilde{p}(X)\in\cD_F$. 
  
  Next we note that $\tilde{p}(0) = (-b+p(0)) M^{-1}=0$ and that
  $\ell^\prime(x) = M^{-1}$. Hence, with $P = \begin{pmatrix} I & 0 \end{pmatrix}$,
  \[
      \tilde{p}^\prime(0) = \ell^\prime \big(p(0)\big)p^\prime(0) = M^{-1}p^\prime(0) 
  \]
  and thus, $\tilde{p}^\prime(0) = p(0)M^{-1} = P MM^{-1} = P$.
\end{proof}

\begin{remark}\rm
\label{rem:mainrelax}
 In Theorem \ref{thm:main}, without the assumption $p^\prime(0)=I$, the remaining 
 hypotheses imply $p^\prime(0)$ is invertible \cite[Theorem 3.4]{HKM11b}.  Applying Proposition \ref{prop:dF} with $b=0$ and $M=p^\prime(0)$,
 gives $\fcH=I$ and $F=p^\prime(0)B.$  Moreover, since $B$ is sv-generic, so is $F$.  The resulting $\tilde{p}$
 thus does satisfy the hypotheses of Theorem \ref{thm:main}. It is now just a matter of undoing the linear change
 of variables that sent $B$ to $F$.
\end{remark}

\subsection{Change of basis in the $\mathcal R$ module generated by $\mathcal A$}
\label{sec:chgstruc}
In the context of the results of Theorem \ref{thm:main}, 
the  formula for the \ct  mapping $p$ depends (only) upon the structure
matrices $\Xi$ for the module generated by the tuple $A$ 
  over the algebra generated by the tuple $R=(C-I)A$ with respect
 to the basis implicitly given by $A=(A_1,\dots,A_g)$.
 We now see that a  linear change of the $A$ variables 
 produces a simple linear ``similarity" transform
 $\tp$ of the mapping $p$. 

Starting with the identity of equation \eqref{eq:AZA}, 
consider a  linear change of variables determined by an invertible
matrix  $M \in \C^{g \times g}$.  That is,  $\tilde{A} = MA$ where $A$ is
regarded as the column
    of matrices $\bem
    A_1\\ \vdots \\ A_g \eem$,
    so $\tilde A_i=\sum_{j=1}^gM_{ij}A_j$ for $i=1,\ldots,g$.
    The matrix $M$ implements a change of basis
    on the span of $\{A_1,\dots,A_g\}$.
    We emphasize that  the vectors of variables and maps
    are row vectors. Observe, in view of equation \eqref{eq:AZA}, 
\[
\begin{split}
\tilde A(C-I) \tilde A_j&= M A (C-I) (\sum_{k=1}^gM_{jk}A_k)
=M(\sum_{k=1}^gM_{jk}(A(C-I)A_k))\\
&=M(\sum_{k=1}^gM_{jk}(\Xi_k A))
=(M(\sum_{k=1}^gM_{jk} \Xi_k) M^{-1})\tilde A.
\end{split}
\]
 Thus,  $(\tilde{A},C)$ satisfy the hypotheses of Proposition \ref{prop:AZA} 
 with structure matrices 
   \[
   \widetilde{\Xi}_j=M(\sum_{k=1}^gM_{jk}\Xi_k)M^{-1}.
   \]
Concretely,
\begin{equation}
\label{eq:Xitilde}
 (\widetilde{\Xi_j})_{s,q} = \sum_{t,k,p} M_{s,t} M_{j,k} (\Xi_k)_{t,p} (M^{-1})_{p,q}.
\end{equation}

\subsubsection{Computation of the mappings after linear change of coordinates}
 \label{sec:affine change}
  Recall the \ct mapping 
     $p(y)=y (1-\sum_{i=1}^g y_i \; \Xi_i)^{-1}$
   associated to the \ct tuple $\Xi$. 
   We now look at the effect of the
    linear  change of variable implemented by  $M$ on $p$.
     The rational function $\tilde{p}$ determined by 
     $\widetilde{\Xi_k}$ of equation \eqref{eq:Xitilde},  is
\[
\begin{split}
\tilde p(y)&=y(1-\sum_{i=1}^g y_i \widetilde{\Xi_i} )^{-1}
=y (1-\sum_{i=1}^gy_iM(\sum_{k=1}^gM_{ik} \Xi_k)
M^{-1})^{-1}\\
&=yM(1-\sum_{k=1}^g(\sum_{i=1}^gy_iM_{ik})\Xi_k)^{-1}M^{-1}.\\
\end{split}
\]
Note that  $\sum_{i=1}^gy_iM_{ik}$ is the $k$-th column of $yM$,
thus $\tp(y) =p(yM)M^{-1}$.
 Similarly  for $\tp$  inverse, denoted $\tq$, so we can summarize this as
\[
   \tp(y) =p(yM)M^{-1}, \qquad \qquad \tilde q(y)=q(yM)M^{-1}.
   \]
(For an example see  \eqref{eq:chCT} below.)

The following proposition summarizes the  mapping implications of this change of variable.

\begin{prop}
If $p:\cD_A \to \cD_B$, then  $\tp: \cD_\tA \to \cD_\tB$
where
    $$ \cD_\tA: =  \{ y : yM \in \cD_A \},
    \qquad \qquad
     \cD_\tB: =  \{ z : zM \in \cD_B \}  $$
\end{prop}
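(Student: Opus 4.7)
The plan is to unwind the definitions of $\cD_\tA$, $\cD_\tB$, and the formula $\tp(y)=p(yM)M^{-1}$ derived in the preceding paragraph, and observe that the result is essentially a one-line computation. In fact the content lies in the already-established consistency between the algebraic change of basis $\tA = MA$, the corresponding change of pencil $L_A(yM)=L_\tA(y)$, and the formula $\tp(y)=p(yM)M^{-1}$; once these are in hand the proposition is immediate.

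First I would verify (or simply recall from the discussion surrounding equation \eqref{eq:Xitilde}) that for any tuple $Y$ of matrices one has
\[
L_\tA(Y) = L_A(YM),
\]
which follows by expanding $L_\tA(Y) = I + \sum_i \tA_i\otimes Y_i + \sum_i \tA_i^*\otimes Y_i^*$ and using $\tA_i = \sum_k M_{i,k}A_k$ to recognize the right-hand side as $L_A(YM)$. Consequently $Y \in \cD_\tA$ if and only if $YM\in \cD_A$, and symmetrically $Z\in \cD_\tB$ if and only if $ZM\in \cD_B$. This is just a reformulation of the definitions in the proposition.

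Next, given $Y\in \cD_\tA$, I would apply the above equivalence to get $YM\in \cD_A$. The hypothesis $p:\cD_A\to \cD_B$ then yields $p(YM)\in\cD_B$. Using the identity $\tp(y)=p(yM)M^{-1}$ derived in Subsection \ref{sec:affine change}, we compute
\[
\tp(Y)\,M \;=\; \bigl(p(YM)M^{-1}\bigr)M \;=\; p(YM) \;\in\; \cD_B,
\]
which by the defining property of $\cD_\tB$ means exactly $\tp(Y)\in\cD_\tB$, as required.

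There is essentially no obstacle here: the only thing to be careful about is the bookkeeping of row- versus column-conventions for the tuples (the paper treats $x$, $y$, $p(x)$ as row vectors and multiplies by $M$ on the right, while tuples of matrices $A$ get multiplied by $M$ from the left as a column). Once that convention is consistently applied, the identity $L_\tA(Y) = L_A(YM)$ and the factorization $\tp(Y)M = p(YM)$ turn the proposition into a trivial substitution, so no separate proof environment or auxiliary lemma is really needed.
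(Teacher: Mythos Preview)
Your argument is correct and is essentially identical to the paper's own proof: both set $x=yM\in\cD_A$, apply $p$ to land in $\cD_B$, and then use $\tp(y)M=p(yM)$ together with the definition of $\cD_\tB$ to conclude. Your additional verification that $L_{\tA}(Y)=L_A(YM)$ is a helpful sanity check confirming that $\cD_\tA$ as defined in the proposition really is the spectrahedron of $\tA=MA$, but it is not strictly needed for the proof as stated.
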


\begin{proof}
  Given $ y \in \cD_\tA$, set $ yM =x$, which by definition is in $\cD_A$.
  By the formula above $\tp(y) = p(x)M^{-1} =:z$.
  Thus $ zM = p(x) \in \cD_B$, hence by definition of $\cD_\tB$,
  we have $\tp(y) =z \in \cD_\tB$.
\end{proof}

\subsection{Composition of \ct maps is not necessarily \ct}\label{ssec:compose}
Suppose $p:\cD_A\to\cD_B$ and $q:\cD_B\to\cD_E$ are \ct maps
between the spectrahedral pairs $(\cD_A,\cD_B)$ and $(\cD_B,\cD_E)$.
In particular, the pairs $(A,B)$ and $(B,E)$ must satisfy rather
stringent algebraic conditions.  In this case, generically $q\circ p$
is again \ct by Theorem \ref{thm:one-sided}. 
On the other hand, in general, given \ct maps $p$ and $q$ (without
specifying domains and codomains), there
is no reason to expect that the composition $q\circ p$ is \ct.
Indeed the following example shows it need not be the case.

Let $g=2$ and let $p$ be the indecomposable \ct map of Type I from 
Section \ref{sec:examples}.  Let $\tilde{p}$ denote the convexotonic map
\[
\tilde{p}=(x_1 + x_2^2,x_2).
\]
It can be obtained by reversing the roles of $x_1$ and $x_2$ in $p$ or observing it 
belongs to the convexotonic tuple
\[
 \Xi_1 = 0, \ \ \Xi_2 = \begin{pmatrix} 0 &0\\1 & 0 \end{pmatrix}.
\]
In terms of the formalism in Subsection \ref{sec:chgstruc}, 
 consider the change of basis matrix
\[
M=\begin{pmatrix}
 0 & 1 \\
 1 & 0 
\end{pmatrix}
\]
and note 
\beq\label{eq:chCT}
\tilde p(x,y)=p(y,x)\begin{pmatrix}
 0 & 1 \\
 1 & 0 
\end{pmatrix}
= 
\begin{pmatrix} x+y^2 &y\end{pmatrix}.
\eeq
Now 
\[
p(\tilde p(x,y)) = \begin{pmatrix}
x+y^2 & y+x^2+xy^2+y^2x+y^4
\end{pmatrix}
\]
is not \ct by Proposition \ref{lem:gtg},  since it is a polynomial of degree exceeding two.

  \section{Constructing all \Ct Maps}
\label{sec:examples}

To construct all \ct maps in $g$ variables first one lists the indecomposable
ones, i.e., those associated with an indecomposable algebra.
Then  build general \ct maps as direct sums of these.
We illustrate this by giving all \ct maps 
in dimension $2$ in Subsections \ref{sec:two dim} and \ref{subsec:decompose}. Finally, in Subsection \ref{ssec:ball} we 
show how the automorphisms of the complex wild ball 
$\sum X_j^* X_j \preceq I$
are, after affine linear changes of variables, \ct.

\subsection{\Ct maps for $g=2$}
\label{sec:two dim}
  In very small dimensions $g\leq5$ indecomposable algebras are classified \cite{Maz79}. 
 We work out the corresponding \ct maps for $g=2$.
The following is the list of indecomposable two-dimensional
algebras over $\C$ (with basis $R_1,R_2$).
\[
\begin{array} {c|ccc|cccccccc}
\text{notation} & \qquad  &\qquad \text{nonzero} \ \text{products}  && \text{properties}\\ \hline
\rm I & R_1^2=R_2  &             && \text{commutative}  & \text{nilpotent}\\
\rm II & R_1^2=R_1 & R_1R_2=R_2&& \\
\rm III & R_1^2=R_1 & R_2R_1=R_2 & &\\
\rm IV & R_1^2=R_1 & R_1R_2=R_2  & R_2R_1=R_2 & \text{commutative} & \text{with identity} \\
\end{array}
\]
Accordingly we refer to these as algebras of type I -- IV.

\subsubsection{Type I}
If  $R_1$ is nilpotent of order 3, then
$
\Xi_1= \begin{pmatrix} 0 & 1 \\ 0 & 0 \end{pmatrix} , \Xi_2= 0.
$
These structure matrices  produce the \ct maps
\begin{equation*}
p(x_1,x_2)=\begin{pmatrix} x_1 & x_2+x_1^2 \end{pmatrix}
\qquad
q(x_1,x_2)=\begin{pmatrix} x_1 & x_2 - x_1^2 \end{pmatrix}.
\end{equation*}
Note $p(0)=0$, $p'(0)=I$ and likewise for $q$.

\subsubsection{Type II}
Let 
$
R_1=\bem 1 & 0 \\ 0 & 0 \eem, 
R_2=\bem 0 & 1 \\ 0 & 0 \eem.
$
The corresponding structure matrices $\Xi$ are 
$
\Xi_1= \bem 1 & 0 \\ 0 & 0 \eem, 
\Xi_2= \bem 0 & 1 \\ 0 & 0 \eem.
$
So 
\[
\begin{split}
p(x)&= 
\bem
(1-x_1)^{-1} x_1 &
(1 - x_1)^{-1} x_2
\eem
\qquad
q(x) =  \bem (1+x_1)^{-1}x_1  & (1+x_1)^{-1} x_2 \eem.
\end{split}
\]

\subsubsection{Type III}
Let 
$
R_1=\bem 1 & 0 \\ 0 & 0 \eem, 
R_2=\bem 0 & 0 \\ 1 & 0 \eem.
$
The structure matrices  are 
$
\Xi_1= I_2, $ $
\Xi_2= 0.
$
So 
\[
\begin{split}
p(x) & = \bem x_1 (1-x_1)^{-1} & x_2(1-x_1)^{-1} \eem
\qquad
q(x)= \bem x_1(1+x_1)^{-1}& x_2(1+x_1)^{-1} \eem.
\end{split}
\]

\subsubsection{Type IV} Take
$
R_1=\bem 1 & 0 \\ 0 & 1 \eem, 
R_2=\bem 0 & 1 \\ 0 & 0 \eem.
$
The corresponding structure matrices  are 
$
\Xi_1= \bem 1 & 0 \\ 0 & 1 \eem, 
$ $
\Xi_2= \bem 0 & 1 \\ 0 & 0 \eem.
$
So  
\[
\begin{split}
p(x) & = \bem
x_1 (1 - x_1)^{-1} &
(1 - x_1)^{-1}  x_2 (1 - x_1)^{-1} 
\eem
\\
q(x)&= \bem
x_1 (1 + x_1)^{-1} &
(1 + x_1)^{-1}  x_2 (1 + x_1)^{-1} 
\eem.
\end{split}
\]
 
The other indecomposable \ct maps correspond to these after a linear change of basis, cf.~Section \ref{sec:chgstruc}. If the change of basis corresponds
to an invertible $2\times 2$ matrix $M$, then the corresponding
\ct map is
\[
\tilde p(x)=p(xM)M^{-1}.
\]

\subsection{\Ct maps associated to decomposable algebras}\label{subsec:decompose}
 Here we explain 
which \ct maps arise from 
decomposable algebras.
Suppose $\cR=\cR'\oplus\cR''$ and $\cR',\cR''$ are indecomposable finite-dimensional algebras.
Let $\{R_1,\ldots,R_g\}$ be a basis for $\cR'$ and
let $\{R_{g+1},\ldots,R_h\}$ be a basis for $\cR''$. Then
$\{R_1\oplus 0,\ldots, R_g\oplus 0, 0\oplus R_{g+1},\ldots,0\oplus R_h \}$
is a basis for $\cR$ with the corresponding structure matrices
\[
\Xi_j =
\begin{cases}
\Xi_j'\oplus 0 &: j\leq g \\
0\oplus \Xi_j'' &: j> g,
\end{cases}
\]
where $\Xi_j'$ and $\Xi_j''$ denote the structure matrices
for $\cR'$ and $\cR''$, respectively. The \ct map corresponding
to $\cR$ is
\[
\begin{split}
p_{\cR}(x)& =\begin{pmatrix}
x_1  &\cdots  & x_g & x_{g+1}&\cdots & x_h
\end{pmatrix}
\Big(I-\sum_{j=1}^{h} \Xi_j x_j \Big)^{-1} \\
& =\begin{pmatrix}
x_1  &\cdots  & x_g & x_{g+1}&\cdots & x_h
\end{pmatrix}
\begin{pmatrix}
I-\sum_{j=1}^{g} \Xi_j' x_j & 0 
\\ 0 &  I-\sum_{j=g+1}^{h} \Xi_j'' x_j
\end{pmatrix}^{-1} \\
& = 
\begin{pmatrix}
p_{\cR'}(x_1,\ldots,x_g) & p_{\cR''}(x_{g+1},\ldots,x_h)
\end{pmatrix}.
\end{split}
\]

\subsection{Biholomorphisms of balls}\label{ssec:ball}
\def\cF{\mathcal F}
\def\bB{\mathbb B}
In this subsection we show how (linear fractional) biholomorphisms of balls in $\C^g$ can be presented using \ct maps. 
Let $\{\hat e_1,\ldots,\hat e_{g+1}\}$ denote the standard basis 
of row vectors
for $\C^{g+1}$ and let $A_j=\hat e_1^* \hat e_{j+1}$ for $j=1,\ldots,g$.  
Since $\cD_A(1)=\{z\in\C^g: \sum_j |z_j|^2\le 1\}$ is the unit ball in $\C^g$, the free spectrahedron $\cD_A$ is a free version of the ball.
That is, 
$\cD_A=\{X:\sum X_j^* X_j \preceq I\}$ 
consisting of all row contractions.
Fix a (row) vector $v\in\C^g$ with $\|v\|<1$  and let $xv^* = \sum \overline v_j x_j$, where $x=(x_1,\ldots,x_g)$ is a row vector of free variables.  By \cite{Po2}, 
up to rotation,
automorphisms of $\cD_A$ have the form, 
\[
\cF_v(x)=v- 
\big(1-vv^*\big)^{\frac12} \big(1-xv^*\big)^{-1} x \big(I-v^*v\big)^{\frac12}  .
\]
Modulo affine linear transformations, $\cF_v$ is of the form
\[
\big(1-xv^*\big)^{-1} x = 
\begin{pmatrix}
(1-xv^*)^{-1} x_1 & \cdots &
(1-xv^*)^{-1} x_g
\end{pmatrix}
\]
since $\big(1-vv^*\big)^{\frac12}$ is a number
and $\big(I-v^*v\big)^{\frac12}$ is a matrix independent of $x$. Further,
\[
\big(1-xv^*\big)^{-1} x = x \big(I-v^*x)^{-1}.
\]
Now let $\Xi$ denote the $g$-tuple of $g\times g$ matrices $\Xi_j = e_j \otimes v^*$, where $\{e_1,\ldots,e_g\}$ is the standard basis of row vectors for $\C^g$. Then $\Xi_j\Xi_k=\overline v_j \Xi_k
=\sum_s (\Xi_k)_{j,s} \Xi_s$, so 
the tuple $\Xi$ satisfies \eqref{eq:cttuple}; i.e., it is \ct.  Moreover,
\[
x (I-\lambda_\Xi(x))^{-1} = x (I-v^*x)^{-1} =  (1-xv^*)^{-1} x.
\]
Thus $\cF_v$ is  a \ct map.

\section{Bianalytic Spectrahedra that are not Affine Linearly Equivalent}
	\label{sec:PQDomain}	

  In this section we present bounded 
free spectrahedra
   that are polynomially equivalent, but not affine linearly equivalent (over $\mathbb C$).  
	
	Suppose $A$ and $B$ are eig-generic tuples, $\cD_A$ and $\cD_B$ are bounded  and there is a polynomial bianalytic map 
	$p:\cD_A\to\cD_B$ with $p(x)=x + h(x)$ ($h$ for higher order terms).  In particular, by Theorem \ref{thm:shotinthedark}, $A$ and $B$ 
	have the same size and $B= W^* V A W$ for unitaries $V$ and $W$. Further,  there is a representation for $p$ in terms of the $g$-tuple $\Xi$ of $g\times g$ matrices determined by
\begin{equation}
 \label{eq:AVIA}
 A_k(V-I)A_j = \sum_{s=1}^g (\Xi_j)_{ks} A_s.
\end{equation}

\subsection{A class of examples}
	Let $Q$ be an  invertible $2\times2$ matrix,  so that 
  \[
  \cD_Q(1)=\big\{c\in\mathbb C: I_2+ \big((cQ)^* +cQ\big)\succeq 0\big\}
  \] 
	is bounded. Choose $\Pu,\Pd,\Po$ invertible, same size as $Q$ with $\Pd\Pu=-2Q$. Now let
\begin{equation}
\label{eq:thisisA}
		A_1=\begin{pmatrix} 0 & \Pu\\ \Pd & \Po \end{pmatrix}, \, A_2 = \begin{pmatrix} 0 &0 \\ 0 & Q\end{pmatrix}.
\end{equation}
 Given $\gamma$ unimodular, let 
\begin{equation}
\label{eq:thisisVg}
 V_\gamma = \begin{pmatrix} \gamma I_2 & 0 \\ 0 & I_2 \end{pmatrix}.
\end{equation}

\begin{prop}
 \label{prop:VA}
With notation as above,
\begin{equation}
 \label{eq:VA}
A_k (V_\gamma-I)A_j = \sum_{s=1}^2 (\Xi_j)_{ks} A_s,
\end{equation}
 where $\Xi=(\Xi_1,\Xi_2)$ is the tuple defined by
\[
 \Xi_1 =\begin{pmatrix} 0 & -2(\gamma-1) \\ 0 & 0 \end{pmatrix}
\]
 and $\Xi_2=0$.  Thus the polynomial mapping 
\[
   p_\gamma(x_1,x_2) = x(I-\Lambda_\Xi(x))^{-1} = (x_1, x_2 +2(1-\gamma)x_1^2)
\]
 is a bianalytic $p:\cD_A\to\cD_B$ with $B=V_\gamma A$.

 Moreover, if $\Po = \alpha_1Q +\alpha_3(\Pu^*\Pu+\Pd\Pd^*)$, then for each unimodular $\varphi$, 
\[
\begin{split}
	  s_\varphi & =  (\varphi x_1,\, -(1-\varphi)\left(4\overline{\alpha_3}\varphi-\alpha_1\right)x_1+x_2 +2(1-\varphi^2)x_1^2)\\
   & \phantom{=}  + (\overline{\alpha_3}(1-\varphi), \, -\overline{\alpha_3}(1-\varphi)\left(2\overline{\alpha_3}(1-\varphi)+\alpha_1\right)).
\end{split}
\]
 is a polynomial automorphism of $\cD_A$. 
\end{prop}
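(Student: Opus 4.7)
The plan is to treat the two claims separately. For the convexotonic identity \eqref{eq:VA}, I would argue by direct block-matrix computation. Since $V_\gamma - I$ has zero bottom-right block, $(V_\gamma - I)A_2 = 0$ and $(V_\gamma - I)A_1$ has only the upper-right block $(\gamma-1)\Pu$ nonzero. Thus $A_k(V_\gamma - I)A_2 = 0$ for every $k$, consistent with $\Xi_2 = 0$. For $j=1$, the defining relation $\Pd\Pu = -2Q$ collapses $A_1(V_\gamma - I)A_1$ to $\bigl(\begin{smallmatrix} 0 & 0\\ 0 & -2(\gamma-1)Q \end{smallmatrix}\bigr) = -2(\gamma-1)A_2$, while $A_2(V_\gamma - I)A_1$ vanishes because the upper block of $A_2$ is zero. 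These computations identify $\Xi_1$ as claimed. Applying the converse portion of Theorem \ref{thm:shotinthedark} to the linearly independent pair $A = (A_1, A_2)$ and the unitary $C = V_\gamma$ then yields that the associated convexotonic map $p_\gamma(x) = x(I - \Lambda_\Xi(x))^{-1} = (x_1,\, x_2 + 2(1-\gamma)x_1^2)$ is bianalytic from $\cD_A$ onto $\cD_{V_\gamma A}$.

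For Part 2, I would present $s_\varphi$ as the composition $s_\varphi = T_\varphi \circ p_{\varphi^2}$, where $p_{\varphi^2}: \cD_A \to \cD_{V_{\varphi^2} A}$ is the convexotonic map from Part 1 and $T_\varphi$ is the affine linear map
\[
T_\varphi(y_1, y_2) = \bigl(\varphi y_1 + \overline{\alpha_3}(1-\varphi),\; -(1-\varphi)(4\overline{\alpha_3}\varphi - \alpha_1)y_1 + y_2 - \overline{\alpha_3}(1-\varphi)(2\overline{\alpha_3}(1-\varphi)+\alpha_1)\bigr).
\]
A short substitution verifies $T_\varphi(p_{\varphi^2}(x)) = s_\varphi(x)$ term by term: the $x_1^2$ coefficient $2(1-\varphi^2)$ is inherited directly from $p_{\varphi^2}$, the linear $x_1$ contribution in the second coordinate comes from the $y_1$-linear part of $T_\varphi$ applied to $p_{\varphi^2}(x_1,x_2)_1 = x_1$, and the constants $\bigl(\overline{\alpha_3}(1-\varphi),\ldots\bigr)$ are exactly $T_\varphi(0)$.

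The hard part is showing that $T_\varphi$ is an affine linear bijection $\cD_{V_{\varphi^2} A} \to \cD_A$. My plan is to produce an invertible constant matrix $W$ satisfying the pencil identity
\[
L_A(T_\varphi(y)) = W^* L_{V_{\varphi^2} A}(y)\, W.
\]
Matching $y$-linear coefficients forces $W^* B_j W = \varphi A_1\delta_{j,1} + m_1 A_2\delta_{j,1} + A_2\delta_{j,2}$ (with $B = V_{\varphi^2}A$), while the constant coefficient forces $W^* W = L_A(T_\varphi(0))$. Expanding both sides in the blocks $\Pu, \Pd, \Po, Q$ reduces the linear-in-$y$ conditions to block identities that, combined with $\Pd\Pu = -2Q$, essentially determine $W$ up to the freedom permitted by Proposition \ref{prop:dF}. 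The constant identity then collapses, after another use of $\Pd\Pu=-2Q$, to a relation involving $\Po$ together with $\Pu^*\Pu + \Pd\Pd^*$ and the scalars $\alpha_1, \alpha_3, \varphi$. The structural hypothesis $\Po = \alpha_1 Q + \alpha_3(\Pu^*\Pu + \Pd\Pd^*)$ is exactly what is needed to close this identity, and it is what dictates the particular constants $c_1, m_1, c_2$ appearing in $T_\varphi$. Alternatively, one may invoke Proposition \ref{prop:dF} with $b = T_\varphi(0)$ and $M$ the linear part of $T_\varphi$, recasting the desired equivalence as the identity $\fcH^{-1} M A\, \fcH^{-1} = V_{\varphi^2} A$; the same algebra using the $\Po$-decomposition drives the verification.
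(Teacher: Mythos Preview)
Your approach is correct and matches the paper's almost exactly: the block computations for Part~1 and the appeal to the converse of Theorem~\ref{thm:shotinthedark} are the same, and for Part~2 the paper likewise factors $s_\varphi=\rho_\varphi\circ p_{\varphi^2}$ with $\rho_\varphi$ affine linear and proves $\rho_\varphi:\cD_{V_{\varphi^2}A}\to\cD_A$ by exhibiting an explicit invertible $\sY$ with $L_A(\rho_\varphi(x))=\sY^* L_{V_{\varphi^2}A}(x)\sY$. The only difference is that the paper writes down $\sY=\begin{pmatrix}\varphi I & \varphi(\delta\Pu+\bar\delta\Pd^*)\\ 0 & I\end{pmatrix}$ (with $\delta=\overline{\alpha_3}(1-\varphi)$) and verifies the identities $\sY^*\sY=L_A(\rho_\varphi(0))$, $\sY^*B_1\sY=\varphi A_1+\eta A_2$, $\sY^*B_2\sY=A_2$ directly, whereas you leave the determination of your $W$ as a matching argument; supplying this explicit $\sY$ would complete your sketch.
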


\begin{proof}
Equation \eqref{eq:VA} follows from the computations,  $(V_\gamma-I)A_2=0 = A_2(V_\gamma-I)$ and 
\[
 A_1(V_\gamma-I)A_1 = -2(\gamma-1) A_2.
\]
 The converse portion of Theorem \ref{thm:shotinthedark} now implies that 
\[
\begin{split}
 p & = x(I-\Lambda_\Xi(x))^{-1} =  \begin{pmatrix} x_1 & x_2 \end{pmatrix} \, \begin{pmatrix} 1 & 2(\gamma-1) x_1 \\ 0 & 1\end{pmatrix}^{-1}\\
 & =   \begin{pmatrix} x_1 & x_2 \end{pmatrix} \begin{pmatrix} 1 & 2(1-\gamma)x_1 \\ 0 & 1\end{pmatrix} 
 = (x_1, 2(1-\gamma)x_1^2 +x_2)
\end{split}
\]
is bianalytic between $\cD_A$ and $\cD_{V_\gamma A}$ as claimed.

To prove the second part of the proposition, suppose $\varphi$  is unimodular. Let $\delta=\overline{\alpha_3}(1-\varphi)$ and $\eta = -4\varphi \delta + (1-\varphi)\alpha_1$ and  let $\rho$ denote the affine linear polynomial,
\[
\rho(x_1,x_2) = (\varphi x_1,\, x_2+\eta x_1) + \delta(1,\, 2\delta -\alpha_1).
\]
With these notations,
\[
 L_A(\rho(x_1,x_2)) = L_A(\rho(0,0)) + (\varphi A_1 +\eta A_2) x_1 + A_2 x_2 + (\overline{\varphi} A_1^* +\overline{\eta}A_2^*) x_1^* +  A_2^*  x_2^*
\]
and, using $P_{22}-\alpha_1 Q = \alpha_3(\Pu^*\Pu+\Pd\Pd^*)$,
\[
 \begin{split}
 L_A(\rho(0,0)) & =  I + \delta A_1 + \delta^* A_1^* +  (2\delta^2 -\delta \alpha_1)A_2 + \overline{(2\delta^2-\delta\alpha_1)} A_2^* \\
   &=  \begin{pmatrix} I & \delta P_{12} +\overline{\delta}P_{21}^* \\ \overline{\delta}P_{12}^* + \delta P_{21} &
    \delta \alpha_3(\Pu^*\Pu+\Pd\Pd^*) + \overline{\delta \alpha_3} (\Pu^*\Pu+\Pd\Pd^*) -2\delta^2 Q - 2 (\overline{\delta})^2 Q^* \end{pmatrix} \\
  & =  \sY^* \sY,
\end{split}
\]
 where
\[
 \sY = \begin{pmatrix} \varphi I & \varphi  \big ( \delta P_{12} + \overline{\delta} P_{21}^* \big ) \\ 0 & I \end{pmatrix}.
\]
 Indeed, the only entry of this equality that is not immediate occurs in the $(2,2)$ entry. Since $\varphi$ is unimodular, $|\delta|^2 = \alpha_3\delta + \overline{\delta \alpha_3}$ and thus the  $(2,2)$ entry of $\sY^*\sY$ is
\[
\begin{split}
  I + \big ( \delta P_{12} + \overline{\delta} P_{21}^*)^* \big ( \delta P_{12} + \overline{\delta} P_{21}^*)
 & =  I + |\delta|^2 \big ( P_{12}^* P_{12} + P_{21} P_{21}^*) + \delta^2 P_{21} P_{12}  +\overline{\delta}^2 P_{12}^* P_{21}^* \\
  = I + \delta \alpha_3(\Pu^*\Pu+\Pd\Pd^*) + & \overline{\delta \alpha_3} (\Pu^*\Pu+\Pd\Pd^*) -2\delta^2 Q - 2 (\overline{\delta})^2 Q^*,
\end{split}
\]
 where $P_{21} P_{12} = -2Q$ was also used. 

 Next, let $B=V_\gamma A$, where $\gamma =\varphi^2$. For notational ease let $Y= \delta P_{12} + \overline{\delta} P_{21}^*$ and verify 
\[
 \begin{split}
 \varphi [P_{21}Y + Y^*P_{12}] +P_{22} 
 & =  \varphi [ \delta (P_{21}P_{12} + P_{21}P_{12}) + \delta^*(P_{12}P_{12}^*+P_{21} P_{21}^*)] +P_{22} \\
 & =  \varphi [-4\delta Q + (1-\overline{\varphi})\alpha_3 (P_{12}P_{12}^*+P_{21} P_{21}^*)] +P_{22} \\
 & =  -4\varphi\delta Q + (\varphi-1) (P_{22} -\alpha_1 Q) +P_{22}\\
 & =  ((1-\varphi)\alpha_1 - 4\varphi \delta) Q +\varphi P_{22} \\
 & =  \eta Q + \varphi P_{22}.
 \end{split}
\]
Hence,
\[
 \begin{split}
 \sY^* B_1 \sY& =  \begin{pmatrix} 0 &  \varphi P_{12} \\ P_{21} &  \varphi [P_{21}Y + Y^*P_{12}] +P_{22} \end{pmatrix}
   = \varphi A_1 + \eta A_2.
\end{split}
\]
 Likewise,
\[
 \sY^* B_2 \sY = \sY^* A_2 \sY = A_2.
\]
 It follows that
\[
 L_A(\rho(x)) = \sY^* L_B(x) \sY
\]
 and thus, as $\sY$ is invertible, $\rho=\rho_\varphi$ is a bianalytic affine linear map from $\cD_B$ to $\cD_A$. Thus, $\rho_\varphi \circ p_\gamma$ is a polynomial automorphism of $\cD_A$.  Finally, since
\[
 \rho_\varphi \circ p_\gamma  (x) = s_\varphi(x),
\]
 the proof of the proposition is complete. 
\end{proof}

The next objective is to establish a converse of Proposition \ref{prop:VA} under some mild additional assumptions on $P_{ij}$ and $Q$.  As a corollary, we produce examples of tuples $A$ and $B$ such that $\cD_A$ and $\cD_B$ are polynomially, but not linearly, bianalytic.

\begin{theorem}
 \label{thm:PQ}
Suppose $\{\Pu^*\Pu, \Pd\Pd^*\}$ is linearly independent and $\cD_Q$ is bounded. 
In this case $\cD_A$ is bounded.

Suppose further that $A$ is eig-generic and $*$-generic and either  $B$ is eig-generic or has size $4$ (the same size as $A$). 
   \begin{enumerate}[\rm (1)]
     \item \label{it:bddpq}  If $p:\cD_A\to\cD_B$ is a polynomial bianalytic map with $p(x)=x+h(x)$, then there is a unimodular $\gamma$ such that, up to unitary equivalence, $B=V_\gamma A$ and 
\[ 
 p= (x_1,x_2 + 2(1-\gamma)x_1^2).
\]
\setcounter{counterPQ}{\theenumi}
\een
Now suppose further that $\{Q,Q^*,\Pu^*\Pu,\Pd\Pd^*\}$ is linearly independent, there is a $c\neq 0$ so that $\Pd^*+c\Pu$
	is not invertible but $\Pd-c\Pu$ is invertible. %

\ben[\rm (1)]
 \setcounter{enumi}{\thecounterPQ}
    \item \label{it:lin indep case}
  If $\{Q,\Po,\Pu^*\Pu,\Pd\Pd^*\}$ is linearly independent, then $\cD_A$ has no non-trivial polynomial automorphisms:  if 
		$q:\cD_A\to \cD_A$ is a bianalytic polynomial, then $q(x)=x$. 
    \item \label{it:lin dep case}
		If $\Po = \alpha_1Q+\alpha_2Q^*+\alpha_3\Pu^*\Pu+\alpha_4\Pd\Pd^*$, then either
		\begin{enumerate}[\rm (a)]
			\item  $\alpha_2\neq0$ and conclusion of item \eqref{it:lin indep case} holds; or
			\item $\alpha_2=0$ in which case $\alpha_3=\alpha_4$ and a polynomial  automorphism $s$ of $\cD_A$ must have the form
	\[
 \begin{split}
	  s=s_\varphi& =  (\varphi x_1,\, -(1-\varphi)\left(4\overline{\alpha_3}\varphi-\alpha_1\right)x_1+x_2 +2(1-\varphi^2)x_1^2)\\
   &  + (\overline{\alpha_3}(1-\varphi), \, -\overline{\alpha_3}(1-\varphi)\left(2\overline{\alpha_3}(1-\varphi)+\alpha_1\right))
\end{split}
\]
for some unimodular $\varphi$.
\een
\een
\end{theorem}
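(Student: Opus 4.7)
The plan is to apply Theorem \ref{thm:shotinthedark} (via Corollary \ref{cor:main}) and then carry out a careful block-by-block analysis of the algebraic identity \eqref{eq:AZA} tailored to the block form \eqref{eq:thisisA}. Boundedness of $\cD_A$ reduces to that of $\cD_A(1)\subset\C^2$: the Schur complement of $L_A(c_1,c_2)\succeq 0$ against its $(1,1)$-block produces a $|c_1|^2(\Pd\Pd^*+\Pu^*\Pu)$ term competing against terms linear in $c_1,c_2$, which forces $|c_1|$ to be bounded, and boundedness of $\cD_Q$ then bounds $|c_2|$. Since $A$ is eig-generic and $*$-generic and $B$ is eig-generic or has the same size $4$ as $A$, Corollary \ref{cor:main} gives $d=e=4$ and Theorem \ref{thm:shotinthedark} produces a $4\times 4$ unitary $C$ and a unitary $\fU$ with $B=\fU^*CA\fU$ such that the structure matrices $\Xi$ from \eqref{eq:AZA} determine $p$ via $p(x)=x(I-\Lambda_\Xi(x))^{-1}$.

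For item \eqref{it:bddpq}, write $C=\bem C_{11}&C_{12}\\ C_{21}&C_{22}\eem$ in $2\times 2$ blocks and process the four equations $A_k(C-I)A_j\in\Span\{A_1,A_2\}$ in turn. The case $(k,j)=(2,2)$ is cleanest: the product has its sole non-zero block $Q(C_{22}-I)Q$ in position $(2,2)$, and membership in $\Span\{A_1,A_2\}$ combined with invertibility of $\Pu,\Pd$ (which rules out any $A_1$ contribution) gives $Q(C_{22}-I)Q=\mu Q$ for some scalar $\mu$, hence $C_{22}=\gamma I$ with $\gamma:=1+\mu$. The cases $(k,j)\in\{(1,2),(2,1)\}$ next annihilate the off-diagonal blocks of $C$: the $(1,1)$-block of the output must vanish, and the resulting linear relation between $\Pu^*\Pu$ and $\Pd\Pd^*$ forces $C_{12}=0=C_{21}$ by the assumed independence of $\{\Pu^*\Pu,\Pd\Pd^*\}$. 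Unitarity of $C$ then makes $C_{11}$ unitary and $|\gamma|=1$; evaluating $(k,j)=(1,1)$ and matching against \eqref{eq:VA} pins $C_{11}=\gamma I$, so $C=V_\gamma$ and $\Xi$ is as in Proposition \ref{prop:VA}. This yields the stated formula for $p$ and $B=\fU^*V_\gamma A\fU$. The principal work is this bookkeeping, with linear independence of $\{\Pu^*\Pu,\Pd\Pd^*\}$ as the recurring algebraic input.

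For items \eqref{it:lin indep case} and \eqref{it:lin dep case}, set $B=A$ and use item \eqref{it:bddpq} to reduce a general polynomial automorphism of $\cD_A$ to a composition $\rho\circ p_\gamma$ with $\rho$ an affine bianalytic map $\cD_{V_\gamma A}\to\cD_A$. Existence of such $\rho$ is a linear problem: writing $L_A(\rho(x))=\mathcal Y^* L_{V_\gamma A}(x)\mathcal Y$ and expanding the $(2,2)$-slot in the basis $\{Q,Q^*,\Pu^*\Pu,\Pd\Pd^*\}$, independence of $\Po$ from $\{Q,Q^*,\Pu^*\Pu,\Pd\Pd^*\}$ (which holds in item \eqref{it:lin indep case}, and also in item \eqref{it:lin dep case}(a) when $\alpha_2\ne 0$) forces $\gamma=1$ and $\mathcal Y=I$; the hypothesis that $\Pd^*+c\Pu$ is non-invertible for some unique $c\ne 0$ while $\Pd-c\Pu$ is invertible then rigidifies the off-diagonal blocks, excluding residual affine freedom and giving item \eqref{it:lin indep case}. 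Under the dependence \eqref{it:lin dep case} with $\alpha_2=0$, the same matching yields $\alpha_3=\alpha_4$ and leaves a one-parameter family of solutions indexed by a unimodular $\varphi$; this family is realized by composing $p_\gamma$ (with $\gamma=\varphi^2$) with the explicit affine automorphism $\rho_\varphi$ constructed in the second half of Proposition \ref{prop:VA}, reproducing exactly $s_\varphi$. The main obstacle is carrying out this coefficient-matching argument and showing the single-parameter freedom $\varphi$ exhausts all automorphisms, which hinges on the non-invertibility hypothesis on $\Pd^*+c\Pu$ ruling out further affine rescalings.
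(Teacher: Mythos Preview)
Your overall strategy matches the paper's: invoke Corollary~\ref{cor:main}/Theorem~\ref{thm:shotinthedark} to reduce to the module relation \eqref{eq:AZA}, then exploit the block form \eqref{eq:thisisA}. But the block analysis you sketch for item~\eqref{it:bddpq} contains a real error, and for items~\eqref{it:lin indep case}--\eqref{it:lin dep case} you misidentify where the $c$ hypothesis enters.

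\textbf{Item \eqref{it:bddpq}.} Your $(k,j)=(2,2)$ step does not give $C_{22}=\gamma I$. From $A_2(C-I)A_2\in\Span\{A_1,A_2\}$ one gets $Q(C_{22}-I)Q=\mu Q$, hence $C_{22}-I=\mu Q^{-1}$, which is \emph{not} scalar for general invertible $Q$. The paper instead starts with $A_1(V-I)A_2$: since any element of $\Span\{A_1,A_2\}$ with zero $(2,1)$-block has zero $A_1$-component, the $(1,2)$-block $\Pu(V_{22}-I)Q$ must vanish, forcing $V_{22}=I$ directly. Unitarity of $V$ then kills $V_{12},V_{21}$, and only afterwards does $A_1(V-I)A_1$ pin $V_{11}$ to a scalar via $\Pd(V_{11}-I)\Pu=\lambda Q=-\tfrac{\lambda}{2}\Pd\Pu$. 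Also, independence of $\{\Pu^*\Pu,\Pd\Pd^*\}$ plays no role in this step; in the paper it is used only to deduce independence of $\{\Pu,\Pd^*\}$ for the boundedness argument.

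\textbf{Items \eqref{it:lin indep case}--\eqref{it:lin dep case}.} Your reduction to an affine map $\rho:\cD_{V_\gamma A}\to\cD_A$ is correct in spirit (this is the paper's $\ell^{-1}$ from Proposition~\ref{prop:dF}), but the analysis is much more involved than ``expanding the $(2,2)$-slot in the basis.'' The paper works with $\sH=\sV\calH$ (where $\calH^2=L_A(q(0))$) and the block equations~\eqref{eq:lots}, which constrain $Y=\sH^{-1}$ rather than $\rho$ directly. Crucially, the hypothesis that $\Pd^*+c\Pu$ is singular while $\Pd-c\Pu$ is not does \emph{not} ``rigidify the off-diagonal blocks.'' It is used at one specific juncture: after deriving $Z^*(\Pu\pm c\Pd^*)^*(\Pu\pm c\Pd^*)Z=(\Pu\mp c\Pd^*)^*(\Pu\mp c\Pd^*)$ in the case $\mu_2<0$ (equation~\eqref{eq:pmc}), the asymmetric invertibility hypothesis produces a contradiction, eliminating that sign. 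The independence of $\{Q,Q^*,\Pu^*\Pu,\Pd\Pd^*\}$ is then what forces $Y_{22}$ to be scalar, and independence of $\{Q,\Po,\Pu^*\Pu,\Pd\Pd^*\}$ enters only through equation~\eqref{eq:P lin com} to force $b_1=0$. Your sketch conflates these distinct roles and omits the substantial computation needed to reach $s_\varphi$.
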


\begin{remark}\rm
 Of course the polynomial automorphisms of $\cD_A$ form a group under composition. In fact, as is straightforward to verify, $s_\varphi \circ s_\psi= s_{\varphi \psi}$. Further, combining items \eqref{it:lin dep case} and \eqref{it:bddpq} of Theorem \ref{thm:PQ}, produces a parameterization of all bianalytic polynomials  $\cD_A$ to $\cD_B$ (under the prevailing assumptions on $A$ and $B$).
 \end{remark}

\begin{example}\rm
As a concrete example, choose
\[
 Q=\begin{pmatrix} 0& 2 \\ \frac12 & 0 \end{pmatrix}.
\]
We note that $xQ+\overline{x}Q$ has both positive and negative eigenvalues for $x\neq 0$, so $\cD_Q$ is bounded. Let
\[
 \Pu = \begin{pmatrix} 1 & 1\\1 & 0\end{pmatrix}, \ \ \Pd = \begin{pmatrix} 2 & -2\\0 & 1\end{pmatrix}, \ \ \Po = I_2,
\]
and writing $A$ as was done above we claim $A$, as described in equation \eqref{eq:thisisA} is eig-generic. Furthermore $\{Q,Q^*,\Pu^*\Pu,\Pd\Pd^*\}$ and 
$\{Q,\Po,\Pu^*\Pu,\Pd\Pd^*\}$ are linearly independent, so Theorem \ref{thm:PQ} applies, thus 
$p(x) = (x_1,x_2+4x_1^2)$ is the unique bianalytic map between $\cD_A$ and $\cD_B$, where $B=V_{-1}A$
and $V_{-1}$ is defined by equation \eqref{eq:thisisVg}. In particular, $\cD_A$ and $\cD_B$ are bounded and polynomially
equivalent, but they are not affine linearly equivalent.

Alternatively, let 
\[
	\Po = \begin{pmatrix}
					10 & -1\\ -1 & 2
				\end{pmatrix} = \Pd\Pd^*+\Pu^*\Pu,
\]
then we have a form for $q(x)$ and a class of affine linear automorphisms of $\cD_A$.

Finally, letting $\Po = 0$, we get our family of automorphisms of $\cD_A$ parameterized by the unimodular complex numbers.
\end{example}

\subsection{The proof of Theorem \ref{thm:PQ}}
Before turning to the proof of Theorem \ref{thm:PQ} proper, we record a few  preliminary results.

\begin{prop}
 \label{prop:1 does it}
	Let $L$ be a linear pencil. If $\cD_L$ is bounded, then  $\cD_L(1)$ 
	is bounded. Conversely, if $\cD_L$ is not bounded, then there exists $\alpha\in \C^g$ 
	such that $t\alpha \in\cD_L(1)$ for all $t\in\R_{>0}$.
\end{prop}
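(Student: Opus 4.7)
The plan is to split the proposition into its two halves. The first half (bounded $\cD_L$ forces bounded $\cD_L(1)$) is immediate from the obvious inclusion $\cD_L(1)\subseteq\cD_L$, so all the substance is in the converse direction: I need to show that if $\cD_L$ is unbounded then some $\alpha\neq 0$ in $\C^g$ already satisfies $t\alpha\in\cD_L(1)$ for every $t>0$. I will organize the argument in two steps: first push unboundedness down to level one, then extract a ray inside $\cD_L(1)$.

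The first step is a numerical-range/compression argument. For any $Y\in M_n(\C)^g$ with $L_A(Y)\succeq 0$ and any unit vector $v\in\C^n$, compression by $I_d\otimes v$ yields
\[
(I_d\otimes v)^* L_A(Y)(I_d\otimes v)= I_d+\sum_j A_j\,(v^*Y_jv)+\sum_j A_j^*\,(v^*Y_jv)^* = L_A(\alpha)\succeq 0,
\]
where $\alpha_j:=v^*Y_jv$. Hence the joint numerical range of $Y$ lies in $\cD_L(1)$. Assuming $\cD_L(1)\subseteq\{\|z\|\le R\}$, each coordinate $Y_j$ has numerical radius at most $R$, so $\|Y_j\|\le 2R$ by the standard inequality $\|T\|\le 2\,w(T)$. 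Thus $\cD_L$ is bounded; contrapositively, if $\cD_L$ is unbounded then $\cD_L(1)$ is unbounded.

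The second step extracts a ray from the level-one slice. The set $\cD_L(1)\subseteq\C^g\cong\R^{2g}$ is convex and closed (preimage of the PSD cone under a continuous affine map), contains $0$, and is now assumed unbounded. In finite-dimensional real vector spaces a closed convex set is unbounded if and only if its recession cone is nontrivial. For $\cD_L(1)$ the recession cone coincides with $\{\alpha\in\C^g:\Lambda_A(\alpha)+\Lambda_A(\alpha)^*\succeq 0\}$, since $\alpha$ is a recession direction iff $L_A(z+t\alpha)\succeq 0$ for all $z\in\cD_L(1)$ and $t\ge 0$, and this reduces (via $z=0$ and letting $t\to\infty$) to nonnegativity of the homogeneous part. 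Pick any nonzero $\alpha$ in this cone; then
\[
L_A(t\alpha)= I+t\bigl(\Lambda_A(\alpha)+\Lambda_A(\alpha)^*\bigr)\succeq I\succeq 0
\]
for every $t>0$, giving the required ray.

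There is no real obstacle here; the only point that requires care is the identification of the recession cone of $\cD_L(1)$ with the homogeneous positivity cone of $\Lambda_A+\Lambda_A^*$, but this is a routine consequence of the decomposition $L_A=I+\Lambda_A+\Lambda_A^*$ together with the standard recession-cone description. The numerical-range bound $\|T\|\le 2w(T)$ is a classical fact and needs no reproof.
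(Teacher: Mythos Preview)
Your proof is correct. The paper does not give a self-contained argument here; it simply cites \cite[Proposition 2.4]{HKM} and adds the remark that over $\C$ the identity $\langle T\gamma,\gamma\rangle=0$ for all $\gamma$ forces $T=0$ without a self-adjointness hypothesis. That polarization remark is exactly the complex-analysis fact underlying your Step~1: the inequality $\|T\|\le 2\,w(T)$ holds for arbitrary complex matrices, so bounding the joint numerical range of a tuple $Y\in\cD_L$ by $\cD_L(1)$ bounds the operator norms of the $Y_j$. In this sense your compression argument is the same idea the paper is pointing to, just made explicit.

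Your Step~2, via the recession cone of the closed convex set $\cD_L(1)$, is a clean way to extract the ray and is not spelled out in the paper's one-line citation. The identification of the recession cone with $\{\alpha:\Lambda_A(\alpha)+\Lambda_A(\alpha)^*\succeq 0\}$ is correct (one containment from $z=0$ and $t\to\infty$, the other from $L_A(z+t\alpha)=L_A(z)+t(\Lambda_A(\alpha)+\Lambda_A(\alpha)^*)$), and the standard fact that a nonempty closed convex unbounded set in a finite-dimensional space has nontrivial recession cone finishes it. One small point worth stating explicitly: you are using that $L$ is \emph{monic}, so that $0\in\cD_L(1)$ and $L_A=I+\Lambda_A+\Lambda_A^*$; this is implicit in the paper's notation but is what makes both the base point $z=0$ and the affine decomposition available.
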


\begin{proof}
 This result is the complex version of the full strength of  \cite[Proposition 2.4]{HKM}. Unfortunately, the statement of the result there is weaker than what is actually proved. Simply note that over the complex numbers, if $T$ is a matrix and  $\langle T\gamma,\gamma\rangle =0$ for all vectors $\gamma$, then, by polarization, $\langle T\gamma,\delta\rangle =0$ for all vectors $\gamma,\delta$ and hence $T=0$. (By comparison, over the real numbers the same conclusion holds provided $T$ is self-adjoint.)  
\end{proof}

\begin{cor}
	\label{cor:posneg eigs}
	Let $L$ be a monic linear pencil with truly linear part $\Lambda.$ Thus, $L(x) = I+\Lambda(x)+\Lambda(x)^*$. 
	The domain $\cD_{L}$ is bounded if and only if $\Lambda(\alpha)$ has both positive 
	and negative eigenvalues for each  $\alpha\in\C^g\setminus\{0\}$. 
\end{cor}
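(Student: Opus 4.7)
The plan is to reduce to Proposition \ref{prop:1 does it}, which already does most of the work: $\cD_L$ is bounded iff its scalar level $\cD_L(1)\subset\C^g$ is bounded, and unboundedness of $\cD_L$ is witnessed by an open ray $\{t\alpha : t>0\}\subset\cD_L(1)$ for some nonzero $\alpha\in\C^g$. Interpreting the statement in the standard way for a self-adjoint pencil, the matrix in question is the truly linear Hermitian part $\Lambda(\alpha)+\Lambda(\alpha)^*$ evaluated at a scalar point. The proof is then a direct translation of ``containing a ray'' into a semidefiniteness condition on $\Lambda(\alpha)+\Lambda(\alpha)^*$.

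For the ``$\Leftarrow$'' direction I would argue by contrapositive. Assume $\cD_L$ is unbounded. Proposition \ref{prop:1 does it} produces a nonzero $\alpha\in\C^g$ with $L(t\alpha)=I+t\bigl(\Lambda(\alpha)+\Lambda(\alpha)^*\bigr)\succeq 0$ for every $t>0$. Dividing by $t$ and letting $t\to\infty$ yields $\Lambda(\alpha)+\Lambda(\alpha)^*\succeq 0$, so this matrix has no negative eigenvalue, violating the hypothesis on $\alpha$.

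For the ``$\Rightarrow$'' direction, again by contrapositive: suppose there is $\alpha\in\C^g\setminus\{0\}$ for which $\Lambda(\alpha)+\Lambda(\alpha)^*$ does not have both a positive and a negative eigenvalue. Then this Hermitian matrix is either $\succeq 0$ or $\preceq 0$. In the first case, $L(t\alpha)=I+t\bigl(\Lambda(\alpha)+\Lambda(\alpha)^*\bigr)\succeq I\succ 0$ for all $t>0$, so the ray $\{t\alpha : t>0\}$ lies in $\cD_L(1)$ and $\cD_L(1)$ is unbounded; the second case reduces to the first after replacing $\alpha$ by $-\alpha$ (since $\Lambda$ is $\C$-linear, $\Lambda(-\alpha)+\Lambda(-\alpha)^*=-\bigl(\Lambda(\alpha)+\Lambda(\alpha)^*\bigr)\succeq 0$). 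Invoking Proposition \ref{prop:1 does it} once more shows $\cD_L$ is unbounded.

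There is no real obstacle here beyond a small notational point: $\Lambda(\alpha)$ itself is not Hermitian (the $A_j$ are arbitrary $d\times d$ complex matrices), so the phrase ``$\Lambda(\alpha)$ has both positive and negative eigenvalues'' must be read as a statement about the Hermitian matrix $\Lambda(\alpha)+\Lambda(\alpha)^*$, i.e.\ the truly linear part of $L$ evaluated at the scalar point $\alpha$. Once that convention is fixed, the two implications above are immediate from Proposition \ref{prop:1 does it} and a one-line $t\to\infty$ argument.
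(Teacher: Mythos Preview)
Your proposal is correct and is exactly the argument the paper has in mind: the corollary is stated without proof, immediately following Proposition~\ref{prop:1 does it}, and the subsequent application in the proof of Theorem~\ref{thm:PQ} confirms your reading that ``$\Lambda(\alpha)$ has both positive and negative eigenvalues'' refers to the Hermitian matrix $\Lambda(\alpha)+\Lambda(\alpha)^*$. One cosmetic point: Proposition~\ref{prop:1 does it} as stated does not explicitly say $\alpha\neq 0$, but this is clearly intended (and forced, since $\alpha=0$ would not witness unboundedness); you handle this implicitly and correctly.
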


\begin{proof}[Proof of Theorem~\ref{thm:PQ}]
     First observe that independence of $\{\Pu^*\Pu,\Pd\Pd^*\}$ implies independence of $\{\Pu,\Pd^*\}$ 
		since $\Pu =t \Pd^*$ implies $\Pu^*\Pu= |t|^2 \Pd\Pd^*$. 
Let
	\[
          Z=xA_1+\overline{x}A_1^* + yA_2+\overline{y}A_2^* = \begin{pmatrix}0 & M \\ M^* & N\end{pmatrix}.
	\]
 We claim $Z$ has both positive and negative eigenvalues, provided not both $x$ and $y$ are $0$. 

In the case $M\ne 0$, the matrix $Z$ has both positive and negative eigenvalues. Note $M=x P_{12} +\overline{x} P_{21}^*$ and by independence $M=0$ if and only if $x=0$.  In the case $x=0$, (and thus $y\ne 0$), $N= yQ+(yQ)^*.$  Since, by hypothesis,  $\cD_Q$ is bounded, Corollary  \ref{cor:posneg eigs} implies $N$ has both positive and negative eigenvalues. Therefore, once again by Corollary \ref{cor:posneg eigs}, $\cD_A$ is a bounded domain.

To prove item \eqref{it:bddpq}, observe the hypotheses (and they imply $\cD_A$ is bounded) allow the application of  Corollary \ref{cor:main}. 
In particular, there exists a unitary $V$ satisfying  equation \eqref{eq:AVIA}  and, in terms of the tuple $\Xi$ of structure matrices,
\[
 p(x) =x(I-\Lambda_\Xi(x))^{-1}.
\]
   Write $V=(V_{jk})$ as a $2\times 2$ matrix to match the $2\times 2$ block structure of $A$. Straightforward computation gives,
\[
 (V-I)A_2 = \begin{pmatrix} 0 & V_{12} Q \\ 0 & (V_{22}-I)Q\end{pmatrix}.
\]
 Hence
\[
 A_1 (V-I)A_2 = \begin{pmatrix} 0 & \Pu(V_{22}-I)Q \\ 0 & \Pd V_{21}Q \end{pmatrix}.
\]
Since $P_{12}$ and $Q$ are invertible and, by equation \eqref{eq:AVIA},  $A_1(V-I)A_2$ lies in the span of $\{A_1,A_2\}$, it follows that $V_{22}-I=0$. Since $V$ is unitary,  $VV^*=I$. Thus
\[
	\begin{pmatrix} 
		V_{11} & V_{12} \\ 
		V_{21} & I 
	\end{pmatrix}
	\begin{pmatrix} 
		V_{11}^* & V_{21}^* \\ 
		V_{12}^* & I 
	\end{pmatrix}
	=\begin{pmatrix} 
		V_{11}V_{11}^*+V_{12}V_{12}^* & V_{11}V_{21}^*+V_{12} \\ 
		V_{21}V_{11}^*+V_{12}^* & V_{21}V_{21}^*+I 
	\end{pmatrix}
	=	\begin{pmatrix} 
		I & 0 \\ 
		0 & I 
	\end{pmatrix}.
\]
It follows that $V_{21} = 0$ and thus $V_{12}=0$ as well. Finally,
\[
 A_1 (V-I)A_1 = \begin{pmatrix} 0 & 0 \\ 0 & \Pd(V_{11}-I)\Pu \end{pmatrix}.
\]
Hence equation \eqref{eq:AVIA} holds in this case ($j=1=k$) if and only if there is a $\lambda$ such that $\Pd(V_{11}-I)\Pu=\lambda Q$
(note that $A_1(V-I)A_1 = \delta A_1+\lambda A_2$, but $\delta = 0$). 
 Since $\Pd\Pu = -2Q$, it follows that
\[
 V_{11}-I = \lambda \Pd^{-1} Q \Pu^{-1}=-\frac{1}{2}\lambda I.
\]
Thus  $V_{11} = (1-\frac12 \lambda)I$ and $|1-\frac12 \lambda| =1$.  Hence,
\[
 V= V_\gamma = \begin{pmatrix} \gamma I & 0 \\ 0& I \end{pmatrix}
\]
 for some unimodular $\gamma$.  The tuple $\Xi$ of structure matrices and polynomial $p$ are thus described in Proposition \ref{prop:VA}.

Turning to the second part of the theorem, 
suppose $q:\cD_A\to\cD_A$ is a polynomial automorphism. Let $b=q(0)$  and let
	$\cH$ denote the positive square root of $L_A(b)$. By Proposition \ref{prop:dF},
	there exist $F$ and a bianalytic polynomial  $\tilde{q}:\cD_A\to\cD_F$ with
        $\tilde{q}(x) = (-b+q(x))q^{\prime}(0)^{-1}$, such that
	$\tilde{q}(0) =0 $, $\tilde{q}'(0) = I$,
	\[
		A_j= (q'(0)^{-1})_{j,1}\cH F_1\cH +\dots+(q'(0)^{-1})_{j,g}\cH F_g\cH
	\]
	and
	\[
		F_j= q'(0)_{j,1}\cH^{-1}A_1\cH^{-1} +\dots+q'(0)_{j,g}\cH^{-1}A_g\cH^{-1}.
	\]

	Now $F$ is the same size as $A$ and $A$ is eig-generic and $A^*$ is $*$-generic,  hence we can apply item \eqref{it:bddpq} to the bianalytic polynomial  $\tilde{q}:\cD_A\to \cD_F$.  In particular, there is a unimodular $\gamma$ such that $F=V_\gamma A$ and $\tilde{q}=(x_1, x_2+2(1-\gamma)x_1^2)$.  By Proposition \ref{prop:dF},
\[
	A_i = \sum_{j=1}^g \left(q^\prime(0)^{-1}\right)_{i,j}\cH F_j \cH.
\]
Since $F_j = \sV^* V A_j \sV$, 
\[
	\sH^{-*}A_i \sH^{-1} = \sum_{j=1}^g \left(q^\prime(0)^{-1}\right)_{i,j}V A_j,
\]
where $\sH=\sV \cH$. Setting 
\[
	q^\prime(0)^{-1} = 
	\begin{pmatrix}
		\lambda_1 & \mu_1\\
		\lambda_2 & \mu_2
	\end{pmatrix},
\]
gives
\begin{equation}
\label{eq:B1}
 \sH^{-*} A_1 \sH^{-1}  =  \sH^{-*} \begin{pmatrix} 0 & \Pu \\ \Pd & \Po \end{pmatrix}\sH^{-1}  =  \begin{pmatrix} 0 & \lambda_1 \gamma \Pu \\ \lambda_1 \Pd & \mu_1 Q+\lambda_1\Po \end{pmatrix} 
\end{equation}
 and likewise
\begin{equation}
 \label{eq:B2}
 \sH^{-*} A_2 \sH^{-1} = \sH^{-*} \begin{pmatrix} 0 & 0\\ 0 & Q \end{pmatrix}\sH^{-1} = \begin{pmatrix} 0 & \lambda_2 \gamma \Pu \\ \lambda_2 \Pd & \mu_2 Q+\lambda_2\Po\end{pmatrix}. 
\end{equation}
By equation \eqref{eq:B1}, $\lambda_1\neq 0$ since $A_1$ is invertible. 

Let $Y=\sH^{-1}$ and write $Y=(Y_{j,k})$ in the obvious way as a $2\times 2$ matrix with $2\times 2$ block entries.  From equation \eqref{eq:B2}, 
\[
 \begin{pmatrix} Y_{21}^* Q Y_{21} &  Y_{21}^* Q Y_{12} \\ * & Y_{22}^* Q Y_{22} \end{pmatrix} = \begin{pmatrix} 0 & \lambda_2 \gamma \Pu \\ \lambda_2 \Pd & \mu_2 Q+\lambda_2\Po\end{pmatrix}. 
\]
 Thus, as $Q$ is invertible, $Y_{21}=0$ and therefore $\lambda_2=0$.  We also record $Y_{22}^* Q Y_{22} =\mu_2 Q$ or equivalently $Y_{22}^* \Pd\Pu Y_{22}=\mu_2 \Pd\Pu$.
 Turning to equation \eqref{eq:B1},
\[
 \begin{pmatrix} 0 & Y_{11}^* \Pu Y_{22} \\ Y_{22}^* \Pd Y_{11} & Y_{22}^* \Pd Y_{12} +  Y_{12}^* \Pu Y_{22}+Y_{22}^*\Po Y_{22} \end{pmatrix}
  = \begin{pmatrix} 0 & \lambda_1 \gamma P \\ \lambda_1 P & \mu_1 Q+\lambda_1\Po\end{pmatrix}. 
\]
 Hence,
\begin{equation}
 \label{eq:lots}
 \begin{split}
   \lambda_2& =  0\\
     Y_{22}^* \Pd\Pu Y_{22}& =  \mu_2 \Pd\Pu \\
    Y_{11}^* \Pu Y_{22}& =  \lambda_1 \gamma \Pu \\
     Y_{22}^* \Pd Y_{11}& =  \lambda_1 \Pd \\
    Y_{22}^* \Pd Y_{12} +  Y_{12}^* \Pu Y_{22}+Y_{22}^*\Po Y_{22}& =  \mu_1 Q+\lambda_1 \Po.
 \end{split}
\end{equation}
Taking determinants in the second of these equations gives $|\det(Y_{22})|^2 = \mu_2^2$ and therefore $\mu_2$ is real. Taking determinants in the third and fourth equations gives $\overline{\lambda_1^2} = \lambda_1^2 \gamma^2$. Thus,
\[
    \overline{\lambda_1} = \pm \lambda_1 \gamma. 
\]
In particular,
\begin{equation}
 \label{eq:lambda-gamma}
   |\overline{\lambda_1}|^2  = \pm \lambda_1^2  \gamma. 
\end{equation}

 Multiplying the third equation on the left by the fourth equation gives
\[
  Y_{22}^* \Pd Y_{11} Y_{11}^* \Pu Y_{22} = \lambda_1^2 \gamma \Pd\Pu. 
\]
 Using the second equation
\[
 Y_{22}^* \Pd Y_{11} Y_{11}^* \Pu Y_{22} = \frac{ \lambda_1^2 \gamma}{\mu_2} Y_{22}^*  \Pd\Pu Y_{22}.
\]
Since $Y_{22}^*\Pd$ and $\Pu Y_{22}$ are invertible,
\[
 Y_{11} Y_{11}^* = \frac{\lambda_1^2 \gamma}{\mu_2} >0.
\]
 In particular, $Y_{11}$ is a multiple of a unitary.

 Next multiply the fourth equation by its adjoint on the right to obtain
\begin{equation*}
  |\lambda_1|^2 \Pd\Pd^* =  Y_{22}^* \Pd Y_{11} (Y_{22}^* \Pd Y_{11})^* =\frac{\lambda_1^2 \gamma}{\mu_2} Y_{22}^* \Pd\Pd^* Y_{22}.
\end{equation*}
 Multiplying the third equation by its adjoint on the left gives (as $|\gamma|=1$)
\begin{equation*}
  |\lambda_1|^2 \Pu^*\Pu =   (Y_{11}^* \Pu Y_{22})^*  Y_{11}^* \Pu Y_{22} = \frac{\lambda_1^2 \gamma}{\mu_2} Y_{22}^* \Pu^* \Pu Y_{22}.
\end{equation*}

In view of equation \eqref{eq:lambda-gamma}, if $\mu_2>0$, then $\lambda_1^2\gamma =|\lambda_1|^2$ and if $\mu_2<0$, then $-\lambda_1^2\gamma=|\lambda_1|^2$. Hence,  with $\abs{\kappa}^2 = |\mu_2|$ and $Z=\kappa Y_{22}$ either $\mu_2>0$ and 
\begin{equation*}
 \begin{aligned}
  Z^* \Pd\Pu Z & =   \Pd\Pu   \\
  Z^* \Pu^*\Pd^* Z& = \Pu^*\Pd^* \\
\end{aligned} \qquad 
\begin{aligned}
  Z^* \Pd\Pd^*  Z& =  \Pd\Pd^*  \\
  Z^* \Pu^*\Pu  Z& =  \Pu^* \Pu \\
 \end{aligned}
\end{equation*}
 or $\mu_2<0$ and 
\begin{equation*}
 \begin{aligned}
  Z^* \Pd\Pu Z& =  -\Pd\Pu \\
  Z^* \Pu^*\Pd^* Z& =  -\Pu^*\Pd^* \\
\end{aligned} \qquad
  \begin{aligned}
  Z^* \Pd\Pd^*  Z& =  \Pd\Pd^* \\
  Z^* \Pu^*\Pu  Z& =  \Pu^* \Pu. \\
  \end{aligned}
\end{equation*}
We will argue that this second case does not occur. Recall we are assuming $\Pd$ and $\Pu$ are both invertible.
 This implies $Z$ is invertible.
Observe that, assuming this second set of equations, for complex numbers $c$,
\begin{equation}
 \label{eq:pmc}
 Z^*((\Pu +c\Pd^*)^*(\Pu+c\Pd^*))Z = (\Pu - c\Pd^*)^*(\Pu-c\Pd^*).
\end{equation}

 By assumption there is a $c\ne 0$ such that $\Pd^* + c\Pu$ is not invertible but $\Pd^* - c\Pu$ is invertible, leading  to the contradiction that the left hand side of equation \eqref{eq:pmc} is invertible, but the right hand side is not.
It follows that $\mu_2>0$ and $\lambda_1^2 \gamma = |\lambda_1|^2.$

 Assuming  $\{Q,Q^*, \Pd\Pd^*, \Pu^* \Pu \}$ is linearly independent, this set  spans the $2\times 2$ matrices. Hence
(using the fact that $A^*XA=X$ for all $2\times 2$ matrices $X$
implies $A$ is a multiple of the identity) %
 $Y_{22}=\kappa I$ for some $\kappa$ with $\abs{\kappa}^2=\mu_2$. Hence  many of the identities in equation \eqref{eq:lots} now imply that $Y_{11}$ is also a multiple of the identity. For instance, using the third equality, 
\[
 \lambda_1 \Pd =   Y_{22}^* \Pd Y_{11} = \overline{\kappa} \Pd Y_{11}
\]
 and hence $Y_{11} = \frac{\lambda_1}{\overline{\kappa}} I$.

  Thus, 
\[
 \sH^{-1}=Y =\begin{pmatrix} \frac{\lambda_1}{\overline{\kappa}}I & Y_{12} \\ 0 & \kappa I \end{pmatrix}
\]
 and consequently
\[
 \sV \calH= \sH =\begin{pmatrix} \frac{\overline{\kappa}}{\lambda_1}& - \frac{\overline{\kappa}}{\kappa\lambda_1} Y_{12} \\ 0 & \frac{1}{\kappa} \end{pmatrix}.
\]
 It follows that
\[
	\calH^2 = \sH^* \sH  = \begin{pmatrix} \frac{\abs{\kappa}^2}{\abs{\lambda_1}^2}I & -\frac{\bar{\kappa}}{\abs{\lambda_1}^2} Y_{12}  
		\\  -\frac{\kappa}{\abs{\lambda_1}^2}Y_{12}^* & \frac{1}{\abs{\kappa}^2}I + \frac{1}{\abs{\lambda_1}^2} Y_{12}^* Y_{12} \end{pmatrix}
\]
On the other hand,
\begin{align*}
	\calH^2 = L_A(b) &=(I+\sum b_j A_j +\sum (b_j A_j)^*) \\
	&= \begin{pmatrix} 
			I & b_1\Pu + \overline{b_1}\Pd^* 
			\\b_1 \Pd + \overline{b_1}\Pu^* & I+ b_2 Q + \overline{b_2} Q^* +b_1 \Po+\overline{b_1}\Po^*
		\end{pmatrix}.
\end{align*}
 It follows that,
\begin{equation}
 \label{eq:YP}
 \begin{split}
	\frac{\abs{\kappa}^2}{\abs{\lambda_1}^2}& =1\\
	-\frac{\bar{\kappa}}{\abs{\lambda_1}^2} Y_{12}& =   b_1\Pu + \overline{b_1}  \Pd^* \\
  \frac{1}{\abs{\kappa}^2}I + \frac{1}{\abs{\lambda_1}^2} Y_{12}^* Y_{12}& =  I + b_2 Q + \overline{b_2} Q^* +b_1 \Po+\overline{b_1}\Po^*.
\end{split}
\end{equation}
 Note that combining the first two of these equations gives,
\begin{equation}
 \label{eq:YP+}
   Y_{12} = - \kappa (b_1\Pu + \overline{b_1}  \Pd^*).
\end{equation}
Since $Y_{22}=\kappa I$, the last equality in equation \eqref{eq:lots} gives
\[
	\overline{\kappa}\Pd Y_{12} + \kappa Y_{12}^*\Pu + \abs{\kappa}^2\Po  = \mu_1 Q+\lambda_1 \Po.
\]
It follows, using the second equality in equation \eqref{eq:YP},
\[
\begin{split}
   \mu_1 Q+(\lambda_1-\abs{\kappa}^2) \Po
		=&-\abs{\lambda_1}^2\Pd(b_1\Pu + \overline{b_1}\Pd^*) - \abs{\lambda_1}^2(\overline{b_1}\Pu^*+b_1\Pd)\Pu\\
		=&- \abs{\lambda_1}^2\left(b_1\Pd\Pu + \overline{b_1}\Pd\Pd^* + \overline{b_1}\Pu^*\Pu+b_1\Pd\Pu\right).
 \end{split}
\]
Simplifying with $\Pd\Pu=-2Q$ and bringing to one side gives
\begin{equation}
 \label{eq:P lin com}
	0=(\mu_1+4b_1\abs{\lambda_1}^2)Q+\overline{b_1}\abs{\lambda_1}^2(\Pd\Pd^*+\Pu^*\Pu) + (\lambda_1-\abs{\kappa}^2)\Po.
\end{equation}

We now proceed to prove item \eqref{it:lin indep case}. Assuming  $\{Q,\Pu^*\Pu,\Pd\Pd^*,\Po\}$ 
is linearly independent, equation \eqref{eq:P lin com} and the fact that $\lambda_1\neq 0$ 
yields $b_1=0$. So $\mu_1=0$ and $\lambda_1 = \abs{\kappa}^2$. 
Furthermore, $\abs{\kappa} = \abs{\lambda_1}$, implies $\lambda_1 = \abs{\lambda_1}$. Hence $\lambda_1=1$, $\abs{\kappa}=1$ and $\gamma =1.$  It also  follows that $Y_{12} =0$ by the third equation in \eqref{eq:YP}. Furthermore, 
\[
	I = I + b_2Q+\overline{b_2}Q^*,
\]
so $b_2=0$, as $\{Q,Q^*\}$ is linearly independent. Hence $L_A(b)=I=\calH$ and 
\[
 \sV = \overline{\kappa}I_4.
\]
Finally, $Y_{12}=0$ also implies $\mu_2 = 1.$ Thus, $F=A$ and $V=V_\gamma =I$, $q(0)=0$ and finally, $q^\prime(0)= I$ too.  Hence, $q(x)=x$ and the  proof of item \eqref{it:lin indep case} is complete.

Moving on to item \eqref{it:lin dep case}, assume now
\[
	\Po = \alpha_1Q+\alpha_2Q^*+\alpha_3\Pu^*\Pu+\alpha_4\Pd\Pd^*.
\]
If $\alpha_2\neq 0$ then $\{Q,\Pu^*\Pu,\Pd\Pd^*,\Po\}$ must also be linearly independent, and hence
the conclusions of  item \eqref{it:lin indep case} hold.

To complete the proof of the theorem,  suppose $\alpha_2 =0$ and recall equation \eqref{eq:P lin com}, 
\begin{align*}
	0& =(\mu_1+4b_1\abs{\lambda_1}^2+\alpha_1(\lambda_1-\abs{\kappa}^2))Q\\
	&\phantom{=}+(\overline{b_1}\abs{\lambda_1}^2+\alpha_3(\lambda_1-\abs{\kappa}^2))\Pu^*\Pu
	+(\overline{b_1}\abs{\lambda_1}^2+\alpha_4(\lambda_1-\abs{\kappa}^2))\Pd\Pd^*.
\end{align*}
Since $\{Q,\Pu^*\Pu,\Pd\Pd^*\}$ is linearly independent, 
\begin{equation*}
 \begin{split}
	\mu_1+4b_1\abs{\lambda_1}^2+\alpha_1(\lambda_1-\abs{\lambda_1}^2)&=0\\
  \overline{b_1}\abs{\lambda_1}^2+\alpha_3(\lambda_1-\abs{\lambda_1}^2)&=0\\
	\overline{b_1}\abs{\lambda_1}^2+\alpha_4(\lambda_1-\abs{\lambda_1}^2)&=0.
\end{split}
\end{equation*}
It follows that $\alpha_3=\alpha_4$ and
\begin{equation}
 \label{eq:b1}
	b_1 = \frac{\overline{\alpha_3}(\lambda_1-1)}{\lambda_1}.
\end{equation}
Now, using equation \eqref{eq:YP+}  and looking at the third equation in equation  \eqref{eq:YP},
\[
\begin{split}
	&\frac{1}{\abs{\kappa}^2}I + \frac{\abs{\kappa}^2}{\abs{\lambda_1}^2}(\abs{b_1}^2\Pu^*\Pu+2b_1^2Q+2\overline{b_1}^2Q^*  +\abs{b_1}^2\Pd\Pd^*)\\
	& {}\qquad{} = I + b_2 Q + \overline{b_2} Q^* +b_1 \Po+\overline{b_1}\Po^*.
\end{split}
\]
Using $\Po = \alpha_1 Q+\alpha_3(\Pu^*\Pu+\Pd\Pd^*)$, 
\begin{equation}
 \label{eq:YComb}
 \begin{split}
	0=\left (1-\frac{1}{\abs{\kappa}^2}\right )I & +(b_2-2b_1^2+\alpha_1b_1)Q+(\overline{b_2}-2\overline{b_1}^2+\overline{\alpha_1}\overline{b_1})Q^*\\
		&+(b_1\alpha_3+\overline{b_1}\overline{\alpha_3}-\abs{b_1}^2)(\Pu^*\Pu+\Pd\Pd^*).
\end{split}
\end{equation}
Using equation \eqref{eq:b1}, 
\begin{align*}
	b_1\alpha_3+\overline{b_1}\overline{\alpha_3}-\abs{b_1}^2 
		&= \abs{\alpha_3}^2\left(\frac{\lambda_1-1}{\lambda_1}+\frac{\overline{\lambda_1}-1}{\overline{\lambda_1}}
		-\frac{(\lambda_1-1)(\overline{\lambda_1}-1)}{\lambda_1\overline{\lambda_1}}\right)\\
		&= \abs{\alpha_3}^2\left(1-\frac{1}{\abs{\lambda_1}^2}\right) = \abs{\alpha_3}^2\left(1-\frac{1}{\abs{\kappa}^2}\right).
\end{align*}
Let $z = (b_2-2b_1^2+\alpha_1b_1)$, solving for the $Q$ and $Q^*$ terms, equation \eqref{eq:YComb} becomes
\[
	zQ+\overline{z}Q^*
		=\left(1-\frac{1}{\abs{\kappa}^2}\right)\left(I+\abs{\alpha_3}^2(\Pu^*\Pu+\Pd\Pd^*)\right)
\]
Write $C = (1-\abs{\kappa}^{-2}),$ let $t\in\mathbb{R}$ with $tC>0$ and consider
\[
	L_Q(tz) = I+tzQ+t\overline{z}Q^* = (1+tC)I+\abs{\alpha_3}^2tC(\Pu^*\Pu+\Pd\Pd^*).
\]
But $\Pu^*\Pu,\Pd\Pd^*\succeq 0$, so $L_Q(tz)\succeq 0$ for all $t$
with $tC>0,$ contradicting the boundedness of  $\cD_Q.$ Hence both $z=0$ and $C(I+\abs{\alpha_3}^2(\Pu^*\Pu+\Pd\Pd^*))=0$.
So either $C=0$ or $I=-\abs{\alpha_3}^2(\Pu^*\Pu+\Pd\Pd^*)$. However $I\succeq0$ while 
$-\abs{\alpha_3}^2(\Pu^*\Pu+\Pd\Pd^*)\preceq 0$, hence this second equality never holds. 
Thus $C=0$.

It follows that $\abs{\kappa}^2=\abs{\lambda_1}^2=\mu_2=1$, so 
\begin{equation*}
	\begin{aligned}
		b_1 &= \overline{\alpha_3}(1-\overline{\lambda_1})\\
		b_2 &= \overline{\alpha_3}(1-\overline{\lambda_1})\left(2\overline{\alpha_3}(1-\overline{\lambda_1})-\alpha_1\right)\\
    \end{aligned} \qquad
    \begin{aligned}
		\mu_1 &= -\lambda_1(1-\overline{\lambda_1})\left(4\overline{\alpha_3}\overline{\lambda_1}+\alpha_1\right)\\
		Y_{12} &= \kappa(\overline{\alpha_3}(1-\overline{\lambda_1})P_{12}-\alpha_3(1-\lambda_1)P_{21}^*),
	\end{aligned}
\end{equation*}
and
\[
			F_1 = \calH^{-1}\left(\overline{\lambda_1}B_1
				+(1-\overline{\lambda_1})(4\overline{\alpha_3}\overline{\lambda_1}+\alpha_1)B_2\right)\calH^{-1}, 
				\ \  \ \ F_2 = \calH^{-1}B_2\calH^{-1}.
\]
Recall,
\[
	q(0) = (b_1,b_2), \ \ 
	q^\prime(0) = \begin{pmatrix} \lambda_1 & \mu_1\\ 0 & 1 \end{pmatrix}^{-1}
	=\begin{pmatrix} \overline{\lambda_1} & -\overline{\lambda_1}\mu_1\\ 0 & 1 \end{pmatrix},
\]
so plugging in;
\[
				q(0) =\begin{pmatrix}\overline{\alpha_3}\left(1-\overline{\lambda_1}\right),&
					\overline{\alpha_3}\left(1-\overline{\lambda_1}\right)\left(2\overline{\alpha_3}(1-\overline{\lambda_1})-\alpha_1\right)
				\end{pmatrix},
\]
and
\[	
				q^\prime(0) = 
				\begin{pmatrix}
				\overline{\lambda_1} & (1-\overline{\lambda_1})(4\overline{\alpha_3}\overline{\lambda_1}+\alpha_1)\\
				0 & 1
				\end{pmatrix}.
\]

Next, we know that $\ell(x) = (-b+x)q^\prime(0)^{-1}$ and $\ell^{-1}(x) = xq^\prime(0)+b$, , so yet again
plugging in;
\begin{align*}
	\ell(x) &= (-\lambda_1q(0)_1+\lambda_1x_1,\lambda_1q^\prime(0)_{1,2}-\lambda_1q^\prime(0)_{1,2}x_1+x_2),\\
	\ell^{-1}(x) &= \left(q(0)_1+\overline{\lambda_1}x_1,q(0)_2+q^\prime(0)_{1,2}x_1+x_2\right).
\end{align*}
Using the fact that $q = \ell^{-1}\circ \tilde{q},$ 
\[
	q(x) = \ell^{-1}\left(\tilde{q}(x)\right) 
	= \begin{pmatrix}q(0)_1+\overline{\lambda_1}x_1, & q(0)_2+q^\prime(0)_{1,2}x_1+x_2+2(1-\overline{\lambda_1}^2)x_1^2\end{pmatrix}.
\]
Observe $q=q_{\overline{\lambda_1}}$ i.e. $q$ depends upon the choice of the unimodular $\overline{\lambda_1}$.
Thus, taking a unimodular $\phi$ and setting $s_\phi = q_\phi$,
\[
	s_\phi^1(x) = \overline{\alpha_3}(1-\phi)+\phi x_1
\]
\[
	s_\phi^2(x)
	=-\overline{\alpha_3}(1-\phi)\left(2\overline{\alpha_3}(1-\phi)+\alpha_1\right)
	-(1-\phi)\left(4\overline{\alpha_3}\phi-\alpha_1\right)x_1+x_2
	+2(1-\phi^2)x_1^2,
\]
which by construction is an automorphism of $\cD_A$. Moreover, if $\psi$ is another unimodular, then
\[
	s_\phi\circ s_\psi = s_{\phi \psi}
\]

These automorphisms must be the only automorphisms of $\cD_A$, since if there were some other form of automorphism
then by composing with $q$ we would get a different form for a bianalytic polynomial  from $\cD_A$ to $\cD_A$ which
cannot happen. 
\end{proof}


\appendix

\def\ione{(1-x_1)^{-1}}
\def\itwo{(1-x_2)^{-1}}
\def\i3{(1-x_3)^{-1}}

\def\iplusone{(1+x_1)^{-1}}
\def\iplustwo{(1+x_2)^{-1}}
\def\iplus3{(1+x_3)^{-1}}

\def\iyOne{(1+y_1)^{-1} }
\def\iyTwo{ (1+y_2)^{-1} }

\def\iy3{(1+y_3)^{-1} }

\newpage

\section{\Ct maps in 3 variables}
\label{sec:appendix}
This is a list of all \ct maps coming from indecomposable 
3 dimensional algebras over $\C$.
These maps were generated in  a Mathematica Notebook
running under the package NCAlgebra.
The notebook, given defining relations 
on a basis
for a $g$-dimensional algebra,
produces the associated \ct maps $p$ and their
inverses $p^{-1}$. The notebook was prepared by Eric Evert
with help from Zongling Jiang (graduate students at UCSD)
and Shiyuan Huang and Ashwin Trisal (undergraduates)
in consultation with this papers' authors.

\linespread{1.1}

\addtocontents{toc}{\SkipTocEntry}\subsection*{Algebra 1}

Relations on a basis $R_1, R_2, R_3$:
\[
R_1 R_1=0, \quad R_2 R_2=0, \quad R_3 R_3=0,
\]
\[
R_1 R_2=0, \quad R_2 R_1=0, \quad R_1 R_3=R_2,
\quad
R_3 R_1=R_2, \quad R_2 R_3=0, \quad R_3 R_2=0.
\]

Structure matrices
{{\footnotesize
\[  \Xi=
\left(
\begin{pmatrix}
0 & 0 & 0 \\
0 & 0 & 0 \\
0 & 1 & 0 
\end{pmatrix},
\begin{pmatrix}
0 & 0 & 0 \\
0 & 0 & 0 \\
0 & 0 & 0 
\end{pmatrix},
\begin{pmatrix}
0 & 1 & 0 \\
0 & 0 & 0 \\
0 & 0 & 0 
\end{pmatrix}
\right)
\]
}}
$$
p(x)=(x_1, \   x_2 + x_1  x_3 + x_3  x_1 ,\  x_3)
\qquad 
p^{-1} (y)= ( y_1,\  y_2 - y_1  y_3 - y_3  y_1 , \ y_3 )
$$

\addtocontents{toc}{\SkipTocEntry}\subsection*{Algebra 2}

Let $\alpha$ be a real number. Note the case $\alpha=1 $
is Algebra 1.

Relations on a basis $R_1, R_2, R_3$:
\[
R_1 R_1=0, \quad R_2 R_2=0, \quad R_3 R_3=0,
\]
\[
R_1 R_2=0, \quad R_2 R_1=0, \quad R_1 R_3=R_2,
\quad
R_3 R_1= \alpha R_2, \quad R_2 R_3=R_2, \quad R_3 R_2=0.
\]

Structure matrices
{{\footnotesize
\[  \Xi=
\left(
\begin{pmatrix}
0 & 0 & 0 \\
0 & 0 & 0 \\
0 & \alpha & 0 
\end{pmatrix},
\begin{pmatrix}
0 & 0 & 0 \\
0 & 0 & 0 \\
0 & 0 & 0 
\end{pmatrix},
\begin{pmatrix}
0 & 1 & 0 \\
0 & 0 & 0 \\
0 & 0 & 0 
\end{pmatrix}
\right)
\]
}}
$$
p(x)=(x_1, \   x_2  + x_1  x_3 + \alpha x_3  x_1, \ x_3)
\qquad
p^{-1}(y) = 
( y_1, \    y_2 - y_1   y_3 - \alpha  y_3   y_1 ,  \  y_3)
$$

\addtocontents{toc}{\SkipTocEntry}\subsection*{Algebra 3}

Relations on a basis $R_1, R_2, R_3$:
\[
R_1 R_1=R_2, \quad R_2 R_2=0, \quad R_3 R_3=0,
\]
\[
R_1 R_2=R_3, \quad R_2 R_1=R_3, \quad R_1 R_3=0,
\quad
R_3 R_1= 0, \quad R_2 R_3=0, \quad R_3 R_2=0.
\]

Structure matrices
{{\footnotesize
\[   \Xi=
\left(
\begin{pmatrix}
0 & 1 & 0 \\
0 & 0 & 1 \\
0 & 0 & 0 
\end{pmatrix},
\begin{pmatrix}
0 & 0 & 1 \\
0 & 0 & 0 \\
0 & 0 & 0 
\end{pmatrix},
\begin{pmatrix}
0 & 0 & 0 \\
0 & 0 & 0 \\
0 & 0 & 0 
\end{pmatrix}
\right)
\]
}}
\[
\begin{split}
p(x)&=(x_1, \   x_2 + x_1  x_1, \    x_3 +
 x_1  (x_1^2  + x_2) + x_2  x_1)\\
p^{-1}(y)&=
( y_1, \   y_2 - y_1^2 , \   y_3 +
  y_1  ( y_1^2 -  y_2) -  y_2   y_1 )
\end{split}
\]

\addtocontents{toc}{\SkipTocEntry}\subsection*{Algebra 4}

Relations on a basis $R_1, R_2, R_3$:
\[
R_1 R_1=0, \quad R_2 R_2=0, \quad R_3 R_3=R_3,
\]
\[
R_1 R_2=0, \quad R_2 R_1=0, \quad R_1 R_3=R_2,
\quad
R_3 R_1= 0, \quad R_2 R_3=R_2, \quad R_3 R_2=0.
\]

Structure matrices
{{\footnotesize
\[  \Xi=
\left(
\begin{pmatrix}
0 & 0 & 0 \\
0 & 0 & 0 \\
0 & 0 & 0 
\end{pmatrix},
\begin{pmatrix}
0 & 0 & 0 \\
0 & 0 & 0 \\
0 & 0 & 0 
\end{pmatrix},
\begin{pmatrix}
0 & 1 & 0 \\
0 & 1 & 0 \\
0 & 0 & 1 
\end{pmatrix}
\right)
\]
}}
\[
\begin{split}
p(x)&=(x_1, \ ( x_2 + x_1  x_3 )  \i3, \  x_3  \i3)\\
p^{-1}(y) &=
 ( y_1, \ ( y_2  - y_1    y_3   )    \iy3,  \ y_3    \iy3) 
 \end{split}
 \]

\addtocontents{toc}{\SkipTocEntry}\subsection*{Algebra 5}

Relations on a basis $R_1, R_2, R_3$:
\[
R_1 R_1=0, \quad R_2 R_2=0, \quad R_3 R_3=R_3,
\]
\[
R_1 R_2=0, \quad R_2 R_1=0, \quad R_1 R_3=0,
\quad
R_3 R_1= R_1, \quad R_2 R_3=R_2, \quad R_3 R_2=0.
\]

Structure matrices
{{\footnotesize
\[   \Xi=
\left(
\begin{pmatrix}
0 & 0 & 0 \\
0 & 0 & 0 \\
1 & 0 & 0 
\end{pmatrix},
\begin{pmatrix}
0 & 0 & 0 \\
0 & 0 & 0 \\
0 & 0 & 0 
\end{pmatrix},
\begin{pmatrix}
0 & 0 & 0 \\
0 & 1 & 0 \\
0 & 0 & 1 
\end{pmatrix}
\right)
\]
}}
\[
\begin{split}
p(x)&=(\i3 x_1, \ x_2 \i3, x_3 \i3)\\
p^{-1}(y)&=
( {\iy3   y_1, \  y_2    \iy3, \ y_3    \iy3})
\end{split}
\]

\addtocontents{toc}{\SkipTocEntry}\subsection*{Algebra 6}

Relations on a basis $R_1, R_2, R_3$:
\[
R_1 R_1=0, \quad R_2 R_2=0, \quad R_3 R_3=R_3,
\]
\[
R_1 R_2=0, \quad R_2 R_1=0, \quad R_1 R_3=0,
\quad
R_3 R_1= R_2, \quad R_2 R_3=0, \quad R_3 R_2=R_2.
\]

Structure matrices
{{\footnotesize
\[  \Xi=
\left(
\begin{pmatrix}
0 & 0 & 0 \\
0 & 0 & 0 \\
0 & 1 & 0 
\end{pmatrix},
\begin{pmatrix}
0 & 0 & 0 \\
0 & 0 & 0 \\
0 & 1 & 0 
\end{pmatrix},
\begin{pmatrix}
0 & 0 & 0 \\
0 & 0 & 0 \\
0 & 0 & 1 
\end{pmatrix}
\right)
\]
}}
\[
\begin{split}
p(x)&=(x_1,  \  \i3  ( x_2  +   x_3 x_1 )  , \ x_3  \i3)\\
p^{-1} (y)&=
 ( {y_1, \ \iy3   ( y_2  - y_3      y_1) , \ y_3    \iy3})
 \end{split}
 \]

\addtocontents{toc}{\SkipTocEntry}\subsection*{Algebra 7}

Relations on a basis $R_1, R_2, R_3$:
\[
R_1 R_1=0, \quad R_2 R_2=R_2, \quad R_3 R_3=R_3,
\]
\[
R_1 R_2=R_1, \quad R_2 R_1=0, \quad R_1 R_3=0,
\quad
R_3 R_1= R_1, \quad R_2 R_3=0, \quad R_3 R_2=0.
\]

Structure matrices
{{\footnotesize
\[  \Xi=
\left(
\begin{pmatrix}
0 & 0 & 0 \\
0 & 0 & 0 \\
1 & 0 & 0 
\end{pmatrix},
\begin{pmatrix}
1 & 0 & 0 \\
0 & 1 & 0 \\
0 & 0 & 0 
\end{pmatrix},
\begin{pmatrix}
0 & 0 & 0 \\
0 & 0 & 0 \\
0 & 0 & 1 
\end{pmatrix}
\right)
\]
}}
\[
\begin{split}
p(x)&= (\i3  x_1  , \ x_2  , \  x_3  \i3)\\
p^{-1} (y)& ={\iy3   y_1   \iyTwo , 1 -  \iyTwo , y_3    \iy3}
\end{split}
\]

\addtocontents{toc}{\SkipTocEntry}\subsection*{Algebra 8}

Relations on a basis $R_1, R_2, R_3$:
\[
R_1 R_1=0, \quad R_2 R_2=0, \quad R_3 R_3=R_3,
\quad
R_1 R_2=0, \quad R_2 R_1=0, \quad R_1 R_3=R_1,
\]
\[
R_3 R_1= R_1, \quad R_2 R_3=R_2, \quad R_3 R_2=0.
\]

Structure matrices
{{\footnotesize
\[  \Xi=
\left(
\begin{pmatrix}
0 & 0 & 0 \\
0 & 0 & 0 \\
1 & 0 & 0 
\end{pmatrix},
\begin{pmatrix}
0 & 0 & 0 \\
0 & 0 & 0 \\
0 & 0 & 0 
\end{pmatrix},
\begin{pmatrix}
1 & 0 & 0 \\
0 & 1 & 0 \\
0 & 0 & 1 
\end{pmatrix}
\right)
\]
}}
\[
\begin{split}
p(x)&=(\i3  x_1  \i3, x_2  \i3, \i3 x_3)\\
p^{-1} (y) &= 
({\iy3   y_1   \iy3, \  y_2    \iy3, \ y_3    \iy3} )
\end{split}
\]

\addtocontents{toc}{\SkipTocEntry}\subsection*{Algebra 9}

Relations on a basis $R_1, R_2, R_3$:
\[
R_1 R_1=0, \quad R_2 R_2=0, \quad R_3 R_3=R_3,
\]
\[
R_1 R_2=0, \quad R_2 R_1=0, \quad R_1 R_3=0,
\quad
R_3 R_1= R_1, \quad R_2 R_3=R_2, \quad R_3 R_2=R_2.
\]

Structure matrices
{{\footnotesize
\[  \Xi=
\left(
\begin{pmatrix}
0 & 0 & 0 \\
0 & 0 & 0 \\
1 & 0 & 0 
\end{pmatrix},
\begin{pmatrix}
0 & 0 & 0 \\
0 & 0 & 0 \\
0 & 1 & 0 
\end{pmatrix},
\begin{pmatrix}
0 & 0 & 0 \\
0 & 1 & 0 \\
0 & 0 & 1 
\end{pmatrix}
\right)
\]
}}
\[
\begin{split}
p(x)&=(\i3  x_1, \  \i3  x_2  \i3, \  x_3  \i3)\\
p^{-1}(y) &= ({\iy3   y_1, \  \iy3   y_2    \iy3, \ y_3    \iy3})
\end{split}
\]

\addtocontents{toc}{\SkipTocEntry}\subsection*{Algebra 10}

Relations on a basis $R_1, R_2, R_3$:
\[
R_1 R_1=0, \quad R_2 R_2=0, \quad R_3 R_3=R_3,
\]
\[
R_1 R_2=0, \quad R_2 R_1=0, \quad R_1 R_3=R_1,
\quad
R_3 R_1= R_1, \quad R_2 R_3=R_2, \quad R_3 R_2=R_2.
\]
Structure matrices
{{\footnotesize
\[  \Xi=
\left(
\begin{pmatrix}
0 & 0 & 0 \\
0 & 0 & 0 \\
1 & 0 & 0 
\end{pmatrix},
\begin{pmatrix}
0 & 0 & 0 \\
0 & 0 & 0 \\
0 & 1 & 0 
\end{pmatrix},
\begin{pmatrix}
1 & 0 & 0 \\
0 & 1 & 0 \\
0 & 0 & 1 
\end{pmatrix}
\right)
\]
}}
\[
\begin{split}
p(x)&=(\i3  x_1  \i3, \i3  x_2  \i3, x_3  \i3) \\
p^{-1} (y)&=
({\iy3   y_1   \iy3,  \ \iy3   y_2    \iy3, \ y_3    \iy3})
\end{split}
\]

\addtocontents{toc}{\SkipTocEntry}\subsection*{Algebra 11}

Relations on a basis $R_1, R_2, R_3$:
\[
R_1 R_1=0, \quad R_2 R_2=0, \quad R_3 R_3=R_3,
\]
\[
R_1 R_2=0, \quad R_2 R_1=0, \quad R_1 R_3=R_2,
\quad
R_3 R_1= R_2, \quad R_2 R_3=R_2, \quad R_3 R_2=R_2.
\]

Structure matrices
{{\footnotesize
\[  \Xi=
\left(
\begin{pmatrix}
0 & 0 & 0 \\
0 & 0 & 0 \\
0 & 1 & 0 
\end{pmatrix},
\begin{pmatrix}
0 & 0 & 0 \\
0 & 0 & 0 \\
0 & 1 & 0 
\end{pmatrix},
\begin{pmatrix}
0 & 1 & 0 \\
0 & 1 & 0 \\
0 & 0 & 1 
\end{pmatrix}
\right)
\]
}}
\[
\begin{split}
p(x)&=(x_1, 
  \i3  (   x_2 + x_1  x_3 + x_3  x_1 - x_3  x_1  x_3 ) 
    \i3, x_3  \i3)\\
p^{-1} (y)&=
( {y_1, \ \iy3   ( y_2   + y_1 y_3   +  y_3 y_1    -  y_3  y_1 y_3 ) \iy3,
  \ y_3    \iy3})
  \end{split}
  \]

\addtocontents{toc}{\SkipTocEntry}\subsection*{Algebra 12}

Relations on a basis $R_1, R_2, R_3$:
\[
R_1 R_1=R_2, \quad R_2 R_2=0, \quad R_3 R_3=R_3,
\]
\[
R_1 R_2=0, \quad R_2 R_1=0, \quad R_1 R_3=R_1, \quad
R_3 R_1= R_1, \quad R_2 R_3=R_2, \quad R_3 R_2=R_2.
\]

Structure matrices
{{\footnotesize
\[  \Xi=
\left(
\begin{pmatrix}
0 & 1 & 0 \\
0 & 0 & 0 \\
1 & 0 & 0 
\end{pmatrix},
\begin{pmatrix}
0 & 0 & 0 \\
0 & 0 & 0 \\
0 & 1 & 0 
\end{pmatrix},
\begin{pmatrix}
1 & 0 & 0 \\
0 & 1 & 0 \\
0 & 0 & 1 
\end{pmatrix}
\right)
\]
}}
{\footnotesize{
\[
\begin{split}
p(x)&=(\i3  x_1  \i3, \ \i3  (\; x_1  \i3  x_1 + x_2 \; )  \i3, \
 x_3  \i3)\\
p^{-1} (y)&=
(  \iy3   y_1   \iy3, 
 -\iy3   (\; y_1   \iy3   y_1 - y_2 \; )   \iy3,  
 y_3    \iy3 )
 \end{split}
 \]}}
An appealing way to present the map $p$ in this case is to write it as
$$
\pi(x)=( x_1 , \  x_2 +  x_1  \i3  x_1   , \
 x_3 - x_3^2  )
$$
with each entry multiplied on the  right and the left by $\i3$.

\vfill

\end{document}